\documentclass[11pt,a4paper,final]{article}
\pdfoutput=1
\usepackage{graphicx}
\graphicspath{{../Figures/}}
\usepackage{subfigure}

\usepackage{amssymb,latexsym, braket}

\usepackage{booktabs}
\usepackage{multirow}
\usepackage{setspace}

\usepackage{amsmath}

\usepackage{mathrsfs,amsfonts}

\usepackage{amsthm,amsxtra}

\usepackage[final]{showkeys}

\usepackage{stmaryrd}

\usepackage{algorithm}
\usepackage{algorithmicx}
\usepackage{algpseudocode}
\usepackage{mathtools}
\newif\ifPDF
\ifx\pdfoutput\undefined
\PDFfalse
\else
\ifnum\pdfoutput > 0
\PDFtrue
\else
\PDFfalse
\fi
\fi

\ifPDF
\usepackage{pdftricks}
\begin{psinputs}
	\usepackage{pstricks}
	\usepackage{pstcol}
	\usepackage{pst-plot}
	\usepackage{pst-tree}
	\usepackage{pst-eps}
	\usepackage{multido}
	\usepackage{pst-node}
	\usepackage{pst-eps}
\end{psinputs}
\else
\usepackage{pstricks}
\fi

\ifPDF
\usepackage[debug,pdftex,colorlinks=true, 
linkcolor=blue, bookmarksopen=false,
plainpages=false,pdfpagelabels]{hyperref}
\else
\usepackage[dvips]{hyperref}
\fi

\pdfminorversion=6

\usepackage[openbib]{currvita}

\usepackage{fancyhdr}

\usepackage[title]{appendix}

\newtheorem{theorem}{Theorem}[section]
\newtheorem{lemma}[theorem]{Lemma}
\newtheorem{definition}[theorem]{Definition}

\newtheorem{remark}[theorem]{Remark} 
\newtheorem{corollary}[theorem]{Corollary}

\newcommand{\sgn}{\operatorname{sgn}}

\newcommand{\supp}{\operatorname{supp}}

\newcommand{\dint}{\displaystyle\int}
\newcommand{\eps}{\varepsilon}

\DeclareMathOperator*{\argmin}{\arg\min}




 \newcommand{\bbE}{\mathbb E}
 \newcommand{\bbN}{\mathbb N}
\newcommand{\bbP}{\mathbb P} 
\newcommand{\bbR}{\mathbb R} 
\newcommand{\bbZ}{\mathbb Z} 

\newcommand{\bzero}{{\mathbf 0}}

\newcommand{\bxi}{\boldsymbol \xi}
\newcommand{\bzeta}{\boldsymbol \zeta}

\newcommand{\be}{\mathbf e}

 \newcommand{\bn}{\mathbf n}

\newcommand{\bu}{\mathbf u} \newcommand{\bv}{\mathbf v} 
\newcommand{\bw}{\mathbf w} \newcommand{\bx}{\mathbf x} 
\newcommand{\by}{\mathbf y} \newcommand{\bz}{\mathbf z}

\newcommand{\cA}{\mathcal A} 
  
\newcommand{\cE}{\mathcal E} 
 
\newcommand{\cI}{\mathcal I} \newcommand{\cJ}{\mathcal J}
 \newcommand{\cL}{\mathcal L}
 
\newcommand{\cO}{\mathcal O} \newcommand{\cP}{\mathcal P} 
\newcommand{\cQ}{\mathcal Q} \newcommand{\cR}{\mathcal R}
\newcommand{\cS}{\mathcal S} 
 \newcommand{\cV}{\mathcal V}
\newcommand{\cW}{\mathcal W}



\usepackage[normalem]{ulem} 

\setlength\textwidth{39pc}
\setlength\textheight{54pc}
\setlength\hoffset{0pc}
\setlength\voffset{0pc}
\setlength\topmargin{0pt}
\setlength\headheight{0pt}
\setlength\headsep{50pt}
\setlength\oddsidemargin{0in}
\setlength\evensidemargin{0in}

\addtolength\parskip{1mm}

\newenvironment{keywords}
{\noindent{\bf Key words.}\small}{\par\vspace{1ex}}

\makeatletter
\newcommand{\chapterauthor}[1]{%
	{\parindent0pt\vspace*{-25pt}%
		\linespread{1.1}\large\scshape#1%
		\par\nobreak\vspace*{35pt}}
	\@afterheading%
}
\makeatother



\title{Error Analysis for the Implicit Boundary Integral Method}
\author{
Yimin Zhong\thanks{Department of Mathematics and Statistics, Auburn University, Auburn AL, USA (\href{yimin.zhong@auburn.edu}{yimin.zhong@auburn.edu})}
\and
Kui Ren\thanks{Department of Applied Physics and Applied Mathematics, Columbia University, New York NY, USA (\href{kr2002@columbia.edu}{kr2002@columbia.edu})}
\and 
Olof Runborg\thanks{Department of Mathematics, KTH Royal Institute of Technology, Stockholm, Sweden (\href{olofr@kth.se}{olofr@kth.se})}
\and  
Richard Tsai\thanks{Department of Mathematics and Oden Institute for Computational Engineering and Sciences, The University of Texas at Austin, Austin TX, USA (\href{ytsai@math.utexas.edu}{ytsai@math.utexas.edu})
}}
\date{}
\begin{document}
\maketitle

\begin{abstract}
The implicit boundary integral method (IBIM) provides a framework to construct quadrature rules on regular lattices for integrals over irregular domain boundaries. This work provides a systematic error analysis for IBIMs on uniform Cartesian grids for boundaries with 
different degree of regularities. We first show that the quadrature error gains an addition order of $\frac{d-1}{2}$ from the curvature for a strongly convex smooth boundary due to the ``randomness'' in the signed distances. This gain is discounted for degenerated convex surfaces. We then extend the error estimate to general boundaries under some special circumstances, including how quadrature error depends on the boundary's local geometry relative to the underlying grid.  Bounds on the variance of the quadrature error under random shifts and rotations of the lattices are also derived.
\end{abstract}

\begin{keywords}
implicit boundary integral method, error analysis, level set, solvent-excluded surface 
\end{keywords}

\section{Introduction}

Let $\Omega\subset \bbR^d$ be a general simply connected domain with $C^{\infty}$ boundary $\Gamma$. Without loss of generality, we assume that the origin $\bzero\in \Omega$ and define the $\eps$-tube $T_{\eps}$: 
\begin{equation*}
    T_{\eps} := \{x\in\Omega\mid 0 \le \mathrm{dist}(\bx, \Gamma) \le \eps\}, 
\end{equation*}
where $\mathrm{dist}(\bx, \Gamma) = \min_{\by\in\Gamma}|\by - \bx|$ is the \emph{unsigned} distance function to the boundary. We make further assumption that $\forall \bx\in T_{\eps}$, the projection $P_{\Gamma}(\bx): T_{\eps}\mapsto \Gamma$:
\begin{equation*}
    P_{\Gamma}(\bx) = \argmin_{\by\in \Gamma} |\by - \bx|
\end{equation*}
is well-defined and the \emph{signed} distance function $d_{\Gamma}$ is defined by 
\begin{equation*}
    d_{\Gamma} (\bx) := \bn(P_{\Gamma}(\bx)) \cdot (\bx - P_{\Gamma}(\bx)),
\end{equation*}
where $\bn$ denotes the outward unit normal vector on $\Gamma$. 
Let $\sigma$ be the boundary Lebesgue measure defined on $\Gamma$. We can then use the co-area formula to rewrite the following boundary integral for $f(\bx)\in C^{\infty}(\Gamma)$
\begin{equation*}
    \cI(f):= \int_{\Gamma} f(\bx) d\sigma(\bx)
\end{equation*}
into a volumetric integral in $T_{\eps}$:
\begin{equation}\label{EQ: TUBE}
    \cI(f) = \int_{T_{\eps}} f(P_{\Gamma}(\bx)) \theta_{\eps}(d_{\Gamma}(\bx)) J_{\eps}(\bx, d_{\Gamma} (\bx)) d\bx  .
\end{equation}
The weight function $\theta_{\eps}(s) := \eps^{-1}\theta(\eps^{-1} s)$ is a regularized 1D Dirac function such that 
\begin{equation*}
   \supp\theta  = [-1, 1]\; \text{ and }\; \int_{-1}^{1} \theta(s) ds = 1.
\end{equation*}
The function $J_{\eps}(\cdot, \eta)$ is the Jacobian for the projection $P_{\Gamma}$ on the level set surface $\{d_{\Gamma} = \eta\}$. For $d=2$, $J_{\eps}(\bx, \eta) = 1 - \eta \kappa(\bx)$ with $\kappa$ the signed curvature of the level curve $\Gamma_{\eta} := \{ \bx\mid d_{\Gamma}(\bx) = \eta \}$.  For $d=3$, $J_{\eps}(\bx, \eta) = 1 - 2 \eta H(\bx) + \eta^2 G(\bx)$ with $H$ and $G$ denoting the mean curvature and Gaussian curvature of the level surface $\Gamma_{\eta}$, respectively.

For the convenience of analysis, we introduce the function class $\cW_q$, $q\in\bbN$ that 
\begin{equation}\nonumber
    \cW_q := \Set{f\in C^{q-1}(\bbR) \ | \begin{array}{l}
              \supp f = [-\eps, \eps] \;\text{ and }\;\dint_{-\eps}^{\eps} f(s)ds = 1,\\\\
         f^{(q)} \text{ is a piecewise $C^1$ function on $\bbR$.}  
    \end{array}}, \; \text{ if }q\ge 1
\end{equation}
and 
\begin{equation}\nonumber
    \cW_0 := \Set{f\in L^{\infty}(\bbR) \ | \begin{array}{l}
              \supp f = [-\eps, \eps] \;\text{ and }\;\dint_{-\eps}^{\eps} f(s)ds = 1,\\\\
         f \text{ is a piecewise $C^1$ function on $\bbR$.}  
    \end{array}}.
\end{equation}
Here $f^{(q)}$ denotes the $q$-th derivative of $f$. The index $q$ denotes the order of regularity of the weight function.
Usual choices such as
\begin{equation}\label{EQ: WEIGHT}
    \theta_{\eps}^{\cos}(s) = \frac{1}{2\eps}\left(1 + \cos(\eps^{-1}\pi s)\right)\in \cW_{2},\quad \theta_{\eps}^{\Delta}(s) = \frac{1}{2\eps}(1 - \eps^{-1}|s|)\in \cW_1.
\end{equation}
\begin{figure}[!htb]
    \centering
    \includegraphics[scale=0.4]{ 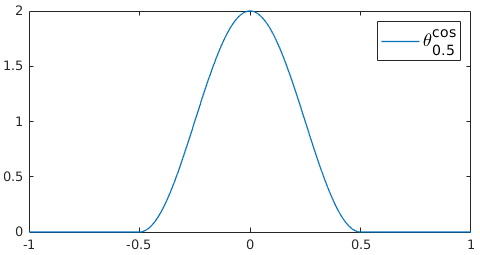}
    \includegraphics[scale=0.4]{ 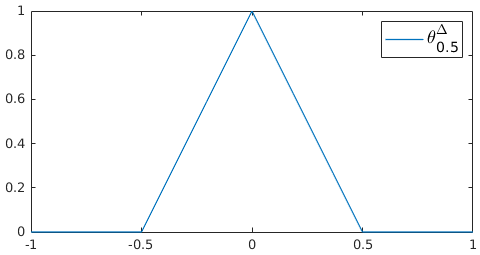}
    \caption{Shape of the the weight functions $\theta_{0.5}^{\cos}$ and $\theta_{0.5}^{\Delta}$ defined in~\eqref{EQ: WEIGHT}.}
    \label{fig: weight func}
\end{figure}
have been adopted in the literature; see, for instance, references~\cite{zhong2018implicit,kublik2013implicit,chen2017implicit,engquist2005discretization,izzo2022corrected,izzo2023high}. These functions are visualized in Figure~\ref{fig: weight func}. More choices of weight functions with higher regularities can be found in~\cite{tornberg2004numerical}.

The implicit boundary integral method (IBIM)~\cite{chen2017implicit,kublik2013implicit,kublik2016integration,kublik2018extrapolative} uses the summation over the regular lattice $(h\bbZ)^d$ to approximate the integral~\eqref{EQ: TUBE}:
\begin{equation}\label{EQ:IBIM}
\begin{aligned}
    \cI_h (f) &= h^d \sum_{\bn\in (h\bbZ)^d\cap T_{\eps}} f(P_{\Gamma}(\bn)) \theta_{\eps}(d_{\Gamma}(\bn)) J_{\eps}(\bn, d_{\Gamma} (\bn))  .
\end{aligned}
\end{equation}
We use $\Theta$ and $\cO$ to denote the standard Landau notations for big-Theta and big-O, respectively. When the width parameter $\eps = \Theta(h^{\alpha})$, $\alpha \in [0, 1)$,  the theoretical quadrature error previously derived is at the order of $\cO(h^{(q+1)(1 -\alpha)})$ for a general domain with smooth boundary~\cite{engquist2005discretization}. However, the total number of lattice points is at the order $\cO(\eps h^{-d})$, and the width parameter is often chosen as $\eps = \Theta(h)$. This turns the classical quadrature error into $\cO(1)$ in such a circumstance, a clearly undesired abrupt loss of accuracy.


The objective of this paper is to perform a more careful investigation of the above issue of vanishing accuracy. We perform a systematic error analysis using the Poisson Summation Formula for boundaries of different orders of regularity. For strongly convex smooth boundaries, we show that the quadrature error gains an addition order of $\frac{d-1}{2}$ from the curvature of the boundaries due to the ``randomness" in the signed distance functions for such boundaries. This additional gain avoids the accuracy catastrophe we see above.

The remainder of this paper is organized as follows. We first discuss in Section~\ref{SEC: 2} the error estimate of the implicit boundary integral method for strongly convex boundaries and the error statistics. We then investigate the issues and possible solutions for a general smooth convex boundary for 2D in Section~\ref{SEC: 3}. Generalization of the estimate of implicit boundary integral to general open curves in 2D is then presented in Section~\ref{SEC: 4}. Concluding remarks are offered in Section~\ref{SEC: 5}. 

\section{Error analysis for smooth convex boundaries in \texorpdfstring{$\mathbb{R}^d$}{}}
\label{SEC: 2}

We start with the simplest case when the boundary $\Gamma\in C^{\infty}$ is strongly convex; that is, the Gaussian curvature is bounded away from zero uniformly. To setup the notation, we denote by $$\cQ(\bx) :=  f(P_{\Gamma}(\bx)) \theta_{\eps}(d_{\Gamma}(\bx)) J_{\eps}(\bx, d_{\Gamma} (\bx))$$ the integrand in~\eqref{EQ: TUBE}. We also denote the mollifier $\psi_{\delta}(\bx) = \delta^{-d}\psi(\delta^{-1}\bx)$, where $\psi(\bu) = \psi(|\bu|)\in C_0^{\infty}([-1,1])$ is the bump function that 
\begin{equation*}
    \int_{-1}^1 \psi(s) ds = 1\,.
\end{equation*}

\subsection{Poisson Summation Formula}

We use the standard notation $\cS(\bbR^d)$ for the Schwartz function space on $\bbR^d$. For $f\in \cS(\bbR^d)$, the Fourier transform of $f$ is defined by 
\begin{equation*}
    \widehat{f}(\bzeta) = \int_{\bbR^d} f(\bx) e^{-2\pi i \bx\cdot \bzeta} d\bx.
\end{equation*}
The following Poisson Summation Formula is a standard tool whose proof can be found in~\cite{stein1993harmonic}.
\begin{lemma}[Poisson Summation Formula~\cite{stein1993harmonic}]
If $f$ satisfies that $|f(\bu)| + |\widehat{f}(\bu)| = \cO((1 + |\bu|)^{-d - \mu})$ for all $\bu\in\bbR^d$ and certain $\mu>0$, then 
\begin{equation}\label{EQ: PSF}
    \sum_{\bn\in \bbZ^d} f(\bn) =\sum_{\bn\in\bbZ^d} \widehat{f}(\bn).
\end{equation}
In particular, the above formula holds for $f\in \cS(\bbR^d)$.
\end{lemma}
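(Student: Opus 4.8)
The plan is to run the classical periodization argument. Given $f$ satisfying the stated decay, define the periodized function
\[
  F(\bx) := \sum_{\bn\in\bbZ^d} f(\bx+\bn).
\]
First I would verify that this series converges absolutely and uniformly on every compact subset of $\bbR^d$: the bound $|f(\bu)| = \cO((1+|\bu|)^{-d-\mu})$ shows that, for $|\bx|\le R$, the terms are dominated for large $|\bn|$ by a constant multiple of $(1+|\bn|)^{-d-\mu}$, which is a summable sequence over $\bbZ^d$ since $\mu>0$. Hence $F$ is well-defined, continuous, and $\bbZ^d$-periodic.

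Next I would compute the Fourier coefficients of $F$ on the torus $[0,1]^d$. For $\bk\in\bbZ^d$, using $\bk\cdot\bn\in\bbZ$ so that $e^{-2\pi i\bk\cdot\bx}$ is $\bbZ^d$-periodic, and the tiling $\bbR^d=\bigcup_{\bn\in\bbZ^d}([0,1]^d+\bn)$,
\[
  c_\bk(F) = \int_{[0,1]^d} F(\bx)\,e^{-2\pi i\bk\cdot\bx}\,d\bx
           = \sum_{\bn\in\bbZ^d}\int_{[0,1]^d+\bn} f(\by)\,e^{-2\pi i\bk\cdot\by}\,d\by
           = \int_{\bbR^d} f(\by)\,e^{-2\pi i\bk\cdot\by}\,d\by = \widehat{f}(\bk),
\]
where the interchange of summation and integration is legitimate by the absolute convergence established above.

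Then I would invoke Fourier inversion on the torus. Since $|\widehat{f}(\bk)| = \cO((1+|\bk|)^{-d-\mu})$, the series $\sum_{\bk\in\bbZ^d}|\widehat{f}(\bk)|$ converges, so $\sum_{\bk\in\bbZ^d}\widehat{f}(\bk)\,e^{2\pi i\bk\cdot\bx}$ converges absolutely and uniformly to some continuous $\bbZ^d$-periodic function $G$. The function $G$ has Fourier coefficients $\widehat{f}(\bk)$, the same as the continuous function $F$; hence $F\equiv G$ by uniqueness of Fourier coefficients for continuous functions on the torus. Evaluating at $\bx=\bzero$ gives
\[
  \sum_{\bn\in\bbZ^d} f(\bn) = F(\bzero) = G(\bzero) = \sum_{\bk\in\bbZ^d}\widehat{f}(\bk),
\]
which is \eqref{EQ: PSF}. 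For $f\in\cS(\bbR^d)$, both $f$ and $\widehat{f}$ are Schwartz and decay faster than any polynomial, so the hypothesis holds with any $\mu>0$, and the formula applies.

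The only delicate point — mild as it is here — is the passage from ``$F$ and $G$ have identical Fourier coefficients'' to ``$F=G$ pointwise,'' which fails in general for merely integrable functions. It succeeds in this setting precisely because the decay of $f$ makes the periodization $F$ continuous and the decay of $\widehat{f}$ makes the Fourier series $G$ converge uniformly (hence to a continuous limit), and a continuous function on the torus is uniquely determined by its Fourier coefficients. Thus the two decay hypotheses are exactly what is needed to close the argument on both sides.
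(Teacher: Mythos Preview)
Your argument is correct and is precisely the classical periodization proof; the paper does not supply its own proof of this lemma but simply cites \cite{stein1993harmonic}, where exactly this argument appears. There is nothing to add.
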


Let $h\in\bbR_{+}$ be the lattice resolution in the summation~\eqref{EQ:IBIM}. If we apply the Poisson Summation Formula to $\cQ_{h, \delta}(\bx):= \cQ(h {\bx})\ast \psi_{\delta}\in \cS(\bbR^d)$, we have
\begin{equation}\label{EQ: POISSON}
    \sum_{\bn\in\bbZ^d} \cQ_{h, \delta}(\bn) = \sum_{\bn\in\bbZ^d} \widehat{\cQ_{h, \delta}}(\bn) = h^{-d} \sum_{\bn\in\bbZ^d} \widehat{\cQ}(h^{-1}{\bn})\widehat{\psi}(\delta\bn)\,.
\end{equation}
For the term $\bn = \bzero$ in the last summation, it equals to
\begin{equation}
    h^{-d}  \widehat{\cQ}(\bzero)\widehat{\psi}(\bzero) = h^{-d} \int_{\bbR^d} \cQ(\bx) d\bx \int_{\bbR^d} \psi(\bx) d\bx =  h^{-d} \int_{\bbR^d} \cQ(\bx) d\bx.
\end{equation}
which produces the desired integral in~\eqref{EQ: TUBE}. The objective of the rest of this section is to provide an estimate for the remaining terms that $\bn\neq \bzero$.

 \subsection{Main results on strongly convex boundaries}\label{SEC: MAIN}
 
 We will need the following two lemmas whose proofs are given in Appendices~\ref{PRF: STATION} and ~\ref{PRF: ESTIMATE}, respectively.
 

 
\begin{lemma}\label{LEM: STATIONARY PHASE}
Let $\Gamma\in C^{\infty}$ be closed and strongly convex. Then there exists a constant $C > 0$ such that when $\eps \le C$, $|\widehat{\cQ}(\bzeta)| = \cO(\eps^{-(q+1)}|\bzeta|^{-(d+1)/2 - q})$.
\end{lemma}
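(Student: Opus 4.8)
The plan is to compute $\widehat{\cQ}(\bzeta)$ in tubular coordinates about $\Gamma$ and to decouple the oscillation tangent to the level sets of $d_\Gamma$ from the oscillation in the normal direction: the tangential part will supply the curvature gain $|\bzeta|^{-(d-1)/2}$, while integrating by parts against $\theta_\eps$ in the normal variable will supply the remaining $|\bzeta|^{-(q+1)}$ at the price of $\eps^{-(q+1)}$. Writing $\bx=\by+\eta\bn(\by)$ with $\by\in\Gamma$ and $\eta\in[-\eps,\eps]$ (so $P_\Gamma(\bx)=\by$ and $d_\Gamma(\bx)=\eta$), and absorbing $J_\eps$ together with the Jacobian of the tubular change of variables into a single weight $w(\by,\eta)$ — smooth and, with all its derivatives, uniformly bounded on $\Gamma\times[-\eps,\eps]$ once $\eps\le C$ — one obtains
\[
\widehat{\cQ}(\bzeta)=\int_{-\eps}^{\eps}\theta_\eps(\eta)\,I(\eta,\bzeta)\,d\eta,\qquad I(\eta,\bzeta):=\int_\Gamma f(\by)\,w(\by,\eta)\,e^{-2\pi i(\by+\eta\bn(\by))\cdot\bzeta}\,d\sigma(\by).
\]
Since $\by\mapsto\by+\eta\bn(\by)$ parametrizes the level surface $\Gamma_\eta$, the inner integral is, up to the smooth density $fw$, the Fourier transform of the surface measure of $\Gamma_\eta$.

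After shrinking $C$, every $\Gamma_\eta$ with $|\eta|\le\eps$ is $C^\infty$ and strongly convex, with curvature bounds uniform in $\eta$. The stationary points of the phase $\by\mapsto(\by+\eta\bn(\by))\cdot\bzeta$ on $\Gamma$ are exactly the two points $\by_\pm=\by_\pm(\omega)$, $\omega:=\bzeta/|\bzeta|$, determined by $\bn(\by_\pm)=\pm\omega$ — note that they do not depend on $\eta$ — and the Hessian there is non-degenerate, with determinant comparable to the (uniformly positive) Gaussian curvature of $\Gamma_\eta$ at that point. Stationary phase, applied uniformly in $\eta$ and $\omega$, then yields, for $|\bzeta|\ge1$,
\[
I(\eta,\bzeta)=|\bzeta|^{-\frac{d-1}{2}}\sum_{\pm}b_\pm(\eta,\omega,|\bzeta|)\,e^{-2\pi i|\bzeta|\,\rho_\pm(\eta,\omega)},\qquad \rho_\pm(\eta,\omega):=(\by_\pm+\eta\bn(\by_\pm))\cdot\omega,
\]
where each $b_\pm$ is smooth and bounded together with all its $\eta$-derivatives, uniformly in $\omega$ and in $|\bzeta|\ge1$ (it is built from $f$, $w(\cdot,\eta)$, and finitely many derivatives of the geometry of $\Gamma_\eta$ at $\by_\pm$).

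The crucial observation is that, since $\bn(\by_\pm)=\pm\omega$, the phase is affine in $\eta$: $\rho_\pm(\eta,\omega)=\by_\pm\cdot\omega\pm\eta$, and in particular
\[
\partial_\eta\rho_\pm(\eta,\omega)=\bn(\by_\pm)\cdot\omega=\pm1 .
\]
Hence one may integrate by parts in $\eta$ in each term without ever having to differentiate a factor $1/\partial_\eta\rho_\pm$. The regularity $\theta_\eps\in\cW_q$ (namely $\theta_\eps\in C^{q-1}$ with $\theta_\eps^{(q)}$ piecewise $C^1$ and $\supp\theta_\eps=[-\eps,\eps]$) permits exactly $q+1$ integrations by parts: the derivatives of $\theta_\eps$ of order $\le q-1$ vanish at $\pm\eps$, so the only boundary contribution occurs at order $q$; differentiating $\theta_\eps$ costs at most a factor $\eps^{-1}$ per step (since $\theta_\eps^{(k)}$ scales like $\eps^{-1-k}$ and $\int_{-\eps}^{\eps}|\theta_\eps^{(k)}|=\cO(\eps^{-k})$, the jumps of $\theta_\eps^{(q)}$ contributing only further $\cO(\eps^{-(q+1)})$ terms), whereas differentiating $b_\pm$ costs $\cO(1)$. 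After $q+1$ integrations by parts each term is $\cO(\eps^{-(q+1)}|\bzeta|^{-(q+1)})$, so $|\widehat{\cQ}(\bzeta)|$ is $\cO(\eps^{-(q+1)}|\bzeta|^{-(d-1)/2-(q+1)})$. Since $(d-1)/2+(q+1)=(d+1)/2+q$, this is the claimed bound $|\widehat{\cQ}(\bzeta)|=\cO(\eps^{-(q+1)}|\bzeta|^{-(d+1)/2-q})$ for $|\bzeta|\ge1$; for bounded $\bzeta$ it is immediate from $\|\cQ\|_{L^1}=\cO(1)$.

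The technical heart is the middle step: establishing the stationary-phase expansion of $I(\eta,\bzeta)$ with all constants uniform in $\eta$ and $\omega$, which rests on the uniform strong convexity of the family $\{\Gamma_\eta\}_{|\eta|\le\eps}$ and on the $\eta$-independence of the critical points $\by_\pm(\omega)$. A secondary technicality is the bookkeeping, in the last step, of the boundary and jump terms generated by iterated integration by parts against a weight of only $\cW_q$ regularity, so as to verify that none of them exceeds the $\eps^{-(q+1)}$ budget.
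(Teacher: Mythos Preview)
Your approach is correct and shares the paper's core strategy—separating the tangential oscillation (curvature gain $|\bzeta|^{-(d-1)/2}$ via stationary phase) from the normal oscillation ($(q+1)$-fold integration by parts against $\theta_\eps$, giving $\eps^{-(q+1)}|\bzeta|^{-(q+1)}$)—but reverses the order of the two steps. The paper integrates by parts in the normal variable $s$ first, which inserts a factor $(\bn(\bx')\cdot\bzeta)^{-(q+1)}$ into the surface amplitude and therefore forces a partition-of-unity split into patches where either no stationary point lies (Case~I, rapid decay) or $|\bn\cdot\bzeta|/|\bzeta|$ is uniformly bounded below (Case~II); only then is stationary phase applied. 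Your order sidesteps this dichotomy entirely: once stationary phase has been carried out, the phase in $\eta$ is exactly $\pm|\bzeta|\eta$ (because the critical points $\by_\pm$ are $\eta$-independent and $\bn(\by_\pm)=\pm\omega$), so the subsequent integration by parts is clean and global. The price you pay is the need to know that the stationary-phase amplitudes $b_\pm(\eta,\omega,|\bzeta|)$ have $\eta$-derivatives bounded uniformly in $|\bzeta|$—this is a standard stationary-phase-with-parameters statement (equivalently, take the expansion to order $q+1$ and bound the remainder directly via $\|\theta_\eps\|_{L^1}$), but one that the paper's order avoids since its normal-direction integration by parts is exact before any asymptotics enter. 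The $\cW_q$ bookkeeping (boundary/jump terms at order $q$, $L^1$ control of $\theta_\eps^{(q+1)}$) is identical in both arguments.
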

\begin{lemma}\label{LEM: ESTIMATE}
Let $\Gamma\in C^{\infty}$ be closed and strongly convex. Then the remaining terms in~\eqref{EQ: POISSON} are bounded by
    \begin{equation}
        \sum_{\bn\in\bbZ^d, \bn \neq \bzero} \widehat{\cQ}(h^{-1}{\bn})\widehat{\psi}(\delta\bn) = \begin{cases}
            \cO(\eps^{-(q+1)} h^{\frac{d+1}{2}+q} \delta^{-\frac{d-1}{2} +q}) \quad &\frac{d-1}{2} > q, \\ 
            \cO(\eps^{-(q+1)} h^{\frac{d+1}{2} + q} |\log\delta|)\quad &\frac{d-1}{2} = q, \\ 
             \cO(\eps^{-(q+1)} h^{\frac{d+1}{2} + q})\quad &\frac{d-1}{2} < q.
        \end{cases}
    \end{equation}
\end{lemma}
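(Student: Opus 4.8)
The plan is to insert the pointwise Fourier decay of Lemma~\ref{LEM: STATIONARY PHASE} into the tail of the Poisson sum and then compare the resulting lattice sum with a radial integral. First I would collect the two inputs. By Lemma~\ref{LEM: STATIONARY PHASE}, for $\eps\le C$ there is a uniform constant $C_1$ with $|\widehat{\cQ}(h^{-1}\bn)|\le C_1\eps^{-(q+1)}h^{\frac{d+1}{2}+q}|\bn|^{-\frac{d+1}{2}-q}$ for every $\bn\neq\bzero$ (in the regime of interest $h$ is bounded, so $|h^{-1}\bn|\ge h^{-1}\gtrsim1$ and the asymptotic estimate applies). Since $\psi\in C_0^\infty$, its transform $\widehat\psi$ is Schwartz, so for any $N\in\bbN$ there is $C_N$ with $|\widehat\psi(\delta\bn)|\le C_N(1+\delta|\bn|)^{-N}$. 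Multiplying the two bounds gives
\begin{equation*}
\Bigl|\sum_{\bn\in\bbZ^d,\,\bn\neq\bzero}\widehat{\cQ}(h^{-1}\bn)\widehat\psi(\delta\bn)\Bigr|\le C_1C_N\,\eps^{-(q+1)}h^{\frac{d+1}{2}+q}\sum_{\bn\in\bbZ^d,\,\bn\neq\bzero}|\bn|^{-\frac{d+1}{2}-q}(1+\delta|\bn|)^{-N}.
\end{equation*}

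Next I would estimate $S(\delta):=\sum_{\bn\neq\bzero}|\bn|^{-\frac{d+1}{2}-q}(1+\delta|\bn|)^{-N}$. Grouping lattice points into spherical shells and using the standard bound $\#\{\bn:|\bn|\asymp r\}\lesssim r^{d-1}$ (equivalently, the comparison $\sum_{\bn\neq\bzero}g(|\bn|)\lesssim\int_1^\infty g(r)r^{d-1}\,dr$ for positive decreasing $g$), together with $r^{-\frac{d+1}{2}-q}\cdot r^{d-1}=r^{\frac{d-3}{2}-q}$, one obtains $S(\delta)\lesssim\int_1^\infty r^{\frac{d-3}{2}-q}(1+\delta r)^{-N}\,dr$. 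I split this integral at $r=\delta^{-1}$ (with $\delta$ small). On $[1,\delta^{-1}]$ the factor $(1+\delta r)^{-N}$ is $\le1$, so this part is $\int_1^{\delta^{-1}}r^{\frac{d-3}{2}-q}\,dr$, which equals $\Theta(\delta^{-\frac{d-1}{2}+q})$ if $\frac{d-1}{2}>q$, $\Theta(|\log\delta|)$ if $\frac{d-1}{2}=q$, and $\Theta(1)$ if $\frac{d-1}{2}<q$. On $[\delta^{-1},\infty)$ I use $(1+\delta r)^{-N}\le(\delta r)^{-N}$ and choose $N$ so large that $\frac{d-3}{2}-q-N<-1$; the tail integral is then $\delta^{-N}\int_{\delta^{-1}}^\infty r^{\frac{d-3}{2}-q-N}\,dr=\Theta(\delta^{-\frac{d-1}{2}+q})$, which as $\delta\to0$ is of the same order as (regime $\frac{d-1}{2}>q$) or negligible compared to (regimes $\frac{d-1}{2}\le q$) the near-field part. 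Hence $S(\delta)$ has the order of the near-field integral, and restoring the prefactor $\eps^{-(q+1)}h^{\frac{d+1}{2}+q}$ produces exactly the three asserted bounds.

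The steps are routine but two points require care. The first is making the shell-counting/integral comparison rigorous; I would either cite the elementary fact $\sum_{\bn\neq\bzero}g(|\bn|)\asymp\int_1^\infty g(r)r^{d-1}\,dr$ for positive monotone $g$, or prove it directly from $\#\{\bn:2^k\le|\bn|<2^{k+1}\}\le C2^{kd}$ and summing over $k$. There is no issue from lattice points near the origin, since $\bn\neq\bzero$ forces $|\bn|\ge1$, so the integrand has no singularity. The second, which is the only genuine bookkeeping obstacle, is ensuring that in each regime it is the near-field piece of the split integral that fixes the order — for instance when $\frac{d-1}{2}<q$ the near-field part is $\Theta(1)$ while the tail $\Theta(\delta^{\,q-\frac{d-1}{2}})$ vanishes as $\delta\to0$ — and that the claimed estimates are read in the limit $\delta\to0$ with the $\eps$- and $h$-dependence pulled out uniformly, which is legitimate since the constant in Lemma~\ref{LEM: STATIONARY PHASE} is uniform for $\eps\le C$ and $h$ is bounded.
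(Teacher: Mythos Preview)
Your argument is correct and is essentially the same as the paper's: both plug the pointwise decay from Lemma~\ref{LEM: STATIONARY PHASE} into the tail sum, split the lattice sum at the scale $|\bn|\sim\delta^{-1}$, use the boundedness of $\widehat\psi$ on the near part and its rapid decay on the far part, and compare to the radial integral $\int r^{\frac{d-3}{2}-q}\,dr$ to obtain the three regimes. The only cosmetic differences are that the paper carries an auxiliary splitting exponent $\beta$ (then sets $\beta=1$) and fixes the far-field decay exponent to $\nu=\frac{d+1}{2}$, whereas you split directly at $\delta^{-1}$ and take $N$ arbitrarily large; neither choice affects the outcome.
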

Lemma~\ref{LEM: ESTIMATE} divides the regularity order $q$ into three classes: sub-critical regularity ($q < \frac{d-1}{2}$), critical regularity ($q = \frac{d-1}{2}$), and super-critical regularity ($q > \frac{d-1}{2}$). Base d on this result, we prove, in the following subsections, the estimates of the error bound for implicit boundary integral based on different regularity regimes of the weight function $\theta_{\eps}$. We summarize the resulting estimates in Tab~\ref{tab: error estimates}. 
\begin{table}[!htb]
    \centering
    \begin{tabular}{@{}c l@{}}
    \toprule
        Regularity regime & Error bound of 
        $|\cI f - \cI_h f|$ \\
        \midrule
        $\min(2, \frac{d-1}{2}) > q \ge 1$  &  $\cO(h^{\frac{2d - \alpha(d+1)}{d+1-2q}})$ \\
        \midrule 
         $\frac{d-1}{2} > q \ge 2 $   & $\cO(h^{\frac{4d - 2\alpha(d+1)}{d+3-2q}})$\\
         \midrule  
         $\frac{d-1}{2} = q$ & $\cO(h^{\frac{d-1}{2}+(q+1)(1-\alpha)}|\log h|)$ \\
         \midrule 
        $\frac{d-1}{2} < q$ & $\cO(h^{\frac{d-1}{2}+(q+1)(1-\alpha)})$ \\
        \bottomrule
    \end{tabular}
    \caption{Error estimates for implicit boundary integral method with $\eps=\Theta(h^{\alpha})$, $\alpha\in[0,1]$ on strongly convex surfaces.}
    \label{tab: error estimates}
\end{table}

\subsubsection{{Sub-critical regularity} } 
We consider two separate cases.
\begin{theorem}\label{THM: LOW THM 1}
Under the assumption that $\Gamma\in C^{\infty}$ is closed and strongly convex, if $\eps = \Theta(h^{\alpha})$ and $\min(2,\frac{d-1}{2}) > q\ge 1$, then $|\cI(f) - \cI_h(f)| =   \cO(h^{\frac{2d-\alpha(d+1)}{d+1-2q}})$.
\end{theorem}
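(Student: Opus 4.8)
The plan is to compare the IBIM sum with the trapezoidal sum of the \emph{mollified} integrand $\cQ_{h,\delta}$, which is Schwartz and hence amenable to the Poisson Summation Formula, and then optimize over the mollification width $\delta$. Writing $\cI_h(f)=h^d\sum_{\bn\in\bbZ^d}\cQ(h\bn)$ (with $\cQ$ extended by zero outside $T_\eps$, which is legitimate since $\theta_\eps$ vanishes at $\pm\eps$ and is globally Lipschitz), and using \eqref{EQ: POISSON} together with $h^{-d}\widehat\cQ(\bzero)\widehat\psi(\bzero)=h^{-d}\int_{\bbR^d}\cQ=h^{-d}\cI(f)$, the triangle inequality gives
\begin{equation}\label{EQ: SPLIT}
    |\cI(f) - \cI_h(f)| \;\le\; h^d\sum_{\bn\in\bbZ^d}\bigl|\cQ_{h,\delta}(\bn) - \cQ(h\bn)\bigr| \;+\; \Bigl|\sum_{\bn\neq\bzero}\widehat{\cQ}(h^{-1}\bn)\widehat\psi(\delta\bn)\Bigr|.
\end{equation}
We call the second term the \emph{aliasing error} and the first the \emph{mollification (consistency) error}, and estimate them separately.

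For the aliasing error, $\Gamma$ is closed and strongly convex and $q<\tfrac{d-1}{2}$, so the sub-critical branch of Lemma~\ref{LEM: ESTIMATE} applies verbatim and yields $\bigl|\sum_{\bn\neq\bzero}\widehat{\cQ}(h^{-1}\bn)\widehat\psi(\delta\bn)\bigr|=\cO\!\bigl(\eps^{-(q+1)} h^{\frac{d+1}{2}+q}\,\delta^{q-\frac{d-1}{2}}\bigr)$. Since $q-\tfrac{d-1}{2}<0$ this term grows as $\delta\to 0$, so $\delta$ cannot be taken too small; the hypothesis $q<2$ (so $q=1$ when $q\in\bbN$, which forces $d\ge4$) is not used here but in the next step.

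For the consistency error, observe that $\theta\in\cW_q$ with $q\ge1$ has $\theta'\in L^\infty$, hence $\|\theta_\eps\|_\infty=\cO(\eps^{-1})$ and $\|\theta_\eps'\|_\infty=\cO(\eps^{-2})$; since $P_\Gamma$, $d_\Gamma$ and $J_\eps(\cdot,d_\Gamma(\cdot))$ are smooth with $\eps$-independent bounds on $T_\eps$, the product rule shows $\cQ$ is globally Lipschitz with $\|\nabla\cQ\|_\infty=\cO(\eps^{-2})$ (the dominant contribution coming from the $\theta_\eps'$ factor) and $\cQ\equiv0$ off $T_\eps$. Writing $\cQ_{h,\delta}(\bn)-\cQ(h\bn)=\int_{|\by|\le\delta}\bigl(\cQ(h\bn-h\by)-\cQ(h\bn)\bigr)\psi_\delta(\by)\,d\by$ and using the Lipschitz bound on displacement $|h\by|\le h\delta$ gives $|\cQ_{h,\delta}(\bn)-\cQ(h\bn)|=\cO(h\delta\,\eps^{-2})$; moreover both $\cQ_{h,\delta}(\bn)$ and $\cQ(h\bn)$ vanish unless $h\bn$ lies within distance $h\delta$ of $T_\eps$, a region of volume $\cO(\eps)$ provided $h\delta=\cO(\eps)$ and hence containing $\cO(\eps h^{-d})$ lattice nodes. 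Summing,
\begin{equation*}
    h^d\sum_{\bn\in\bbZ^d}\bigl|\cQ_{h,\delta}(\bn) - \cQ(h\bn)\bigr| = \cO\!\bigl(h^d\cdot \eps h^{-d}\cdot h\delta\,\eps^{-2}\bigr) = \cO\!\bigl(h\,\delta\,\eps^{-1}\bigr).
\end{equation*}

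Combining the two estimates in \eqref{EQ: SPLIT} and balancing $\eps^{-(q+1)}h^{\frac{d+1}{2}+q}\delta^{q-\frac{d-1}{2}}$ against $h\delta\eps^{-1}$ gives the choice $\delta=\Theta\bigl(\eps^{-\frac{2q}{d+1-2q}}h^{\frac{d-1+2q}{d+1-2q}}\bigr)$, for which both terms are of order $h^{\frac{2d}{d+1-2q}}\eps^{-\frac{d+1}{d+1-2q}}$; with this $\delta$ one checks a posteriori that $\delta\to0$ as $h\to0$ and that $h\delta=\Theta(h^{(2d-2q\alpha)/(d+1-2q)})=\cO(\eps)$ whenever $\alpha\le1$, so all steps above are valid. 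Substituting $\eps=\Theta(h^\alpha)$ yields $|\cI(f)-\cI_h(f)|=\cO(h^{\frac{2d-\alpha(d+1)}{d+1-2q}})$, as claimed. The main technical point — and the reason the regularity is capped at $q<2$ in this theorem — is the consistency estimate: one must only use the \emph{first-order} Taylor bound (higher order is unavailable since $\cQ$ is merely Lipschitz when $q=1$), and one must track that mollification perturbs only $\cO(\eps h^{-d})$ nodes, which is exactly the constraint $h\delta\lesssim\eps$ that has to be reconciled with the $\delta$ chosen by the balancing.
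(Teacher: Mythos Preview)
Your argument is correct and follows the same route as the paper: split the error via the mollified Poisson sum \eqref{EQ: POISSON} into an aliasing term handled by the sub-critical case of Lemma~\ref{LEM: ESTIMATE} and a consistency term controlled by the first-order Lipschitz bound $\|\nabla\cQ\|_\infty=\cO(\eps^{-2})$ over $\cO(\eps h^{-d})$ nodes, then balance in $\delta$. Your additional checks that $\delta\to0$ and $h\delta=\cO(\eps)$, and your remark on why the cap $q<2$ forces the first-order (rather than second-order) Taylor estimate, are good hygiene the paper leaves implicit.
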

\begin{proof}
    First, by Lemma~\ref{LEM: ESTIMATE}, we have, 
    \begin{equation}\label{EQ: EST LOW REG}
        \left|\cI(f) - h^d \sum_{\bn\in\bbZ^d} \cQ_{h, \delta}(\bn) \right| =  \cO(\eps^{-q-1} h^{\frac{d+1}{2}+q} \delta^{-\frac{d-1}{2}+q})\,.
    \end{equation}
   We also estimate the bound for
    \begin{equation}\label{EQ: DIFF 2}
    \begin{aligned}
        \left|\sum_{\bn\in\bbZ^d} \cQ(h\bn) - \cQ_{h,\delta}(\bn)\right| &=  \left|  \sum_{\bn\in\bbZ^d} \int_{\bbR^d}\left( \cQ(h\bn) - \cQ(h(\bn - \by))\right) \psi_{\delta}(\by) d\by \right|\,.
    \end{aligned}
    \end{equation}
The summation has at most $\cO(\eps h^{-d})$ terms, and we also have the trivial estimate
    \begin{equation}\label{EQ: LIP}
        \cQ(h(\bn - \by)) - \cQ(h\bn) = \cO(L h\delta)\,,
    \end{equation}
        where $L = \cO(\eps^{-2})$ is the Lipschitz constant of $\cQ$. This leads to 
    \begin{equation}\label{EQ: EST 1st ORDER}
         \left|\sum_{\bn\in\bbZ^d} \cQ(h\bn) - \cQ_{h,\delta}(\bn)\right| = 
        \cO(\eps^{-1} h^{-d+1} \delta)\,.
    \end{equation}
    Combining the estimates~\eqref{EQ: EST LOW REG} and~\eqref{EQ: EST 1st ORDER} gives us
    \begin{equation}\label{EQ: TOTAL EST lOW REG 1st ORDER}
        \left|\cI(f) - \sum_{\bn\in\bbZ^d} \cQ(h\bn)\right| =
        \cO\left(\eps^{-q-1} h^{\frac{d+1}{2}+q} \delta^{-\frac{d-1}{2}+q} + \eps^{-1}h \delta\right)\,.
    \end{equation}
    Let $\eps = \Theta(h^{\alpha})$ and balance the orders between $h$ and $\delta$ in~\eqref{EQ: TOTAL EST lOW REG 1st ORDER}, we conclude that the balance is attained at $\delta = \cO(h^{\frac{d-1-2q(\alpha-1)}{d+1-2q}})$, and 
    \begin{equation}
         \left|\cI(g) - h^d \sum_{\bn\in\bbZ^d} \cQ(h\bn)\right|  =              \cO(h^{\frac{2d-\alpha(d+1)}{d+1-2q}})\,.
    \end{equation}
    This completes the proof.
\end{proof}

In the second case, we have the following result.
\begin{theorem}\label{THM: LOW THM 2}
Under the assumption that $\Gamma\in C^{\infty}$ is closed and strongly convex, if $\eps = \Theta(h^{\alpha})$ and $\frac{d-1}{2} > q\ge 2$, then $|\cI(f) - \cI_h(f)| = \cO(h^{\frac{4d- 2\alpha(d+1)}{d+3-2q}})$.
\end{theorem}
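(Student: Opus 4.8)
The plan is to follow the proof of Theorem~\ref{THM: LOW THM 1} verbatim up to the point where the mollification error is estimated, and then to replace the first-order bound~\eqref{EQ: LIP}--\eqref{EQ: EST 1st ORDER} by a second-order one that exploits the radial symmetry of the bump function $\psi$. As before, write
\begin{equation*}
\cI(f) - \cI_h(f) = \Big(\cI(f) - h^d\!\!\sum_{\bn\in\bbZ^d}\!\cQ_{h,\delta}(\bn)\Big) + h^d\!\!\sum_{\bn\in\bbZ^d}\!\big(\cQ_{h,\delta}(\bn) - \cQ(h\bn)\big),
\end{equation*}
where the first parenthesis is controlled by Lemma~\ref{LEM: ESTIMATE} in its sub-critical regime (applicable since $q<\tfrac{d-1}{2}$), giving $\cO(\eps^{-(q+1)}h^{\frac{d+1}{2}+q}\delta^{-\frac{d-1}{2}+q})$ exactly as in~\eqref{EQ: EST LOW REG}.

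For the second sum I would, node by node, write $\cQ(h\bn) - \cQ_{h,\delta}(\bn) = \int_{\bbR^d}\big(\cQ(h\bn) - \cQ(h(\bn-\by))\big)\psi_\delta(\by)\,d\by$ and Taylor-expand the scalar function $t\mapsto\cQ(h\bn - t h\by)$ about $t=0$. Since $\theta_\eps\in\cW_q$ with $q\ge2$, the integrand $\cQ$ is $C^1$ with an essentially bounded Hessian, and a chain-rule count gives $\norm{\nabla^2\cQ}_{\infty} = \cO(\eps^{-3})$: the $\eps$-dependence comes entirely from $\theta_\eps'' = \eps^{-3}\theta''(\eps^{-1}\cdot)$, while $f\circ P_\Gamma$, $d_\Gamma$ and $J_\eps$ have derivative bounds independent of $\eps$ on the fixed-radius tube (for $\eps$ below the reach of $\Gamma$). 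The integral form of Taylor's theorem then yields $\cQ(h\bn) - \cQ(h(\bn-\by)) = h\,\by\cdot\nabla\cQ(h\bn) + \cO(\eps^{-3}h^2|\by|^2)$ uniformly in $\bn,\by$. Integrating against $\psi_\delta$, the linear term drops out because $\psi(\bu)=\psi(|\bu|)$ is even, so $\int_{\bbR^d}\by\,\psi_\delta(\by)\,d\by = \bzero$, while the remainder contributes $\cO\big(\eps^{-3}h^2\!\int_{\bbR^d}|\by|^2\psi_\delta(\by)\,d\by\big) = \cO(\eps^{-3}h^2\delta^2)$. Hence $|\cQ(h\bn)-\cQ_{h,\delta}(\bn)| = \cO(\eps^{-3}h^2\delta^2)$; since only $\cO(\eps h^{-d})$ nodes contribute (those in a $\delta$-neighborhood of the tube, with $\delta\le\eps$ in the relevant parameter range) the second sum is $\cO(\eps^{-2}h^2\delta^2)$ after multiplying by $h^d$, one power of $\delta$ (and of $h$) better than~\eqref{EQ: EST 1st ORDER}.

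Combining, $|\cI(f)-\cI_h(f)| = \cO\big(\eps^{-(q+1)}h^{\frac{d+1}{2}+q}\delta^{-\frac{d-1}{2}+q} + \eps^{-2}h^2\delta^2\big)$. Setting $\eps=\Theta(h^\alpha)$ and balancing the two terms in $\delta$, the optimum is $\delta = \Theta\big(h^{(d+2q-3-2\alpha(q-1))/(d+3-2q)}\big)$, which indeed satisfies $0<\delta\le\eps$ for all $q\ge2$ and $\alpha\in[0,1]$ (note $d+3-2q>0$ because $q<\tfrac{d-1}{2}$). Substituting this back in yields the claimed bound $\cO\big(h^{(4d-2\alpha(d+1))/(d+3-2q)}\big)$.

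I expect the only delicate point to be the second-order Taylor estimate in the borderline case $q=2$, where $\cQ$ is merely $C^1$: one must state Taylor's theorem as $g(1)=g(0)+g'(0)+\int_0^1(1-t)g''(t)\,dt$ applied to $g(t)=\cQ(h\bn - th\by)$ — valid since $g\in C^1$ with $g'$ Lipschitz in $t$ — and check that the finitely many jump hypersurfaces of $\theta_\eps''\circ d_\Gamma$ cause no trouble, the second derivative being essentially bounded by $\cO(\eps^{-3})$. Everything else (the Hessian bound, the vanishing first moment of $\psi_\delta$, the node count, and the $\delta$-optimization) is routine and parallels the proof of Theorem~\ref{THM: LOW THM 1}.
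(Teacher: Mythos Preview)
Your proposal is correct and follows essentially the same route as the paper: the paper also invokes Lemma~\ref{LEM: ESTIMATE} for the Fourier side, then uses that $q\ge 2$ implies $\cQ\in C^{1,1}$ to Taylor-expand to second order, cancels the linear term by the spherical symmetry of $\psi_\delta$, bounds the remainder via the Lipschitz constant $K=\cO(\eps^{-3})$ of $\nabla\cQ$ and the $\cO(\eps h^{-d})$ node count, and balances $\delta$ to obtain the stated exponent. Your extra care about the integral-remainder form of Taylor's theorem in the borderline case $q=2$ is a welcome clarification that the paper leaves implicit.
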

\begin{proof}
        Similar to the previous case, we use Lemma~\ref{LEM: ESTIMATE} to conclude that
        \begin{equation}
        \left|\cI(f) - h^d \sum_{\bn\in\bbZ^d} \cQ_{h, \delta}(\bn) \right| =   \cO(\eps^{-q-1} h^{\frac{d+1}{2}+q} \delta^{-\frac{d-1}{2}+q})\,.
    \end{equation}
    Since $q\ge 2$, $\cQ\in C^{1,1}(\bbR^d)$, we can take Taylor expansion
    \begin{equation}\label{EQ: TAYLOR EXP}
        \cQ(h(\bn - \by)) - \cQ(h\bn) = -h\by \cdot \nabla \cQ(h\bn) + \cO(K h^2 \delta^2  )
     \end{equation}
    where $K = \cO(\eps^{-3})$ is the Lipschitz constant of $\nabla \cQ$. Using the spherical symmetry of $\psi_{\delta}$, the first term on the right-hand side of~\eqref{EQ: TAYLOR EXP} will be canceled. We thus have
    \begin{equation}\label{EQ: EST 2nd ORDER}
         \left|\sum_{\bn\in\bbZ^d} \cQ(h\bn) - \cQ_{h,\delta}(\bn)\right| = 
       \cO(\eps^{-2} h^{-d+2} \delta^2)\,,
    \end{equation}
    which leads to
    \begin{equation}\label{EQ: TOTAL EST lOW REG 2nd ORDER}
        \left|\cI(f) - \sum_{\bn\in\bbZ^d} \cQ(h\bn)\right| =
        \cO\left(\eps^{-q-1} h^{\frac{d+1}{2}+q} \delta^{-\frac{d-1}{2}+q} + \eps^{-2} h^2 \delta^2\right)\,.
    \end{equation}
    The bounds attain balance when $\delta = \cO(h^{\frac{d-1 + (\alpha-1)(2-2q)}{d+3-2q}})$ at which point we have
     $$\left|\cI(f) - \sum_{\bn\in\bbZ^d} \cQ(h\bn)\right| = \cO(h^{\frac{4d- 2\alpha(d+1)}{d+3-2q}})\,,$$
     which is the desired result.
\end{proof}

\subsubsection{Critical regularity}
The same calculations can be performed in the case of critical regularity. We have the following result.
\begin{corollary}\label{COR: CRI REG}
    Under the assumption that $\Gamma\in C^{\infty}$ is closed and strongly convex, if $\eps = \Theta(h^{\alpha})$ and $\frac{d-1}{2} = q$, then $|\cI(f) - \cI_h(f)| = \cO(h^{\frac{d-1}{2}+(q+1)(1-\alpha)}|\log h|)$.
\end{corollary}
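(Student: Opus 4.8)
The plan is to run the argument of Theorems~\ref{THM: LOW THM 1} and~\ref{THM: LOW THM 2} almost verbatim, feeding in the critical ($\frac{d-1}{2}=q$) branch of Lemma~\ref{LEM: ESTIMATE} in place of the sub-critical one. As there, write $\cI_h(f)=h^d\sum_{\bn\in\bbZ^d}\cQ(h\bn)$ (legitimate since $\cQ$ is supported in $T_\eps$) and split
\begin{equation*}
|\cI(f)-\cI_h(f)|\le\Big|\cI(f)-h^d\!\!\sum_{\bn\in\bbZ^d}\!\cQ_{h,\delta}(\bn)\Big|+h^d\Big|\sum_{\bn\in\bbZ^d}\bigl(\cQ(h\bn)-\cQ_{h,\delta}(\bn)\bigr)\Big|.
\end{equation*}
For the first term, the Poisson Summation identity~\eqref{EQ: POISSON} (whose $\bn=\bzero$ term reproduces $\cI(f)$) combined with the $\frac{d-1}{2}=q$ case of Lemma~\ref{LEM: ESTIMATE} gives $\cO\bigl(\eps^{-(q+1)}h^{\frac{d+1}{2}+q}|\log\delta|\bigr)$. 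The critical condition forces $d=2q+1$, hence $\frac{d+1}{2}=q+1$; substituting $\eps=\Theta(h^\alpha)$ then turns this into $\cO\bigl(h^{\frac{d-1}{2}+(q+1)(1-\alpha)}|\log\delta|\bigr)$, which is already the claimed rate modulo the choice of $\delta$.

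For the second, mollification term I would reuse the bounds already established in the sub-critical proofs: if $q=1$ (i.e. $d=3$), $\cQ$ is merely Lipschitz with constant $\cO(\eps^{-2})$, and \eqref{EQ: EST 1st ORDER} (times $h^d$) controls it by $\cO(\eps^{-1}h\delta)$; if $q\ge2$ (i.e. $d\ge5$), $\cQ\in C^{1,1}(\bbR^d)$, the first-order Taylor term integrates to zero against the radially symmetric $\psi_\delta$, and \eqref{EQ: EST 2nd ORDER} (times $h^d$) controls it by $\cO(\eps^{-2}h^2\delta^2)$.

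The essential departure from the sub-critical argument is that one should \emph{not} balance the two terms: because the first term depends on $\delta$ only through $|\log\delta|$, I would instead take $\delta=\Theta(h^N)$ with a fixed integer $N$ large enough (depending only on $d$; one checks $N=\max(2,q)$ suffices) that $\eps^{-1}h^{1+N}$, respectively $\eps^{-2}h^{2+2N}$, is $\cO\bigl(h^{\frac{d-1}{2}+(q+1)(1-\alpha)}\bigr)$ uniformly over $\alpha\in[0,1]$. Then $|\log\delta|=N|\log h|=\Theta(|\log h|)$, and adding the two contributions yields $|\cI(f)-\cI_h(f)|=\cO\bigl(h^{\frac{d-1}{2}+(q+1)(1-\alpha)}|\log h|\bigr)$.

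The one point requiring care is exactly this choice of $N$: one must verify that a single exponent dominates the mollification error for every admissible $\alpha$ — the binding constraint is at $\alpha=0$ — and for the relevant $q=\frac{d-1}{2}$, keeping the two sub-cases ($q=1$ versus $q\ge2$) straight. One should also note that driving $\delta$ polynomially to zero does not invalidate anything, since $\cQ_{h,\delta}=\cQ(h\,\cdot)\ast\psi_\delta$ remains smooth and compactly supported for every $\delta>0$, so the Poisson Summation Formula continues to apply. Everything else is a routine repetition of the computations already carried out for the sub-critical regime.
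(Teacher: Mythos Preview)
Your proof is correct and follows essentially the same route as the paper's: the same splitting into a Poisson-summation remainder (handled by the critical branch of Lemma~\ref{LEM: ESTIMATE}) plus a mollification error (handled by \eqref{EQ: EST 1st ORDER} or \eqref{EQ: EST 2nd ORDER}). The only difference is cosmetic: the paper actually balances the two contributions to pin down $\delta$ as a specific ($\alpha$-dependent) power of $h$ and then observes $|\log\delta|=\cO(|\log h|)$, whereas you bypass the balancing by taking $\delta=\Theta(h^{N})$ with a fixed large $N$, which is a slightly cleaner way to reach the same bound.
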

\begin{proof}
   Using the Lemma~\ref{LEM: ESTIMATE} for the critical value $q = \frac{d-1}{2}$, we get
    \begin{equation*}
        \left|\cI(f) - h^d \sum_{\bn\in\bbZ^d} \cQ_{h, \delta}(\bn) \right| =  \cO(\eps^{-q-1} h^{\frac{d+1}{2}+q} |\log \delta|)\,.
    \end{equation*}
    If $q < 2$, we reuse~\eqref{EQ: EST 1st ORDER}, and otherwise we reuse~\eqref{EQ: EST 2nd ORDER}, to get
    \begin{equation}\label{EQ: TOTAL EST CRI REG}
        \left|\cI(f) - h^d \sum_{\bn\in\bbZ^d} \cQ(h\bn)\right| = \begin{cases}
                \cO\left(\eps^{-q-1} h^{\frac{d+1}{2}+q} |\log\delta| + \eps^{-1} h \delta  \right)\quad &q < 2\,,\\
                \cO\left(\eps^{-q-1} h^{\frac{d+1}{2}+q} |\log\delta| + \eps^{-2} h^2 \delta^2\right) \quad &q\ge 2\,.
        \end{cases}
    \end{equation}
    Let $\eps =\Theta(h^{\alpha})$ and balance the orders between $h$ and $\delta$ in~\eqref{EQ: TOTAL EST CRI REG}, we find: 
    \begin{enumerate}
        \item [(1)] If $0\le q<2$, the balance is achieved when $h^{\frac{d-1}{2}+q-q\alpha}|\log\delta| \approx \delta$, which means the error is $\cO(h^{1-\alpha + \frac{d-1}{2}+q-q\alpha}|\log \delta|)$. Since $\frac{d-1}{2}+q-q\alpha > 0$, $\delta = \cO(h^{\frac{d-1}{2}+q-q\alpha-\tau})$ for any $\tau>0$. Hence $|\log\delta| = \cO(|\log h|)$.
        \item [(2)] If $q\ge 2$, the balance is achieved when $h^{(-q+1)\alpha} h^{\frac{d-3}{2}+q}|\log \delta| \approx \delta^2$ and the quadrature error is $\cO(h^{-(q+1)\alpha} h^{\frac{d+1}{2}+q}|\log\delta|)$. Because $\frac{d-3}{2}+q + (1-q)\alpha > 0$, we find that $\delta^2 = \cO(h^{\frac{d-3}{2}+q + (1-q)\alpha-\tau})$ for certain $\tau>0$ and it implies $|\log \delta| = \cO(|\log h|)$.
    \end{enumerate}
    The error bounds in both cases are the same; that is,
    \begin{equation}
         \left|\cI(g) - h^d \sum_{\bn\in\bbZ^d} \cQ(h\bn)\right|  =          \cO(h^{\frac{d-1}{2}+(q+1)(1-\alpha)}|\log h|)\,.
    \end{equation}
    The finishes the proof.
\end{proof}
\subsubsection{{Super-critical regularity }}
The bound for the super-critical regularity case follows in the same way.
\begin{corollary}
    Under the assumption that $\Gamma\in C^{\infty}$ is closed and strongly convex, if $\eps = \Theta(h^{\alpha})$ and $\frac{d-1}{2} < q$, then $|\cI(f) - \cI_h(f)| = \cO(h^{\frac{d-1}{2}+(q+1)(1-\alpha)})$.
\end{corollary}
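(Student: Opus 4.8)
The plan is to mirror the argument used in the critical and sub-critical cases, but to exploit the fact that in the super-critical regime $q>\frac{d-1}{2}$ the bound furnished by Lemma~\ref{LEM: ESTIMATE} no longer involves the mollification parameter $\delta$, so that no balancing between $h$ and $\delta$ is needed; instead one simply lets $\delta\to 0^+$.

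First I would invoke Lemma~\ref{LEM: ESTIMATE} in the super-critical case together with the Poisson summation identity~\eqref{EQ: POISSON}. Since the $\bn=\bzero$ term reproduces $h^{-d}\cI(f)$, this yields
\begin{equation*}
    \left|\cI(f) - h^d\sum_{\bn\in\bbZ^d}\cQ_{h,\delta}(\bn)\right| = \left|\sum_{\bn\in\bbZ^d,\,\bn\neq\bzero}\widehat{\cQ}(h^{-1}\bn)\widehat{\psi}(\delta\bn)\right| = \cO\big(\eps^{-(q+1)}h^{\frac{d+1}{2}+q}\big),
\end{equation*}
with the implied constant independent of $\delta$: indeed $|\widehat{\psi}(\delta\bn)|\le\widehat{\psi}(\bzero)=1$, and the series $\sum_{\bn\neq\bzero}|\widehat{\cQ}(h^{-1}\bn)|$ converges absolutely because Lemma~\ref{LEM: STATIONARY PHASE} gives the decay $|\widehat{\cQ}(\bzeta)|=\cO(\eps^{-(q+1)}|\bzeta|^{-(d+1)/2-q})$ with $\frac{d+1}{2}+q>d$ precisely in the super-critical range.

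Next I would pass to the limit $\delta\to 0^+$ on the left-hand side. For fixed $h$ the sum over $\bn$ has only finitely many nonzero terms because $\cQ$ is compactly supported in $T_{\eps}$, and for each lattice point $\cQ_{h,\delta}(\bn)=(\cQ(h\,\cdot)\ast\psi_{\delta})(\bn)\to\cQ(h\bn)$ as $\delta\to 0$, since $\cQ\in C^{q-1}(\bbR^d)\subset C^0(\bbR^d)$ (recall $q\ge 1$ here) and $\psi_{\delta}$ is an approximate identity. Hence $h^d\sum_{\bn}\cQ_{h,\delta}(\bn)\to\cI_h(f)$, the bound above survives the limit, and $|\cI(f)-\cI_h(f)|=\cO(\eps^{-(q+1)}h^{\frac{d+1}{2}+q})$. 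Substituting $\eps=\Theta(h^{\alpha})$ and simplifying $\frac{d+1}{2}+q-\alpha(q+1)=\frac{d-1}{2}+(q+1)(1-\alpha)$ gives the claimed exponent.

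The only genuinely delicate point is the $\delta$-uniformity of the estimate and the interchange of limit and summation; an equivalent route that bypasses the mollifier altogether is to note that $\cQ(h\,\cdot)$ is compactly supported and, by Lemma~\ref{LEM: STATIONARY PHASE}, satisfies $|\cQ(h\bu)|+|\widehat{\cQ}(h^{-1}\bu)|=\cO((1+|\bu|)^{-d-\mu})$ with $\mu=q-\frac{d-1}{2}>0$, so the Poisson Summation Formula applies \emph{directly} to $\cQ(h\,\cdot)$ and gives $\cI_h(f)-\cI(f)=\sum_{\bn\neq\bzero}\widehat{\cQ}(h^{-1}\bn)$; the remaining sum is then controlled termwise exactly as above. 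Either way, the super-critical case is the one regime in which the quadrature attains its ``expected'' order $\frac{d-1}{2}+(q+1)(1-\alpha)$ with no loss from $\delta$.
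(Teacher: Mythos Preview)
Your proof is correct. The paper's argument is organizationally the same as your first route---invoke the super-critical case of Lemma~\ref{LEM: ESTIMATE} and then pass from the mollified sum to $\cI_h(f)$---but it carries out that passage by reusing the Lipschitz/Taylor bounds~\eqref{EQ: EST 1st ORDER} and~\eqref{EQ: EST 2nd ORDER}, which produces the auxiliary terms $\eps^{-1}h\delta$ or $\eps^{-2}h^2\delta^2$ (with the attendant case split $q<2$ versus $q\ge 2$) before letting $\delta$ be small. You replace this with the direct observation that $\cQ_{h,\delta}(\bn)\to\cQ(h\bn)$ pointwise on a uniformly bounded index set, which is cleaner and dispenses with the case split. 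Your second route---noting that $q>\frac{d-1}{2}$ is precisely the condition under which the decay hypothesis of the Poisson Summation Formula holds for $\cQ(h\,\cdot)$ itself, so no mollification is needed---is a genuine simplification that the paper does not exploit; it makes transparent that the mollifier $\psi_\delta$ is a device required only in the sub-critical and critical regimes.
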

\begin{proof}
    Using Lemma~\ref{LEM: ESTIMATE}, the estimate~\eqref{EQ: TOTAL EST CRI REG} now takes the form
       \begin{equation}\label{EQ: TOTAL EST HI REG}
        \left|\cI(f) - h^d \sum_{\bn\in\bbZ^d} \cQ(h\bn)\right| = \begin{cases}
                \cO\left(\eps^{-q-1} h^{\frac{d+1}{2}+q}  + \eps^{-1} h \delta  \right)\quad &q < 2\,,\\
                \cO\left(\eps^{-q-1} h^{\frac{d+1}{2}+q}  + \eps^{-2} h^2 \delta^2\right) \quad &q\ge 2\,.
        \end{cases}
    \end{equation}
    Therefore, the error estimate becomes 
    \begin{equation*}
         \left|\cI(g) - h^d \sum_{\bn\in\bbZ^d} \cQ(h\bn)\right|  =          \cO(h^{\frac{d-1}{2}+(q+1)(1-\alpha)})\,,
    \end{equation*}
    which is what we need to prove.
\end{proof}
The tube width usually takes $\alpha=1$ for practical applications of the implicit-boundary-integral method. If $\theta_{\eps}$ has sub-critical regularity $q < \frac{d-1}{2}$, we have the error exponents as $\frac{d-1}{d+1-2q}$ and $\frac{2(d-1)}{d+3-2q}$ in Table~\ref{tab: error estimates} for different regularity classes. These errors may still be far from optimal, and the sharper exponents need more sophisticated estimates for the remaining terms of the Poisson Summation Formula~\eqref{EQ: PSF}. The problem is closely related to the generalized cases of the Gauss Circle Problem (GCP)~\cite{berndt2018circle,heath1999lattice} that counts the number of lattice points inside the domain. This can be reviewed as a special case of implicit boundary integral by taking $\theta_{\eps} = \theta_{\eps}^C = \frac{1}{2\eps}\chi_{T_{\eps}}$ as the characteristic function on the tube and $f\equiv 1$ with an approximation of $J \approx 1$. In such case, a discontinuity of $\cQ$ exists across the boundary $\partial T_{\eps}$. It will involve additional boundary contributions in~\eqref{EQ: TOTAL EST lOW REG 1st ORDER} and~\eqref{EQ: TOTAL EST lOW REG 2nd ORDER}. We will provide an interesting example in Remark~\ref{REM: CONST} below. The sharp discrepancy estimate of GCP is still an open question. Recent developments in GCP and its variants can be found in~\cite{heath1999lattice,berndt2018circle,bourgain2017mean,huxley2003exponential,huxley2005exponential} and references therein.
\begin{remark}\label{REM: CONST}
One interesting case is when the weight function has a jump across the boundary. For instance, the case where $\theta^C_{\eps} = \frac{1}{2\eps}\chi_{T_{\eps}}\in \cW_0$ is the characteristic function on the tube. Due to the jump, we cannot directly reuse~\eqref{EQ: LIP} for~\eqref{EQ: DIFF 2}. We can estimate the summation in three categories
\begin{equation*}
      \left|\sum_{\bn\in\bbZ^d} \cQ(h\bn) - \cQ_{h,\delta}(\bn)\right| \le \cV_{in} + \cV_{bd} + \cV_{out}, 
\end{equation*}
where the summation $\cV_{in}$, $\cV_{bd}$, and $\cV_{out}$ are respectively
\begin{equation*}
\begin{aligned}
    \cV_{in} &=  \left|\sum_{\cS_{in}} \cQ(h\bn) - \cQ_{h,\delta}(\bn)\right| ,\quad \text{ where } \cS_{in} := \{\bn\mid  h \bn\in T_{\eps},\quad  B_{h\bn}(h\delta)\subset T_{\eps} \}\\
    \cV_{bd} &=  \left|\sum_{\cS_{bd}} \cQ(h\bn) - \cQ_{h,\delta}(\bn)\right| , \quad \text{ where } \cS_{bd} :=  \{\bn\mid  h \bn\in T_{\eps},\quad  B_{h\bn}(h\delta)\not\subset T_{\eps} \}\\
    \cV_{out} &= \left|\sum_{\cS_{out}} \cQ(h\bn) - \cQ_{h,\delta}(\bn)\right| ,\quad \text{ where } \cS_{out} := \{ \bn\mid h \bn\not\in T_{\eps}, \quad B_{h\bn}(h\delta)\cap T_{\eps} \neq\emptyset \}
\end{aligned}
\end{equation*}
We can estimate $\cV_{in}$ by $\cO(\eps h^{-d+2}\delta^2)$ by the second order Taylor expansion at $h\bn$. For both $\cV_{bd}$ and $\cV_{out}$, there are $\cO(\delta h^{-d+1})$ lattice points, and each summand is at order $\cO(\eps^{-1})$. Therefore, we obtain the estimate
\begin{equation}
   \left|\cI(f) - h^d\sum_{\bn\in\bbZ^d} \cQ(h\bn)\right| = \cO(\eps^{-1} h^{\frac{d+1}{2}} \delta^{-\frac{d-1}{2}} + h^{2}\delta^2 + \eps^{-1}\delta h)\,.
\end{equation}
The balance is attained at $\delta \sim \cO(h^{\frac{d-1}{d+1}})$ and error is $\cO(h^{\frac{2d-\alpha(d+1)}{d+1}})$ for $\eps =\Theta(h^{\alpha})$. In particular, when $\alpha=1$, we find that the error is bounded by $\cO(h^{\frac{d-1}{d+1}})$. As a comparison, in this case, a standard Monte Carlo integration for $\alpha=1$ gives a smaller error, bounded by $\cO(h^{\frac{d-1}{2}})$. 
\end{remark}

For fixed tube width of $\cO(1)$ ($\alpha = 0$), the error estimates are $\cO(h^{\frac{2d}{d-1}})$ for $\theta_{\eps}^{\Delta}$ ($q=1$ and $d>3$), which is almost second order convergence in high dimensions. It exceeds the theoretical estimate $\cO(h)$. For $\theta_{\eps}^{\cos}$ ($q=2$ and $d > 5$), the error becomes $\cO(h^{\frac{4d}{d-1}})$, which is almost 4th order when the dimension is high, exceeding the theoretical estimate $\cO(h^2)$. We gain these additional convergence rates from the strong convexity in high dimensions. However, for low dimensional surfaces of $d=2$ or $d=3$, once the regularity $q\ge 1 \ge \frac{d-1}{2}$, we have a generic error estimate $\cO(h^{\frac{d+1}{2}+q}|\log h|)$ or $\cO(h^{\frac{d+1}{2}+q})$.

\subsection{Variance of error under random translations}\label{SEC: STATS}

We now characterize the variance of the quadrature error caused by random shifts of the lattice, an important issue for practical applications~\cite{izzo2022corrected,izzo2023high, engquist2005discretization}. We denote by $\bzeta\in\bbR^d$ the random shift, and denote by $\cI_h(f;\bzeta)$ the implicit-boundary-integral quadrature with the random shift; that is,
$$\cI_h(f;\bxi) := h^d \sum_{\bn\in\bbZ^d} \cQ(h(\bn + \bxi))\,.$$
It is clear that the quadrature error is $h$-periodic, hence we only have to consider a random offset $\bxi\in[0, 1]^d$. First, we note that $\bbE_{\bxi} \cI_h(f;\bxi) = \cI(f)$, since  
\begin{equation*}
   \bbE_{\bxi} \cI_h(f;\bxi) =  h^d  
 \sum_{\bn\in\bbZ^d} \int_{[0, 1]^d} \cQ(h(\bn + \bxi)) d\bxi = \int_{\bbR^d} \cQ(\bx) d\bx = \cI(f).
\end{equation*}
We therefore define the variance of $\cI_h(f; \bzeta)$ as
\begin{equation}
    \textrm{Var}_{\bxi}[\cI_h(f;\bxi)]:= \int_{\bxi\in [0, 1]^d} \left|\cI(f) - h^d \sum_{\bn\in\bbZ^d} \cQ(h(\bn + \bxi))\right|^2 d\bxi\,.
\end{equation}
We are interested in providing an estimate for this variance.

The following theorem follows the idea of the classical result by Kendall~\cite{kendall1948number}. General $L^p$ estimates can be derived similarly; see, for instance, ~\cite{brandolini2015lp,huxley2014fourth}.
\begin{theorem}\label{THM: STAT 1}
Let $\Gamma\in C^{\infty}$ be closed and strongly convex. If $\eps=\Theta(h^{\alpha})$, then
    $ \mathrm{Var}_{\bxi}\left[\cI_h(f;\bxi)\right] = \cO(h^{d-1 + 2(q+1)(1-\alpha)})$.
\end{theorem}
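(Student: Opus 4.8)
The plan is to use Parseval's theorem to convert the $L^2$ average over random shifts $\bxi\in[0,1]^d$ into an $\ell^2$ sum over the nonzero Fourier modes. Writing $F(\bxi) := h^d\sum_{\bn\in\bbZ^d}\cQ(h(\bn+\bxi)) - \cI(f)$, one observes that $F$ is a $1$-periodic function of $\bxi$ in each coordinate, so it admits a Fourier series on $[0,1]^d$. By the Poisson Summation Formula applied (as in~\eqref{EQ: POISSON}, but keeping the shift) to $\cQ(h\,\cdot\,)$ mollified by $\psi_\delta$ and then taking $\delta\to0$ formally — or more carefully by first regularizing and controlling the mollification error as in the proofs of Theorems~\ref{THM: LOW THM 1}--\ref{THM: LOW THM 2} — one gets that the $\bm$-th Fourier coefficient of $F$ (in $\bxi$) is $\widehat{\cQ}(h^{-1}\bm)e^{2\pi i(\cdots)}$ up to the $h^{-d}\cdot h^d$ normalization, for $\bm\neq\bzero$, while the $\bzero$-mode is cancelled by subtracting $\cI(f)$. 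Hence
\begin{equation*}
    \mathrm{Var}_{\bxi}[\cI_h(f;\bxi)] = \int_{[0,1]^d}|F(\bxi)|^2\,d\bxi = \sum_{\bm\in\bbZ^d,\,\bm\neq\bzero} \bigl|\widehat{\cQ}(h^{-1}\bm)\bigr|^2\,.
\end{equation*}

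The next step is to bound this sum using the decay estimate from Lemma~\ref{LEM: STATIONARY PHASE}, namely $|\widehat{\cQ}(\bzeta)| = \cO(\eps^{-(q+1)}|\bzeta|^{-(d+1)/2-q})$. Substituting $\bzeta = h^{-1}\bm$ gives $|\widehat{\cQ}(h^{-1}\bm)|^2 = \cO(\eps^{-2(q+1)} h^{d+1+2q}|\bm|^{-(d+1)-2q})$, so
\begin{equation*}
    \sum_{\bm\neq\bzero}\bigl|\widehat{\cQ}(h^{-1}\bm)\bigr|^2 = \cO\!\left(\eps^{-2(q+1)} h^{d+1+2q}\sum_{\bm\neq\bzero}|\bm|^{-(d+1)-2q}\right)\,.
\end{equation*}
The lattice sum $\sum_{\bm\neq\bzero}|\bm|^{-(d+1)-2q}$ converges (comparison with $\int_1^\infty r^{-(d+1)-2q} r^{d-1}\,dr = \int_1^\infty r^{-2-2q}\,dr < \infty$ since $q\ge0$), so it contributes an $\cO(1)$ constant. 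Therefore $\mathrm{Var}_{\bxi}[\cI_h(f;\bxi)] = \cO(\eps^{-2(q+1)} h^{d+1+2q})$, and inserting $\eps = \Theta(h^\alpha)$ yields $\cO(h^{-2\alpha(q+1)} h^{d+1+2q}) = \cO(h^{d-1+2(q+1)(1-\alpha)})$, which is exactly the claimed bound.

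The main obstacle — and the place I would spend the most care — is the rigorous justification of the Parseval identity, i.e. that the Fourier coefficients of the shifted sum $h^d\sum_\bn \cQ(h(\bn+\bxi))$ are genuinely $\widehat{\cQ}(h^{-1}\bm)$ (times a unimodular phase). The function $\cQ$ is only $C^{q-1}$ with $\cQ^{(q)}$ piecewise $C^1$, compactly supported in the tube but not Schwartz, so the naive Poisson Summation Formula of the stated Lemma does not apply directly to $\cQ(h\,\cdot\,)$ itself. The clean route is to mollify: replace $\cQ$ by $\cQ_{h,\delta} = \cQ(h\,\cdot\,)\ast\psi_\delta\in\cS(\bbR^d)$, apply Parseval to the smooth periodization, bound the difference between the mollified and unmollified shift-sums uniformly in $\bxi$ by $\cO(\eps^{-2}h^{-d+1}\delta)$ (Lipschitz, $q<2$) or $\cO(\eps^{-2}h^{-d+2}\delta^2)$ ($q\ge2$) exactly as in~\eqref{EQ: EST 1st ORDER} and~\eqref{EQ: EST 2nd ORDER}, and simultaneously control the tail coming from $\widehat{\psi}(\delta\bm)$; then let $\delta\to0$. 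One must check that the error terms introduced this way are dominated by the main term $\cO(h^{(d-1)/2+(q+1)(1-\alpha)})$ in $L^2$, which holds since these are the same balancing estimates already used in Section~\ref{SEC: MAIN}. A secondary technical point is confirming that Lemma~\ref{LEM: STATIONARY PHASE}'s stationary-phase decay rate $(d+1)/2+q$ is enough to make the lattice sum of squares converge; as noted, the exponent on $|\bm|$ after squaring is $(d+1)+2q$, comfortably above $d$ for all $q\ge0$, so convergence is never in doubt and no critical/subcritical case distinction is needed here — unlike in Lemma~\ref{LEM: ESTIMATE}, which bounds an $\ell^1$-type sum.
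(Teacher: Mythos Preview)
Your approach is essentially the same as the paper's: express the variance via Parseval as $\sum_{\bm\neq\bzero}|\widehat{\cQ}(h^{-1}\bm)|^2$, then bound the sum using Lemma~\ref{LEM: STATIONARY PHASE} and the convergence of $\sum_{\bm\neq\bzero}|\bm|^{-(d+1)-2q}$. The only difference is that you spend most of your effort on a mollification argument to justify the Fourier-coefficient identity, whereas the paper simply computes the Fourier coefficients of the periodization $\cA(h,\bzeta)=h^d\sum_{\bn}\cQ(h(\bn+\bzeta))$ directly: since $\cQ$ is bounded and compactly supported the sum is locally finite, and unfolding $\sum_{\bn}\int_{[0,1]^d}$ into $\int_{\bbR^d}$ immediately gives $a_{\bxi}(h)=\widehat{\cQ}(h^{-1}\bxi)$ with no need for the Poisson Summation Formula or any $\delta\to0$ limit.
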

\begin{proof}
Let $\cA(h, \bzeta):= h^d \sum_{\bn\in\bbZ^d} \cQ(h(\bn + \bzeta))$. It is then clear that $\cA(h,\bzeta)$ has a Fourier series expansion that converges in $L^2([0, 1]^d)$: 
\begin{equation*}
    \cA(h, \bzeta) \sim \sum_{\bxi\in \bbZ^d} a_{\bxi}(h) e^{2\pi i( \bzeta\cdot \bxi)}
\end{equation*}
where the Fourier coefficients are given as 
\begin{equation*}
\begin{aligned}
    a_{\bxi}(h) &= \int_{[0, 1]^d} \cA(h, \bzeta) e^{-2\pi i (\bzeta\cdot \bxi)} d\bzeta \\
    &=h^d \sum_{\bn\in\bbZ^d}  \int_{[0, 1]^d} \cQ(h(\bn + \bzeta)) e^{-2\pi i (\bxi\cdot \bzeta)} d\bzeta \\
    &=h^d  \sum_{\bn\in\bbZ^d}  \int_{[0, 1]^d} \cQ(h(\bn + \bzeta)) e^{-2\pi i (\bxi\cdot \bzeta)} d\bzeta \\
    &= \int_{\bbR^d} \cQ(\bx) e^{-2\pi i(h^{-1}\bxi \cdot \bx)} d\bx \\
    &= \widehat{\cQ}(h^{-1}\bxi)\,.
\end{aligned}
\end{equation*}
Notice that $a_{\bzero} = \cI(f)$. Therefore, we have
\begin{equation*}
    \mathrm{Var}_{\bxi}\left[\cI_h(f;\bxi)\right] = \sum_{\bxi\in\bbZ^d, |\bxi|\neq 0} |a_{\bxi}(h)|^2 = \sum_{\bxi\in\bbZ^d, |\bxi|\neq 0} |\widehat{\cQ}(h^{-1}\bxi)|^2\,. 
\end{equation*}
Using Lemma~\ref{LEM: STATIONARY PHASE}, with $q \ge 0$, we get 
\begin{equation*}
\begin{aligned}
    \mathrm{Var}_{\bxi}\left[\cI_h(f;\bxi)\right] &=\cO(\eps^{-2(q+1)} h^{d+1+2q}) \sum_{\bxi\in\bbZ^d, |\bxi|\neq 0} |\bxi|^{-(d+1) - 2q}\\ &=  \cO(h^{d-1+2(q+1)(1-\alpha)})\,.
\end{aligned}
\end{equation*}
The proof is complete.
\end{proof}

A probabilistic consequence of the above variance characterization is the following. If one randomly chooses a shift $\bxi\in\bbR^d$ in the implicit boundary integration, then by the Chebyshev inequality, there exists a constant $C$, depending on both $f$ and the surface $\Gamma$, that 
\begin{equation*}
    \bbP\Big(|\cI_{h}(f; \bxi) - \cI(f)| \ge k h^{\frac{d-1}{2}+(q+1)(1-\alpha)} \Big) < C k^{-2},\ \ k>0\,.
\end{equation*}
Hence, for weight functions with sub-critical regularity $q < \frac{d-1}{2}$, this implies that it is rare to achieve the possible worst bounds shown in Theorem~\ref{THM: LOW THM 1} and Theorem~\ref{THM: LOW THM 2}. 

\subsection{Numerical experiments}

In this section, we numerically verify (i) the quadrature error bounds shown in Tab~\ref{tab: error estimates} on circles in 2D and spheres in 3D; and (ii) the variance of quadrature error under random translations. The reference values of the boundary integrals are computed by \texttt{Mathematica} to machine precision. The \texttt{MATLAB} source code of all numerical experiments is hosted in the GitHub repository\footnote{\href{https://github.com/lowrank/ibim-error-analysis-experiments}{https://github.com/lowrank/ibim-error-analysis-experiments}}.

\subsubsection{2D circle}
The 2D experiments are performed on the circle $\Gamma$:
\begin{equation*}
    \frac{(x - x_0)^2}{r^2} + \frac{(y - y_0)^2}{r^2} = 1
\end{equation*}
with $r = \frac{3}{4}$ and $(x_0, y_0)$ a randomly sampled point. The test integrand function $f:\bbR^2 \mapsto \bbR$ is given as
\begin{equation*}
    f(x, y) = \cos(x^2 - y) \sin(y^2 - x^3)\,.
\end{equation*}
With the weight function $\theta_{\eps}^{\Delta}\in \cW_1$, the quadrature error has a upper bound estimate $\cO(h^{\frac{1}{2} + 2(1 - \alpha)})$. As we can see in Figure~\ref{fig: error decay}, for $\alpha=1$, $\alpha=\frac{1}{2}$ and $\alpha = 0$, the estimated error bounds $\cO(h^{\frac{1}{2}})$, $\cO(h^{\frac{3}{2}})$ and $\cO(h^{\frac{5}{2}})$ appear consistent with the numerical results. 
\begin{figure}[!htb]
    \centering
    \includegraphics[scale=0.34]{ 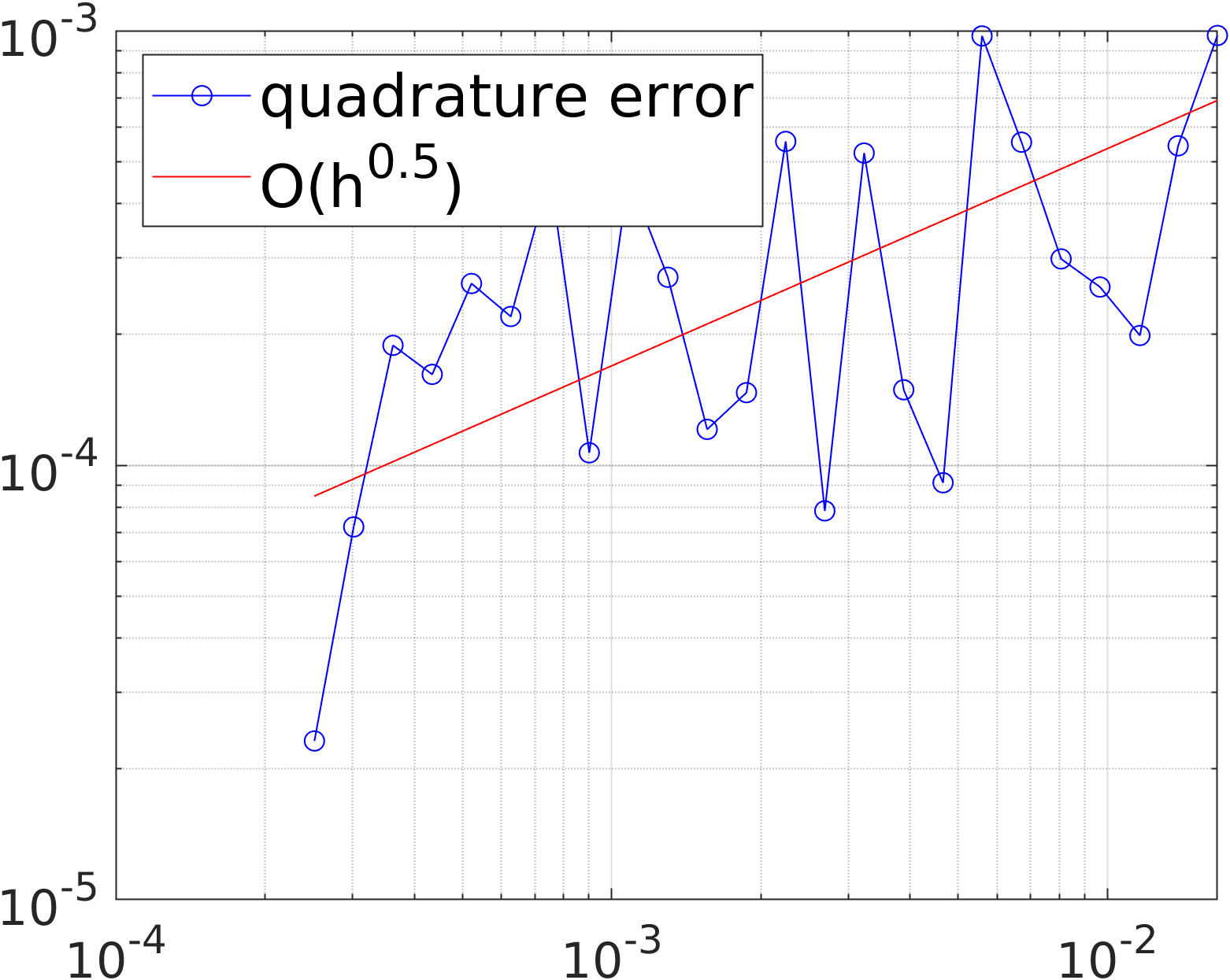}\qquad
    \includegraphics[scale=0.34]{ 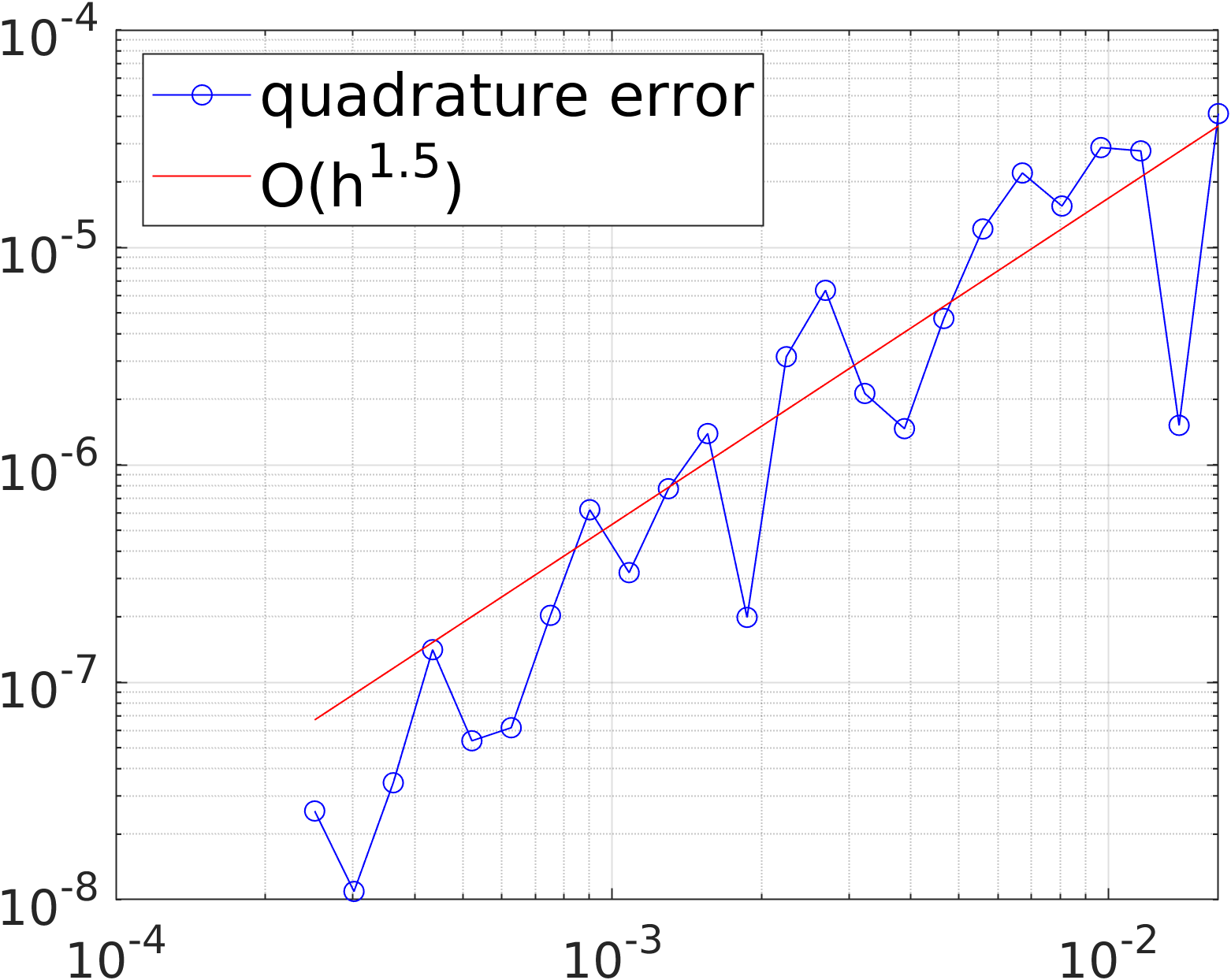}\qquad
    \includegraphics[scale=0.34]{ 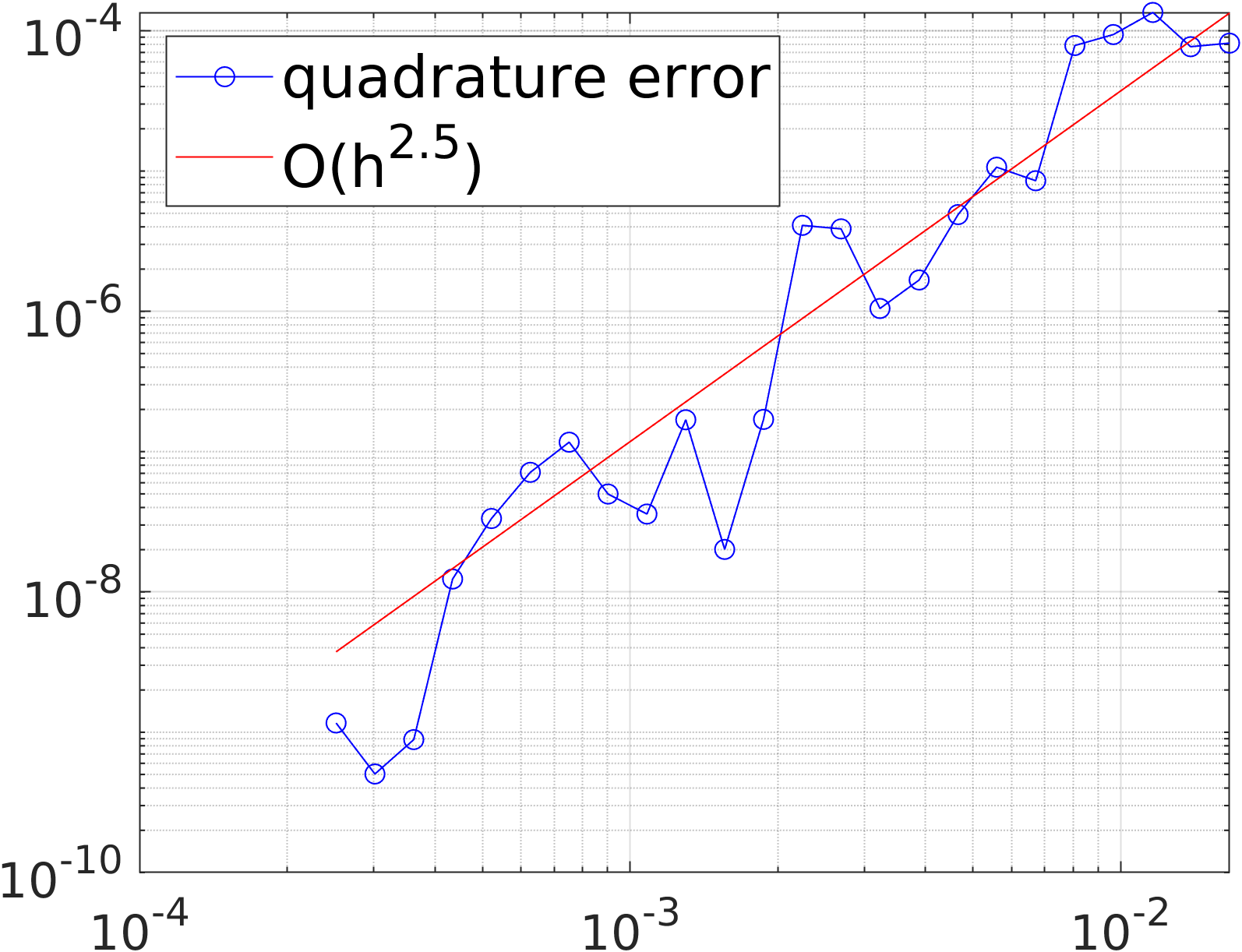}
    \caption{Quadrature error with weight function $\theta_{\eps}^{\Delta}$ with respect to grid size $h$. Left: tube width $\eps=2h$. Middle: tube width $\eps=2h^{\frac{1}{2}}$. Right: tube width $\eps=0.1$.}
    \label{fig: error decay}
\end{figure}
With the weight function $\theta_{\eps}^{\cos}\in \cW_2$, the quadrature error has an upper bound estimate $\cO(h^{\frac{1}{2} + 3(1 - \alpha)})$. In Figure~\ref{fig: error decay cos}, we find that for $\alpha=1$, $\alpha=\frac{1}{2}$ and $\alpha = 0$, the estimated error bounds $\cO(h^{\frac{1}{2}})$, $\cO(h^{2})$ and $\cO(h^{\frac{7}{2}})$ match the numerical results approximately.
\begin{figure}[!htb]
    \centering
    \includegraphics[scale=0.34]{ 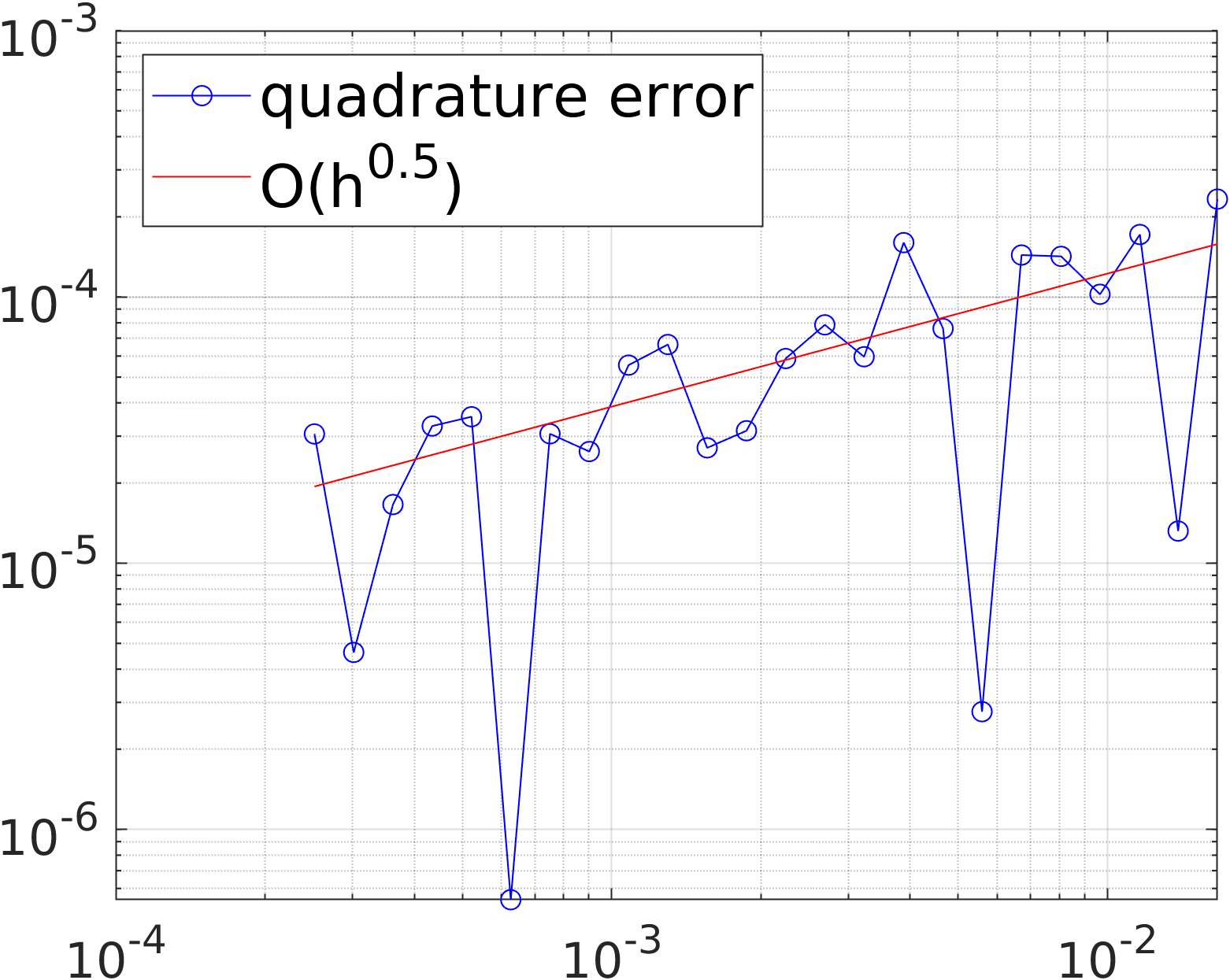}\qquad
    \includegraphics[scale=0.34]{ 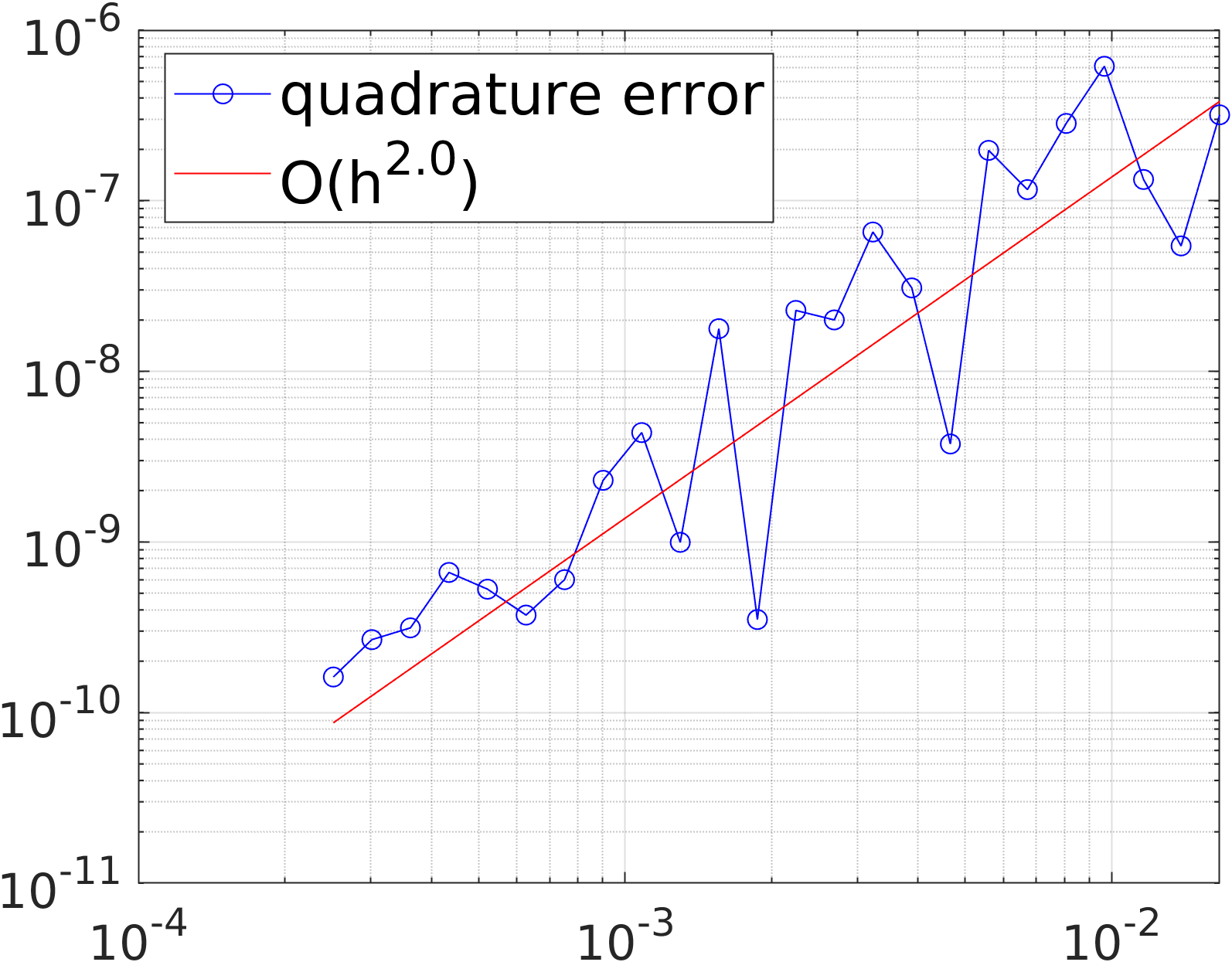}\qquad
    \includegraphics[scale=0.34]{ 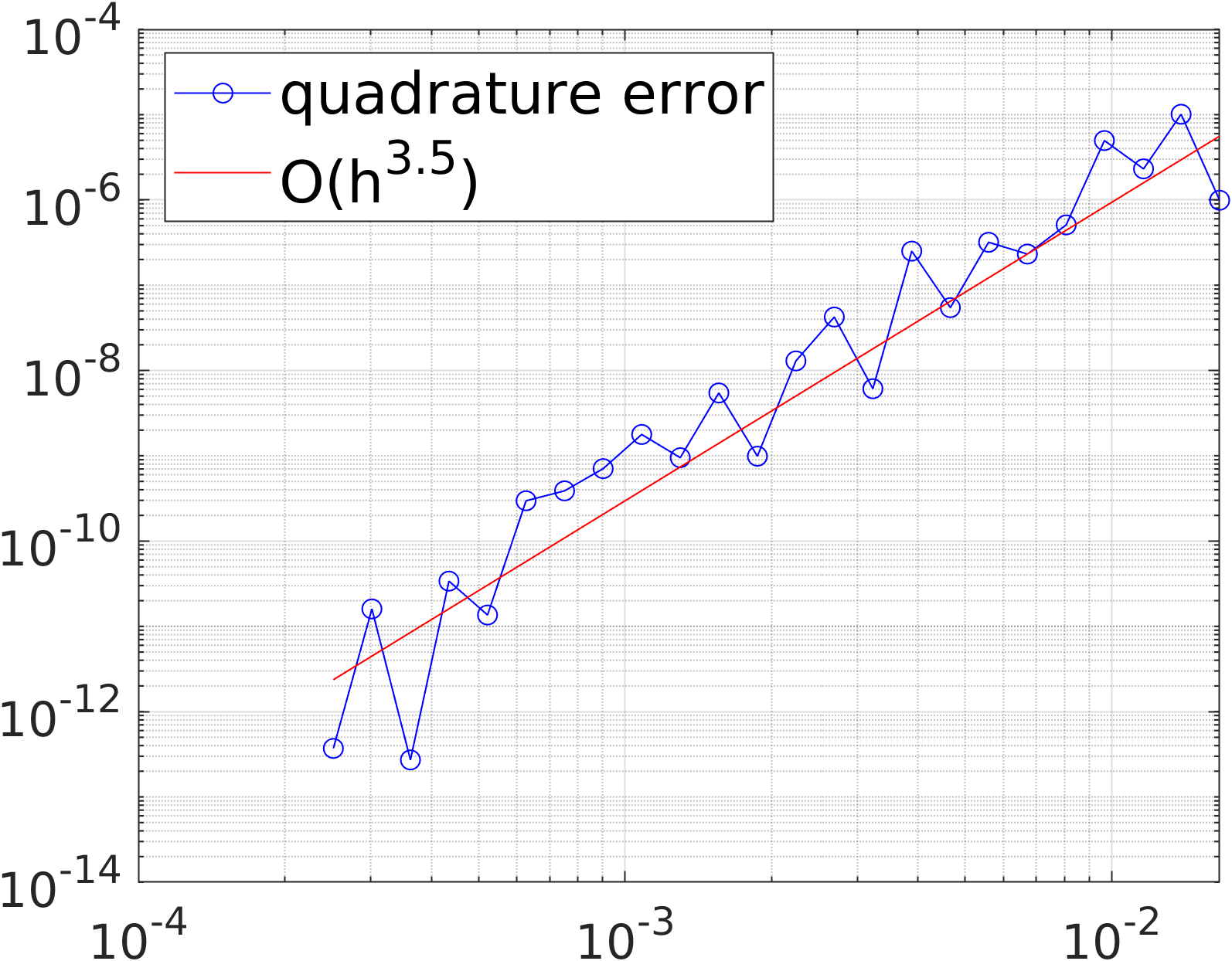}
    \caption{Quadrature error with weight function $\theta_{\eps}^{\cos}$ with respect to grid size $h$. Left: tube width $\eps=2h$. Middle: tube width $\eps=2h^{\frac{1}{2}}$. Right: tube width $\eps=0.1$.}
    \label{fig: error decay cos}
\end{figure}
The variances of error are plotted in Figure~\ref{fig: error var 1} and Figure~\ref{fig: error var 2} with 32 random shifts, consistent with the result of Theorem~\ref{THM: STAT 1}. 
\begin{figure}[!htb]
    \centering
     \includegraphics[scale=0.34]{ 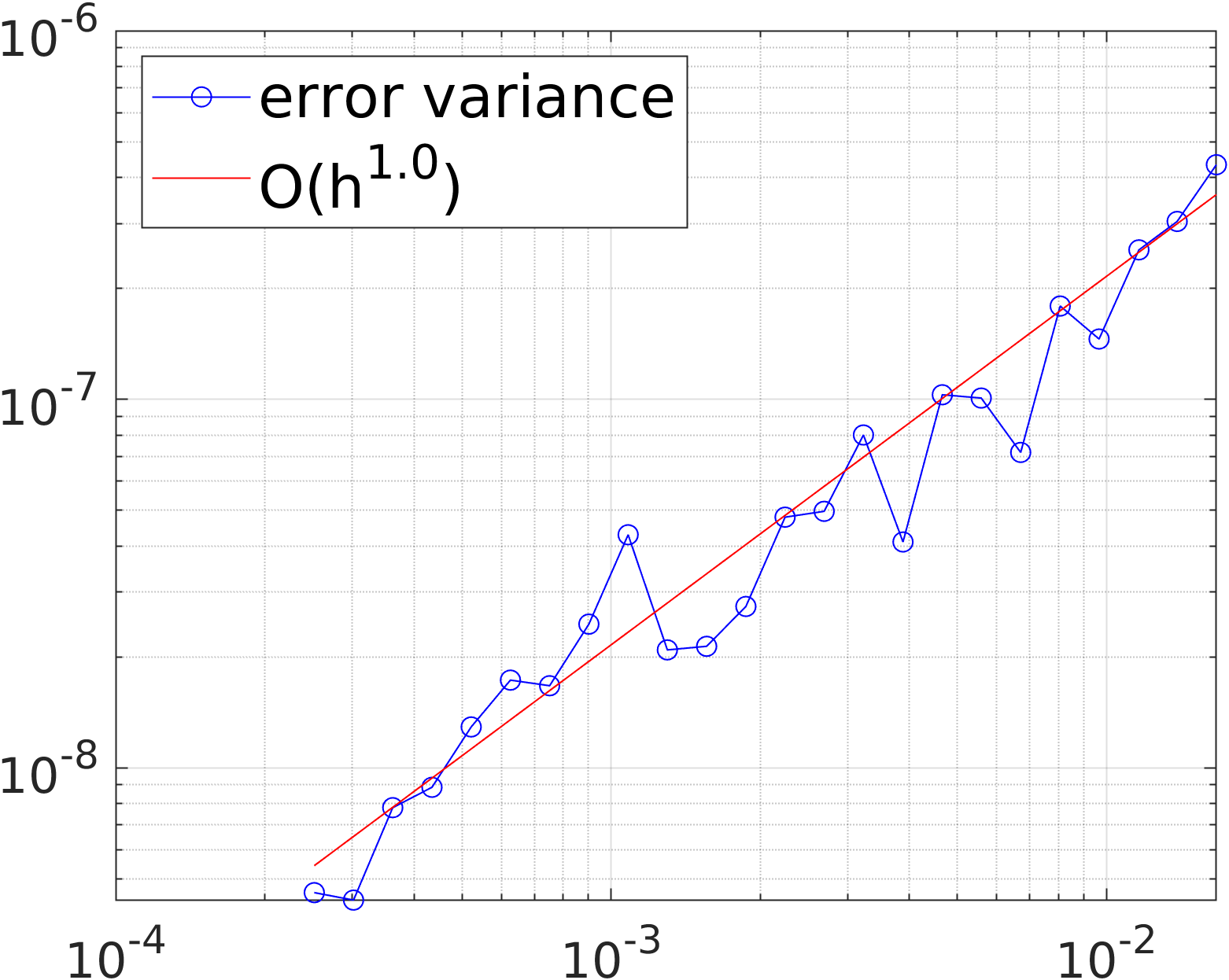}\qquad
    \includegraphics[scale=0.34]{ 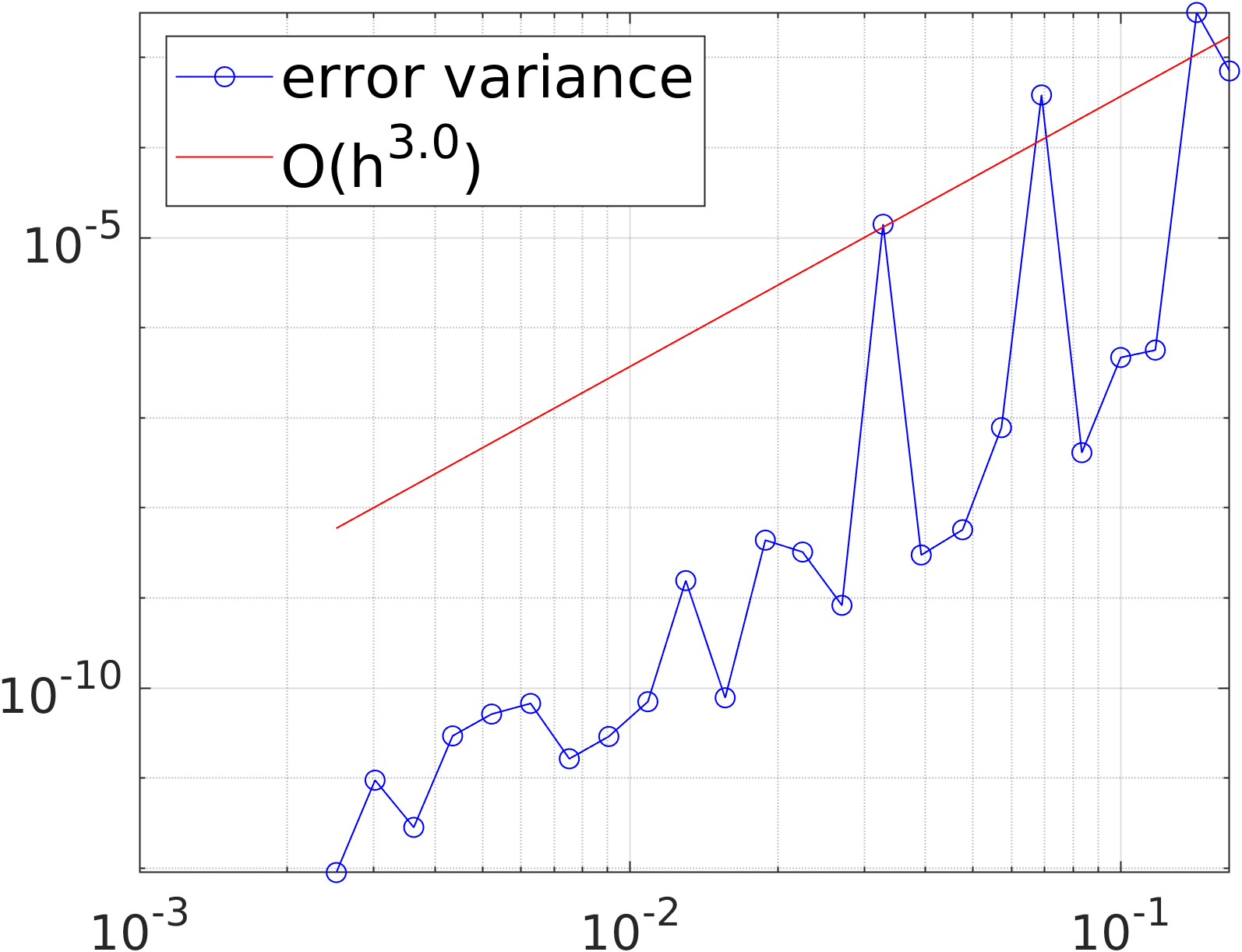}\qquad
    \includegraphics[scale=0.34]{ 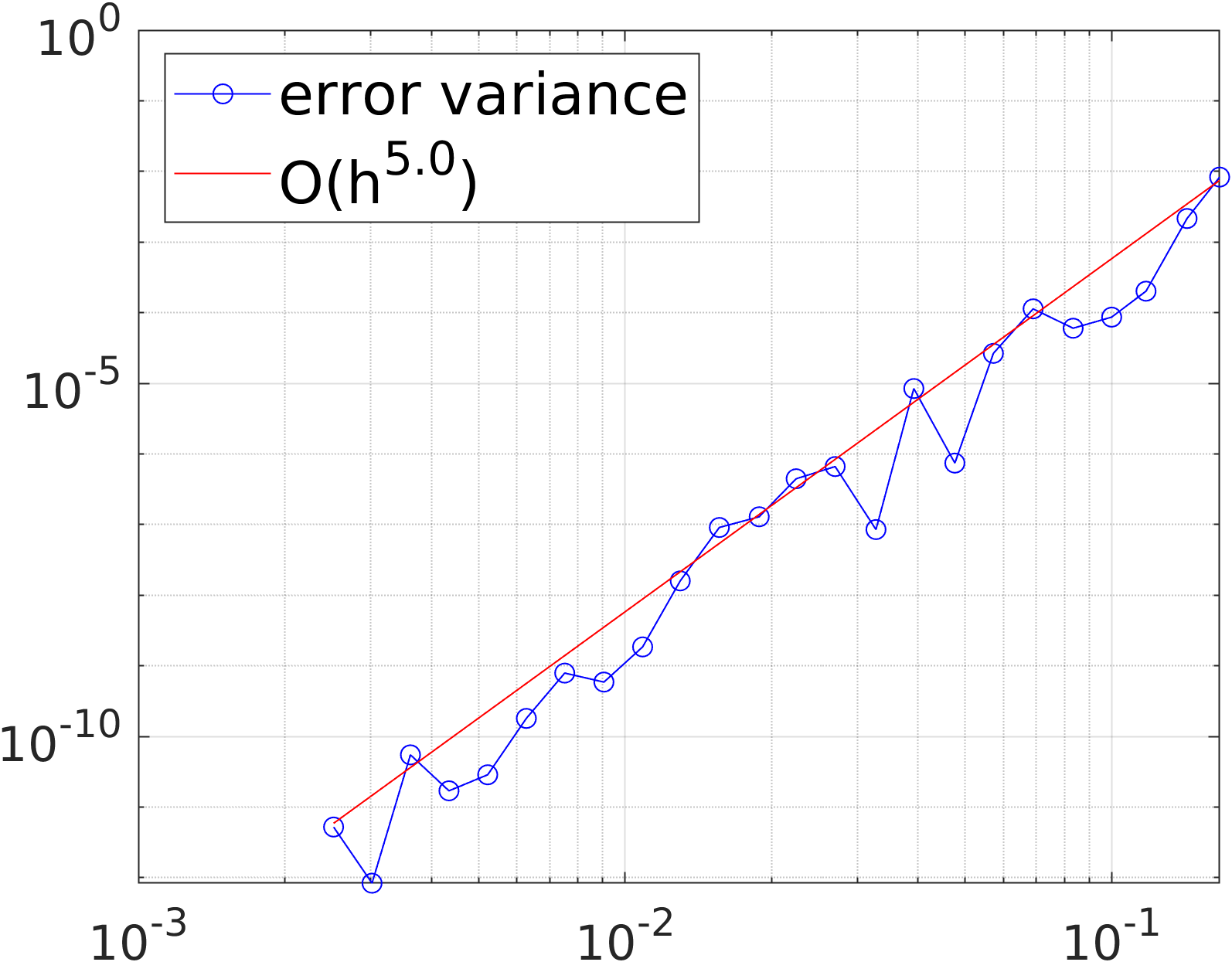}
    \caption{Variance of quadrature error with weight function $\theta_{\eps}^{\Delta}$ with respect to grid size $h$. Left: tube width $\eps=2h$. Middle: tube width $\eps=2h^{\frac{1}{2}}$. Right: tube width $\eps=0.1$. }
    \label{fig: error var 1}
\end{figure}
\begin{figure}[!htb]
    \centering
     \includegraphics[scale=0.34]{ 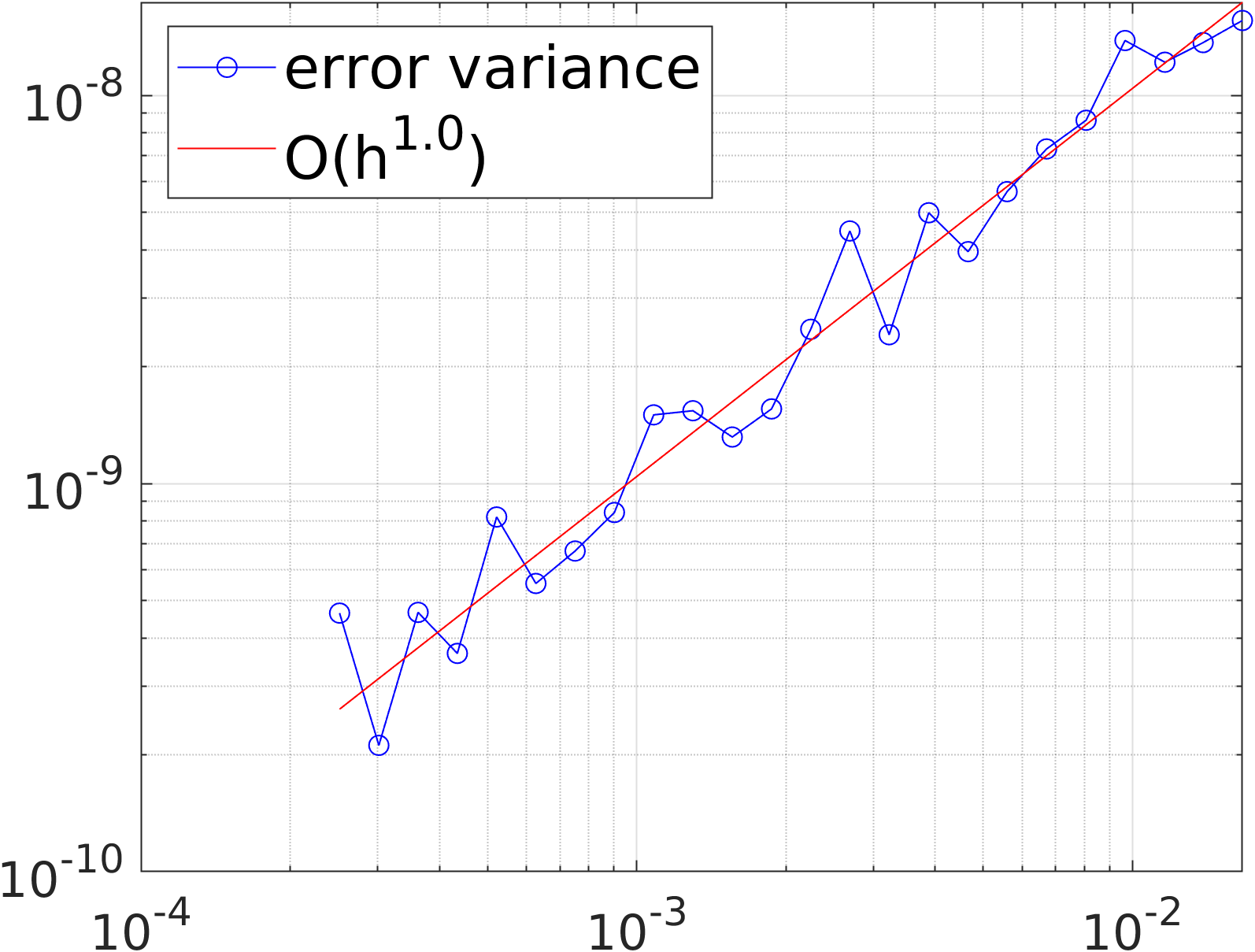}\qquad
    \includegraphics[scale=0.34]{ 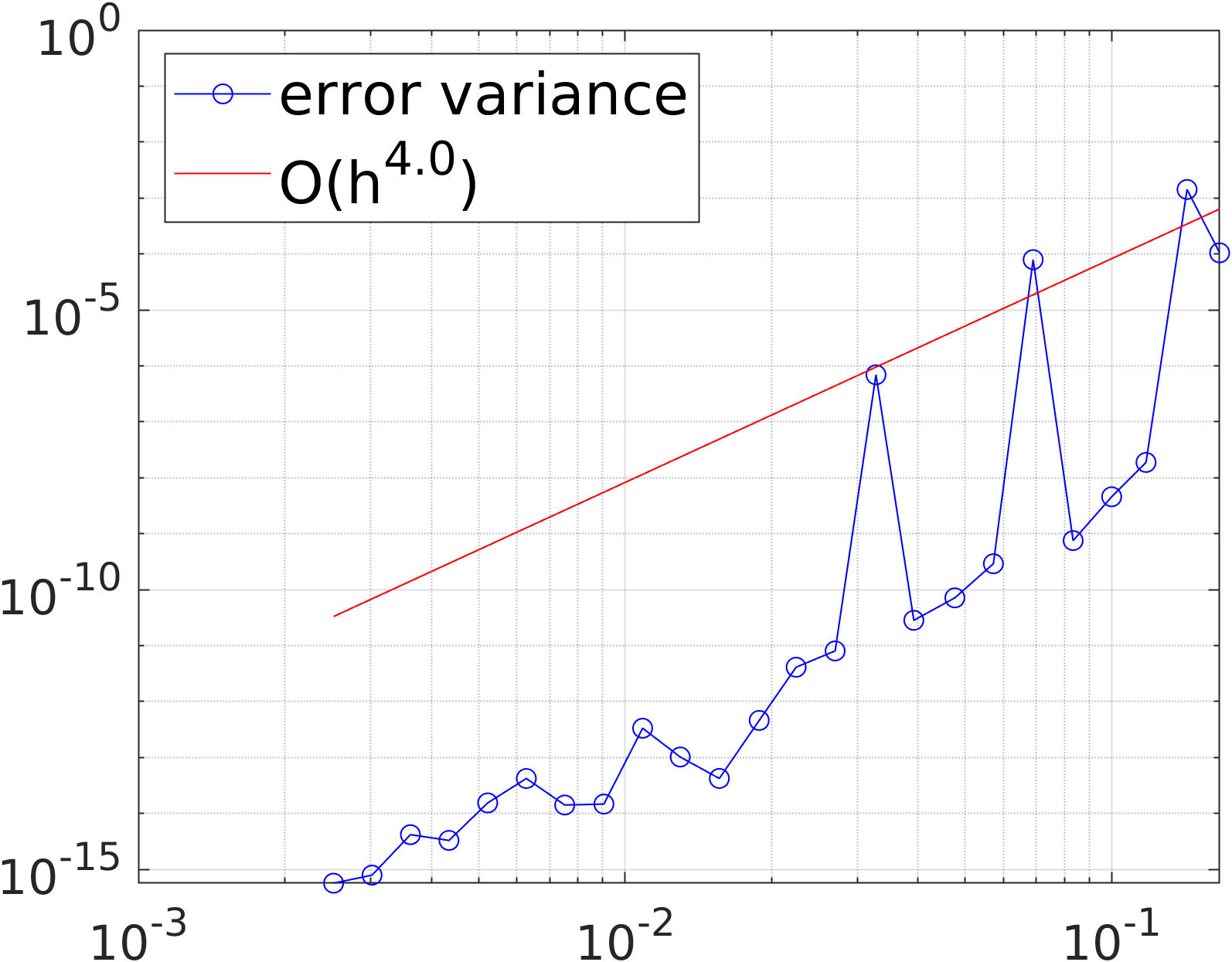}\qquad
    \includegraphics[scale=0.34]{ 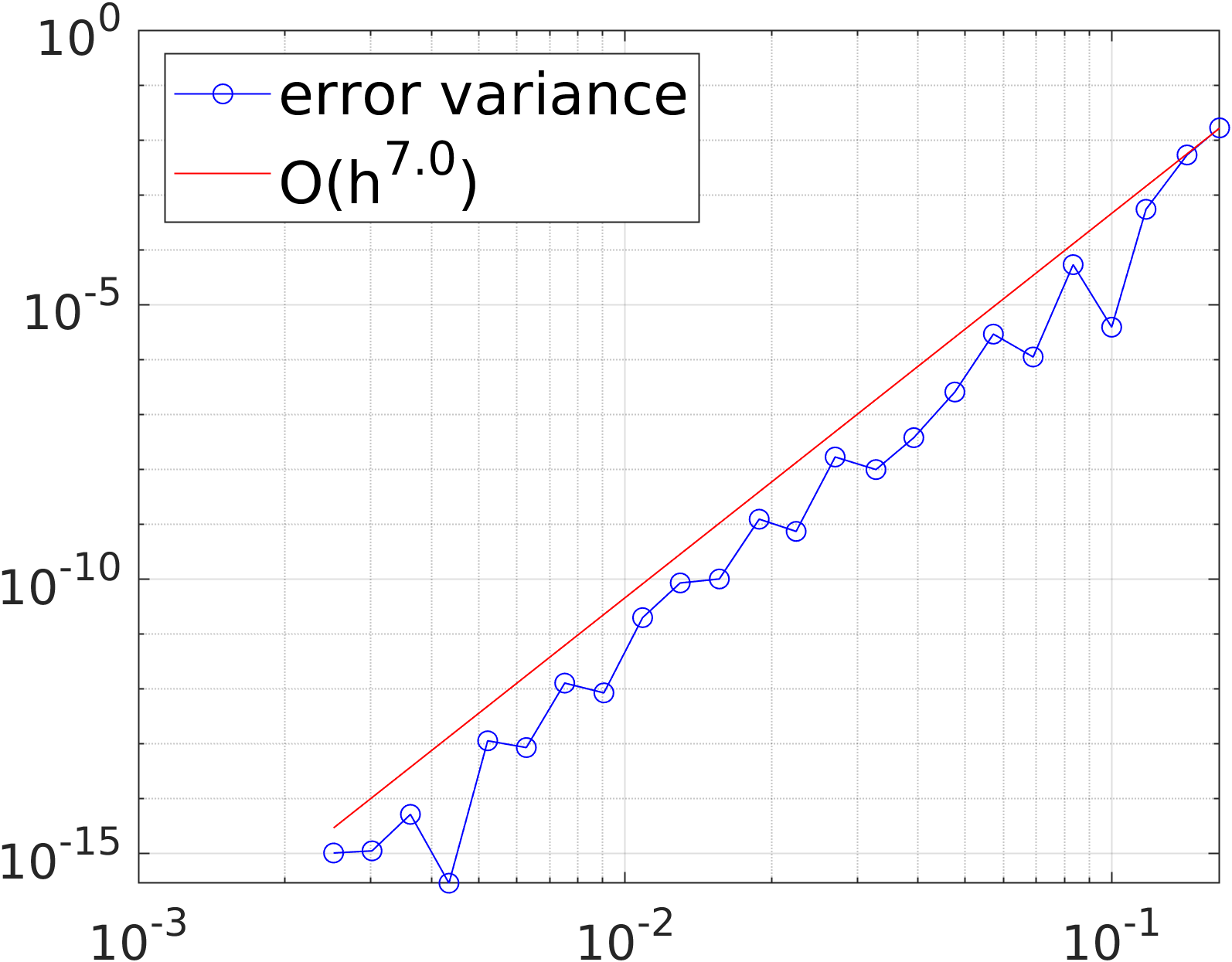}
    \caption{Variance of quadrature error with weight function $\theta_{\eps}^{\cos}$ with respect to grid size $h$. Left: tube width $\eps=2h$. Middle: tube width $\eps=2h^{\frac{1}{2}}$. Right: tube width $\eps=0.1$. }
    \label{fig: error var 2}
\end{figure}

\subsubsection{3D sphere}
The 3D experiments are performed on the sphere $\Gamma$:
\begin{equation*}
    \frac{(x - x_0)^2}{r^2} + \frac{(y - y_0)^2}{r^2} + \frac{(z - z_0)^2}{r^2} = 1
\end{equation*}
with $r = \frac{3}{4}$ and $(x_0, y_0, z_0)$ a randomly sampled point.  The test integrand function $f:\bbR^2 \mapsto \bbR$ in this case is 
\begin{equation*}
    f(x, y, z) = \cos(x^2 - y - z^3) \sin(y^2 - x^3 - z).
\end{equation*}
With the weight function $\theta_{\eps}^{\Delta}\in \cW_1$ (resp. $\theta_{\eps}^{\cos}\in \cW_2$), the quadrature errors are shown in Figure~\ref{fig: error 3d-1} (resp. Figure~\ref{fig: error 3d-2}) for $\alpha = 1$, $\alpha = \frac{1}{2}$ and $\alpha = 0$. The estimated error bounds are still consistent with the rate $\cO(h^{1 + 2(1-\alpha)})$ (resp. $\cO(h^{1 + 3(1-\alpha)})$). The corresponding variance of quadrature error is plotted in Figure~\ref{fig: var error 3d 1} (resp. Figure~\ref{fig: var error 3d 2}).
\begin{figure}[!htb]
    \centering
     \includegraphics[scale=0.34]{ 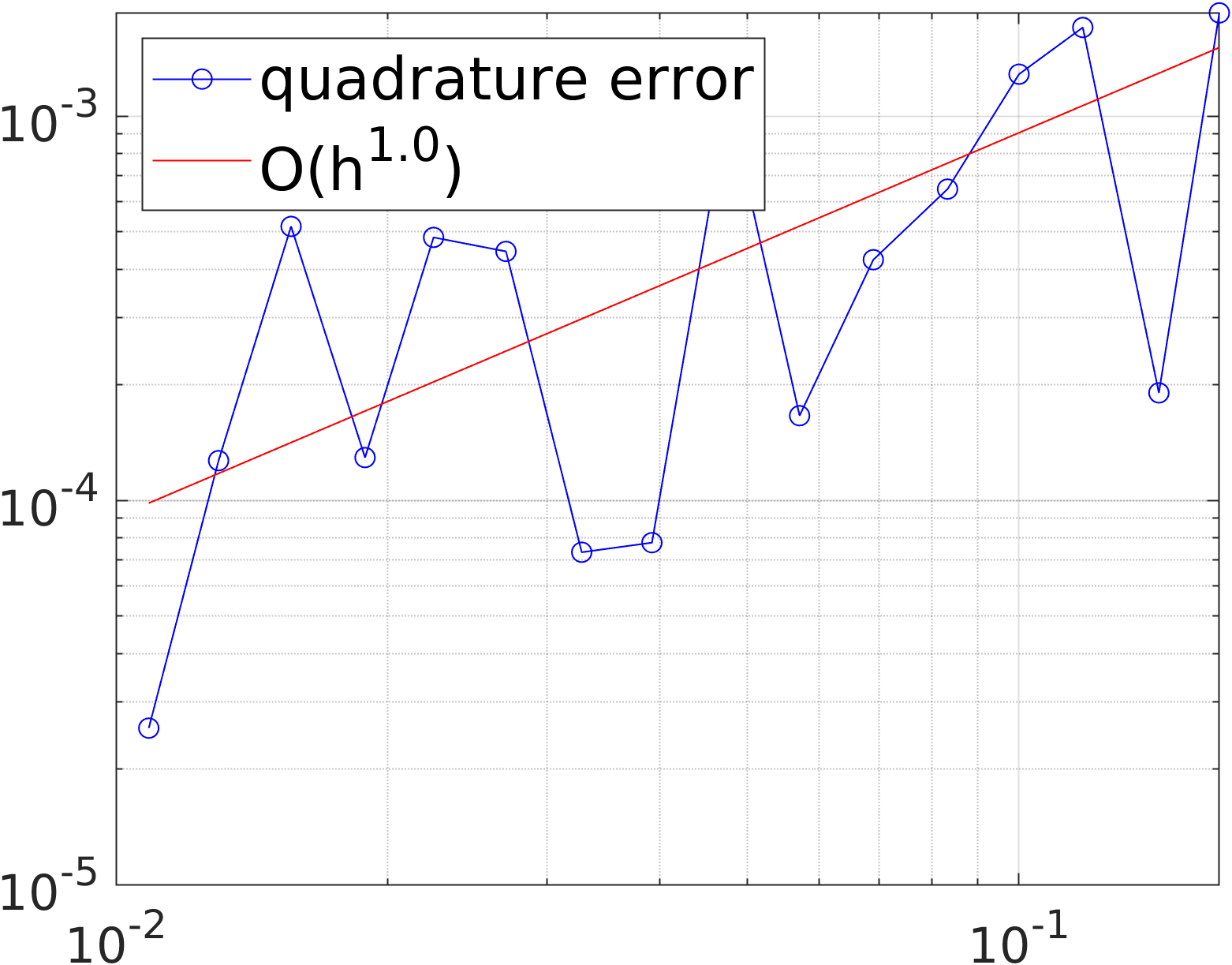}\qquad
    \includegraphics[scale=0.34]{ 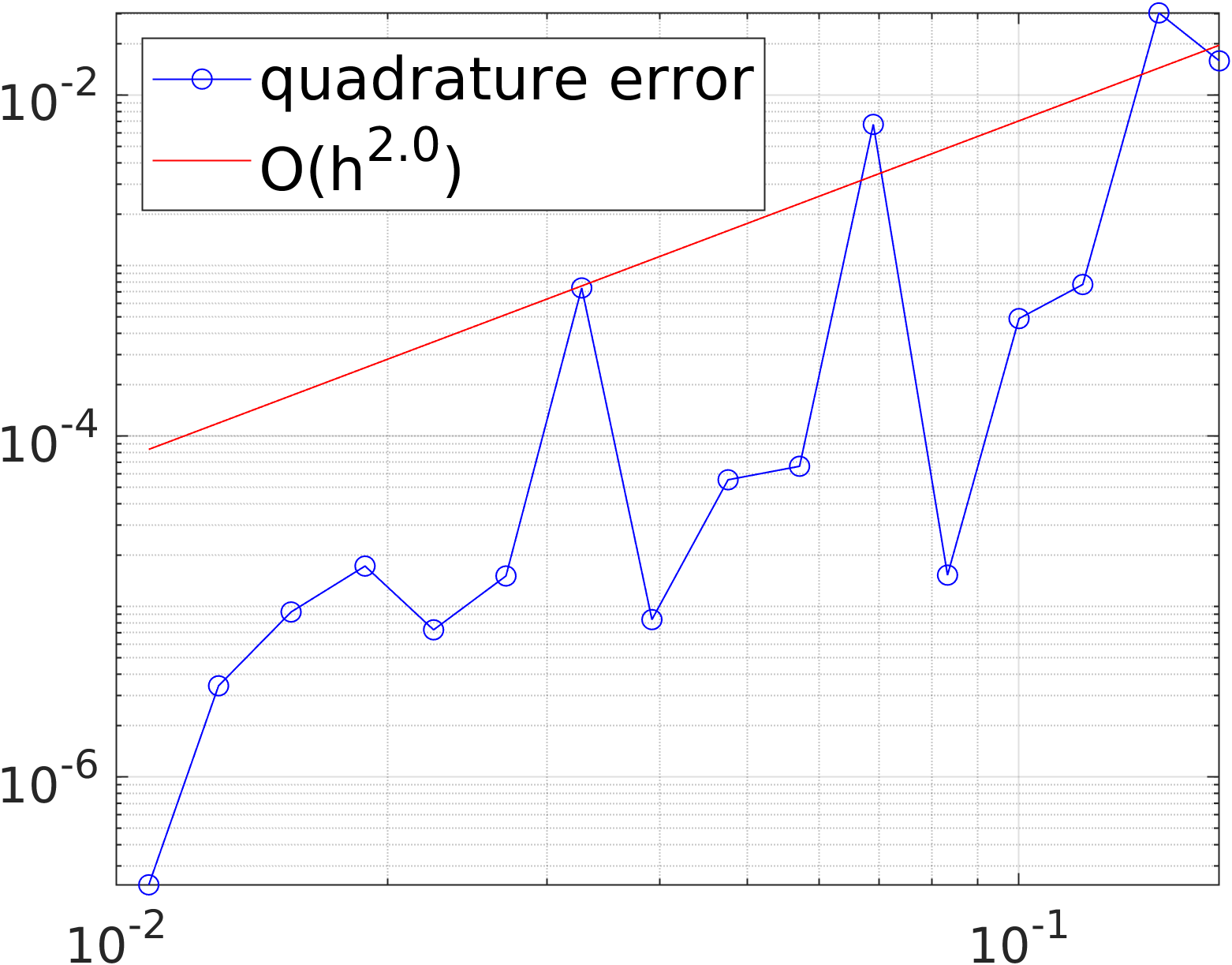}\qquad
    \includegraphics[scale=0.34]{ 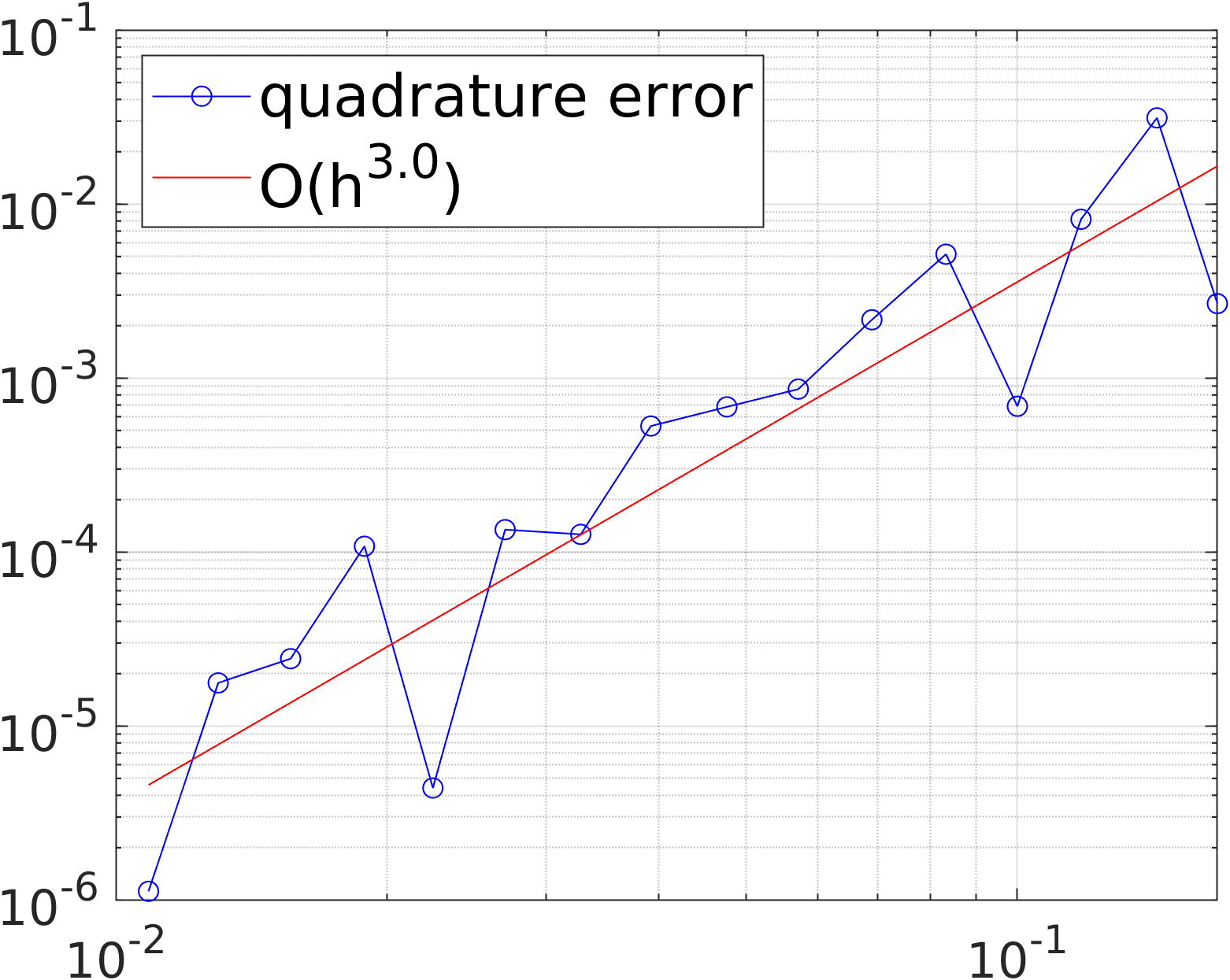}
    \caption{Quadrature error with weight function $\theta_{\eps}^{\Delta}$ with respect to grid size $h$. Left: tube width $\eps=2h$. Middle: tube width $\eps=2h^{\frac{1}{2}}$. Right: tube width $\eps=0.1$. }
    \label{fig: error 3d-1}
\end{figure}

\begin{figure}[!htb]
    \centering
     \includegraphics[scale=0.34]{ 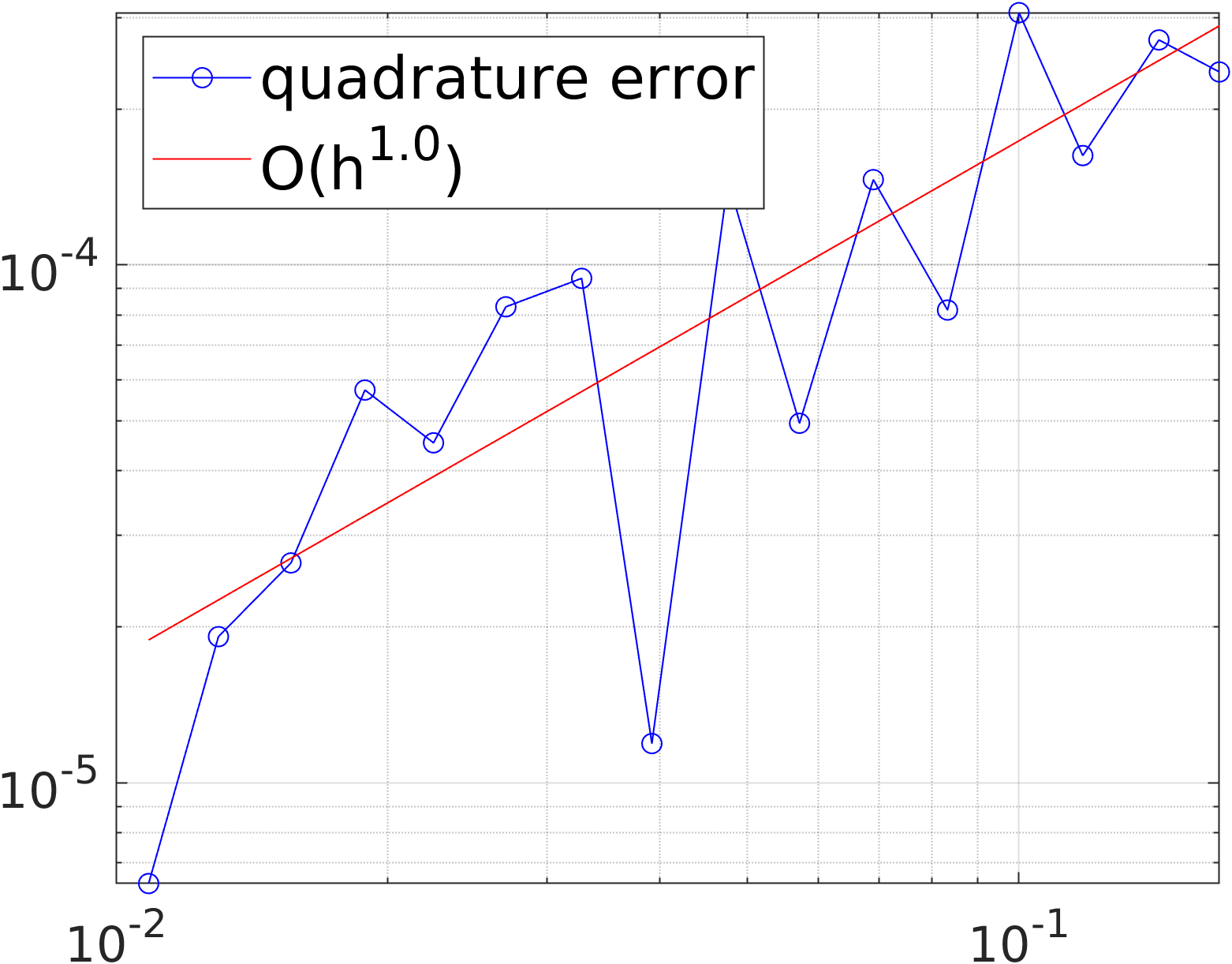}\qquad
    \includegraphics[scale=0.34]{ 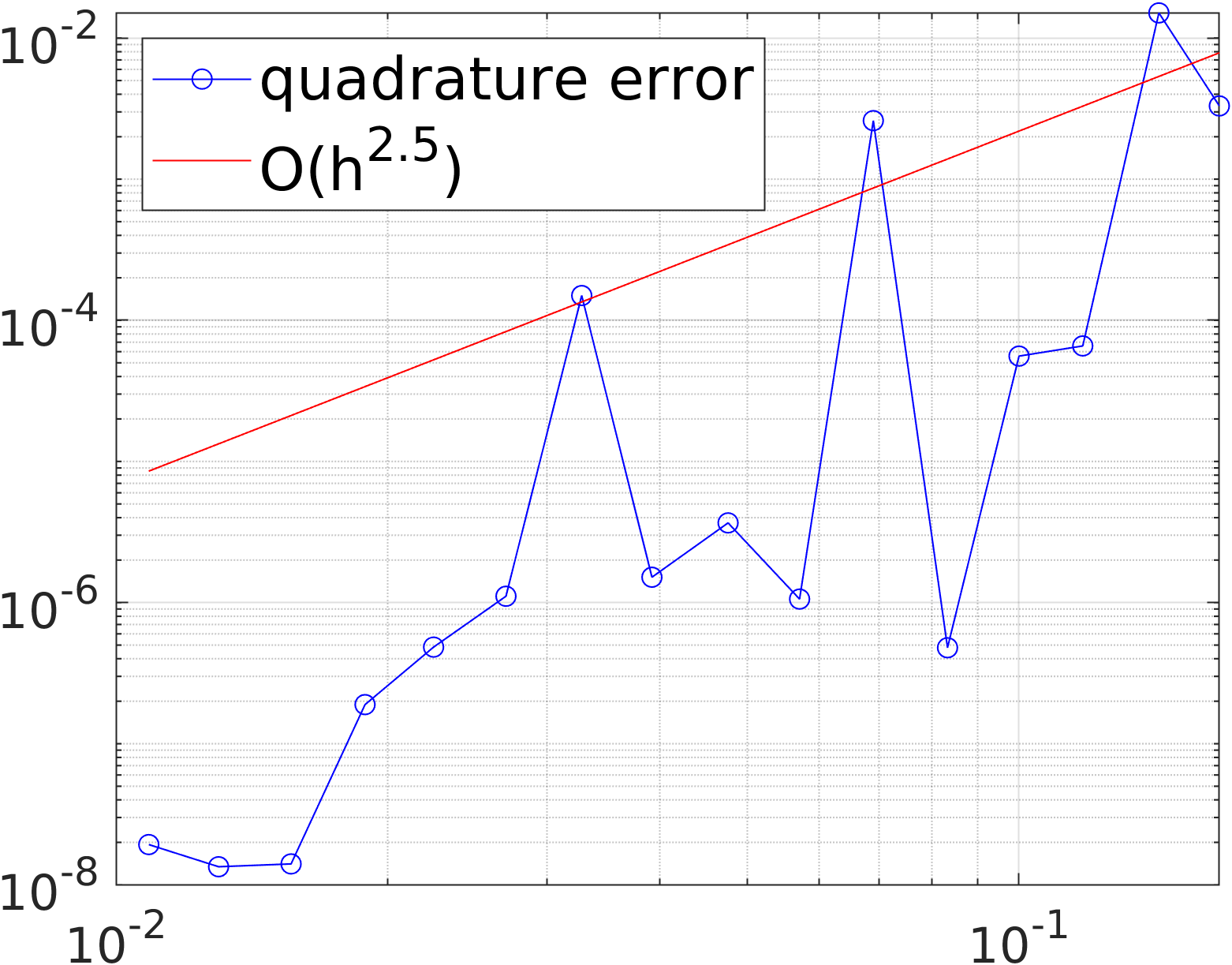}\qquad
    \includegraphics[scale=0.34]{ 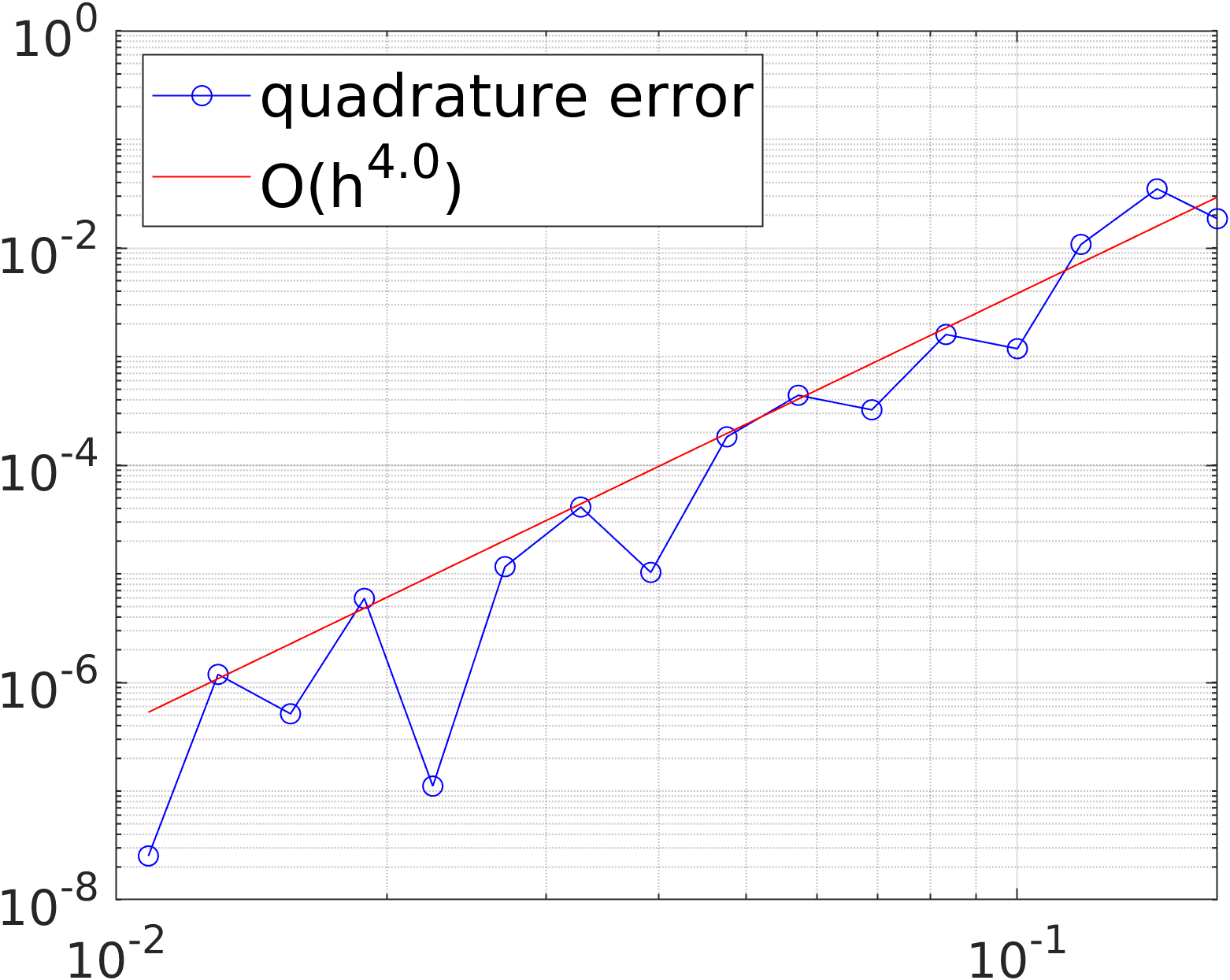}
    \caption{Quadrature error with weight function $\theta_{\eps}^{\cos}$ with respect to grid size $h$. Left: tube width $\eps=2h$. Middle: tube width $\eps=2h^{\frac{1}{2}}$. Right: tube width $\eps=0.1$. }
    \label{fig: error 3d-2}
\end{figure}

\begin{figure}[!htb]
    \centering
     \includegraphics[scale=0.34]{ 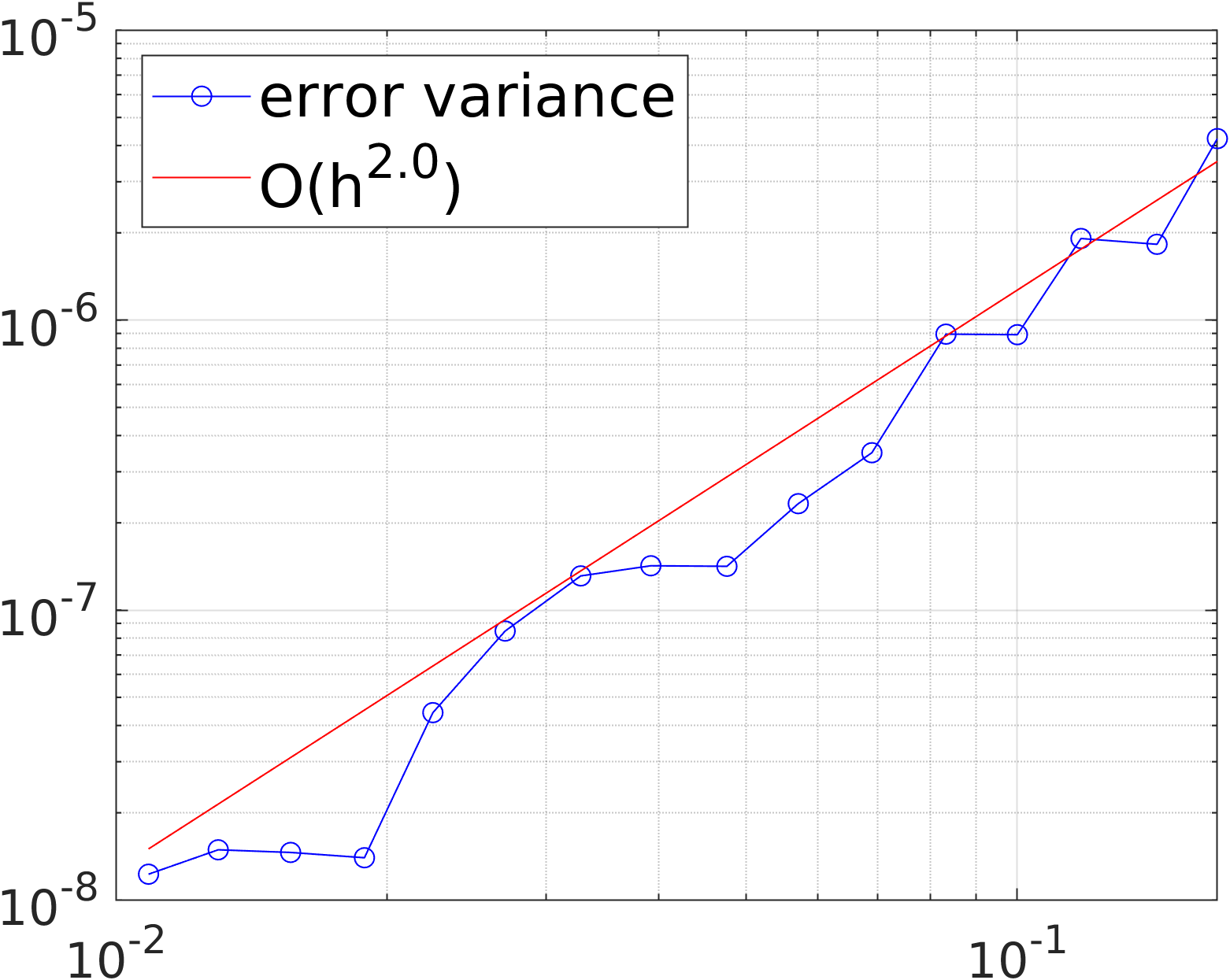}\qquad
    \includegraphics[scale=0.34]{ 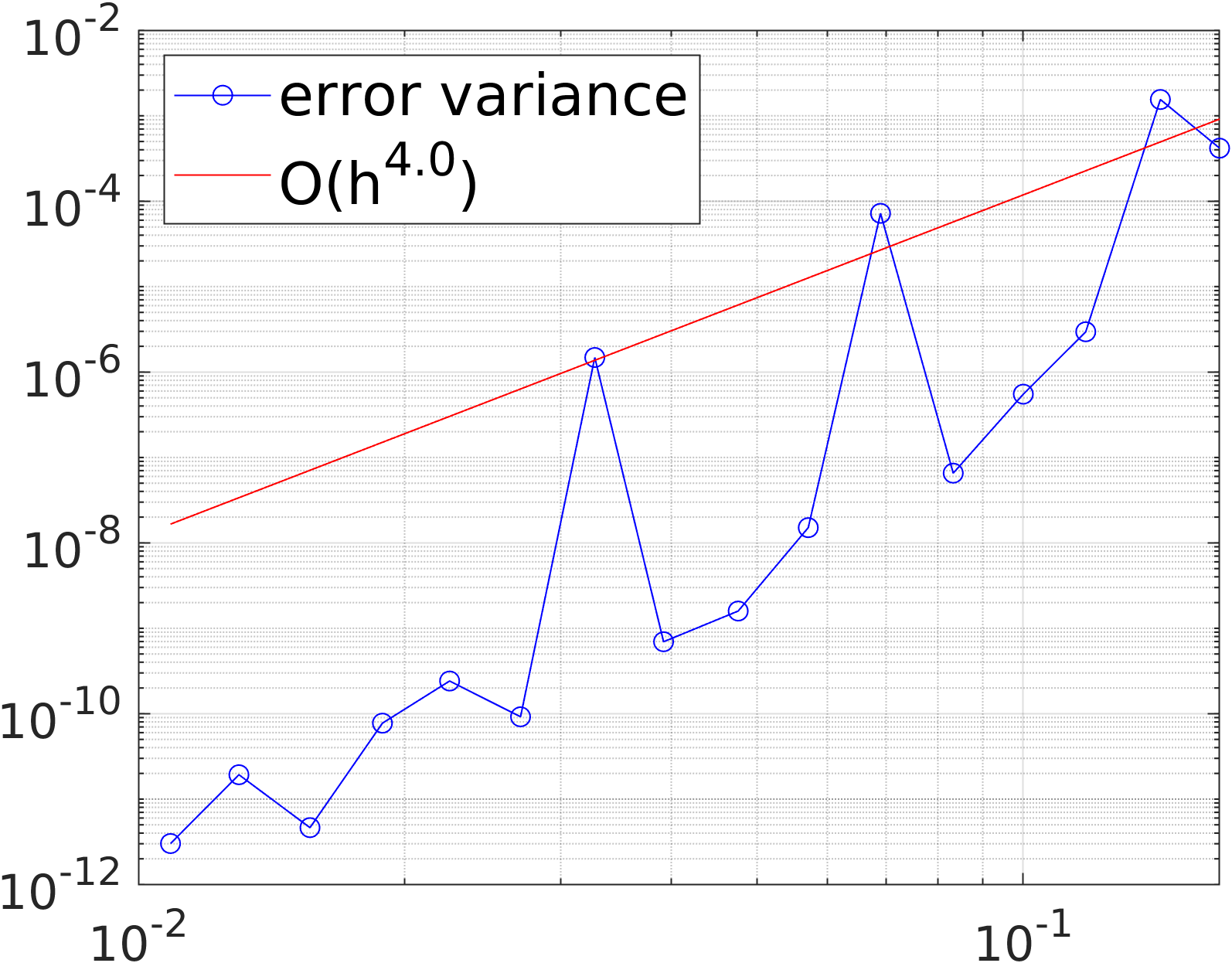}\qquad
    \includegraphics[scale=0.34]{ 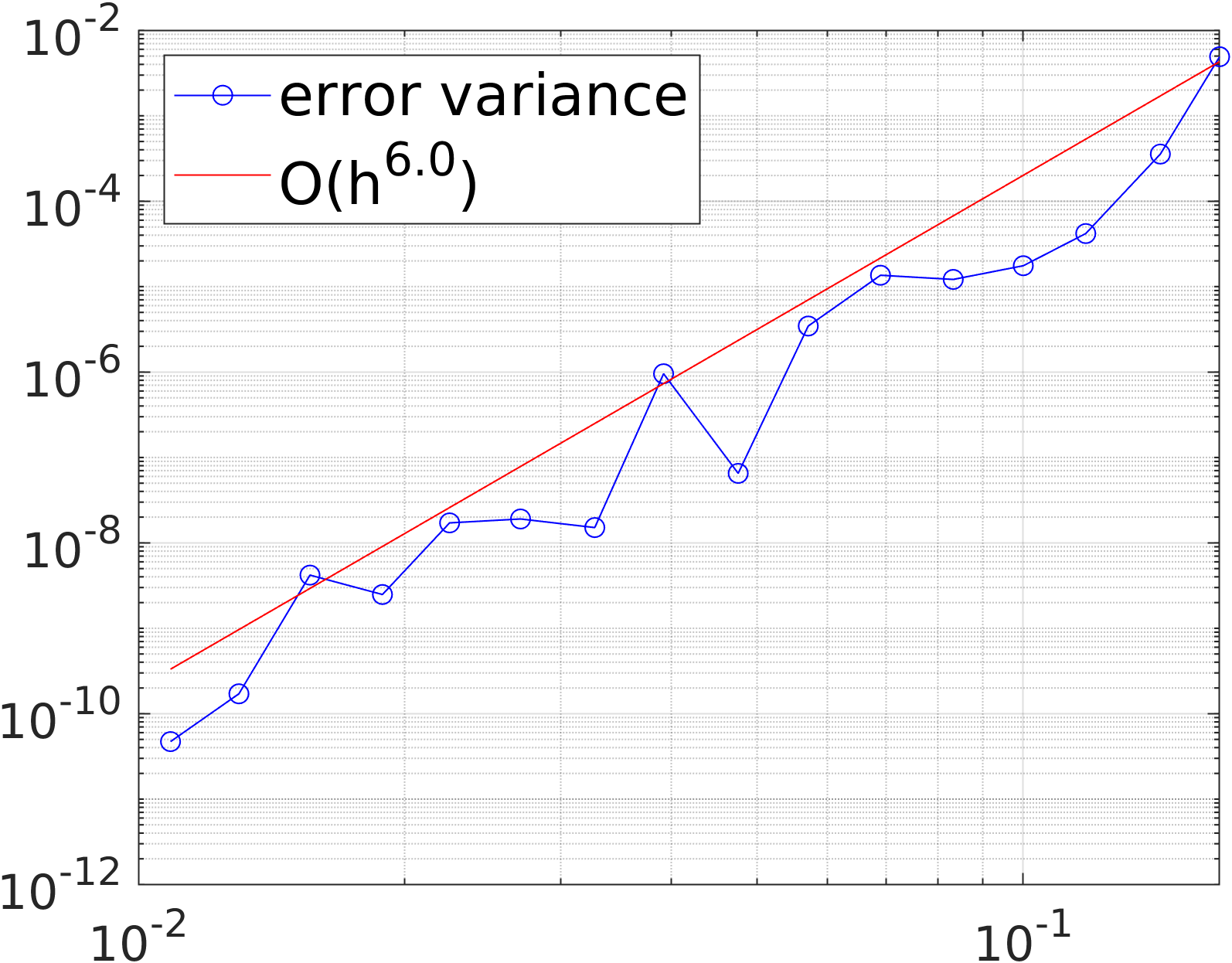}
    \caption{Variance of quadrature error with weight function $\theta_{\eps}^{\Delta}$ with respect to grid size $h$. Left: tube width $\eps=2h$. Middle: tube width $\eps=2h^{\frac{1}{2}}$. Right: tube width $\eps=0.1$. }
    \label{fig: var error 3d 1}
\end{figure}
\begin{figure}[!htb]
    \centering
     \includegraphics[scale=0.34]{ 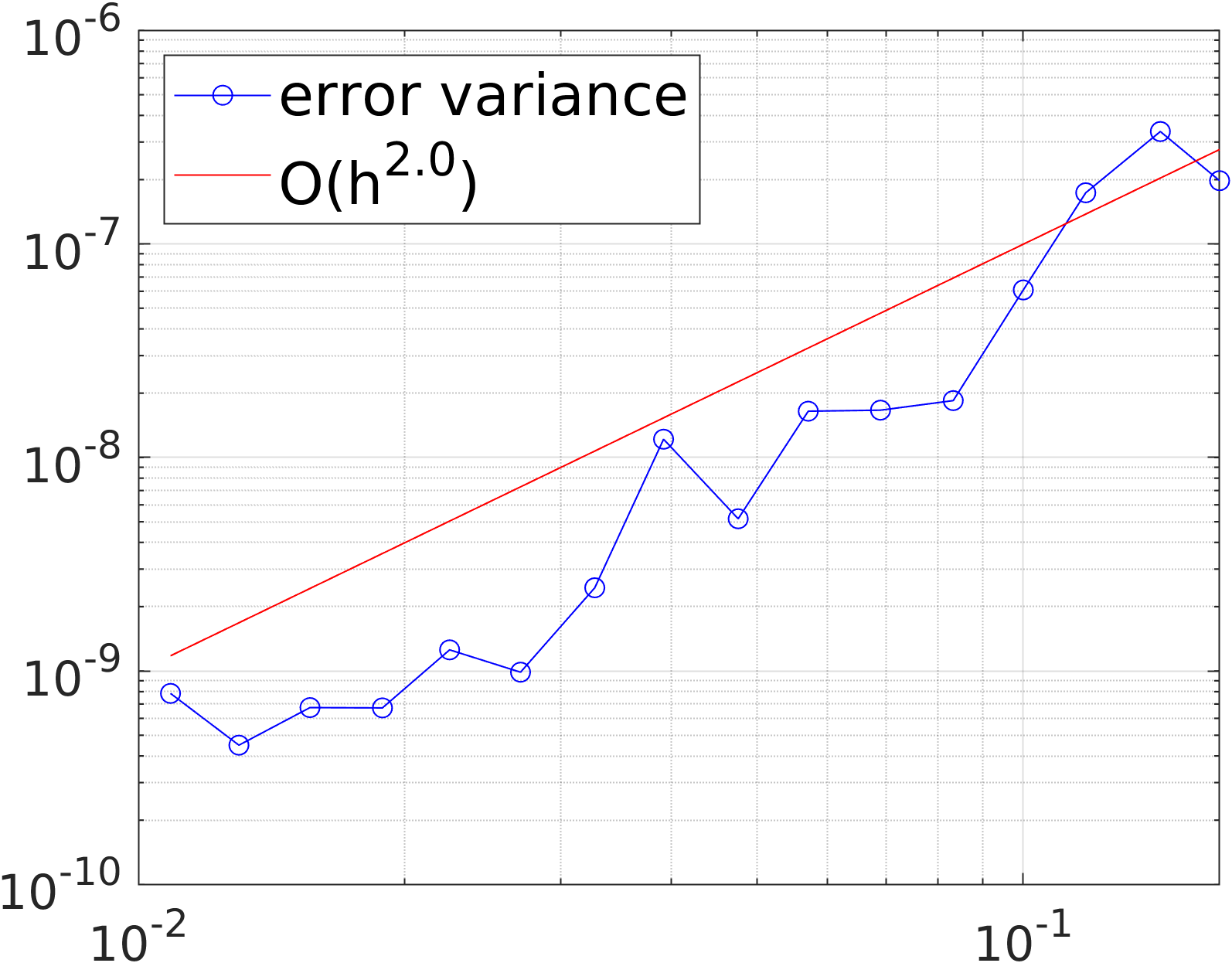}\qquad
    \includegraphics[scale=0.34]{ 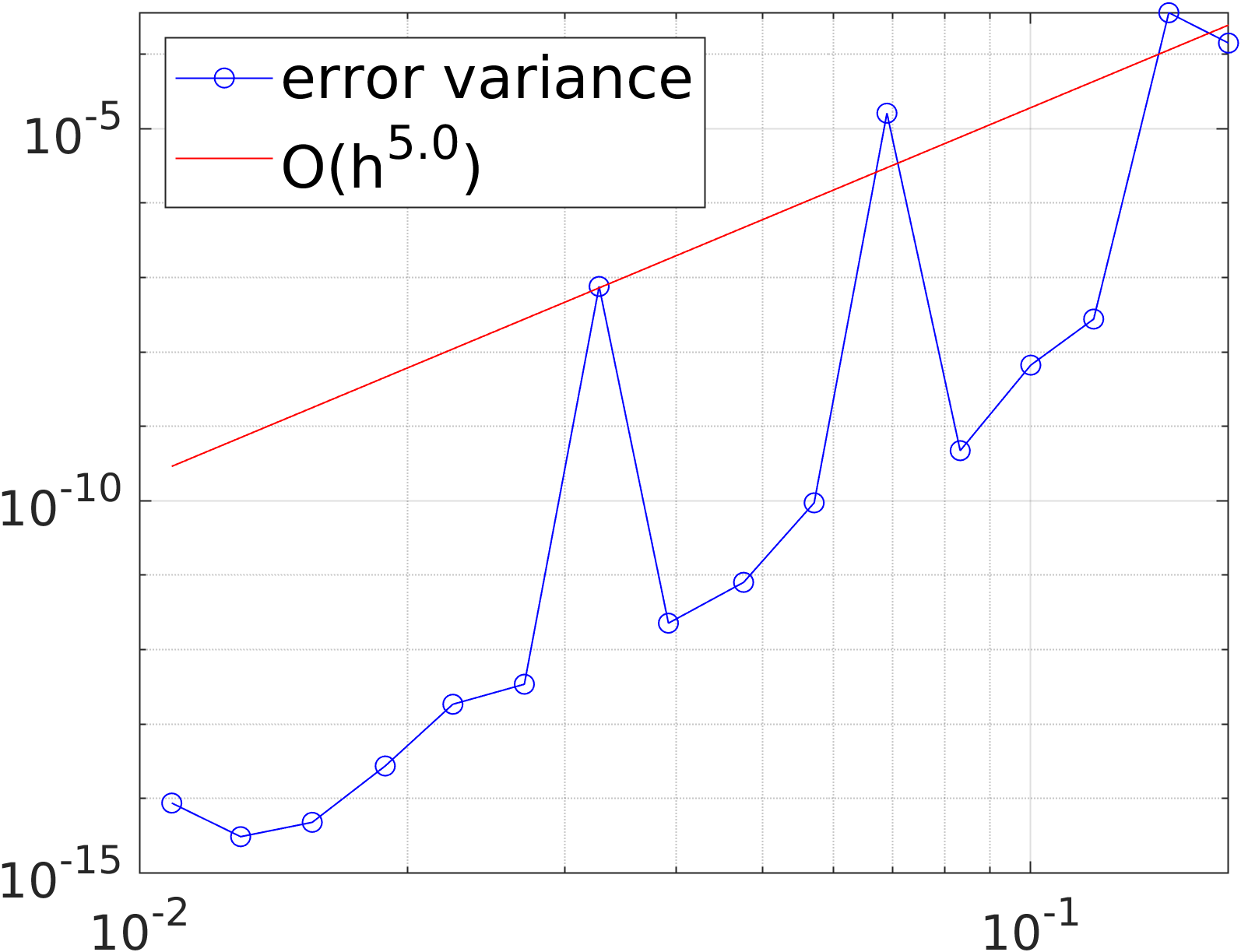}\qquad
    \includegraphics[scale=0.34]{ 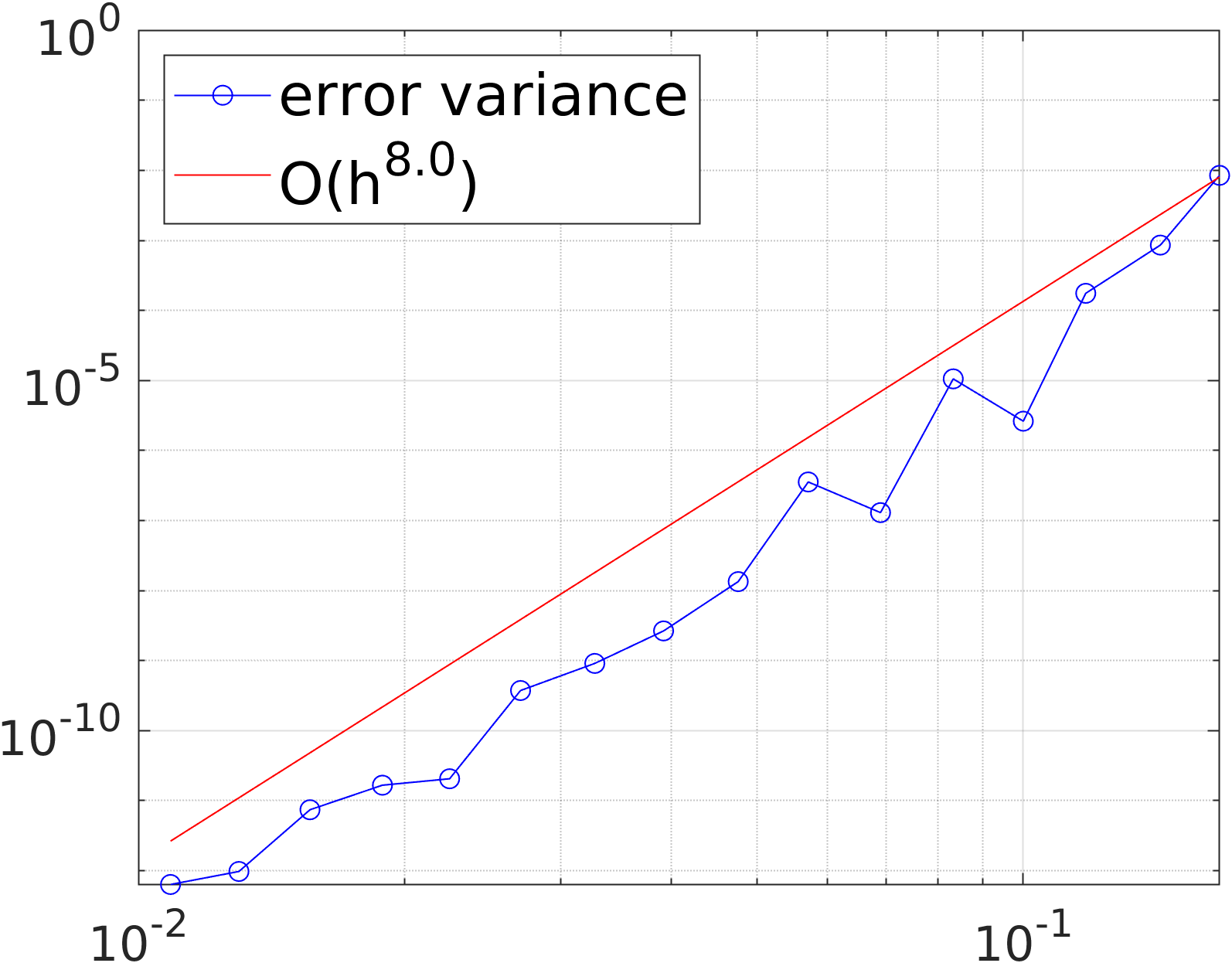}
    \caption{Variance of quadrature error with weight function $\theta_{\eps}^{\cos}$ with respect to grid size $h$. Left: tube width $\eps=2h$. Middle: tube width $\eps=2h^{\frac{1}{2}}$. Right: tube width $\eps=0.1$. }
    \label{fig: var error 3d 2}
\end{figure}

\begin{remark}
For the efficiency of computation, when the tube width $\eps= \Theta(h)$, one can apply the approximated Jacobian factor $J_{\eps}\approx 1$ instead of the accurate one if the dimension $d=2, 3$, without altering the error estimate $\cO(h^{\frac{d-1}{2}})$.  If the dimension $d=4, 5$, one can alternatively use the approximation $J_{\eps}(\bx, \eta) \approx 1 - d_{\Gamma}(\bx) \Delta d_{\Gamma}(\bx)$, which can be effectively computed by a local central difference scheme.
\end{remark}

\begin{remark}
It is worth noting that when the boundary is strongly convex, the quadrature error $\cO(h^{\frac{d-1}{2}})$ for a thin tube with width $\eps=\Theta(h)$ is somewhat equivalent to the error using a standard Monte Carlo method. This seems to imply certain equidistributed ``randomness'' of the lattice points inside the tube. However, this is still an open question even for spheres unless the tube width $\eps$ is at least $ \Theta(h^{\frac{11}{16}})$, which can be derived by combining the Theorem 1 of~\cite{duke1990representation} and the lattice count theorem in~\cite{heath1999lattice}.
\end{remark}

\subsection{Convex but not strongly convex boundaries}

It turns out that the results in this section can be modified slightly for closed, convex but not strongly convex, surface $\Gamma\in C^{\infty}$ with at least one positive principal curvature. In this case, the stationary phase estimate used in proving Lemma~\ref{LEM: STATIONARY PHASE} gives a degenerate result (see Remark~\ref{REMARK:Convex}), which leads to a modified version of Lemma~\ref{LEM: STATIONARY PHASE} in which one replaces $d$ with $\Lambda+1$ ($\Lambda$ being the number of positive principle curvature of $\Gamma$). Based on this revised version of Lemma~\ref{LEM: STATIONARY PHASE}, we can reproduce all the theorems in this section to have the following result.
\begin{corollary}\label{COR: GENERAL}
    Let $\Gamma\in C^{\infty}$ be closed and convex with at least $\Lambda > 0$ strictly positive principal curvatures everywhere. Then every theorem of Section~\ref{SEC: MAIN} and Section~\ref{SEC: STATS} holds with $d$ replaced by $(\Lambda+1)$.
\end{corollary}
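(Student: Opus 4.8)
The plan is to re-run the whole chain of arguments in Sections~\ref{SEC: MAIN} and~\ref{SEC: STATS} almost verbatim, the only genuinely new ingredient being a revised version of Lemma~\ref{LEM: STATIONARY PHASE}. The point that makes such a mechanical repetition legitimate is that the ambient dimension $d$ enters the proofs of Theorems~\ref{THM: LOW THM 1}--\ref{THM: STAT 1} in only two places: through the decay exponent of $\widehat{\cQ}$ supplied by Lemma~\ref{LEM: STATIONARY PHASE}, and through the multiplicity of lattice points appearing in the Poisson sums of Lemma~\ref{LEM: ESTIMATE} and of Theorem~\ref{THM: STAT 1}. In particular, after the $h^{-d}$ coming from the $\cO(\eps h^{-d})$ tube points is multiplied by the quadrature weight $h^{d}$, the ``difference'' bounds \eqref{EQ: EST 1st ORDER} and \eqref{EQ: EST 2nd ORDER} carry no $d$ at all. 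Hence it suffices to show that, for closed convex $\Gamma$ with at least $\Lambda>0$ strictly positive principal curvatures everywhere, the joint effect of those two $d$-dependences is exactly that of the substitution $d\mapsto\Lambda+1$.

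First I would redo the stationary-phase estimate behind Lemma~\ref{LEM: STATIONARY PHASE}. Write $\widehat{\cQ}(\bzeta)$ as an oscillatory surface integral with phase $\bx\mapsto\bx\cdot\bzeta$ and amplitude carrying the $\theta_{\eps}$-profile in the normal variable; its critical points are the (at most two) boundary points whose outward normal is parallel to $\bzeta$, and the normal variable again contributes $\eps^{-(q+1)}|\bzeta|^{-(q+1)}$ exactly as in the strongly convex case. At such a critical point the tangential Hessian is $|\bzeta|$ times the second fundamental form, which by hypothesis has at least $\Lambda$ positive eigenvalues; stationary phase in those $\Lambda$ nondegenerate tangential directions gains $|\bzeta|^{-\Lambda/2}$, while along the complementary $d-1-\Lambda$ (degenerate, in the worst case genuinely flat) directions the phase has no further oscillation and one loses only a constant. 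This is the degeneration alluded to in Remark~\ref{REMARK:Convex}, and it yields
\begin{equation}\nonumber
  |\widehat{\cQ}(\bzeta)| = \cO\!\left(\eps^{-(q+1)}\,|\bzeta|^{-\frac{\Lambda+2}{2}-q}\right),
\end{equation}
i.e.\ Lemma~\ref{LEM: STATIONARY PHASE} with $d$ replaced by $\Lambda+1$. What is crucial for the sequel is that this bound is attained only for frequency directions realized as a normal at a least-curved boundary point; since on the least-curved set the Gauss map has rank exactly $\Lambda$, the set of such directions has dimension at most $\Lambda$, and for all remaining $\bzeta$ one has strictly faster decay.

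Given this, I would re-derive Lemma~\ref{LEM: ESTIMATE} (and likewise the variance sum in Theorem~\ref{THM: STAT 1}) by splitting $\sum_{\bn\neq\bzero}$ into the lattice points lying within an $\cO(1)$-neighbourhood of the ``bad'' cone and all the rest. That cone, being a cone over an at most $\Lambda$-dimensional subset of the sphere, contains only $\cO(R^{\Lambda+1})$ lattice points of norm at most $R$, so the controlling radial integral becomes $\int_{1}^{\delta^{-1}} r^{\Lambda}\, r^{-\frac{\Lambda+2}{2}-q}\,dr$, which reproduces precisely the three cases of Lemma~\ref{LEM: ESTIMATE} under $d\mapsto\Lambda+1$ (the trichotomy now being governed by $q$ versus $\tfrac{\Lambda}{2}$); the complementary part enjoys strictly faster decay and is of lower order. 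Substituting the resulting $(\Lambda+1)$-version of Lemma~\ref{LEM: ESTIMATE} into the now dimension-insensitive proofs of Theorems~\ref{THM: LOW THM 1} and~\ref{THM: LOW THM 2}, of Corollary~\ref{COR: CRI REG} and its super-critical counterpart, and of Theorem~\ref{THM: STAT 1}, produces every estimate of Sections~\ref{SEC: MAIN} and~\ref{SEC: STATS} with $d$ replaced by $\Lambda+1$, which is the claim.

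I expect the main obstacle to be the degenerate stationary-phase estimate and, above all, its uniformity. Two issues require care. First, frequencies $\bzeta$ lying near but not on the bad cone see a decay rate interpolating between $|\bzeta|^{-\frac{\Lambda+2}{2}-q}$ and the nondegenerate rate, so a transitional (van der Corput / Airy-type) bound is needed to be sure their contribution to the Poisson sums is still absorbed by the bad-cone term. Second, for a general $C^{\infty}$ convex body the least-curved set, hence the degenerate subspace and the bad cone, may depend on the base point and need not be a smooth submanifold, so a covering or stratification argument is required to make the lattice-point count $\cO(R^{\Lambda+1})$ uniform. Everything else is a routine reprise of the strongly convex arguments.
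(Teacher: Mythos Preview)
Your approach follows the paper's high-level strategy---obtain a degenerate version of Lemma~\ref{LEM: STATIONARY PHASE} and then rerun the arguments of Sections~\ref{SEC: MAIN} and~\ref{SEC: STATS}---but you go considerably further than the paper actually does. The paper's justification is essentially one sentence: it asserts (citing Remark~\ref{REMARK:Convex}) that the modified stationary-phase bound suffices to ``reproduce all the theorems'' with $d$ replaced by $\Lambda+1$, and says nothing about why the lattice sums in Lemma~\ref{LEM: ESTIMATE} and Theorem~\ref{THM: STAT 1}, which are still taken over $\bbZ^d$, should behave as though they were over $\bbZ^{\Lambda+1}$.

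Your bad-cone decomposition addresses exactly this point, and it is not superfluous. If one merely inserts the uniform worst-case bound $|\widehat{\cQ}(\bzeta)|=\cO\!\big(\eps^{-(q+1)}|\bzeta|^{-(\Lambda+2)/2-q}\big)$ into the sums over $\bbZ^d$, the radial integral in the proof of Lemma~\ref{LEM: ESTIMATE} becomes $\int_1^{\delta^{-1}} r^{\,d-1-(\Lambda+2)/2-q}\,dr$, which places the critical regularity at $q=(2d-\Lambda-2)/2$ rather than the claimed $q=\Lambda/2$; and in Theorem~\ref{THM: STAT 1} the sum $\sum_{\bxi\neq\bzero}|\bxi|^{-(\Lambda+2)-2q}$ can even diverge when $q<(d-\Lambda-2)/2$. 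So without something like your observation that the slow decay occurs only along an at most $(\Lambda+1)$-dimensional set of frequencies (the Gauss image of the least-curved points), the substitution $d\mapsto\Lambda+1$ does not follow from the modified Lemma~\ref{LEM: STATIONARY PHASE} alone. In this sense your proposal is more complete than the paper's sketch; the two obstacles you flag---Airy-type transitional bounds for $\bzeta$ near but off the bad cone, and the possibly irregular stratification of the degenerate normal set---are precisely the technical issues a rigorous proof would have to resolve.
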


As an extreme example, in the following Figure~\ref{fig: straight line}, we show that if the boundary $\Gamma$ has a segment component parallel to $y = x$, where $\Lambda = 0$, numerical quadrature with tube width $\eps=\Theta(h)$ will introduce an $\cO(1)$ error even when integrating a constant function. In addition, if one computes the quadrature with random shifts, the standard deviation is still $\cO(1)$. Such an argument can be easily adapted to segments with rational slopes. 
\begin{figure}[!htb]
    \centering
    \includegraphics[scale=0.4]{ 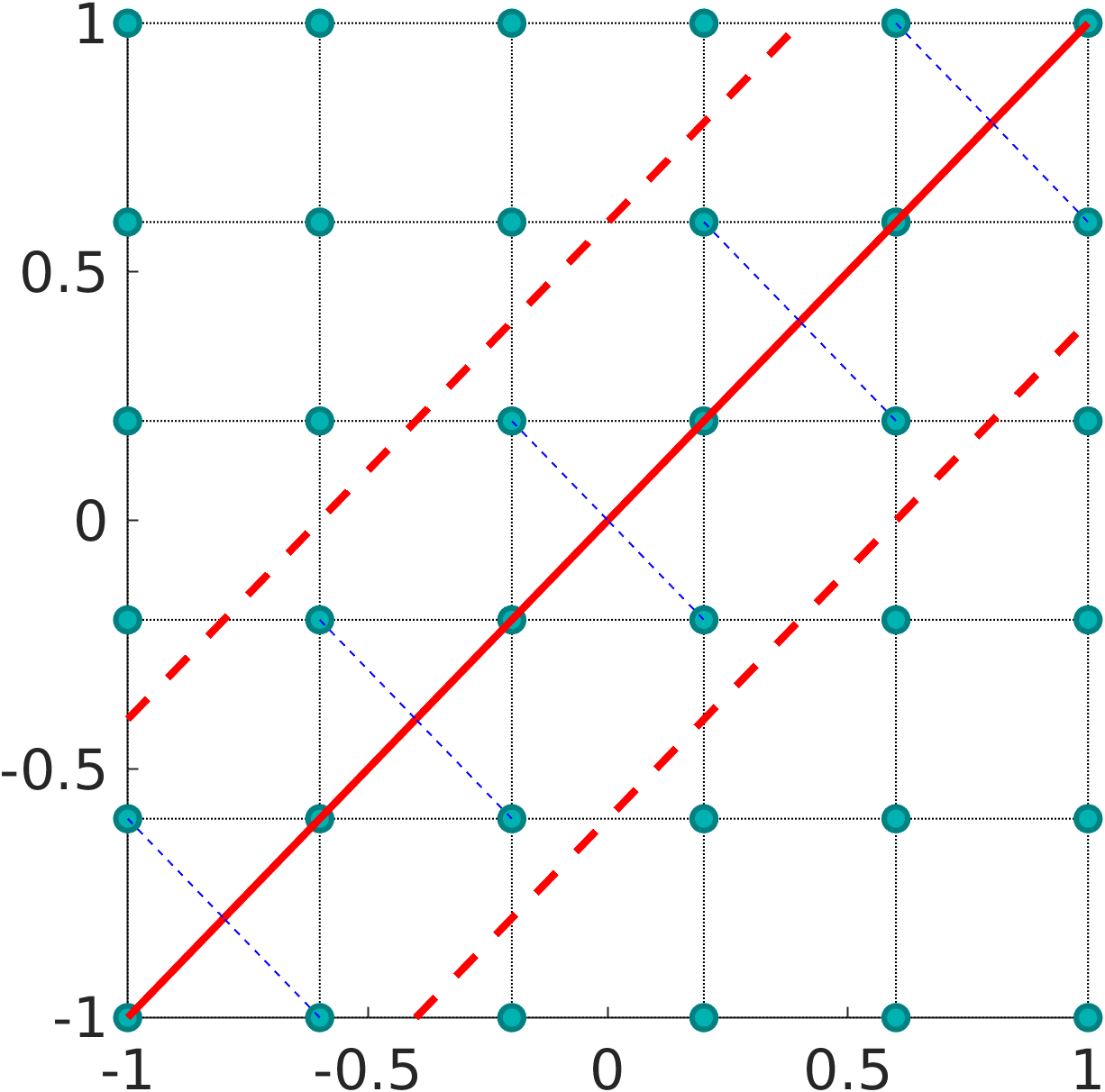}
    \caption{$\Gamma$ (red solid line) has a segment component as $y = x$. The tube, denoted by the red dash line, has a width $\eps = \frac{3}{2}h$. The green dots are lattice points, and the blue dash lines are projections to $\Gamma$.}
    \label{fig: straight line}
\end{figure}
\begin{remark}
Such a difference between smooth convex sets and polytopes has already been spotted in the famous lattice point problem. The early works trace back to Hardy and Littlewood~\cite{hardy1922some,hardy1922some2} over a century ago. The lattice point problem studies the discrepancy $$|t B \cap \bbZ^d| - \mathrm{meas}(B) t^d.$$
 The discrepancy could be regarded as random fluctuations if $B\subset \bbR^d$ has a smooth convex boundary. In contrast, if $B$ is a polytope, the leading term in the discrepancy may still behave polynomially~\cite{macdonald1971polynomials} in the variable $t$, which conceals the randomness behind the successive order. In addition, the fluctuations of randomness are closely related to the Diophantine approximation problem; see ~\cite{beck1994probabilistic,matousek1999geometric} and references therein.
\end{remark}


\section{Theories for smooth closed curves in \texorpdfstring{$\bbR^2$}{}}
\label{SEC: 3}

In this section, we will establish a general theory for the implicit boundary integral on smooth closed curves in $\bbR^2$. The case when the curve is closed and strongly convex is only a special case of the results in Section~\ref{SEC: 2}. We therefore focus only on the cases where the curvature vanishes at certain points of the curves. 

\subsection{Vanishing curvature at isolated points}\label{SEC: 3.1}
We start with a generic case where the curvature vanishes at a finite number of points.  It suffices to consider only one point with vanishing curvature using a partition of unity argument. We set this point $\bz$ as the origin. We assume that the curvature at $\bz$ vanishes to order $\kappa - 2$ for certain $\kappa > 2$,  which means \emph{locally} we can arrange the frame and represent $\Gamma$ as $(x, g(x))\subset \bbR^2$ such that
\begin{equation}\label{EQ: VANISH CUR}
    g(x) = |x|^{\kappa} h(x),  \quad h\in C^{\infty}[-r, r], \quad h(x)\neq 0\quad \forall x\in[-r,r]
\end{equation}
for certain $r > 0$. Then in the proof of Lemma~\ref{LEM: STATIONARY PHASE} (see Appendix~\ref{PRF: STATION}),  instead of gaining an additional decay factor $\cO(|\bzeta|^{1/2})$ for~\eqref{EQ: STA PHASE}, the factor becomes $\cO(|\bzeta|^{1/\kappa})$ in the spirit of Lemma~\ref{LEM: van der Corput Revised} (a generalized version of the van der Corput Lemma~\cite{stein1993harmonic}). Repeating the proofs in Section~\ref{SEC: MAIN}, one can obtain the following result.
\begin{corollary}
    Let $d=2$, and $\Gamma$ be a smooth closed curve with vanishing curvatures to order $\kappa-2$ ($\kappa>2$) at finitely many points. Then, when the tube width is $\eps=\cO(h^{\alpha})$, the quadrature error scales as $$|\cI f  - \cI_h f|=\cO(h^{\frac{1}{\kappa}+(q+1)(1-\alpha)})\,.$$
\end{corollary}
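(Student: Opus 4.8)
The idea is to run the Poisson-summation argument of Section~\ref{SEC: MAIN} essentially unchanged, recording only how the single degenerate point alters the decay of $\widehat{\cQ}$. Since there are finitely many points of vanishing curvature, fix a smooth partition of unity on $\Gamma$, lifted to $\bbR^2$ through $P_\Gamma$, so that $\cQ=\sum_j\cQ\chi_j$ where exactly one cut-off $\chi_0$ is supported near the degenerate point $\bz=\bzero$, on whose support $\Gamma$ is the graph $(x,|x|^{\kappa}h(x))$ (in a suitably rotated frame) with $h$ smooth and nowhere vanishing, while on $\supp\chi_j$, $j\neq0$, the curvature is bounded away from zero. Each $\cQ\chi_j$ has the same structure as $\cQ$ (only $f$ is multiplied by a smooth cut-off on $\Gamma$), so for $j\neq0$ Lemma~\ref{LEM: STATIONARY PHASE} and Lemma~\ref{LEM: ESTIMATE} (with $d=2$) apply and the corresponding piece of $\cI_h f-\cI f$ is $\cO(h^{\frac12+(q+1)(1-\alpha)})$. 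Since $\kappa>2$ gives $\frac1\kappa<\frac12$, this is dominated by the claimed bound, so only the piece $\cQ\chi_0$ requires new work.

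For that piece I would revisit the proof of Lemma~\ref{LEM: STATIONARY PHASE}. Passing to tube coordinates and integrating out the transverse variable, $\widehat{\cQ\chi_0}(\bzeta)$ equals, up to lower-order terms, a one-dimensional oscillatory integral $\int a(x;\eps,\bzeta)\,e^{-2\pi i\,\gamma(x)\cdot\bzeta}\,dx$ in the graph coordinate $x$, whose amplitude carries the transverse factor $\widehat{\theta_\eps}(\bn(x)\cdot\bzeta)=\cO\!\big((\eps|\bzeta|)^{-(q+1)}\big)$ produced, exactly as in the strongly convex case, by $q+1$ integrations by parts against $\theta_\eps\in\cW_q$. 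Writing $\bzeta=|\bzeta|\omega$, on $\supp\chi_0$ the phase is $\gamma(x)\cdot\omega=\omega_1 x+\omega_2\,|x|^{\kappa}h(x)$, i.e.\ a linear term plus $|x|^{\kappa}$ times a smooth nonvanishing factor; the generalized van der Corput estimate, Lemma~\ref{LEM: van der Corput Revised}, bounds such integrals by $\cO(|\bzeta|^{-1/\kappa})$ \emph{uniformly} in $\omega$, in particular uniformly as the stationary point migrates through the flat spot when $\omega$ is nearly normal to $\Gamma$ at $\bz$ (for the nondegenerate directions one instead gets the stronger $\cO(|\bzeta|^{-1/2})$, or fast decay from non-stationary phase). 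Multiplying the two factors gives the modified estimate $|\widehat{\cQ\chi_0}(\bzeta)|=\cO\!\big(\eps^{-(q+1)}|\bzeta|^{-(q+1)-1/\kappa}\big)$, which is Lemma~\ref{LEM: STATIONARY PHASE} with the curvature gain $\tfrac12$ replaced by $\tfrac1\kappa$.

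From here the argument is even shorter than in Section~\ref{SEC: MAIN}: since $\kappa>2$ forces $(q+1)+\tfrac1\kappa>2=d$ for every $q\ge1$, the series $\sum_{\bn\in\bbZ^2\setminus\{\bzero\}}|\bn|^{-(q+1)-1/\kappa}$ converges and the Poisson Summation Formula applies directly to $\cQ\chi_0(h\,\cdot)$ with no mollifier, giving
\begin{equation*}
    \Big|\,h^2\!\!\sum_{\bn\in\bbZ^2}\cQ\chi_0(h\bn)-\int_{\bbR^2}\cQ\chi_0\,\Big|
    =\Big|\sum_{\bn\neq\bzero}\widehat{\cQ\chi_0}(h^{-1}\bn)\Big|
    =\cO\!\big(\eps^{-(q+1)}h^{(q+1)+1/\kappa}\big).
\end{equation*}
Substituting $\eps=\Theta(h^\alpha)$ turns this into $\cO(h^{(q+1)(1-\alpha)+1/\kappa})$, and together with the $j\neq0$ pieces this proves the corollary. (For $q=0$, where the series diverges, one reinstates the mollifier $\psi_\delta$ and balances orders as in Theorem~\ref{THM: LOW THM 1}; that produces a strictly weaker exponent, so the clean statement above is understood for $q\ge1$.)

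\textbf{Main obstacle.} The only genuinely new ingredient is the uniform one-dimensional oscillatory bound near $\bz$: one must verify that the decay is $|\bzeta|^{-1/\kappa}$ for \emph{all} directions $\omega$, which in the near-tangent range requires a dyadic decomposition in $x$ around the flat spot together with the non-integer-order van der Corput lemma, and a check that differentiating the transverse factor $\widehat{\theta_\eps}(\bn(x)\cdot\bzeta)$ and the geometric quantities ($P_\Gamma$ and the Jacobian $J_\eps$ restricted to level curves near a flat point) in $x$ costs only powers of $\eps^{-1}$ already absorbed into $\eps^{-(q+1)}$. Everything else is a line-by-line transcription of Section~\ref{SEC: MAIN}.
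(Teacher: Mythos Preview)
Your proposal is correct and follows the same route as the paper: replace the stationary-phase gain $|\bzeta|^{-1/2}$ in Lemma~\ref{LEM: STATIONARY PHASE} by $|\bzeta|^{-1/\kappa}$ via the generalized van der Corput estimate (Lemma~\ref{LEM: van der Corput Revised}) near the flat point, and then rerun the Poisson-summation machinery of Section~\ref{SEC: MAIN}. Your observation that for $d=2$ and $q\ge 1$ one has $(q+1)+\tfrac1\kappa>2$, so the tail $\sum_{\bn\neq\bzero}|\bn|^{-(q+1)-1/\kappa}$ already converges and the mollifier $\psi_\delta$ can be dispensed with, is a legitimate shortcut; it is exactly what the paper's super-critical case (Corollary~2.7) amounts to once the modified Fourier decay is in hand.
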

When $\alpha = 1$, the quadrature error is bounded by $\cO(h^{\frac{1}{\kappa}})$ which degenerates to $\cO(1)$ as $\kappa\to\infty$ (that is when the curve locally becomes straight, as in Figure~\ref{fig: straight line}). While this worst quadrature error may be unsatisfactory for practical uses, under random rigid transformations, the ``average'' quadrature error could be greatly improved over this bound.


\subsection{Variance of error under random rigid transformations}

The discussions in Section~\ref{SEC: STATS} suggest that the distribution of signed distances on lattice points to a strongly convex smooth boundary behaves ``almost'' random so that most rigid transformations (rotations and translations) of the lattice points will not change the quadrature error too much. In this section, we show that under random rigid transformations,  even with a finite number of points on $\Gamma$ with vanishing curvatures will \emph{not} alter the ``average'' quadrature error $\cO(h^{\frac{1}{2}+(q+1)(1-\alpha)})$.
\begin{theorem}\label{THM: DEGEN STAT}
    Let $\kappa\in (2, \infty)$ and $\Gamma$ be a convex closed $ C^{\infty}$ curve with finitely many points of vanishing curvature of maximal order $(\kappa-2)$. For any rotation $\eta\in \mathrm{SO}(2)$ and translation $\bxi\in [0, 1]^2$, let $\cI_h(f; \eta, \bxi)$ denote the implicit boundary integral with tube width $\eps=\Theta(h^{\alpha})$, $\alpha\in[0, 1]$ on the transformed boundary $\eta \Gamma + h \bxi$. Then, we have that 
    \begin{equation}
    \int_{[0, 1]^2}\int_{\eta\in \mathrm{SO}(2)} |\cI_h(f; \eta,\bxi) - \cI(f) |^2 d\eta d\bxi = \cO(h^{1+2(q+1)(1-\alpha)} )\,,
    \end{equation}
    where $d\eta$ is the normalized Haar measure on $\mathrm{SO}(2)$ and $q\ge 0$ is the regularity order of the weight function.
\end{theorem}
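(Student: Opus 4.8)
The plan is to carry the two averages — over rotations $\eta\in\mathrm{SO}(2)$ and over translations $\bxi\in[0,1]^2$ — through the same Fourier/Parseval machinery used for Theorem~\ref{THM: STAT 1}, which reduces the quantity to a sum of \emph{circular averages} of $|\widehat{\cQ}|^2$, and then to estimate that sum by a decay bound for $\widehat{\cQ}(\rho\omega)$ that is uniform in the direction $\omega$ and degrades only on negligibly thin angular windows around the finitely many ``degenerate'' normal directions.

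\textbf{Step 1 (reduction to circular averages).} For fixed $\eta$, the integrand attached to the translated boundary $\eta\Gamma+h\bxi$ is $\cQ(\eta^{-1}(\,\cdot-h\bxi))$, so $\cI_h(f;\eta,\bxi)=h^2\sum_{\bn\in\bbZ^2}\cQ_\eta\big(h(\bn-\bxi)\big)$ with $\cQ_\eta(\bx):=\cQ(\eta^{-1}\bx)$ and $\widehat{\cQ_\eta}(\bzeta)=\widehat{\cQ}(\eta^{-1}\bzeta)$. Repeating the computation in the proof of Theorem~\ref{THM: STAT 1} mutatis mutandis (using $1$-periodicity in $\bxi$) gives
\begin{equation*}
\int_{[0,1]^2}\big|\cI_h(f;\eta,\bxi)-\cI(f)\big|^2\,d\bxi=\sum_{\bm\in\bbZ^2,\ \bm\neq\bzero}\big|\widehat{\cQ}(h^{-1}\eta^{-1}\bm)\big|^2 .
\end{equation*}
Integrating over $\mathrm{SO}(2)$ against the normalized Haar measure, interchanging sum and integral, and using that $\eta\mapsto\eta^{-1}\bm$ sweeps the circle of radius $|\bm|$ uniformly, the left-hand side of the theorem becomes $\sum_{\bm\neq\bzero}A(|\bm|/h)$, where
\begin{equation*}
A(\rho):=\frac{1}{2\pi}\int_0^{2\pi}\big|\widehat{\cQ}\big(\rho(\cos\phi,\sin\phi)\big)\big|^2\,d\phi .
\end{equation*}
It therefore suffices to establish the uniform estimate $A(\rho)=\cO\big(\eps^{-2(q+1)}\rho^{-3-2q}\big)$ for $\rho$ large and $\eps\le C$: since every $\bm\neq\bzero$ has $|\bm|/h\to\infty$ as $h\to0$ and $\sum_{\bm\in\bbZ^2\setminus\{0\}}|\bm|^{-3-2q}<\infty$ (because $3+2q>2$), this yields $\sum_{\bm\neq\bzero}A(|\bm|/h)=\cO(\eps^{-2(q+1)}h^{3+2q})=\cO(h^{1+2(q+1)(1-\alpha)})$ for $\eps=\Theta(h^\alpha)$, which is the claim.

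\textbf{Step 2 (direction-uniform decay of $\widehat{\cQ}$).} Since $\Gamma$ is convex, each $\omega\in\bbS^1$ has exactly two support points with outward normals $\pm\omega$, and $\widehat{\cQ}(\rho\omega)$ equals (up to rapidly decaying remainders) the sum of two localized oscillatory integrals near those points. Doing the integration transverse to $\Gamma$ first produces the factor $\widehat{\theta}(\eps\,\bn\cdot\rho\omega)=\cO\big((\eps\rho)^{-(q+1)}\big)$ coming from the regularity of the weight, while the tangential oscillatory integral is governed by the order of vanishing of the curvature at the support point. Away from the finitely many degenerate normals the curvature is bounded below and one recovers Lemma~\ref{LEM: STATIONARY PHASE} with a uniform constant, $|\widehat{\cQ}(\rho\omega)|=\cO(\eps^{-(q+1)}\rho^{-3/2-q})$. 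Near a degenerate normal $\omega_0$ attached to a point of curvature-vanishing order $\le\kappa-2$, with $\beta$ the angle between $\omega$ and $\omega_0$ and using~\eqref{EQ: VANISH CUR}, the tangential critical point sits at parameter $|x_*|\sim|\beta|^{1/(\kappa-1)}$, where the curvature is $\sim|\beta|^{(\kappa-2)/(\kappa-1)}$; the generalized van der Corput estimate invoked for Lemma~\ref{LEM: STATIONARY PHASE} (cf.\ Lemma~\ref{LEM: van der Corput Revised}) should then give the uniform bound
\begin{equation*}
|\widehat{\cQ}(\rho\omega)|^2=\cO\!\left(\eps^{-2(q+1)}\rho^{-2(q+1)}\min\!\Big(\rho^{-2/\kappa},\ \rho^{-1}|\beta|^{-\frac{\kappa-2}{\kappa-1}}\Big)\right),
\end{equation*}
the first alternative being the $\kappa$-th-derivative van der Corput bound and the second the nondegenerate stationary-phase bound with the small curvature inserted.

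\textbf{Step 3 (integrating over directions) and the main obstacle.} Finally I would integrate the bound of Step~2 over $\phi$. On the generic set of directions it reads $\cO(\eps^{-2(q+1)}\rho^{-2(q+1)-1})$ and integrates trivially. Around each of the finitely many degenerate normals, split at $\beta_0:=\rho^{-(\kappa-1)/(\kappa-2)}$: for $|\beta|\le\beta_0$ one gets $\int_{-\beta_0}^{\beta_0}\rho^{-2/\kappa}\,d\beta=\rho^{-2/\kappa}\beta_0=\cO(\rho^{-1})$ because $2/\kappa+(\kappa-1)/(\kappa-2)>1$ for every $\kappa>2$, and for $\beta_0\le|\beta|\lesssim1$ one gets $\int_{\beta_0}^{c}\rho^{-1}|\beta|^{-(\kappa-2)/(\kappa-1)}\,d\beta=\cO(\rho^{-1})$ since $(\kappa-2)/(\kappa-1)\in(0,1)$ keeps the $\beta$-integral convergent at $0$. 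Adding the finitely many contributions gives $A(\rho)=\cO(\eps^{-2(q+1)}\rho^{-3-2q})$, and Step~1 finishes the proof. I expect the only real difficulty to be Step~2: making the stationary-phase / van der Corput analysis of $\widehat{\cQ}(\rho\omega)$ genuinely uniform in the direction, controlling the crossover between the nondegenerate estimate — whose constant itself blows up like $|\beta|^{-(\kappa-2)/(2(\kappa-1))}$ as $\omega\to\omega_0$ — and the degenerate van der Corput estimate, and verifying that the transverse ($\theta_\eps$) factor decouples cleanly from the tangential oscillation throughout. This is essentially a quantitatively sharpened, direction-uniform rerun of the appendix proof of Lemma~\ref{LEM: STATIONARY PHASE}.
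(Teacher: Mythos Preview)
Your proposal is correct and follows essentially the same route as the paper: reduce via the Parseval/Kendall argument to $\sum_{\bm\neq\bzero}\int_{\mathrm{SO}(2)}|\widehat{\cQ}(h^{-1}\eta^{-1}\bm)|^2\,d\eta$, localize around the finitely many degenerate normals, bound $|\widehat{\cQ}(\rho\omega)|$ by the $\kappa$-th order van der Corput estimate close to a degenerate direction and by the nondegenerate stationary-phase estimate with the small Hessian $\sim|\beta|^{(\kappa-2)/(\kappa-1)}$ away from it, and then integrate over the angle $\beta$ to recover the $\rho^{-3-2q}$ circular-average bound. The paper carries this out with a partition of unity and a three-case split at the threshold $|\delta|\sim\rho^{-(\kappa-2)/\kappa}$ (equivalently $|x(\delta)|\sim\rho^{-(\kappa-2)/(\kappa(\kappa-1))}$), whereas you package the two pointwise bounds as a $\min$ and split at $\beta_0=\rho^{-(\kappa-1)/(\kappa-2)}$; the thresholds differ but the resulting $\beta$-integrals both collapse to $\cO(\rho^{-1})$, so the conclusions coincide.
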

\begin{proof}
    Let $\cQ_{\Gamma}(\bx) := f(P_{\Gamma}\bx)\theta_{\eps}(d_{\Gamma}(\bx))$. The quadrature $\cI_h(f;\eta,\bxi)$ can be written as 
    \begin{equation*}
        \cI_h(f; \eta) = h^2\sum_{\bw\in \bbZ^2} \cQ_{\eta\Gamma + h\bxi}(h \bw) = h^2 \sum_{\bw\in \bbZ^2} \cQ_{\Gamma}(h\eta^{-1}(\bw - \bxi)).
    \end{equation*}
Similar to that in the proof of Theorem~\ref{THM: STAT 1}, we have
    \begin{equation*}
    \begin{aligned}
           \int_{\mathrm{SO}(2)} \int_{[0, 1]^2}|\cI(f; \eta,\bxi) - \cI(f) |^2 d\bxi d\eta &= \int_{\mathrm{SO}(2)} \sum_{\bw\in\bbZ^2, \bw\neq \bzero}|\widehat{\cQ}_{\Gamma}(h^{-1}\eta^{-1}\bw)|^2 d\eta.
    \end{aligned}
    \end{equation*}
    Next, we provide a slightly different estimate of $\widehat{\cQ}_{\Gamma}$ from the one in the proof of Lemma~\ref{LEM: STATIONARY PHASE}. Without loss of generality, we choose a partition of unity $\{\phi_j\}_{j=1}^N$ for $\Gamma$ such that each support $\supp \phi_j\subset \Gamma$ contains exactly one point with vanishing curvature to order $\kappa - 2$. The case where $\supp \phi_j$ does not contain any point with vanishing curvature is already handled in Theorem~\ref{THM: STAT 1}. Locally, the support of $\phi_j$ is represented by
    $$\supp\phi_j = \{(x, g_j(x)) \mid x\in (-r, r)\}\,, $$
    with $g_j(x)$ taking the form of ~\eqref{EQ: VANISH CUR}. Let $\bw = |\bw|(\sin\delta, \cos\delta)$. Then, according to~\eqref{EQ: STA PHASE}, if we set $\bx = (x(\delta), g_j(\delta))\in \Gamma$, then $\bx + s \bn(\bx)$ becomes a stationary point  if
    \begin{equation}\label{EQ:Sta Point Cond}
       \sin(\delta) + g'_j(x(\delta)) \cos(\delta) -  \frac{ s g''_j(x(\delta))}{(1 + |g'_j(x(\delta))|^2))^{3/2}} \left( \sin(\delta) + g'_j(x(\delta)) \cos(\delta)\right) = 0\,. 
    \end{equation}
    When $\eps $ is small enough, ~\eqref{EQ:Sta Point Cond} is equivalent to 
    \begin{equation*}
 \sin\delta + g'_j(x(\delta)) \cos\delta = 0\,.
    \end{equation*}
    This leads to $ |x(\delta)| = \Theta(|\delta |^{1/(\kappa - 1)})$ since locally $g'_j(x) = \Theta(|x|^{\kappa - 1})$. We estimate $\widehat \cQ_\Gamma$ in three different cases:
    \begin{enumerate}
        \item If $|x(\delta)| > r$, then there are no stationary points near the point with vanishing curvature. The estimate will be the same as that in Lemma~\ref{LEM: STATIONARY PHASE}, that is, 
        \begin{equation*}
            |\widehat{\cQ_{\Gamma} \phi_j}(\bw) |= \cO(\eps^{-q-1} |\bw|^{-1/2 - (q+1)})\,.
        \end{equation*}
        \item If $|x(\delta)| < |\bw|^{-\frac{\kappa - 2}{\kappa(\kappa - 1)}}$, the point is close to the origin. We can then use Lemma~\ref{LEM: van der Corput Revised}) (the revised van der Corput Lemma) and Lemma~\ref{LEM: STATIONARY PHASE} to conclude that
        \begin{equation*}
            |\widehat{\cQ_{\Gamma} \phi_j}(\bw)| = \cO(\eps^{-q-1} |\bw|^{-1/\kappa - (q+1)})\,.
        \end{equation*}
        \item If $ r \ge  |x(\delta)| \ge |\bw|^{-\frac{\kappa - 2}{\kappa(\kappa - 1)}} $, the point's Hessian is at order $\cO(|x(\delta)|^{\kappa - 2})$. We apply the standard stationary phase approximation with the additional factor $|x(\delta)|^{-(\kappa - 2)/2} = \Theta(|\delta|^{-\frac{\kappa - 2}{2\kappa - 2}})$ to get  
        \begin{equation*}
             |\widehat{\cQ_{\Gamma} \phi_j}(\bw)| = \cO(\eps^{-q-1} |\bw|^{-1/2 - (q+1)} |\delta|^{-\frac{\kappa - 2}{2\kappa - 2}})\,.
        \end{equation*}
    \end{enumerate}
    Therefore, we have 
    \begin{equation*}
    \begin{aligned}
            \int_{\mathrm{SO(2)}} |\widehat{\cQ_{\Gamma} \phi_j}(h^{-1}\eta^{-1}\bw)|^2 d\eta &= \cO\left( \left|\eps^{-q-1} |h^{-1}\bw|^{-1/2 - (q+1)}\right|^2 \right) \\ &\quad + \cO\left(| h^{-1} \bw|^{-\frac{\kappa - 2}{\kappa}} \left|\eps^{-q-1} |h^{-1}\bw|^{-1/\kappa - (q+1)}\right|^2 \right) \\ &\quad + \int_{|h^{-1}\bw|^{-\frac{\kappa - 2}{\kappa}}}^{\cO(1)} \cO\left( \left|\eps^{-q-1} |h^{-1}\bw|^{-1/2 - (q+1)} |\delta|^{-\frac{\kappa - 2}{2\kappa - 2}}\right|^2 \right)d\delta\,.
    \end{aligned}
    \end{equation*}
    The first two terms are both bounded by $\cO\left(\eps^{-2q-2} h^{1 + 2(q+1)} |\bw|^{-1- 2(q+1)}\right) $, and the last integral is bounded by
    \begin{equation*}
        \cO(\eps^{-2q-2}|\bw|^{-1-2(q+1)} h^{1+2(q+1)})\int_{|h^{-1}\bw|^{-\frac{\kappa - 2}{\kappa}}}^r  |\delta|^{-\frac{\kappa - 2}{\kappa - 1}} d\delta =  \cO(\eps^{-2q-2}|\bw|^{-1-2(q+1)} h^{1+2(q+1)})\,.
    \end{equation*}
    Taking $\eps = \cO(h^{\alpha})$ then gives us 
    \begin{equation*}
    \begin{aligned}
         \int_{\mathrm{SO}(2)} \sum_{\bw\in\bbZ^2, \bw\neq \bzero}|\widehat{\cQ_{\Gamma} \phi_j}(h^{-1}\eta^{-1}\bw)|^2 d\eta &= \sum_{\bw\in\bbZ^2, \bw\neq \bzero}\cO(\eps^{-2q-2}|\bw|^{-1-2(q+1)} h^{1+2(q+1)}) \\
         &= \cO(h^{1+2(q+1)(1-\alpha)} )\,.
    \end{aligned}
    \end{equation*}
    The last step is to use the Cauchy-Schwartz inequality and the finiteness of $N$ to conclude that
    \begin{equation*}
    \begin{aligned}
          \int_{\mathrm{SO}(2)} \sum_{\bw\in\bbZ^2, \bw\neq \bzero}|\widehat{\cQ_{\Gamma}}(h^{-1}\eta^{-1}\bw)|^2 d\eta &=    \int_{\mathrm{SO}(2)} \sum_{\bw\in\bbZ^2, \bw\neq \bzero}|\sum_{j=1}^N\widehat{\cQ_{\Gamma} \phi_j}(h^{-1}\eta^{-1}\bw)|^2 d\eta \\
          &\le N  \int_{\mathrm{SO}(2)} \sum_{\bw\in\bbZ^2, \bw\neq \bzero}|\widehat{\cQ_{\Gamma} \phi_j}(h^{-1}\eta^{-1}\bw)|^2 d\eta\\
          &=\cO(h^{1+2(q+1)(1-\alpha)} )\,.
    \end{aligned}
    \end{equation*}
    The proof is complete.
\end{proof}

In fact, one can work a little harder to remove the convexity requirement in the above theorem. 
The main idea is that, instead of using the representation~\eqref{EQ: VANISH CUR} for the curve, which sustains the convexity, we can represent the curve locally as $(x, g(x))\subset \bbR^2$ for $x\in (-r, r)$ that
\begin{equation}\label{EQ: VANISH CUR 2}
     g(x) =  \sgn(x) |x|^{\kappa} h(x),\quad h(x) \neq 0\quad\forall x\in (-r,r),\quad h\in C^2(-r, r)\,.
\end{equation}
 The proof of Lemma~\ref{LEM: van der Corput Revised} (the revised van der Corput Lemma) shows both~\eqref{EQ: VANISH CUR} and~\eqref{EQ: VANISH CUR 2} can be handled by the same procedure; see Remark~\ref{RMK:Nonconvex}. We can thus have the following corollary.
\begin{corollary}\label{COR: GENERAL CURVE}
   The same estimate in Theorem~\ref{THM: DEGEN STAT} holds for any closed boundary $\Gamma\in C^{\infty}$ if it only has finitely many points with vanishing curvature, whose maximal order is $(\kappa - 2)$ for $\kappa\in (2, \infty)$. 
\end{corollary}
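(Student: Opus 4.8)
The plan is to retrace the proof of Theorem~\ref{THM: DEGEN STAT} essentially verbatim, the sole modification being that at each point of vanishing curvature we adopt the non-convex local model~\eqref{EQ: VANISH CUR 2}, $g(x)=\sgn(x)|x|^{\kappa}h(x)$ with $h\in C^2$ and $h$ nonvanishing, in place of the convex model~\eqref{EQ: VANISH CUR}. As in that proof, one starts from
\begin{equation*}
\int_{\mathrm{SO}(2)}\int_{[0,1]^2}|\cI_h(f;\eta,\bxi)-\cI(f)|^2\,d\bxi\,d\eta = \int_{\mathrm{SO}(2)}\sum_{\bw\in\bbZ^2,\ \bw\neq\bzero}\bigl|\widehat{\cQ_{\Gamma}}(h^{-1}\eta^{-1}\bw)\bigr|^2\,d\eta,
\end{equation*}
and introduces a finite partition of unity $\{\phi_j\}_{j=1}^N$ on $\Gamma$. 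On the patches avoiding every vanishing-curvature point, the $d=2$ estimate of Lemma~\ref{LEM: STATIONARY PHASE} applies unchanged, contributing $\cO(h^{1+2(q+1)(1-\alpha)})$ exactly as in Theorem~\ref{THM: STAT 1}; so it remains to treat a single patch $\phi_j$ whose support contains one vanishing-curvature point, placed at the origin with local graph $g_j$ of the form~\eqref{EQ: VANISH CUR 2}.

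First I would record the geometric facts that survive the sign change. Because $h$ is nonvanishing, one still has $g_j'(x)=\Theta(|x|^{\kappa-1})$ and $g_j''(x)=\Theta(|x|^{\kappa-2})$ for $0<|x|\le r$ (now $g_j''$ changes sign at the origin — an inflection rather than a genuine extremum — but its \emph{modulus} obeys the same bound). Hence the stationary-point analysis behind~\eqref{EQ:Sta Point Cond} is unaffected, and for a direction $\bw=|\bw|(\sin\delta,\cos\delta)$ the stationary point still satisfies $|x(\delta)|=\Theta(|\delta|^{1/(\kappa-1)})$. I would then split $\widehat{\cQ_\Gamma\phi_j}(\bw)$ into the same three regimes as in the proof of Theorem~\ref{THM: DEGEN STAT}: (i) no stationary point inside $\supp\phi_j$, giving the nondegenerate bound $\cO(\eps^{-q-1}|\bw|^{-1/2-(q+1)})$; (ii) the stationary point lies in $|x(\delta)|<|\bw|^{-\frac{\kappa-2}{\kappa(\kappa-1)}}$, where one invokes the revised van der Corput Lemma~\ref{LEM: van der Corput Revised} together with Lemma~\ref{LEM: STATIONARY PHASE} to get $\cO(\eps^{-q-1}|\bw|^{-1/\kappa-(q+1)})$; and (iii) the intermediate range $|\bw|^{-\frac{\kappa-2}{\kappa(\kappa-1)}}\le|x(\delta)|\le r$, where the Hessian has size $\Theta(|x(\delta)|^{\kappa-2})$ and the classical stationary-phase formula yields the extra factor $|x(\delta)|^{-(\kappa-2)/2}=\Theta(|\delta|^{-\frac{\kappa-2}{2\kappa-2}})$. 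Squaring, integrating over $\delta$ (equivalently over $\mathrm{SO}(2)$) and using $\int_0^r|\delta|^{-\frac{\kappa-2}{\kappa-1}}d\delta<\infty$ (since $\frac{\kappa-2}{\kappa-1}<1$), each patch contributes $\cO(\eps^{-2q-2}h^{1+2(q+1)}|\bw|^{-1-2(q+1)})$; summing over $\bw\neq\bzero$ (the exponent $1+2(q+1)>2$ guarantees absolute convergence), over the finitely many $j$ via Cauchy--Schwarz, and substituting $\eps=\Theta(h^{\alpha})$ gives the claimed $\cO(h^{1+2(q+1)(1-\alpha)})$.

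The only genuinely new ingredient — and the step I expect to be the crux — is the assertion recorded in Remark~\ref{RMK:Nonconvex} that the revised van der Corput estimate Lemma~\ref{LEM: van der Corput Revised} applies to phases arising from~\eqref{EQ: VANISH CUR 2} and not merely from~\eqref{EQ: VANISH CUR}. Concretely, the phase in the oscillatory integral defining $\widehat{\cQ_\Gamma\phi_j}$ has its relevant lower-order derivative bounded below in modulus near the origin but possibly sign-changing there, reflecting the inflection. I would check that the proof of Lemma~\ref{LEM: van der Corput Revised} never uses the sign of that derivative: it requires only a one-sided lower bound on some derivative of the phase (for the higher-derivative case no monotonicity is needed at all, and where monotonicity of a lower-order derivative is invoked it is applied separately on the two half-intervals $x>0$ and $x<0$), and both properties are manifestly preserved when $|x|^{\kappa}$ is replaced by $\sgn(x)|x|^{\kappa}$. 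I would also note in passing that the tube construction itself is unaffected by the loss of convexity: a compact $C^{\infty}$ curve has positive reach, so for $\eps$ small enough the projection $P_\Gamma$ is still well defined on $T_\eps$ and $\cQ_\Gamma$ together with its Fourier transform is defined exactly as in Section~\ref{SEC: STATS}. Granting the extended van der Corput estimate, every inequality in the proof of Theorem~\ref{THM: DEGEN STAT} carries over with the word ``convex'' deleted, and the corollary follows.
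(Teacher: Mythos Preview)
Your proposal is correct and follows essentially the same approach as the paper. The paper's own argument for this corollary is in fact just the short paragraph preceding the statement: replace the convex local model~\eqref{EQ: VANISH CUR} by the non-convex model~\eqref{EQ: VANISH CUR 2} and invoke Remark~\ref{RMK:Nonconvex} to see that Lemma~\ref{LEM: van der Corput Revised} still applies, after which the proof of Theorem~\ref{THM: DEGEN STAT} goes through verbatim --- you have simply written out that sketch in full, including the (correct) observation that the dyadic argument in Appendix~C uses only the lower bound on $|\phi''|$, never its sign.
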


\subsection{Numerical experiments}
Here, we present some numerical simulations to verify the variance estimates in   Theorem~\ref{THM: DEGEN STAT}  and Corollary~\ref{COR: GENERAL CURVE} for the quadrature error under random rigid transformations. 

\subsubsection{Convex curve with vanishing curvatures}\label{SEC: VAR VANISH}
To numerically verify the variance estimate in Theorem~\ref{THM: DEGEN STAT}, we consider the convex curve
\begin{equation}\label{EQ: VANISH CUR EQ}
    \frac{(x-x_0)^4}{r^4} + \frac{(y-y_0)^2}{r^2} =1
\end{equation}
with $r=\frac{3}{4}$ and a random center $(x_0, y_0)$. The curvature vanishes to order two at $(x_0, y_0\pm r)$; See Figure~\ref{fig: vanish curvature}.  The integrand function is selected as
\begin{equation*}
    f(x, y) = \cos(x^2 - y) \sin(y^2 - x^3).
\end{equation*}
\begin{figure}[!htb]
    \centering
    \includegraphics[scale=0.4]{ 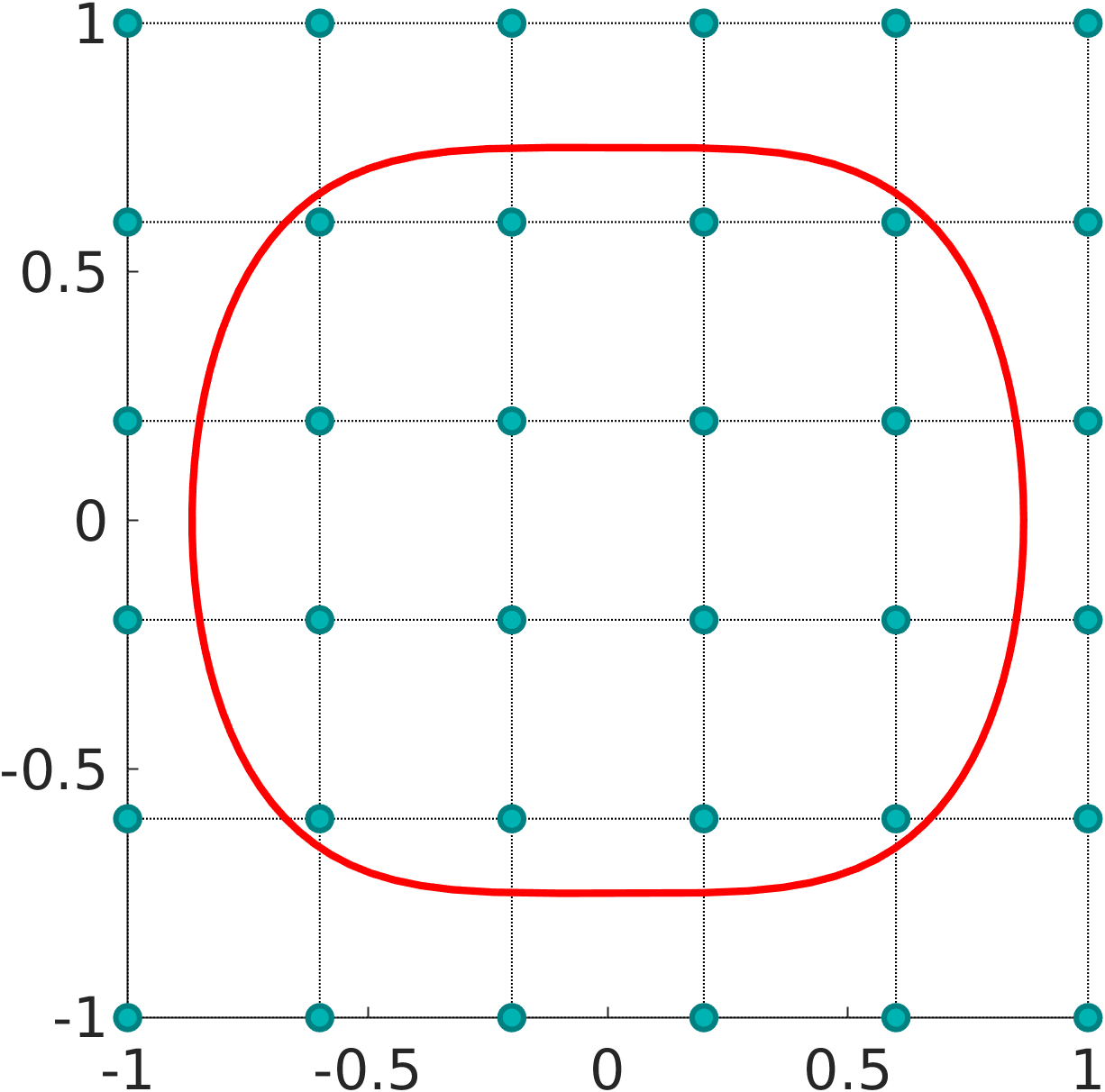}
    \caption{The smooth boundary with vanishing curvatures to order two in~\eqref{EQ: VANISH CUR EQ}.}
    \label{fig: vanish curvature}
\end{figure}
The weight function is $\theta_{\eps}^{\Delta}\in \cW_1$. Each experiment is performed with 32 random rigid transformations independently. The decay rates of $\cO(h^{5-4\alpha})$ and $\cO(h^{7-6\alpha})$ have been observed in Figure~\ref{fig: degen var decay 1} and Figure~\ref{fig: degen var decay 2} for different tube widths $\eps = \Theta(h^{\alpha})$, respectively.
\begin{figure}[!htb]
    \centering
    \includegraphics[scale=0.34]{ 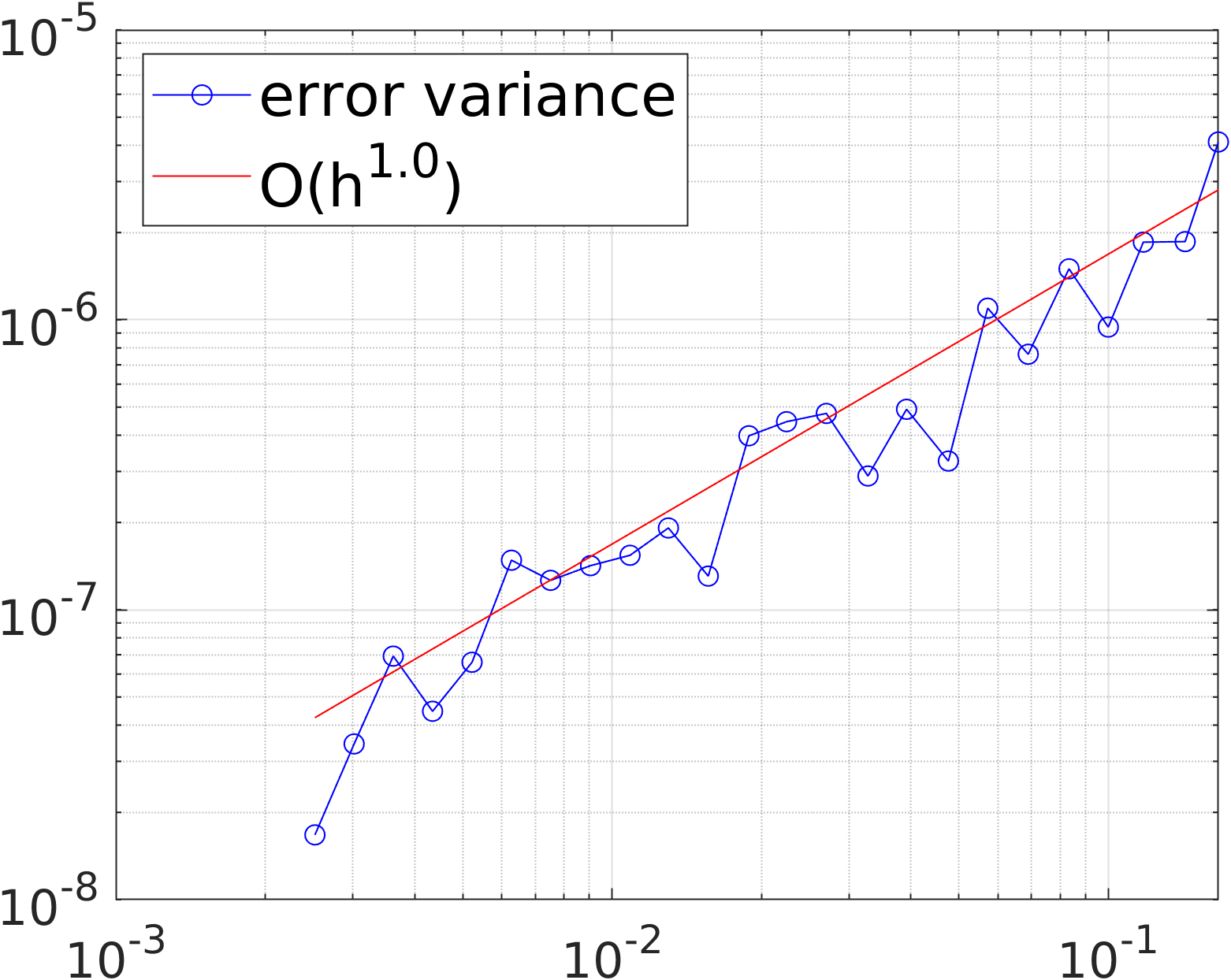}\qquad
    \includegraphics[scale=0.34]{ 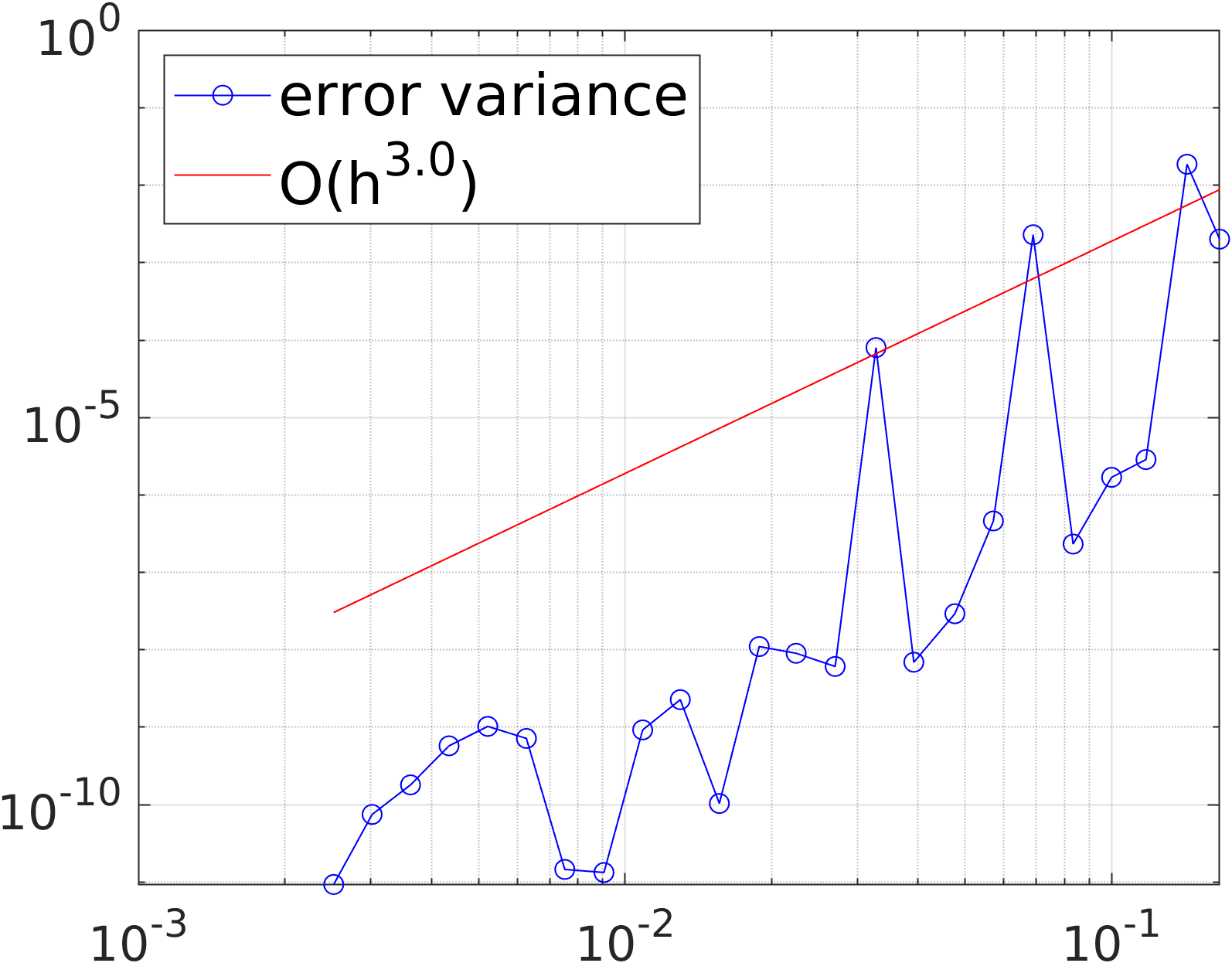}\qquad
    \includegraphics[scale=0.34]{ 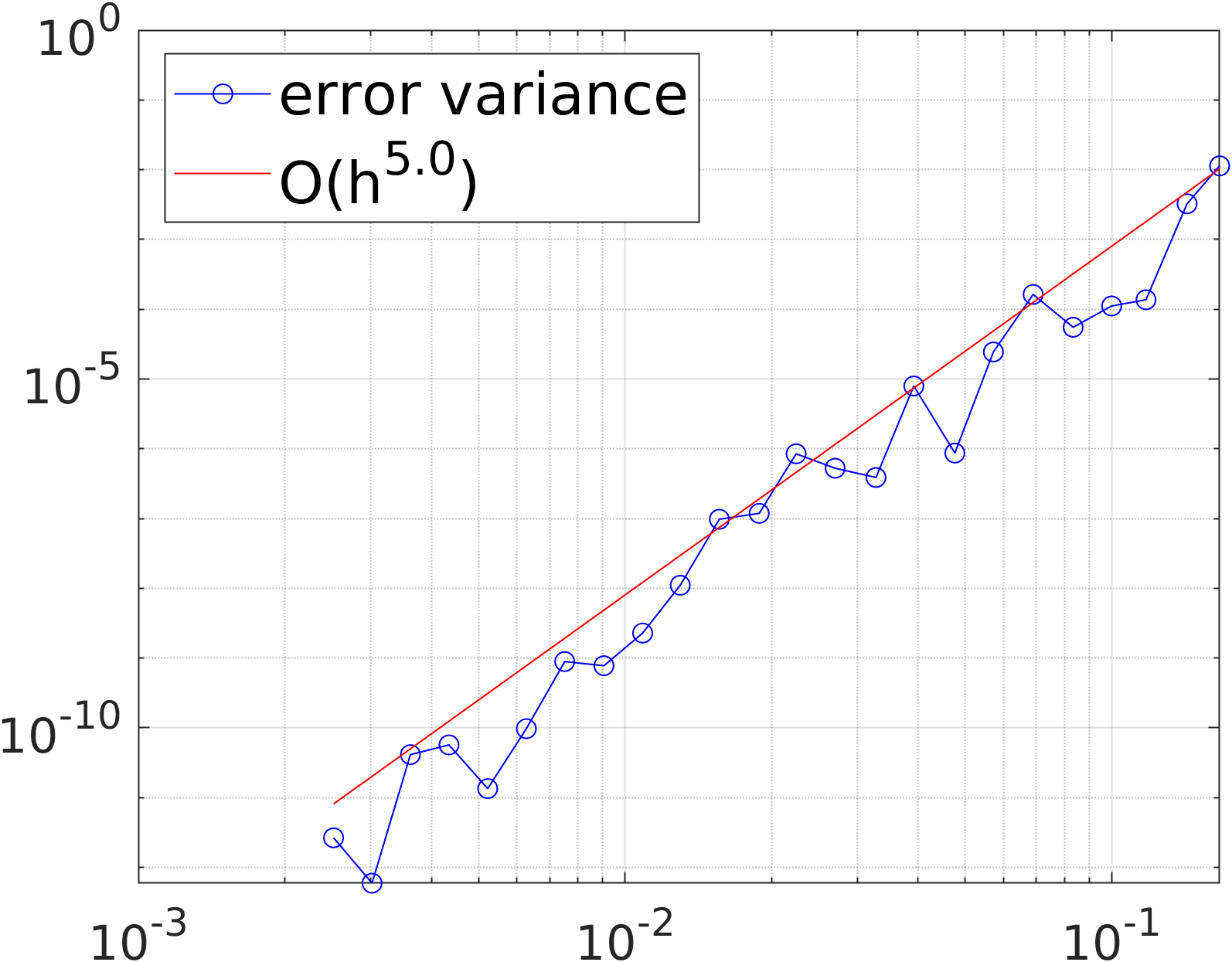}
    \caption{Variance of quadrature error for weight function $\theta_{\eps}^{\Delta}$. Left: $\eps = 2h$. Middle: $\eps = 2h^{\frac{1}{2}}$. Right: $\eps = 0.1$.}
    \label{fig: degen var decay 1}
\end{figure}
\begin{figure}[!htb]
    \centering
    \includegraphics[scale=0.34]{ 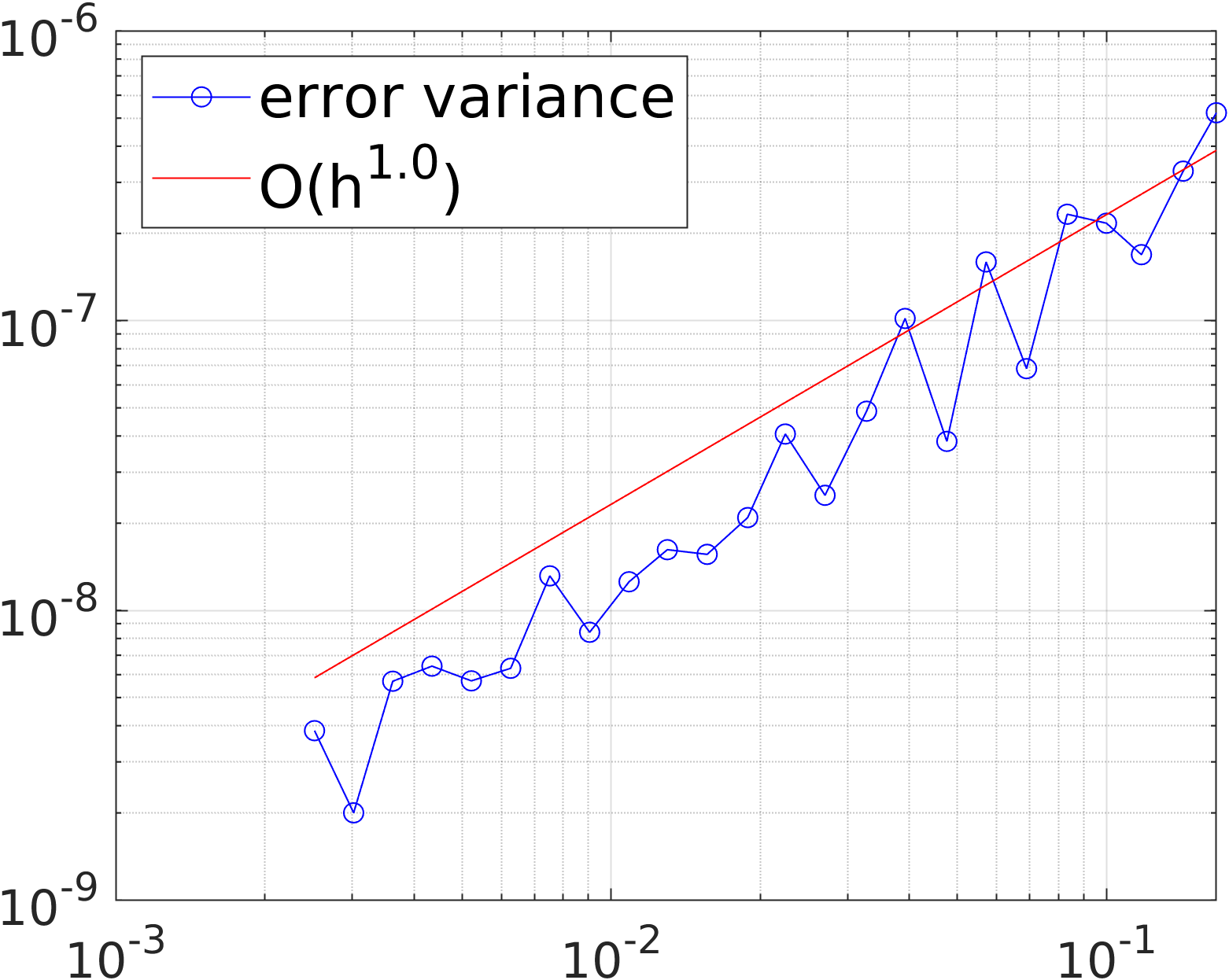}\qquad
    \includegraphics[scale=0.34]{ 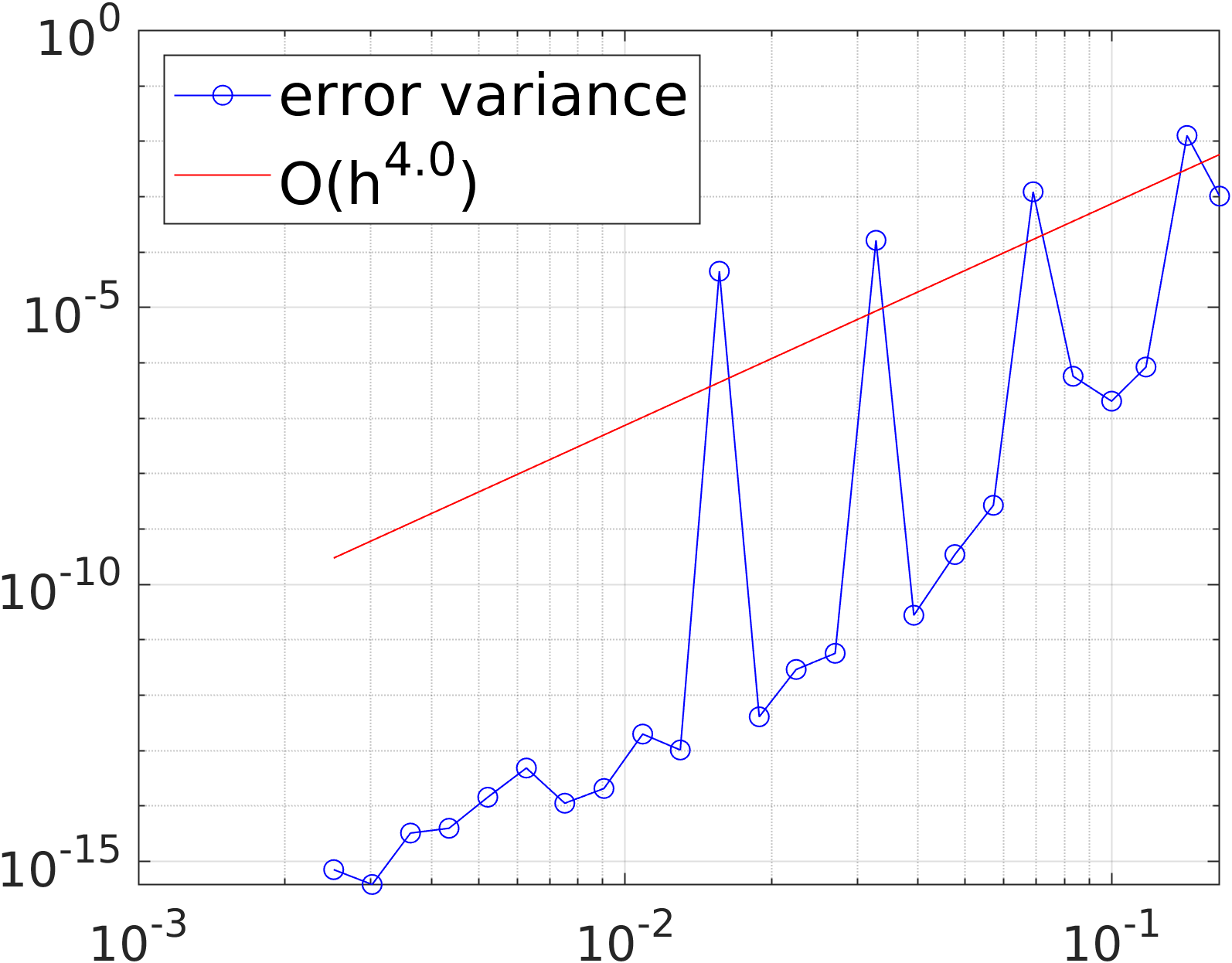}\qquad
    \includegraphics[scale=0.34]{ 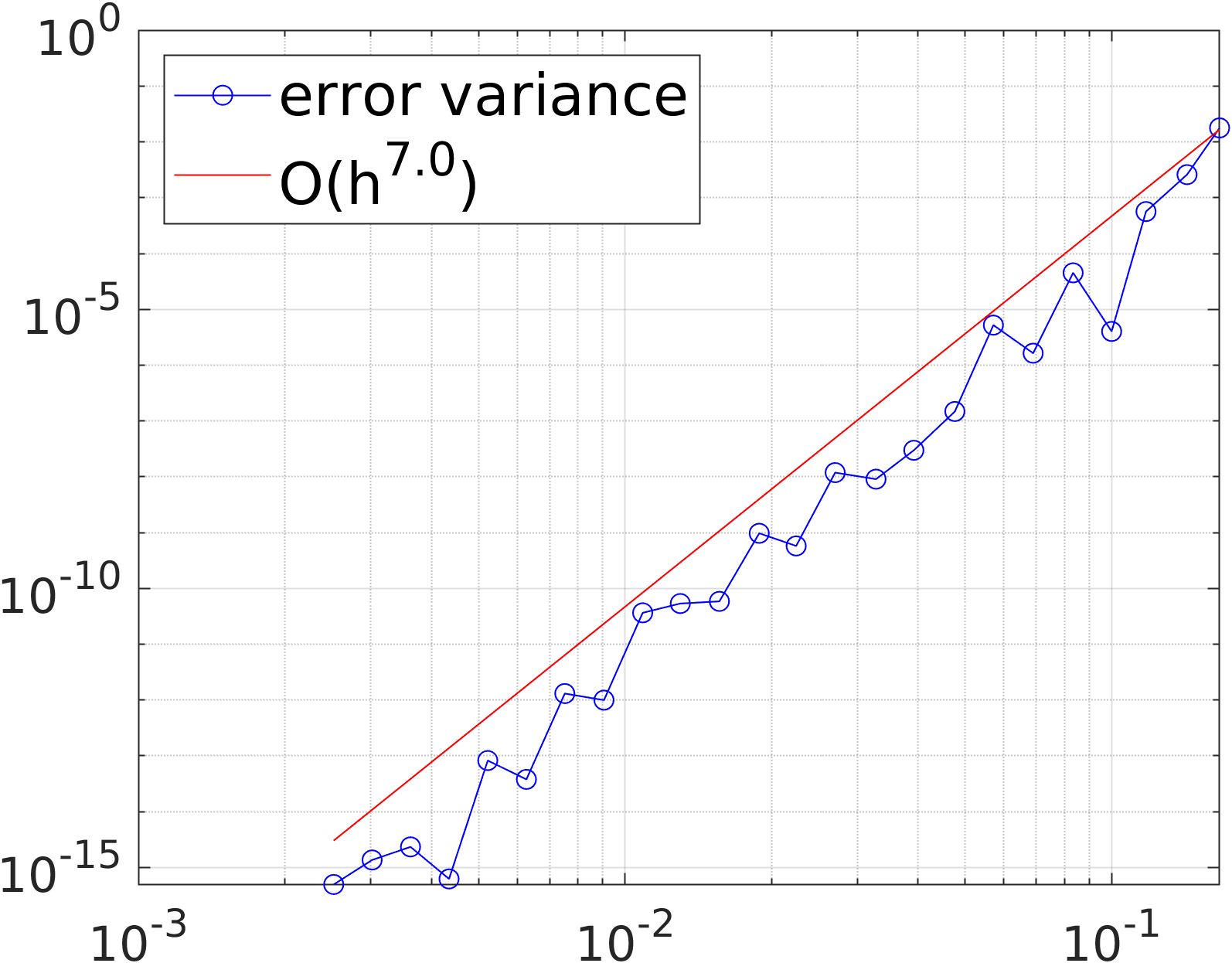}
    \caption{Variance of quadrature error for weight function $\theta_{\eps}^{\cos}$. Left: $\eps = 2h$. Middle: $\eps = 2h^{\frac{1}{2}}$. Right: $\eps = 0.1$.}
    \label{fig: degen var decay 2}
\end{figure}

\subsubsection{Star shape}\label{SEC: VAR STAR}
For the case of non-convex curves, we take the star-shaped curve in polar coordinates:
\begin{equation}\label{EQ: STAR EQ}
    \rho(\theta) = R + r \cos(m\theta)
\end{equation}
with parameters $R=0.75$, $r = 0.2$, and $m=3$; see Figure~\ref{fig: star-shape}.
\begin{figure}[!htb]
    \centering
    \includegraphics[scale=0.4]{ 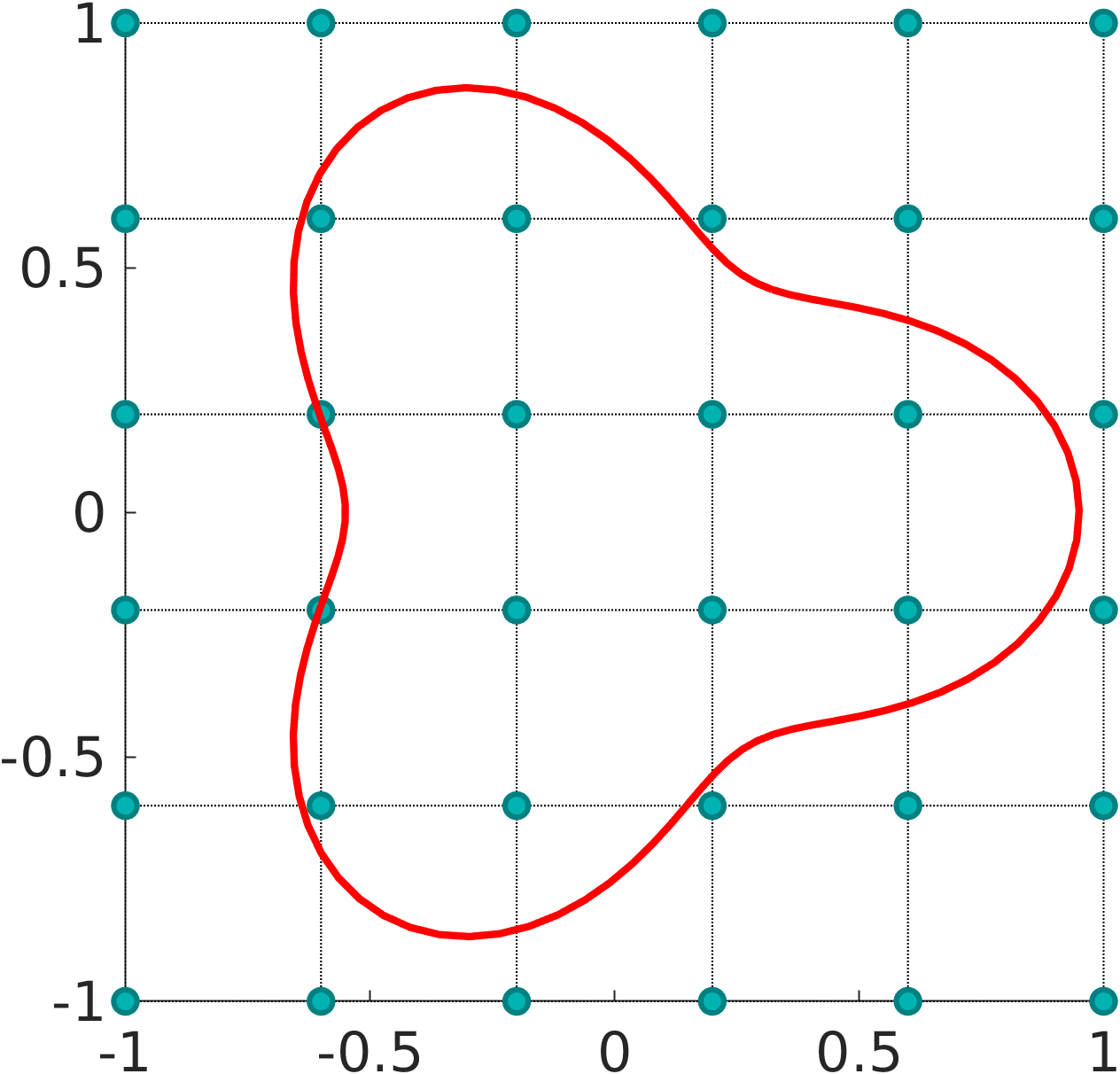}
    \caption{Star-shape boundary with parameter $R = 0.75$, $r = 0.2$, and $m=3$ in~\eqref{EQ: STAR EQ}.}
    \label{fig: star-shape}
\end{figure}
The curve is non-convex but consists of finitely many points with vanishing curvatures. We use it to verify the result in Corollary~\ref{COR: GENERAL CURVE}. The integrand function is selected as
\begin{equation*}
    f(x, y) = \cos(x^2 - y) \sin(y^2 - x^3).
\end{equation*}
The weight function is $\theta_{\eps}^{\Delta}\in \cW_1$. Each experiment is performed with 32 random rigid transformations independently. The decay rate of $\cO(h^{5-4\alpha})$ is observed in Figure~\ref{fig: star var decay 1}.
\begin{figure}[!htb]
    \centering
    \includegraphics[scale=0.34]{ 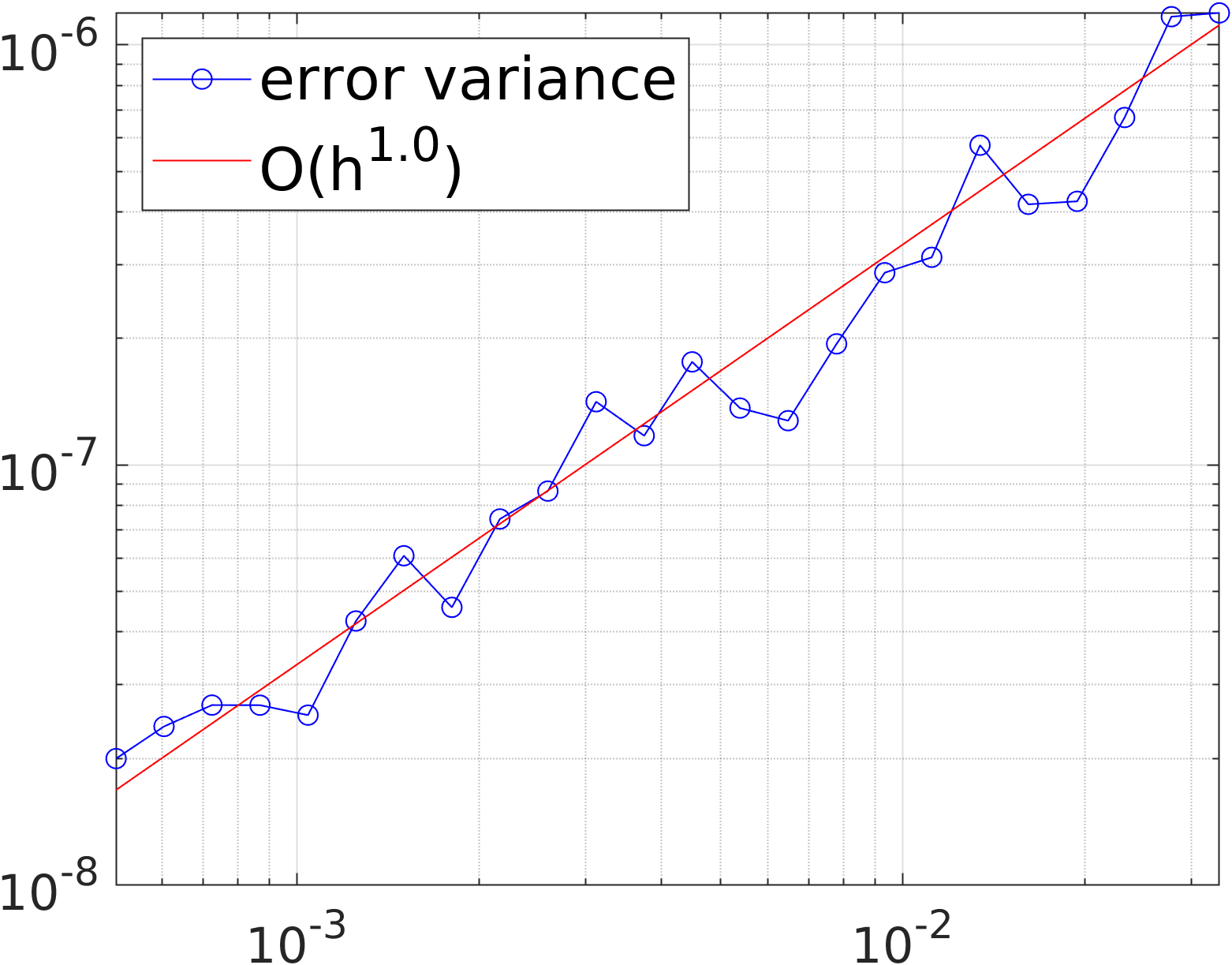}\qquad
    \includegraphics[scale=0.34]{ 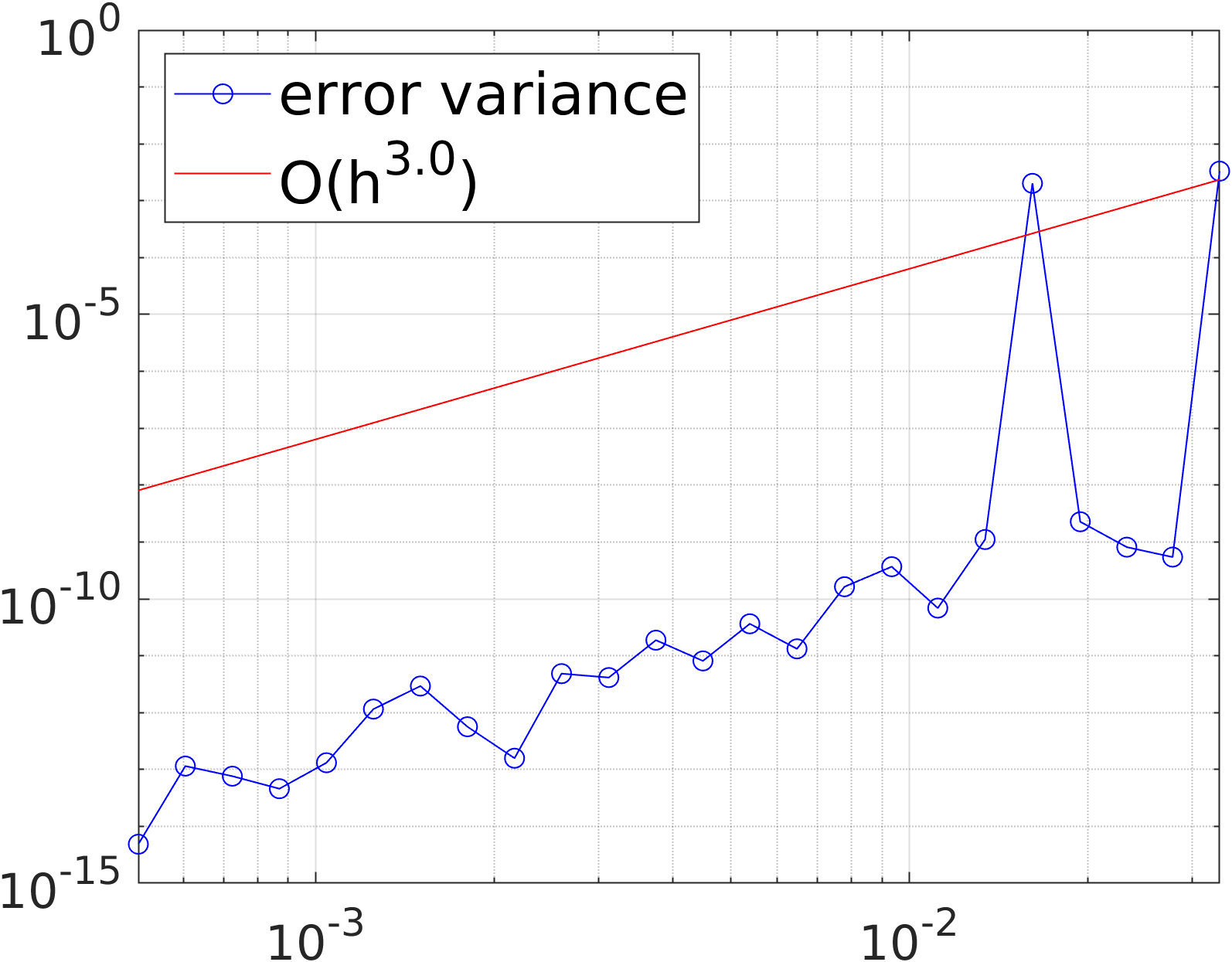}\qquad
    \includegraphics[scale=0.34]{ 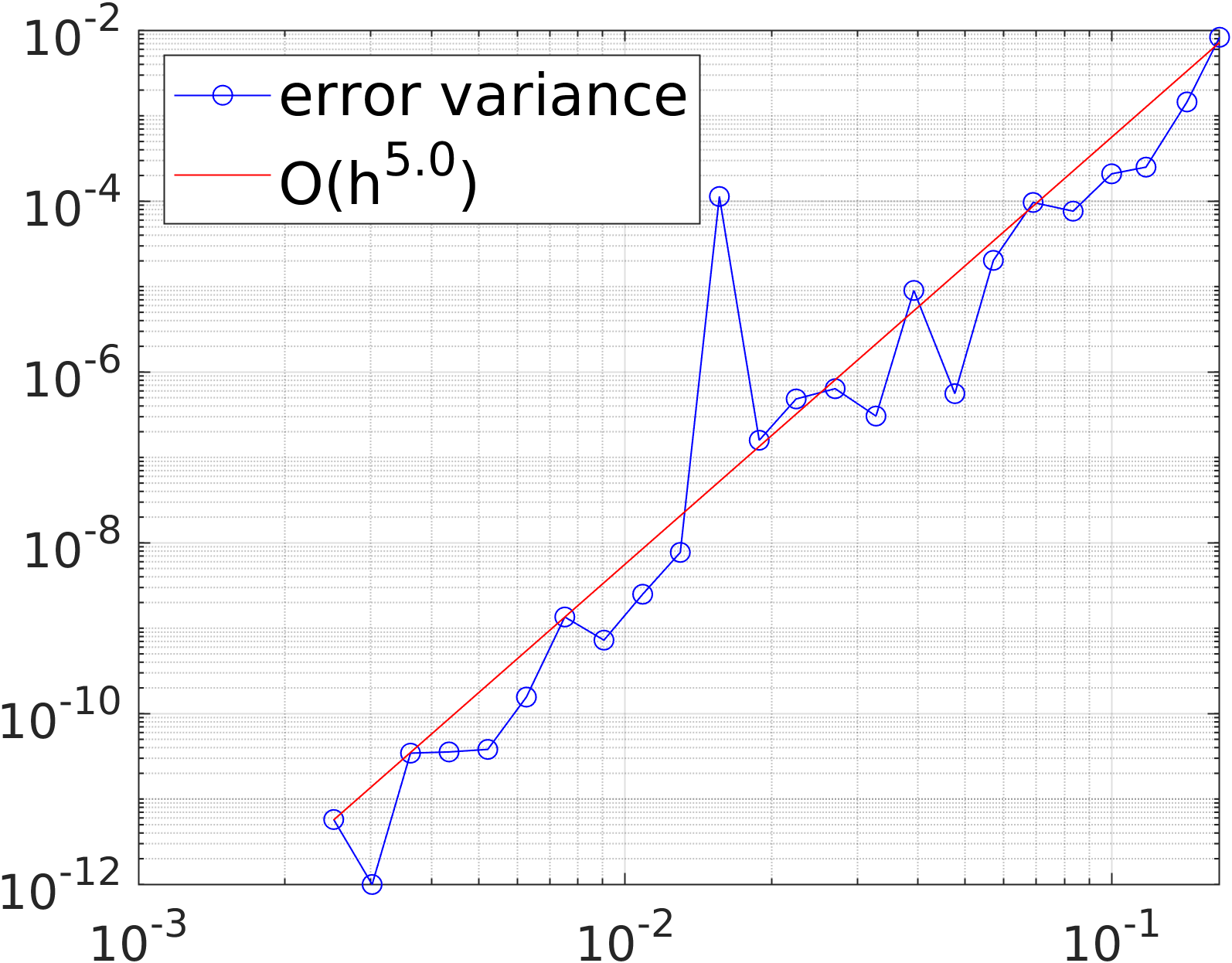}
    \caption{Variance of quadrature error for weight function $\theta_{\eps}^{\Delta}$. Left: $\eps = 2h$. Middle: $\eps = 2h^{\frac{1}{2}}$. Right: $\eps = 0.1$.}
    \label{fig: star var decay 1}
\end{figure}

\section{Further discussions on open curves}
\label{SEC: 4}

In this section, we consider the quadrature error on open curves instead of closed ones. For technical reasons in the derivation of the theory, mainly the fact that in the case of open curves, we cannot construct a partition of unity with $C_0^{\infty}$ functions, we have only weaker results compared to the ones in Section~\ref{SEC: 2} and Section~\ref{SEC: 3}. 

We consider two groups of curves: (i)  curves with finitely many inflection points (curvature zero) and (ii) line segments.

\subsection{General curves}
 Let $\Gamma$ be a collection of curves that $ \Gamma = \bigcup_{k=1}^m \Gamma_k$,
where each curve $\Gamma_k\in C^{\infty}$ has only a finite number of points with vanishing curvature, whose maximal order is $(\kappa - 2)$, $\kappa \in (2, \infty)$. Let $\cQ_k(\bx):= f(P_{\Gamma_k}\bx)\theta_{\eps}(d_{\Gamma_k}(\bx)) J_{\eps}(\bz, d_{\Gamma_k}(\bx))$, we define generalized implicit boundary integral on the collection $\Gamma$ as 
\begin{equation}
\begin{aligned}
    \cI_h (f) := h^2 \sum_{k=1}^m \sum_{\bz\in (h\bbZ)^2\cap T_{k, \eps}} \cQ_k(\bz),
\end{aligned}
\end{equation}
where $T_{k,\eps} = \{\bx + t\bn_k(\bx) \mid (\bx, t) \in \Gamma_{k}\times [-\eps, \eps]\}$ is the segmented tube associated with $\Gamma_k$ and $\bn_k(\bx)$ is the unit normal vector at $\bx\in\Gamma_k$. Clearly, the smooth closed boundary in Corollary~\ref{COR: GENERAL CURVE} is a special case for $m=1$. 
\begin{theorem}\label{LEM: GENERAL CURVES}
For any rotation $\eta\in \mathrm{SO}(2)$ and translation $\bxi\in [0, 1]^2$, we denote the generalized implicit boundary integral with tube width $\eps=\Theta(h^{\alpha})$ on the transformed curve collection $\eta \Gamma + h \bxi$ as $\cI_h(f; \eta, \bxi)$, then if there are no points with vanishing curvature or there exist finitely many points with vanishing curvature to the maximal order $\kappa - 2$ for $\kappa\in [2, \frac{3+\sqrt{5}}{2})$, then 
$$\int_{\mathrm{SO(2)}}\int_{[0, 1]^2} |\cI(f) - \cI_h(f; \eta, \bxi)|^2 d\bxi d\eta  = 
    \cO(h^{\frac{2}{\kappa} + {\frac{2}{\kappa - 1}(1-\alpha)} }).
    $$
\end{theorem}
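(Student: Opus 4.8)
The plan is to run the argument of the proof of Theorem~\ref{THM: DEGEN STAT} on each open arc $\Gamma_k$ and to add a new ingredient for the endpoints, which are exactly the places where a $C_0^\infty$ partition of unity is unavailable. First I would pass to the Fourier side as in Theorem~\ref{THM: STAT 1} and Theorem~\ref{THM: DEGEN STAT}: for each $k$ and each $\eta\in\mathrm{SO}(2)$, the function $\bxi\mapsto h^2\sum_{\bw}\cQ_k\bigl(h\eta^{-1}(\bw-\bxi)\bigr)$ is in $L^2([0,1]^2)$, its Fourier coefficients are $\widehat{\cQ_k}(h^{-1}\eta^{-1}\bw)$, and the $\bw=\bzero$ coefficient equals $\int \cQ_k = \int_{\Gamma_k}f\,d\sigma$. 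Parseval, a Cauchy--Schwarz step, and the finiteness of $m$ then reduce the theorem to controlling
\begin{equation*}
   \sum_{k=1}^{m}\int_{\mathrm{SO}(2)}\ \sum_{\bw\in\bbZ^2,\ \bw\neq\bzero}\bigl|\widehat{\cQ_k}(h^{-1}\eta^{-1}\bw)\bigr|^2\,d\eta .
\end{equation*}

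Next I localize on each $\Gamma_k$ with a partition of unity $\{\phi_j\}$ and split the $\phi_j$ into (i) those whose support is interior to $\Gamma_k$ and (ii) those whose support contains an endpoint $\bz$ of $\Gamma_k$. For type (i) the cutoff is genuinely $C_0^\infty$ along the curve, so the estimates of Lemma~\ref{LEM: STATIONARY PHASE} and Lemma~\ref{LEM: van der Corput Revised} apply exactly as in Theorem~\ref{THM: DEGEN STAT} (whether or not $\supp\phi_j$ contains a vanishing-curvature point), and these pieces contribute $\cO(h^{1+2(q+1)(1-\alpha)})$, which is already within the claimed bound since $q\ge0$ and $\kappa\ge2$ force $1+2(q+1)(1-\alpha)\ge\frac{2}{\kappa}+\frac{2}{\kappa-1}(1-\alpha)$. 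For a type (ii) piece, $\cQ_k\phi_j$ extended by $0$ has a genuine jump across the end-cap $\{\bz+t\bn_k(\bz):|t|\le\eps\}$, so non-stationary phase alone is not enough. In tubular coordinates $(\sigma,t)$ around $\bz$ I would carry out the $t$-integration first, which always yields the factor $\widehat{\theta}(\eps\,\bzeta\cdot\bn_k(\bz))$ (up to lower-order corrections proportional to the curvature), and then analyze the remaining one-dimensional oscillatory integral along the curve as the sum of an endpoint boundary term of size $\lesssim |\bzeta\cdot\bt_k(\bz)|^{-1}$ and a bulk term governed by (possibly degenerate) stationary phase.

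The decisive contribution comes from the cone of directions near the normal $\bn_k(\bz)$ at an endpoint where the curvature is permitted to vanish to order $\kappa-2$: there the would-be stationary point of the curve phase sits at distance $\sim|\bzeta\cdot\bt_k(\bz)|^{1/(\kappa-1)}$ from $\bz$, and by matching the end-cap boundary term against the van der Corput estimate of order $\kappa$ on the adjacent flat arc — keeping track of whether that stationary point falls inside the degenerate zone of width $\sim|\bzeta|^{-1/\kappa}$ — one obtains, uniformly in $\eta$,
\begin{equation*}
   \int_{\mathrm{SO}(2)}\bigl|\widehat{\cQ_k\phi_j}(h^{-1}\eta^{-1}\bw)\bigr|^2\,d\eta
   \;=\;\cO\!\Bigl(h^{\,\frac{2}{\kappa}+\frac{2}{\kappa-1}(1-\alpha)}\,|\bw|^{-\left(\frac{2}{\kappa}+\frac{2}{\kappa-1}\right)}\Bigr),
\end{equation*}
after using that $\alpha\le 1$ makes $\eps|h^{-1}\bw|=\Theta(h^{\alpha-1}|\bw|)\ge 1$ for every $\bw\neq\bzero$ and small $h$, so the gain from $\widehat\theta$ is always active. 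Summing over $\bw\in\bbZ^2\setminus\{\bzero\}$, the two-dimensional lattice series $\sum_{\bw\neq\bzero}|\bw|^{-(2/\kappa+2/(\kappa-1))}$ converges precisely when $\tfrac{2}{\kappa}+\tfrac{2}{\kappa-1}>2$, i.e. $\kappa^2-3\kappa+1<0$, i.e. $\kappa<\tfrac{3+\sqrt{5}}{2}$ — exactly the standing hypothesis — which yields $\cO(h^{\frac{2}{\kappa}+\frac{2}{\kappa-1}(1-\alpha)})$ for the endpoint pieces as well. Combining with the interior contribution and the finiteness of $m$ and of the partition gives the asserted estimate, and the case with no vanishing curvature is the specialization $\kappa=2$ (consistent with Corollary~\ref{COR: GENERAL CURVE}).

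The step I expect to be the main obstacle is precisely the endpoint estimate: establishing the decay of $\widehat{\cQ_k\phi_j}$ in the critical cone with constants independent of the rotation $\eta$, since this requires balancing the end-cap jump contribution against the degenerate stationary-phase contribution of the adjacent arc (and doing so for both admissible local shapes \eqref{EQ: VANISH CUR} and \eqref{EQ: VANISH CUR 2} of the curve, via Lemma~\ref{LEM: van der Corput Revised}). The bookkeeping in the remaining cones — near the tangent $\bt_k(\bz)$, where the $t$-integration gains nothing and one only gets an $\cO(h^{3-\alpha})$ contribution, and at the interior vanishing-curvature points — only reproduces terms already dominated by the above, so no further technical difficulty arises there.
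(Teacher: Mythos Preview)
Your high-level plan — Parseval reduction to $\sum_k\int_{\mathrm{SO}(2)}\sum_{\bw\neq\bzero}|\widehat{\cQ_k}(h^{-1}\eta^{-1}\bw)|^2\,d\eta$, splitting into interior and endpoint pieces, and summing the angular average over the lattice with the convergence criterion $\tfrac{2}{\kappa}+\tfrac{2}{\kappa-1}>2\iff\kappa<\tfrac{3+\sqrt5}{2}$ — is exactly what the paper does, and the final bound you quote for the endpoint piece coincides with theirs.

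Where you diverge from the paper, and where there is a genuine gap, is the mechanism for the endpoint estimate. Your plan is to perform the $t$-integration first and \emph{freeze} the result as $\widehat{\theta}(\eps\,\bzeta\cdot\bn_k(\bz))$, then treat the remaining arc integral by ``endpoint boundary term $+$ van der Corput bulk''. The freezing is not harmless: after the $t$-integration the amplitude is $\widehat{\theta_\eps}(\bn_k(\bx')\cdot\bzeta)$, whose $C^1$ norm in the arc variable picks up a factor $\eps|\bzeta|$ times the curvature, and on a patch of fixed length this is \emph{not} lower order as $|\bzeta|\to\infty$. That growth feeds straight into the $\|f\|_{C^1}$ constant in Lemma~\ref{LEM: van der Corput Revised}, so the bulk van der Corput bound you want to invoke deteriorates by exactly the factor you were hoping to gain from $\widehat\theta$. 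In short, factoring $\widehat\theta$ out and then running the two estimates separately does not close.

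The paper sidesteps this by a different device. It keeps the $s$-integral intact, introduces the primitive
\[
G(x,s)=\int_0^x e^{-2\pi i\phi(t,\zeta_1,\zeta_2,s)}\,dt,
\]
and integrates by parts once in $x$; because $G(0,s)=0$ the open endpoint $x=0$ produces no boundary term at all, and the other endpoint is killed by the partition of unity. The resulting expression $\int_0^r G(x,s)\,\partial_x[\,\cdots]\,dx$ is then split not into ``boundary vs.\ bulk'' but into the $x$-regions $\cI_1=\{|\cos(\psi(x)-\delta)|\le|\eps\bzeta|^{-1}\}$ and its complement $\cI_2$: on $\cI_1$ one uses only the uniform van der Corput bound $|G|=\cO(|\bzeta|^{-1/\kappa})$ together with a measure estimate for $|\cI_1|$; on $\cI_2$ the \emph{normal} component $\bn(\bx')\cdot\bzeta$ is large, so one integrates by parts $(q{+}1)$ times in $s$ there — and only there — before again invoking $|G|=\cO(|\bzeta|^{-1/\kappa})$. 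This is precisely what avoids the bad $C^1$ growth you would encounter: the $s$-gain is taken pointwise in $x$ on $\cI_2$, never through a frozen-normal approximation. The two contributions $\cE_1,\cE_2$ are then squared and integrated in the angle $\delta$, each giving $\cO(|\eps\bzeta|^{-2/(\kappa-1)}|\bzeta|^{-2/\kappa})$, which after the lattice sum yields the stated bound.
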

\begin{proof}
Without loss of generality, we may assume $\Gamma_k$ has at most one point with vanishing curvature at its endpoint, otherwise one can split the curve into shorter pieces. Let
       $$\Gamma_k \coloneqq \{(x, g_k(x)) \mid x\in [0, r]\}$$
     such that $g_k(x) = |x|^{\kappa} h(x)$, $h\in C^{\infty} [0,r]$ and $h(x) > 0$ for all $x\in [0,r]$. 
    When $\kappa=2$, then there are no points with vanishing curvature.  Using the derivation in Appendix~\ref{PRF: STATION}, the Fourier transform of $\cQ_k \chi_{T_{k,\eps}}$ becomes 
    \begin{equation}\label{EQ: FUBINI}
    \begin{aligned}
    \widehat{\cQ_k \chi_{T_{k,\eps}}}(\bzeta) &= \int_{T_{k,\eps}} \cQ_k (\bx) e^{-2\pi i \bx\cdot \bzeta} d\bx \\
    &= \int_{-\eps}^{\eps}  \theta_{\eps}(s)  \int_{\Gamma_k} f(\bx') e^{-2\pi i (\bx'+s \bn(\bx'))\cdot \bzeta}  d\sigma  ds\\
    &=\int_{-\eps}^{\eps}  \theta_{\eps}(s)  \int_{0}^r f(x) e^{-2\pi i \phi(x, \zeta_1, \zeta_2, s)}  \sqrt{1 + |g'_k(x)|^2}dx  ds 
    \end{aligned}
    \end{equation}
    where $d\sigma$ is the Lebesgue measure on $\Gamma_k$, $\chi_{T_{k,\eps}}$ denotes the characteristic function on $T_{k,\eps}$, and 
    \begin{equation*}
        \phi(x, \zeta_1, \zeta_2, s) =  x \zeta_1 + g_k(x) \zeta_2 + s \frac{-g'_k(x)\zeta_1 + \zeta_2}{\sqrt{1 + |g'_k(x)|^2}}\,.
    \end{equation*}
    Let $\{\chi_l\}_{l\ge 1}$ be a partition of unity over the interval $[0, r]$. It is clear that we only have to estimate $\widehat{\cQ_k \chi_{T_{k,\eps}}}$ for each integrand $f(x)\chi_{l}$ instead of $f(x)$. Therefore, we may further assume $f(r) = 0$. Let us define the auxiliary function $G(x, s)$ that 
    \begin{equation*}
        G(x, s) = \int_0^x e^{-2\pi i \phi(t, \zeta_1, \zeta_2, s)} dt\,.
    \end{equation*}
    Then, using integration by parts on~\eqref{EQ: FUBINI}, as well as the the assumption that $f(r) = 0$, we obtain
    \begin{equation}\label{EQ: CQK}
    \begin{aligned}
    \widehat{\cQ_k \chi_{T_{k,\eps}}}(\bzeta) &=\int_{-\eps}^{\eps}  \theta_{\eps}(s)  \int_{0}^r f(x) e^{-2\pi i \phi(x, \zeta_1, \zeta_2, s)}  \sqrt{1 + |g'_k(x)|^2}dx  ds \\
    &=\int_{-\eps}^{\eps}  \theta_{\eps}(s)  \left[ -\int_{0}^r  G(x, s)  \frac{d}{dx}\left( f(x)\sqrt{1 + |g'_k(x)|^2} \right) dx  \right] ds\,.
    \end{aligned}
    \end{equation}
    Let $\bx' = (x, g_k(x))$ and $\bn(\bx') = (\sin\psi(x), \cos \psi(x))$, then $\psi(x) = -\arctan(g'(x)) = \Theta(|x|^{\kappa - 1})$ and $\psi\in[-\psi_0, 0]$ for certain $\psi_0\in (0, \frac{\pi}{2})$.
    In the following, we use the notation $\bzeta = |\bzeta|(\sin\delta, \cos\delta)$ and estimate the integral in the following form
    \begin{equation}\label{EQ: EJ}
      \cE :=  \int_{-\eps}^{\eps}  \theta_{\eps}(s) \int_{0}^{r}  G(x,s) \frac{d}{dx}\left( f(x)\sqrt{1 + |g'_k(x)|^2} \right) dx ds .
    \end{equation}
    We split the integral in $x$ into two different regimes.\\[1ex]   
\noindent {\bf Case 1.} On $\cI_1 := \{x\in [0, r] \mid  |\cos(\psi(x) - \delta) |\le | \eps\bzeta|^{-1}\} $, we can use Lemma~\ref{LEM: van der Corput Revised} to find that $|G(x,s)| = \cO(|\bzeta|^{-1/\kappa})$, and the contribution on $\cI_1$ is then
        \begin{equation*}
        \begin{aligned}
     \cE_1 &= \int_{-\eps}^{\eps}  \theta_{\eps}(s) \int_{\cI_1} G(x,s) \frac{d}{dx}\left( f(x)\sqrt{1 + |g'_k(x)|^2} \right) dx ds  = \cO\left(|\cI_1||\bzeta|^{-1}\right) \\
     &=\begin{cases}
         \cO\left( (\eps |\bzeta|)^{\frac{-1}{\kappa - 1}} |\bzeta|^{-1/\kappa} \right)  \quad& \text{ if } |\sin(\delta - \frac{\pi}{2})|\le (\eps |\bzeta|)^{-1}\,,\\ 
          \cO\left( |\sin(\delta - \frac{\pi}{2})|^{-\frac{\kappa - 2}{\kappa - 1}} (\eps|\bzeta|)^{-1} |\bzeta|^{-1/\kappa} \right) \quad & \text{ if }  |\sin(\delta - \frac{\pi}{2})| > (\eps |\bzeta|)^{-1}\,.
     \end{cases}
        \end{aligned}
        \end{equation*}
        \noindent {\bf Case 2.} On $\cI_2 := \{x\in [0, r]\mid  |\cos(\psi(x) - \delta)| \in[  | \eps\bzeta|^{-1} , 1] \} $, we use integration by parts $(q+1)$ times on~\eqref{EQ: EJ}. The contribution on $\cI_2$ can then be estimated as
        \begin{equation*}
        \begin{aligned}
        \cE_2
        &= - \sum_{l=1}^L \int_{\cI_2}  \left[\frac{1}{2\pi i \bn(\bx')\cdot \bzeta}  \right]^{q+1} \left( G(x, s) \frac{d^q}{ds^q} \theta_{\eps} \Big|_{s_l^{+}}^{s_l^{-}} \right) \frac{d}{dx}\left[f(x)\sqrt{1 + |g'_k(x)|^2}  \right] dx  \\
       & \quad +  \sum_{l=1}^L \int_{s_l}^{s_{l+1}} \frac{d^{q+1}}{ds^{q+1}} \theta_{\eps}(s)\int_{\cI_2} \left[\frac{1}{2\pi i \bn(\bx')\cdot \bzeta}  \right]^{q+1} G(x,s)\frac{d}{dx}\left[ f(x)  \sqrt{1 + |g'_k(x)|^2}\right]  dx ds.
        \end{aligned}
       \end{equation*}
       Since $|\bn(\bx')\cdot \bzeta| = |\bzeta||\cos(\psi(x) - \delta)|$, we can then apply the estimate $G(x,s) = \cO(|\bzeta|^{-1/\kappa})$ again. This leads to
        \begin{equation*}
        \begin{aligned}
       \cE_2 &= \cO\left(\eps^{-(q+1)} |\bzeta|^{-(q+1 + 1/\kappa)}\right) \int_{\cI_2} \frac{1}{|\cos (\delta - \psi(x))|^{q+1}} dx \\
       &=  \cO\left(\eps^{-(q+1)} |\bzeta|^{-(q+1+1/\kappa)}\right) \int_{(\eps |\bzeta|)^{-1}}^{\frac{\pi}{2}} \frac{|\frac{\pi}{2} - \delta - \tau|^{-\frac{\kappa - 2}{\kappa - 1}}} {|\sin \tau |^{q+1}} d\tau  \quad (\text{let } \tau = \frac{\pi}{2} - \delta + \psi(x) )\\
       &\le  \cO\left(\eps^{-(q+1)} |\bzeta|^{-(q+1+1/\kappa)}\right) \int_{(\eps |\bzeta|)^{-1}}^{\frac{\pi}{2}} \frac{| (\eps |\bzeta|)^{-1} - \tau|^{-\frac{\kappa - 2}{\kappa - 1}}} {|\sin \tau |^{q+1}} d\tau \\
       &=  \cO\left( |\eps\bzeta|^{-\frac{1}{\kappa - 1}} |\bzeta|^{-\frac{1}{\kappa}}\right)\,.
        \end{aligned}
         \end{equation*}
The above calculations show that
\begin{equation}\label{EQ:CQK 1-2}
\begin{aligned}
     \int_0^{2\pi} |\cE_1|^2 d\delta &= \cO\left( |\bzeta|^{-2/\kappa} \left[ |\eps\bzeta|^{-2}+ |\eps\bzeta|^{-\frac{\kappa + 1}{\kappa - 1}}\right] \right)\,,\\
     \int_0^{2\pi} |\cE_2|^2 d\delta &= \cO(|\eps\bzeta|^{-\frac{2}{\kappa - 1}} |\bzeta|^{-2/\kappa})\,.
\end{aligned}
\end{equation}
To estimate~\eqref{EQ: CQK}, we observe that
\begin{equation}\nonumber
\begin{aligned}
     \int_{\textrm{SO(2)}} |\widehat{\cQ_k \chi_{T_{k,\eps}}}(\eta^{-1}\bzeta) |^2 d\eta &= \frac{1}{2\pi} \int_0^{2\pi} |\cE_1 + \cE_2|^2 d\delta \\&\le \frac{1}{4\pi} \int_0^{2\pi} \left( |\cE_1|^2 + |\cE_2|^2\right) d\delta\,.
\end{aligned}
\end{equation}
This, together with~\eqref{EQ:CQK 1-2}, gives
\begin{equation}\nonumber
      \int_{\textrm{SO(2)}} |\widehat{\cQ_k \chi_{T_{k,\eps}}}(\eta^{-1}\bzeta) |^2 d\eta = \begin{cases}
\cO(|\eps\bzeta|^{-2}|\bzeta|^{-\frac{2}{\kappa}} + |\eps\bzeta|^{-\frac{2}{\kappa - 1}}|\bzeta|^{-\frac{2}{\kappa}})\quad &\text{if }\kappa \in [2, 3]\,,\\
\cO(|\eps\bzeta|^{-\frac{\kappa+1}{\kappa - 1}}|\bzeta|^{-\frac{2}{\kappa}} + |\eps\bzeta|^{-\frac{2}{\kappa - 1}}|\bzeta|^{-\frac{2}{\kappa}})\quad& \text{if } \kappa > 3\,.
      \end{cases}
\end{equation}
Since $\frac{2}{\kappa - 1} \le \frac{\kappa + 1}{\kappa - 1}$ and $\frac{2}{\kappa - 1}\le 2$, we only need the second term. The variance of quadrature error is therefore bounded by 
\begin{equation}\nonumber
\begin{aligned}
    \sum_{\bzeta\in \bbZ^2 - \{\bzero\}} \int_{\textrm{SO(2)}} |\widehat{\cQ_k \chi_{T_{k,\eps}}}(\eta^{-1}h^{-1}\bzeta) |^2 d\eta &= \cO\left(\int_{\rho\ge 1} (\eps h^{-1}\rho )^{-\frac{2}{\kappa - 1}} (h^{-1}\rho)^{-\frac{2}{\kappa}} \rho d\rho\right) \\
        &=\cO(h^{\frac{2}{\kappa} + \frac{2}{\kappa - 1}(1-\alpha)}) \,,
        \end{aligned}
\end{equation}
where we need $\kappa<\frac{3 + \sqrt{5}}{2}\approx 2.618$ for the integral to be finite. This finishes the proof.
\end{proof}
We emphasize that the result in Theorem~\ref{LEM: GENERAL CURVES} is valid for smooth open curves as well as closed ones. However, as we commented before, the above estimates are not as strong as the one in Corollary~\ref{COR: GENERAL CURVE} for smooth closed boundaries with any $\kappa \in [2, \infty)$, due to the fact that in the current case, when cannot assume that the partition of unity we have is a smooth and vanishing-on-support partition. It is, however, possible to generalize the estimate in Theorem~\ref{LEM: GENERAL CURVES} to larger $\kappa$ by a careful computation of~\eqref{EQ: CQK}. In particular, as we will see later in the numerical experiments in Section~\ref{SEC: EXP 4}, the estimates in Theorem~\ref{LEM: GENERAL CURVES} appear to the sharp for curves without inflection points ($\kappa = 2$). However, for $\kappa > 2$, the estimate in Theorem~\ref{LEM: GENERAL CURVES} may still be far from being optimal. When the tube width $\eps=\Theta(h)$, the estimates in Theorem~\ref{LEM: GENERAL CURVES} and Corollary~\ref{COR: GENERAL CURVE} coincide for $\kappa = 2$, which implies open and closed curves may have the same variance of quadrature errors $\cO(h)$. 




\subsection{Segments with irrational slope}


The illustration of Figure~\ref{fig: straight line} shows that the worst scenario for straight segments is $\cO(1)$. There is, however, a dense subset of slopes that the quadrature error could be much better. We now show that if the slope of the straight segment is quadratically irrational (which is dense in $\bbR$), then the quadrature error can be reduced to $\cO(h^{2-\alpha}|\log h|)$ for a width of $\eps = \cO(h^{\alpha})$. The proof is based on the concept of low-discrepancy sequences. Let us first recall some of the necessary tools.

\begin{definition}
    The discrepancy of a set $P = \{\bx_1, \dots, \bx_N\}\subset \bbR^d$ is defined by 
    \begin{equation}
        D_{N}(P) = \sup_{B\in J} \left| \frac{|P\cap B| }{N} - \mathrm{meas}(B)\right|
    \end{equation}
    where $\mathrm{meas}(A)$ is the Lebesgue measure of $A$,  $J$ is the set of $d$-dimensional boxes of the form $  \prod_{i=1}^d [a_i, b_i)$, $0\le a_i < b_i \le 1$. 
\end{definition}
The following Theorem~\ref{THM: LOW DIS} shows that a set with small discrepancy serves as a quadrature rule of equal weights for functions of bounded variation. This quadrature rule is also called the quasi-Monte-Carlo method~\cite{bertrandias1960calcul,zaremba1966good,sloan1994lattice,hlawka1962angenaherten}, and it outperforms the standard Monte-Carlo methods in accuracy if $\{\bx_i\}_{i\ge 1}$ is a low-discrepancy sequence~\cite{kuipers2012uniform}.
\begin{theorem}[Koksma-Hlawka Inequality~\cite{brandolini2013koksma}]\label{THM: LOW DIS}
Let $f$ have bounded variation $V(f)$, then 
\begin{equation}\label{EQ: QMC}
    \left|\frac{1}{N}\sum_{i=1}^N f(\bx_i) - \int_{[0, 1]^d} f(\bx) d\bx \right| \le V(f) D_N(\bx_1,\dots, \bx_N).
\end{equation}
\end{theorem}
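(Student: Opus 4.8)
The plan is to prove this by the classical route: express the quadrature error as a Riemann--Stieltjes integral against the \emph{local discrepancy function} and then integrate by parts. First I would settle the one-dimensional case $d=1$, and then reduce the general case to it by an inclusion--exclusion (telescoping) identity over the coordinate subsets.

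For $d=1$, introduce the local discrepancy $\Delta(t) := \frac{1}{N}\#\{i : x_i < t\} - t$ on $[0,1]$; it is of bounded variation and, since the boxes in $J$ are half-open (so the point set may be taken inside $[0,1)$), satisfies $\Delta(0)=\Delta(1)=0$. Writing the empirical measure $\mu_N = \frac{1}{N}\sum_i \delta_{x_i}$ and Lebesgue measure $\lambda$, one has $\frac1N\sum_i f(x_i) - \int_0^1 f = \int_0^1 f\, d(\mu_N-\lambda)$, and integration by parts together with the vanishing boundary terms gives
\[
\frac{1}{N}\sum_{i=1}^N f(x_i) - \int_0^1 f(t)\,dt = -\int_0^1 \Delta(t)\, df(t).
\]
Bounding this Stieltjes integral by $\sup_t|\Delta(t)|$ times the total variation $V(f)$, and noting that $\sup_t|\Delta(t)|$ is the star discrepancy $D_N^*(P)$, which is $\le D_N(P)$ because anchored intervals $[0,t)$ lie in $J$, yields \eqref{EQ: QMC} for $d=1$.

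For general $d$, I would use the standard representation of $f$ through its mixed differentials with some coordinates anchored at $1$,
\[
f(\bx) = \sum_{u\subseteq\{1,\dots,d\}} (-1)^{|u|} \int_{\prod_{j\in u}[x_j,1]} \partial_u f(\mathbf{1}_{\bar u}, \bt_u)\, d\bt_u,
\]
where $\partial_u f$ is the mixed derivative in the variables of $u$, $\mathbf{1}_{\bar u}$ sets the variables outside $u$ to $1$, and $\partial_u f\, d\bt_u$ is replaced by the iterated Stieltjes differential $d_u f$ when $f$ is merely of bounded variation. Substituting into $\frac1N\sum_i f(\bx_i)$ and into $\int_{[0,1]^d} f$ and interchanging the (finite) sums with the integrals, the term indexed by $u$ pairs the $|u|$-dimensional local discrepancy of the coordinate-projected point set against $d_u f$; applying the one-dimensional estimate above coordinate by coordinate (Fubini) bounds it by the $u$-piece of the Hardy--Krause variation times $\sup|\Delta_u|\le D_N(P)$. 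Summing over $u$ recombines these pieces into $V(f)$, which is the claimed bound.

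The main obstacle is the multidimensional bookkeeping: making the decomposition precise under the sole hypothesis that $f$ has bounded variation (so that the mixed differentials are genuine signed measures), carrying out iterated Stieltjes integration by parts in several variables, and verifying that all boundary contributions from the faces $x_j=1$ cancel so that exactly the Hardy--Krause variation $V(f)$ is reconstituted. In the one-dimensional case everything is transparent; the subtleties live entirely in the combinatorics of the coordinate subsets and in the precise definition of $V(f)$ borrowed from \cite{brandolini2013koksma}. I note that for the application in this section --- line segments in $\bbR^2$, where the relevant quantity is a one-dimensional equidistribution of a sequence $\{n\beta \bmod 1\}$ --- only the $d=1$ case of the first two steps is actually invoked.
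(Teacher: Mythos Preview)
The paper does not supply a proof of this theorem at all: it is quoted with a citation to \cite{brandolini2013koksma} and then invoked as a black box in the proof of Theorem~\ref{THM: LINE INT}. So there is nothing to compare against in the paper itself.

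That said, your outline is the classical Hlawka--Zaremba route and is correct in substance. The one-dimensional step is exactly right. For the multidimensional step, what you wrote is the standard Hlawka identity anchored at the corner $\mathbf{1}$; the ``bookkeeping'' you flag is indeed the whole content, and it goes through provided $V(f)$ is understood in the Hardy--Krause sense (sum over all faces of the Vitali variations), which is the meaning of ``bounded variation'' implicit in the cited reference. One small slip: in $d=1$ your $\Delta(t)$ with strict inequality gives the anchored-at-$0$ star discrepancy, and the Hlawka identity you wrote anchors at $1$; the two conventions are equivalent up to the reflection $t\mapsto 1-t$, but you should keep them consistent when you write it out.

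Your closing remark about the application is not quite accurate. In the proof of Theorem~\ref{THM: LINE INT} the inequality is applied on the two-dimensional rectangle $R=\Gamma\times[-\eps,\eps]$ to the product $f(\bz)\theta_{\eps}(\bz)$, with the discrepancy $D_N(P)$ taken over genuine two-dimensional sub-boxes of $R$; the lattice-point counts needed to bound that discrepancy (Corollary~\ref{COR: DIS}) come from the continued-fraction arithmetic of the slope, but the Koksma--Hlawka step itself is the $d=2$ case, not $d=1$. Since the integrand has tensor-product structure, the Hardy--Krause variation is easy to compute and equals $\cO(\eps^{-1})$, as the paper uses.
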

However, the general $d$-dimensional  lattice $h\bbZ^d$ only has a discrepancy of $\cO(h)$. Hence, a direct application of the Koksma-Hlawka Inequality to the weight function $\theta_{\eps}(\bx)$ only implies an error bound of $\cO(\frac{h}{\eps})$. We will show below in Theorem~\ref{THM: LINE INT} that the error can improve if the lattice points are chosen appropriately. To choose such lattice points, we need the following result.
\begin{lemma}[{\cite[Theorem 2.3.3]{huxley1996area}}]\label{LEM: DIS}
    Let $R\subset\bbR^2$ be a convex polygon $P_1P_2\dots P_n$, $P_n = P_0$. Let the slope of side $P_{i-1} P_{i}$ be $\alpha_{i}$ and assume that $\alpha_i$ can be written in continued fraction  
    \begin{equation*}
        \alpha_{i} = a_{0, i} + 1/(a_{1, i} + \cdots ), 
    \end{equation*}    
    with convergents $\frac{r_{k, i}}{q_{k, i}}$, $k\in\bbN$. Then the difference between the lattice point count inside $R$ and area of $R$ is bounded by $\sum_{i=1}^n \rho(P_{i-1}P_i)$,
    where $\rho(P_{i-1}P_i) = \sum_{s=0}^k a_s + \frac{|P_{i-1}P_i|+1}{q_{k,i}}$, $|P_{i-1}P_i|$ being the length of the side, and $k$ being the largest integer that $q_{k,i}\le |P_{i-1}P_i|+1$.
\end{lemma}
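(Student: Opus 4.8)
The statement is Theorem~2.3.3 of~\cite{huxley1996area}, so one option is simply to quote it; for completeness I will indicate how its proof goes. The plan is to reduce the two-dimensional lattice-point count inside $R$ to a collection of one-dimensional ``sawtooth'' sums, one attached to each side $P_{i-1}P_i$, and then to control each such sum through the continued-fraction expansion of the slope $\alpha_i$.

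First I would slice $R$ by the vertical lines $x=m$, $m\in\bbZ$. Convexity guarantees that $\{y:(m,y)\in R\}$ is a (possibly empty) closed interval $[u(m),v(m)]$, so the number of lattice points on that line is $\lfloor v(m)\rfloor-\lceil u(m)\rceil+1$, which differs from $v(m)-u(m)$ only through the fractional parts of $v(m)$ and $u(m)$ (write $\{t\}$ for the fractional part of $t$). Summing over $m$ and subtracting $\mathrm{meas}(R)=\int (v(x)-u(x))\,dx$, the discrepancy splits into
\begin{equation*}
  \Bigl(\sum_m \bigl(v(m)-u(m)\bigr)-\int (v-u)\,dx\Bigr)\;-\;\sum_m\Bigl(\{v(m)\}-\tfrac{1}{2}\Bigr)\;+\;\sum_m\Bigl(\{u(m)\}-\tfrac{1}{2}\Bigr)\,.
\end{equation*}
Since $v$ and $u$ are piecewise affine, with the affine pieces being exactly the upper (resp.\ lower) sides of the polygon, the trapezoidal rule is exact on each piece, so the first bracket contributes only boundary terms at the vertices, which are dominated by the $\rho(P_{i-1}P_i)$. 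The last two sums break up side by side: on the part of a side $P_{i-1}P_i$ the boundary reads $v(m)=\alpha_i m+\beta_i$ over a set of consecutive integers of length at most $|P_{i-1}P_i|+1$.

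The core step is then the classical estimate for the discrepancy of the sequence $(\{\alpha_i m+\beta_i\})_m$ from uniformity, i.e.\ a bound on $\sum_m(\{\alpha_i m+\beta_i\}-\tfrac{1}{2})$. Using Ostrowski's representation of integers with respect to the denominators $q_{k,i}$ of the convergents of $\alpha_i$ (equivalently, the three-distance theorem), one obtains a bound of the shape $\sum_{s=0}^{k}a_{s,i}+(|P_{i-1}P_i|+1)/q_{k,i}$, where $k$ is the largest index with $q_{k,i}\le|P_{i-1}P_i|+1$; this is precisely $\rho(P_{i-1}P_i)$. Summing over the $n$ sides gives the asserted bound.

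I expect the main obstacle to be the nearly vertical sides, for which the vertical slicing degenerates (few columns, each carrying a long interval): there one must instead slice horizontally and reconcile the two decompositions, splitting $\partial R$ into $x$-monotone and $y$-monotone arcs so that each side is handled by the slicing that keeps its slope, or reciprocal slope, of moderate size. The remaining delicate points --- the bookkeeping of the vertex terms, and the case of rational $\alpha_i$ where the continued fraction terminates and the sawtooth sum is exactly periodic --- are routine; the full details are in~\cite{huxley1996area}.
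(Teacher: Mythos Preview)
The paper does not prove this lemma at all; it is stated with an explicit citation to Theorem~2.3.3 of~\cite{huxley1996area} and then used as a black box in the proof of Corollary~\ref{COR: DIS} and Theorem~\ref{THM: LINE INT}. Your first sentence, ``one option is simply to quote it,'' is therefore exactly what the paper does. The sketch you give beyond that --- vertical slicing, reduction to sawtooth sums $\sum_m(\{\alpha_i m+\beta_i\}-\tfrac12)$ along each side, and the Ostrowski/continued-fraction bound yielding $\rho(P_{i-1}P_i)$ --- is a faithful outline of Huxley's argument and is correct, including the caveat about nearly vertical sides; it simply goes further than the paper, which is content to cite the result.
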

This result allows us to show the following.
\begin{corollary}\label{COR: DIS}
    Let $R$ be a rectangle with side lengths $s$ and $t$, and the corresponding slopes $\alpha$ and $\beta$. If the terms of continued fraction of $\alpha$ and $\beta$ are uniformly bounded, then $||R\cap h\bbZ^d| - h^{-2} st |$ is bounded by $\cO(1) + \cO(\log(h^{-1}s+1)) + \cO(\log(h^{-1}t+1))$.
\end{corollary}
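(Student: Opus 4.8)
The plan is to rescale the problem to the unit lattice and then apply Lemma~\ref{LEM: DIS} to the rescaled rectangle, using the bounded-partial-quotients hypothesis to control the continued-fraction sums that appear. First I would note that $\bx\in R\cap h\bbZ^2$ if and only if $h^{-1}\bx\in (h^{-1}R)\cap\bbZ^2$, so that $|R\cap h\bbZ^2| = |(h^{-1}R)\cap\bbZ^2|$ while $\mathrm{meas}(h^{-1}R)=h^{-2}st$. The set $h^{-1}R$ is again a rectangle, with the \emph{same} slopes $\alpha,\beta$ (slopes are scale invariant) and with side lengths $h^{-1}s$ and $h^{-1}t$. Applying Lemma~\ref{LEM: DIS} to the convex polygon $h^{-1}R$, a $4$-gon whose sides come in two parallel pairs, the discrepancy $\big||R\cap h\bbZ^2|-h^{-2}st\big|$ is bounded by $2\rho_{\alpha}+2\rho_{\beta}$, where $\rho_\gamma$ denotes the value of $\rho(P_{i-1}P_i)$ for a side of slope $\gamma\in\{\alpha,\beta\}$, the side of slope $\alpha$ having length $\ell_\alpha:=h^{-1}s$ and that of slope $\beta$ having length $\ell_\beta:=h^{-1}t$.

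The core of the argument is then to show $\rho_\gamma=\cO(\log(\ell_\gamma+1))$ whenever the partial quotients of $\gamma$ are bounded by a fixed $M$. Writing $\gamma=a_0+1/(a_1+\cdots)$ with convergent denominators $q_j$ (so $q_{-1}=0$, $q_0=1$, $q_j=a_jq_{j-1}+q_{j-2}$), the recursion forces $q_j\ge q_{j-1}+q_{j-2}$, hence the $q_j$ grow at least geometrically. Since $k$ is the largest index with $q_k\le\ell_\gamma+1$, this gives $k=\cO(\log(\ell_\gamma+1))$, and therefore $\sum_{s=0}^k a_s\le M(k+1)=\cO(\log(\ell_\gamma+1))$. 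For the remaining term $\tfrac{\ell_\gamma+1}{q_k}$, maximality of $k$ yields $q_{k+1}>\ell_\gamma+1$, while $q_{k+1}=a_{k+1}q_k+q_{k-1}\le(M+1)q_k$; combining, $q_k>(\ell_\gamma+1)/(M+1)$, so $\tfrac{\ell_\gamma+1}{q_k}<M+1=\cO(1)$. Summing the two pieces gives $\rho_\gamma=\cO(\log(\ell_\gamma+1))$, with implied constant depending only on $M$.

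Putting $\ell_\alpha=h^{-1}s$ and $\ell_\beta=h^{-1}t$ back in yields $\big||R\cap h\bbZ^2|-h^{-2}st\big|\le 2\rho_\alpha+2\rho_\beta=\cO(1)+\cO(\log(h^{-1}s+1))+\cO(\log(h^{-1}t+1))$, which is the assertion. The step I expect to be the main obstacle is the remainder estimate $\tfrac{\ell_\gamma+1}{q_k}=\cO(1)$: it is precisely here that boundedness of the partial quotients is used a second, essential time, since without it $q_k$ could lag far behind $\ell_\gamma$ and this term would be uncontrolled. A minor point worth stating explicitly is that the hypothesis implicitly forces $\alpha,\beta$ to be irrational (otherwise the continued fraction terminates and $k$ saturates at its finite length), which is exactly the regime of interest here, e.g.\ for quadratic irrationals, whose partial quotients are eventually periodic and hence bounded.
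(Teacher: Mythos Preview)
Your proposal is correct and follows essentially the same approach as the paper: rescale to the unit lattice, apply Lemma~\ref{LEM: DIS}, and use the geometric growth of the convergent denominators $q_j$ to bound $k$. The paper's proof is terser---it only records $q_{k+2}>2q_k$ and the resulting bound on $k$---whereas you spell out explicitly both the rescaling step and the second place where boundedness of the partial quotients is used, namely the estimate $(\ell_\gamma+1)/q_k\le M+1$; both are genuine ingredients, so your fuller write-up is an improvement in clarity.
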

\begin{proof}
For continued fraction of $\alpha$ (and the same for $\beta$), the convergents $r_k/q_k$ satisfy $q_{k+2} > 2 q_k$. Therefore, $k \le \frac{2\log (s+1)}{\log 2} + 1$; see~\cite{huxley1996area} for more detailed discussion. The conclusion then follows directly from Lemma~\ref{LEM: DIS}. A similar conclusion can be found in~ \cite[Theorem A3]{hardy1922some}. 
\end{proof}

We are now ready to prove the first main result of this section.
\begin{theorem}\label{THM: LINE INT}
Let $\Gamma\subset \bbR^2$ be a segment with a quadratically irrational slope. Then the quadrature error of integrating the function $f\in C(\Gamma)$, with tube width $\eps = \Theta(h^{\alpha})$ ($\alpha \in [0, 1]$), is given as
\begin{equation}
        | \cI(f) - \cI_h(f) |= \cO( h^{2-\alpha}\log(h))\,,
\end{equation}
where $\cI_h(f)$ is defined as 
\begin{equation}\nonumber
       \cI_h (f) := h^2\sum_{\bz\in (h\bbZ)^2\cap T_{\eps}} f(\bz) \theta_{\eps}(\bz),    
\end{equation}
with $T_{\eps} = \{\bx + t \bn\mid (\bx, t) \in \Gamma\times [-\eps, \eps]\}$ and $\bn$ being the unit normal vector for the segment.
\end{theorem}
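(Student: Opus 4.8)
The plan is to use that, for a straight segment, the tube $T_\eps$ is nothing but a thin rectangle carried at the oblique slope of $\Gamma$, so that $\cI_h$ becomes an equal-weight cubature over the points of $(h\bbZ)^2$ lying in that rectangle; because the slope is a quadratic irrational, those points form a low-discrepancy set, and the Koksma--Hlawka inequality (Theorem~\ref{THM: LOW DIS}) converts this into the asserted rate. First I pass to orthonormal coordinates $(\xi,\eta)$, $\xi$ along $\Gamma$ and $\eta$ along the normal $\bn$, in which $T_\eps$ is the rectangle $R=[0,\ell]\times[-\eps,\eps]$, $\ell:=|\Gamma|$, with $d_\Gamma(\bx)=\eta$ and $J_\eps\equiv 1$ on it. Putting $F(\bx):=f(P_\Gamma(\bx))\,\theta_\eps(d_\Gamma(\bx))=f(\xi)\theta_\eps(\eta)$ and extending by zero outside $T_\eps$, we have $\cI(f)=\int_R F\,d\bx$ and $\cI_h(f)=h^2\sum_{\bz\in(h\bbZ)^2\cap R}F(\bz)$. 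In the original coordinates two sides of $R$ carry the slope $\alpha_0$ of $\Gamma$ and the other two carry $-1/\alpha_0$; since a line of irrational slope meets $h\bbZ^2$ in at most one point, $\partial R$ contains at most four lattice points, whose total contribution is $\cO(h^2\eps^{-1})$, so the open/closed ambiguity in $T_\eps$ is harmless.

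Next I handle the arithmetic. If $\alpha_0$ is a quadratic irrational, so is $-1/\alpha_0$, and by Lagrange's theorem both have eventually periodic continued fractions, hence uniformly bounded partial quotients; therefore Corollary~\ref{COR: DIS} applies to $R$ and to every sub-rectangle $S\subset R$ with sides parallel to those of $R$. Writing its side lengths as $s\le\ell$ and $t\le 2\eps$, this gives
\begin{equation*}
\bigl|\,|(h\bbZ)^2\cap S|-h^{-2}\,\mathrm{meas}(S)\,\bigr|=\cO(|\log h|),
\end{equation*}
with a constant depending only on $\Gamma$. Rescaling $R$ onto $[0,1]^2$ by the affine map $T$ that sends the sides of $R$ to those of the square, and setting $P:=T\bigl((h\bbZ)^2\cap R\bigr)$, $N:=|P|$, the case $S=R$ gives $N=h^{-2}\mathrm{meas}(R)+\cO(|\log h|)$, while the general case (each box of $[0,1]^2$ is the $T$-image of such a sub-rectangle) yields
\begin{equation*}
D_N(P)=\sup_{B}\Bigl|\tfrac{|P\cap B|}{N}-\mathrm{meas}(B)\Bigr|=\cO\!\left(\frac{h^{2}|\log h|}{\mathrm{meas}(R)}\right)=\cO\!\left(h^{2}\eps^{-1}|\log h|\right),
\end{equation*}
using $\mathrm{meas}(R)=2\ell\eps$.

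Now I bring in Koksma--Hlawka. The rescaled integrand $\widetilde F:=F\circ T^{-1}$ is (a function of $\xi$ of bounded variation) times the rescaled weight $\eps^{-1}\theta(\cdot)$, which vanishes at the endpoints of its interval, so its Hardy--Krause variation is $V(\widetilde F)=\cO(\eps^{-1})$. Theorem~\ref{THM: LOW DIS} then gives $\bigl|\frac1N\sum_{\bp\in P}\widetilde F(\bp)-\int_{[0,1]^2}\widetilde F\bigr|=\cO\bigl(V(\widetilde F)D_N(P)\bigr)=\cO(h^{2}\eps^{-2}|\log h|)$. Since $\cI_h(f)=h^2N\cdot\frac1N\sum_{\bp}\widetilde F(\bp)$ and $\cI(f)=\mathrm{meas}(R)\int_{[0,1]^2}\widetilde F$, I write
\begin{equation*}
\cI_h(f)-\cI(f)=h^2N\Bigl(\tfrac1N\sum_{\bp\in P}\widetilde F-\int_{[0,1]^2}\widetilde F\Bigr)+\bigl(h^2N-\mathrm{meas}(R)\bigr)\int_{[0,1]^2}\widetilde F,
\end{equation*}
and use $h^2N=\cO(\eps)$, $h^2N-\mathrm{meas}(R)=\cO(h^2|\log h|)$, and $\bigl|\int_{[0,1]^2}\widetilde F\bigr|=\mathrm{meas}(R)^{-1}|\cI(f)|=\cO(\eps^{-1})$ to conclude $|\cI(f)-\cI_h(f)|=\cO(h^{2}\eps^{-1}|\log h|)$. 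With $\eps=\Theta(h^\alpha)$ this is $\cO(h^{2-\alpha}\log h)$, which is the claim.

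The main obstacle is the discrepancy estimate of the second step: controlling the number of lattice points in a thin, obliquely tilted rectangle is exactly where quadratic irrationality is indispensable --- for a generic irrational slope one would only get $o(h)$ there, too weak once multiplied by the $\eps^{-1}$ coming from $\theta_\eps$, and for a rational slope one gets the $\cO(1)$ failure of Figure~\ref{fig: straight line}. The one delicate point in the remaining bookkeeping is that the slenderness of $R$ enters twice, as $\mathrm{meas}(R)^{-1}\asymp\eps^{-1}$ in the relative discrepancy and again as $\eps^{-1}$ in $V(\widetilde F)$ through the spike of $\theta_\eps$; these multiply to an apparent $\eps^{-2}$ that is restored to $\eps^{-1}$ only by the normalization $h^2N=\cO(\eps)$, so one must verify that both pairs of sides of $R$ have admissible slopes and track this cancellation carefully. (As usual the Koksma--Hlawka bound needs $f$ of bounded variation, which is provided by the paper's standing smoothness assumptions.)
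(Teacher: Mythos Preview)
Your proposal is correct and follows essentially the same route as the paper: both arguments rotate to coordinates aligned with $\Gamma$, invoke Corollary~\ref{COR: DIS} to bound the lattice discrepancy in the tilted rectangle by $\cO(|\log h|)$, and then feed this into the Koksma--Hlawka inequality (Theorem~\ref{THM: LOW DIS}) together with $V(f\theta_\eps)=\cO(\eps^{-1})$. Your version is in fact a bit more careful than the paper's in two places: you explicitly observe that the perpendicular slope $-1/\alpha_0$ is also a quadratic irrational (needed since Corollary~\ref{COR: DIS} requires bounded partial quotients for \emph{both} pairs of sides), and you make the affine rescaling to $[0,1]^2$ explicit rather than working with a nonstandard scaled discrepancy on $R$ itself.
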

\begin{proof}
    For simplicity, let $\Gamma$ be the segment sitting on $x$-axis and rotate, instead, the lattice to $p\bv + q\bv^{\perp}$ where $\bv = (v_1, v_2)$ is a unit vector and $(p,q)\in\bbZ^2$. We denote by $R := \Gamma\times [-\eps, \eps]$ the rectangular tube around $\Gamma$. The quadrature becomes
    \begin{equation*}
       \cI_{h} (f) = h^2\sum_{(p, q)\in\bbZ^2} \chi_{\Gamma} f( h(p\bv + q\bv^{\perp})\cdot \be_1 )\theta_{\eps}( h(p\bv + q\bv^{\perp})\cdot \be_2 )\,, 
    \end{equation*}
    where $\chi_{\Gamma}$ is the characteristic function on $\Gamma$. Let $P:= R \cap \{h(p\bv+q\bv^{\perp})\mid (p,q)\in\bbZ^2\}$, and denote by $N = |P|$. Then the discrepancy of $P$ (with scaling) is
    \begin{equation*}
    \begin{aligned}
            D_{N}(P) &= \sup_{B\in J} \left| \frac{|P \cap B|}{N} \text{meas}(R) - \mathrm{meas}(B)\right|,\quad J = \left\{  [a_1, b_1)\times [a_2, b_2)\subset R\right\}\,.
    \end{aligned}
    \end{equation*}
    Since the continued fraction of a quadratically irrational number is periodic, it is uniformly bounded. By Corollary~\ref{COR: DIS},  $N = h^{-2}\text{meas}(R) + \cO(\log(h) )$. We can derive that 
\begin{equation}\nonumber
    \begin{aligned}
        D_N(P) &\le \sup_{B\in J}\left| \frac{h^{-2}\mathrm{meas}(B) + \cO(1) + \cO(\log({h}^{-1}\mathrm{diam}(B)+1))}{N}\text{meas}(R) - \mathrm{meas}(B)\right|\\
        &\le \left| \frac{{h}^{-2}\text{meas}(R)}{N} - 1\right| + \sup_{B\subset J}\left|\frac{\cO(1) + \cO(\log({h}^{-1}\mathrm{diam}(B)+1))}{N}\text{meas}(R)\right|\\
        &= \frac{\cO({h}^2 \log({h}^{-1}))}{\mathrm{meas}(R)}\,.
    \end{aligned}
\end{equation}
By Lemma~\ref{LEM: DIS}, we have
\begin{equation}\label{EQ: CELL EST}
    \left|\frac{1}{N}\sum_{\bz\in P} f(\bz)\theta_{\eps}(\bz) - \frac{1}{\mathrm{meas}(R)}\int_{R} f(\bx) \theta_{\eps}(\bx) d\bx \right| \le D_{N}(P) V_{R}(f\theta_{\eps})\,,
\end{equation}
where $V_{R}(f\theta_{\eps}) = \cO(\eps^{-1})$ is the total variation of $f\theta_{\eps}$ on $R$. Multiplying $N h^2$ to~\eqref{EQ: CELL EST} then gives us
\begin{equation}\nonumber
\begin{aligned}
        \left|{h}^{2}\sum_{\bz\in P} f(\bz)\theta_{\eps}(\bz) -  \frac{N{h}^2}{\mathrm{meas}(R)}\int_{R} f(\bx) \theta_{\eps}(\bx) d\bx \right| &\le N {h}^{2} D_{N}(P) V_{R}(f\theta_{\eps})\,.
\end{aligned}
\end{equation}
Using the fact that $N  h^2 = \textrm{meas}(R) + \cO({h}^2 \log(h^{-1}) )$ and $\textrm{meas}(R) = \cO(\eps) = \cO(h^{\alpha})$ ($\alpha\in [0, 1]$), we obtain the desired error estimate
\begin{equation*}
    \begin{aligned}
    |\cI_h(f) - \cI(f)| &=   \left|{h}^{2}\sum_{\bz\in P} f(\bz)\theta_{\eps}(\bz) - \int_{R} f(\bx) \theta_{\eps}(\bx) d\bx \right| \\
       &\le N{h}^{2} D_{N}(P) V_{R}(f\theta_{\eps}) + \cO\left(\frac{{h}^2\log({h}^{-1})}{\textrm{meas}(R)}\right) \int_{R} f(\bx)\theta_{\eps}(\bx) d\bx \\
        &= \cO(\eps^{-1} h^2\log h^{-1})\,.
\end{aligned}
\end{equation*}
This completes the proof.
\end{proof}

In Theorem~\ref{THM: LINE INT}, the requirement that the line segment has a quadratically irrational slope is mainly used to get the boundedness of the continued fraction needed in the proof. This requirement can be replaced with other conditions that would ensure the boundedness of the continued fraction. In fact, by a slight modification in the proof of Corollary~\ref{COR: DIS} and Theorem~\ref{THM: LINE INT}, we can derive the following version of the theorem.
\begin{corollary}
    Let $\Gamma\subset \bbR^2$ be a segment with slope $\beta$ whose continued fraction is
    $[b_0; b_1, b_2, \cdots]$. If $|b_k|\le C k^p$, and tube width $\eps = \Theta(h^{\alpha})$ ($\alpha \in [0, 1]$), then the quadrature error of integrating the characteristic function $f\in C(\Gamma)$ is 
    \begin{equation}
        | \cI(f) - \cI_h(f) |= \cO( h^{2-\alpha}\log^{p+1}(h))\,.
    \end{equation}
\end{corollary}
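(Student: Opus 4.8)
The proof is a direct adaptation of the arguments behind Corollary~\ref{COR: DIS} and Theorem~\ref{THM: LINE INT}; the only new ingredient is a slightly more careful accounting of how the growth of the partial quotients enters the lattice-count estimate supplied by Lemma~\ref{LEM: DIS}. The plan is therefore to first upgrade Corollary~\ref{COR: DIS} to the polynomial-growth setting, and then re-run the proof of Theorem~\ref{THM: LINE INT} unchanged.

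\textbf{Step 1: an analogue of Corollary~\ref{COR: DIS}.} I would show that if $R$ is a rectangle with side lengths $s$ and $t$ whose side slopes have continued fractions $[b_0;b_1,b_2,\dots]$ satisfying $|b_k|\le Ck^p$, then
\[
\bigl||R\cap h\bbZ^2|-h^{-2}st\bigr| = \cO(1)+\cO\bigl(\log^{p+1}(h^{-1}s+1)\bigr)+\cO\bigl(\log^{p+1}(h^{-1}t+1)\bigr).
\]
The key observation is that the recursion $q_{k+1}=b_{k+1}q_k+q_{k-1}$ always forces $q_{k+2}\ge 2q_k$, \emph{regardless} of how large the $b_k$ are, so the index $k$ appearing in Lemma~\ref{LEM: DIS} — the largest integer with $q_{k}\le(\text{side length})+1$ — still obeys $k=\cO(\log(\text{side length}))$. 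Given this, $\sum_{s=0}^k b_s\le C\sum_{s=0}^k s^p=\cO(k^{p+1})=\cO(\log^{p+1}(\text{side length}))$, while the tail term $\frac{|P_{i-1}P_i|+1}{q_{k,i}}\le b_{k+1}+1=\cO(\log^{p}(\text{side length}))$, using $q_{k+1,i}\le(b_{k+1}+1)q_{k,i}$ together with $q_{k+1,i}>|P_{i-1}P_i|+1$. Hence each $\rho(P_{i-1}P_i)=\cO(\log^{p+1}(\text{side length}))$, and summing over the four sides of the (scaled) rectangle gives the displayed bound.

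\textbf{Step 2: re-run Theorem~\ref{THM: LINE INT}.} With $\Gamma$ placed on the $x$-axis, the lattice rotated to $\{h(p\bv+q\bv^{\perp})\}$, and $R=\Gamma\times[-\eps,\eps]$, the slopes of the sides of $R$ relative to the lattice are controlled by the continued fraction of $\beta$, so Step~1 applies. Setting $P:=R\cap\{h(p\bv+q\bv^{\perp})\}$ and $N=|P|$, we obtain $Nh^2=\mathrm{meas}(R)+\cO(h^2\log^{p+1}(h^{-1}))$ and, exactly as in Theorem~\ref{THM: LINE INT}, $D_N(P)=\cO(h^2\log^{p+1}(h^{-1}))/\mathrm{meas}(R)$. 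Applying the Koksma--Hlawka inequality (Theorem~\ref{THM: LOW DIS}) to $f\theta_{\eps}$ on $R$, whose total variation is $\cO(\eps^{-1})$, and using $\mathrm{meas}(R)=\cO(\eps)=\cO(h^{\alpha})$, yields
\[
|\cI(f)-\cI_h(f)| \le Nh^2\,D_N(P)\,V_R(f\theta_{\eps}) + \cO\!\left(\frac{h^2\log^{p+1}(h^{-1})}{\mathrm{meas}(R)}\right)\!\int_R f\theta_{\eps}\,d\bx = \cO\bigl(\eps^{-1}h^2\log^{p+1}(h^{-1})\bigr),
\]
which is $\cO(h^{2-\alpha}\log^{p+1}(h))$, as claimed.

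\textbf{Main obstacle.} There is essentially no serious obstacle; the one point requiring care is the verification in Step~1 that polynomial growth of the partial quotients does not destroy the logarithmic bound on the depth of the continued fraction that a side of length $\sim h^{-1}$ can ``afford.'' Since larger partial quotients only make the denominators $q_k$ grow faster, the bound $k=\cO(\log(\text{side}))$ is untouched, and the polynomial weights cost only the extra factors $\log^{p}$ (from the tail term) and $\log^{p+1}$ (from $\sum b_s$). Everything else is the discrepancy/bounded-variation argument already carried out for Theorem~\ref{THM: LINE INT}.
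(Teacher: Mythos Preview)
Your proposal is correct and is precisely the ``slight modification in the proof of Corollary~\ref{COR: DIS} and Theorem~\ref{THM: LINE INT}'' that the paper alludes to without spelling out: you correctly observe that $q_{k+2}\ge 2q_k$ holds independently of the size of the partial quotients, so the depth $k$ stays $\cO(\log(\text{side length}))$, and then the polynomial bound $|b_k|\le Ck^p$ turns $\sum_{s\le k}b_s$ into $\cO(\log^{p+1})$, after which Theorem~\ref{THM: LINE INT} carries over verbatim. The only point you might make explicit is that the perpendicular side of $R$ has slope $-1/\beta$, whose continued fraction is a one-step shift of that of $\beta$ and hence inherits the same growth condition.
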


The variance of error can be characterized as follows.
\begin{theorem}\label{THM: STAT 2}
    Let $\Gamma\subset \bbR^2$ be a segment of unit length. For any rotation $\eta \in \mathrm{SO}(2)$ and translation $\bxi\in [0, 1]^2$, we denote by $\cI_h(f; \eta, \bxi)$ the implicit boundary integral with tube width $\eps=\Theta(h^{\alpha})$ ( $\alpha\in[0, 1]$) on the transformed segment $\eta \Gamma + h \bxi$. We have
    \begin{equation}
     \int_{[0, 1]^2}\int_{\eta\in \mathrm{SO}(2)} |\cI_h(f; \eta,\bxi) - \cI(f) |^2 d\eta d\bxi = \begin{cases}
          \cO(h^{3-\alpha}) , & \alpha \in [0, \frac{2q}{2q+1})\,, \\
          \cO(h^{1 + (2q+2)(1-\alpha)}), & \alpha \in [\frac{2q}{2q+1}, 1]\,,
      \end{cases}     
    \end{equation}
    where $d\eta$ is the normalized Haar measure on $\mathrm{SO}(2)$ and $q\ge 0$ is the regularity order of the weight function.
\end{theorem}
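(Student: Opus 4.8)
The plan is to follow the Fourier-series argument used in the proofs of Theorem~\ref{THM: STAT 1} and Theorem~\ref{THM: DEGEN STAT}, exploiting the fact that for a straight segment the quadrature integrand factorizes exactly. Put $\Gamma=[0,1]\times\{\bzero\}$ in its own frame; then transforming the segment by $(\eta,\bxi)$ is the same as transforming the lattice, and since the error is $h$-periodic in the shift it suffices to let $\bxi$ range over $[0,1]^2$. Write $\cQ_{\Gamma}(\bx):=f(P_{\Gamma}\bx)\,\theta_{\eps}(d_{\Gamma}(\bx))\,\chi_{T_{\eps}}(\bx)$; because the curvature of a segment vanishes identically, $J_{\eps}\equiv 1$ and $T_{\eps}=[0,1]\times[-\eps,\eps]$ is a rectangle. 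Since $\bbE_{\eta,\bxi}[\cI_h(f;\eta,\bxi)]=\cI(f)$, the very computation used in Theorem~\ref{THM: STAT 1} (Fubini to identify the Fourier coefficients of the periodized integrand in $\bxi$ as the samples $\widehat{\cQ_{\Gamma}}(h^{-1}\eta^{-1}\,\cdot\,)$, then Parseval) gives
\begin{equation*}
\int_{[0,1]^2}\int_{\mathrm{SO}(2)}\bigl|\cI_h(f;\eta,\bxi)-\cI(f)\bigr|^2\,d\eta\,d\bxi=\int_{\mathrm{SO}(2)}\sum_{\bw\in\bbZ^2\setminus\{\bzero\}}\bigl|\widehat{\cQ_{\Gamma}}(h^{-1}\eta^{-1}\bw)\bigr|^2\,d\eta .
\end{equation*}

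The second step is to estimate $\widehat{\cQ_{\Gamma}}$. In the segment's frame $\cQ_{\Gamma}(x,y)=\bigl(f(x)\chi_{[0,1]}(x)\bigr)\,\theta_{\eps}(y)$, so the transform splits, $\widehat{\cQ_{\Gamma}}(\zeta_1,\zeta_2)=\widehat{f\chi_{[0,1]}}(\zeta_1)\,\widehat{\theta}(\eps\zeta_2)$. Assuming $f$ has bounded variation (which covers Lipschitz $f$ and characteristic functions), $f\chi_{[0,1]}$ is bounded and of bounded variation with at worst a jump at each endpoint, so $|\widehat{f\chi_{[0,1]}}(\zeta_1)|=\cO((1+|\zeta_1|)^{-1})$; and since $\theta\in\cW_q$, integrating by parts $q+1$ times gives $|\widehat{\theta}(\xi)|=\cO((1+|\xi|)^{-(q+1)})$. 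Hence $|\widehat{\cQ_{\Gamma}}(\bzeta)|=\cO\bigl((1+|\zeta_1|)^{-1}(1+\eps|\zeta_2|)^{-(q+1)}\bigr)$, which in particular makes the double series above convergent for every $q\ge 0$, so Parseval is legitimate.

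The heart of the proof is the rotation average. Writing $\bw=|\bw|(\cos\phi,\sin\phi)$ and $\eta$ as the rotation by angle $\theta$, the vector $h^{-1}\eta^{-1}\bw$ has modulus $A:=h^{-1}|\bw|$ and polar angle $\psi:=\phi-\theta$; set $B:=\eps A=\eps h^{-1}|\bw|$, and note $A\gg 1$ and $B$ is bounded below by a positive constant for all $\alpha\in[0,1]$ and $|\bw|\ge 1$. The Haar integral becomes $\frac{1}{2\pi}\int_0^{2\pi}(1+A|\cos\psi|)^{-2}(1+B|\sin\psi|)^{-2(q+1)}\,d\psi$, which I would estimate by splitting $[0,\tfrac{\pi}{2}]$ (by symmetry) into the \emph{normal-resonance} cone $\{|\cos\psi|\le A^{-1}\}$, of width $\cO(A^{-1})$, where the first factor is $\cO(1)$ and the second is $\cO(B^{-2(q+1)})$; the \emph{tangential-resonance} cone $\{|\sin\psi|\le B^{-1}\}$, of width $\cO(B^{-1})$, where the second factor is $\cO(1)$ and the first is $\cO(A^{-2})$; and the complement, on which both factors genuinely decay and a routine estimate gives no more than the same order. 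This yields
\begin{equation*}
\frac{1}{2\pi}\int_0^{2\pi}\bigl|\widehat{\cQ_{\Gamma}}(h^{-1}\eta^{-1}\bw)\bigr|^2\,d\eta=\cO\bigl(A^{-2}B^{-1}+A^{-1}B^{-2(q+1)}\bigr)=\cO\bigl(\eps^{-1}h^{3}|\bw|^{-3}+\eps^{-2(q+1)}h^{2q+3}|\bw|^{-2q-3}\bigr).
\end{equation*}
Summing over $\bw\in\bbZ^2\setminus\{\bzero\}$ (both exponents $3$ and $2q+3$ exceed $2$, so the planar lattice sums converge) and inserting $\eps=\Theta(h^{\alpha})$ gives the total bound $\cO\bigl(\eps^{-1}h^{3}+\eps^{-2(q+1)}h^{2q+3}\bigr)=\cO\bigl(h^{3-\alpha}+h^{1+(2q+2)(1-\alpha)}\bigr)$; since $3-\alpha\le 1+(2q+2)(1-\alpha)$ precisely when $\alpha\le\tfrac{2q}{2q+1}$, the stated case distinction follows.

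I do not expect a genuine obstacle here; the real work is the elementary but slightly delicate rotation integral in the third step. The two resonance cones must be identified correctly — one coming from the finite length of the segment through the $(1+|\zeta_1|)^{-1}$ decay, the other from the tube width through the $(1+\eps|\zeta_2|)^{-(q+1)}$ decay — and their overlap region must be checked to contribute no more than the same order, with attention to the fact that at $\alpha=1$ the parameter $B=\eps h^{-1}|\bw|$ is only bounded below rather than large. It is worth noting that the slow $(1+|\zeta_1|)^{-1}$ tangential decay, which is responsible for the $h^{3-\alpha}$ branch, is exactly the manifestation of the ``weaker results'' for open curves announced at the start of Section~\ref{SEC: 4}: one cannot cut $f$ off smoothly at the segment's endpoints, so the endpoint jumps of $f\chi_{[0,1]}$ limit the decay of $\widehat{\cQ_{\Gamma}}$ in the direction along the segment.
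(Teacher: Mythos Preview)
Your proposal is correct and follows essentially the same route as the paper's proof: the same Parseval reduction to $\sum_{\bw\neq\bzero}\int_{\mathrm{SO}(2)}|\widehat{\cQ_{\Gamma}}(h^{-1}\eta^{-1}\bw)|^2\,d\eta$, the same factorization $\widehat{\cQ_{\Gamma}}=\widehat{f\chi_{[0,1]}}\cdot\widehat{\theta_{\eps}}$ with the same decay estimates, and the same angular splitting into the two resonance cones plus a complement. The paper partitions $[0,\tfrac{\pi}{2}]$ into four explicit subintervals rather than your three, but the two middle intervals are precisely your ``complement'' and produce exactly the same contributions $\cO(\eps^{-1}|\bzeta|^{-3})$ and $\cO(\eps^{-2q-2}|\bzeta|^{-2q-3})$ that you claim.
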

\begin{proof}
    We denote the rectangular support of $\cQ_{\Gamma}(\bx):= f(\bx)\theta_{\eps}(\bx)$ by $T_{\eps}$. If $\beta$ is the angle between the segment $\Gamma$ and $\bxi$, then 
    \begin{equation}\nonumber
    \begin{aligned}
        \widehat{\cQ}_{\Gamma}( \bxi ) &= \int_{T_{\eps}} \cQ_{\Gamma}(\bx) e^{-2\pi i \bxi\cdot \bx} d\bx =\widehat{f}(|\bxi|\cos\beta) \widehat{\theta}_{\eps}(|\bxi|\sin\beta)\,.
    \end{aligned}
    \end{equation}
    The following generic estimates of Fourier transforms can be derived using integration by parts, 
    \begin{equation}\label{EQ: FT EST}
        \begin{aligned}
            \widehat{f}(|\bxi|\cos\beta) &= \cO\left(\frac{1}{1 + |\bxi||\cos\beta|}\right)\,,\\ 
            \widehat{\theta}_{\eps}(|\bxi|\sin\beta) &= \cO\left( \left(\frac{1}{1 + \eps|\bxi||\sin\beta| }\right)^{q+1}\right)\,,
        \end{aligned}
    \end{equation}
    where $q$ is the regularity order of $\theta_{\eps}$. Therefore, there exists a constant $C > 0$ that 
    \begin{equation}\label{EQ: ROT L2}
    \begin{aligned}
            \int_{\mathrm{SO(2)}} \left|\widehat{\cQ}_{\Gamma}(\eta^{-1}\bxi)\right|^2 d\eta & = \frac{1}{2\pi}\int_0^{2\pi} \left|\widehat{f}(|\bxi|\cos\beta) \widehat{\theta}_{\eps}(|\bxi|\sin\beta)\right|^2 d\beta \\
            &\le C \int_{0}^{2\pi} \left(\frac{1}{1+|\bxi| |\cos\beta|}\right)^2 \left(\frac{1}{1 + \eps |\bxi| |\sin\beta|}\right)^{2(q+1)} d\beta \\
            & = 4C \int_{0}^{\frac{\pi}{2}} \left(\frac{1}{1+|\bxi| |\cos\beta|}\right)^2 \left(\frac{1}{1 + \eps |\bxi| |\sin\beta|}\right)^{2(q+1)} d\beta \,.
    \end{aligned} 
    \end{equation}
    We now split the integral into $\{0\le \beta\le \frac{1}{\eps|\bxi|}\}$, $\{\frac{1}{\eps|\bxi|}\le \beta\le \frac{\pi}{4}\}$, $\{\frac{\pi}{4}\le \beta\le \frac{\pi}{2}-\frac{1}{|\bxi|}\}$, and $\{\frac{\pi}{2}-\frac{1}{|\bxi|}\le \beta \le \frac{\pi}{2}\}$ to get
    \begin{equation}\nonumber
    \begin{aligned}
        \int_{0}^{\frac{1}{\eps|\bxi|}} \left(\frac{1}{1+|\bxi| |\cos\beta|}\right)^2 \left(\frac{1}{1 + \eps |\bxi| |\sin\beta|}\right)^{2(q+1)} d\beta &= \cO\left(\frac{1}{\eps|\bxi|^3}\right) \,, \\
         \int_{\frac{1}{\eps|\bxi|}}^{\frac{\pi}{4}} \left(\frac{1}{1+|\bxi| |\cos\beta|}\right)^2 \left(\frac{1}{1 + \eps |\bxi| |\sin\beta|}\right)^{2(q+1)} d\beta &= \cO\left(\frac{1}{\eps|\bxi|^3}\right)\,,\\
        \int_{\frac{\pi}{4}}^{\frac{\pi}{2}-\frac{1}{|\bxi|}} \left(\frac{1}{1+|\bxi| |\cos\beta|}\right)^2 \left(\frac{1}{1 + \eps |\bxi| |\sin\beta|}\right)^{2(q+1)} d\beta &= \cO\left(\frac{1}{\eps^{2q+2}|\bxi|^{2q+3}}\right) \,,\\
        \int_{\frac{\pi}{2}-\frac{1}{|\bxi|}}^{\frac{\pi}{2}} \left(\frac{1}{1+|\bxi| |\cos\beta|}\right)^2 \left(\frac{1}{1 + \eps |\bxi| |\sin\beta|}\right)^{2(q+1)} d\beta &= \cO\left(\frac{1}{\eps^{2q+2}|\bxi|^{2q+3}}\right) \,.\\
    \end{aligned}
    \end{equation}
    Putting these together, we have
    \begin{equation}\nonumber
    \begin{aligned}
          \sum_{\bzeta\in\bbZ^2-\{\bzero\}}\int_{\mathrm{SO(2)}} \left|\widehat{\cQ}_{\Gamma}(\eta^{-1}h^{-1} \bxi)\right|^2 d\eta &=  \cO\left(\int_{\rho \ge 1} \frac{h^3}{\eps\rho^3} \rho d\rho \right) + \cO\left(\int_{\rho \ge 1} \frac{h^{2q+3}}{\eps^{2q+2}\rho^{2q+3}}  \rho d\rho \right) \,,
    \end{aligned}
    \end{equation}
    which then leads to 
    \begin{equation}\nonumber
    \begin{aligned}
      \int_{\mathrm{SO}(2)} \int_{[0, 1]^2}|\cI(f; \eta,\bxi) - \cI(f) |^2 d\bxi d\eta =       \begin{cases}
          \cO(h^{3-\alpha}) , & \alpha \in [0, \frac{2q}{2q+1})\,, \\
          \cO(h^{1 + (2q+2)(1-\alpha)}), & \alpha \in [\frac{2q}{2q+1}, 1]\,.
      \end{cases}     
    \end{aligned}
    \end{equation}
    This is the desired result.
\end{proof}

\begin{remark}
  It should be noted that if $f\in C_0^{\infty}(\Gamma)$, then by a slight modification in~\eqref{EQ: FT EST}, one can further improve the above estimate.  The result can be adapted to any polytope boundary $\Gamma$ in high dimensions by combining the techniques in~\cite{brandolini1997average,tarnopolska1979number,chen2014panorama}. In our setting, we can use the divergence theorem to reduce the Fourier transform into faces of lower dimensions plus the volumetric integrals with potentially smaller magnitudes. The normal vector on each face stays constant, and eventually, the Fourier transform can be decomposed into finitely many one-dimensional Fourier transforms. 
\end{remark}

\subsection{Numerical experiments}
\label{SEC: EXP 4}

Here are some numerical simulations to verify the quadrature error estimates and the corresponding variance estimates in Theorem~\ref{LEM: GENERAL CURVES}, Theorem \ref{THM: LINE INT},  and Theorem~\ref{THM: STAT 2}.

\subsubsection{Semicircle}\label{SEC: SEMICIRCLE}
We verify the estimates in Theorem~\ref{LEM: GENERAL CURVES} with a semi-circle. The quadrature on a semi-circle can be fulfilled by setting the integrand function as zero on half of the circle. Let the circle be 
\begin{equation*}
    \frac{(x - x_0)^2}{r^2} + \frac{(y - y_0)^2}{r^2} = 1
\end{equation*}
with $r = \frac{3}{4}$ and $(x_0, y_0)$ is a randomly sampled point. We take the integrand as 
\begin{equation*}
    f(x, y) = \begin{cases}
        \cos(x^2 - y) \sin(y^2 - x^3) &\text{ if } y \ge y_0\,,\\
        0&\text{ if } y < y_0\,.
    \end{cases}
\end{equation*}
The weight function $\theta_{\eps}^{\Delta}\in \cW_1$ and we sample the random rigid transformation 64 times independently. The variance of quadrature error has an upper bound in Theorem~\ref{LEM: GENERAL CURVES} as $\cO(h^{3-2\alpha})$. As we can see in Figure~\ref{fig: semi-circle decay}, for  $\alpha=\frac{1}{2}$ and $\alpha = 0$, the estimated error bounds $\cO(h^{2})$ and $\cO(h^{3})$ appears consistent with the numerical results. However, for $\alpha=1$, the theoretical estimate is $\cO(h)$ while we observed a transition of decay rate from quadratic to linear. This discrepancy likely happened because in the proof of Theorem~\ref{LEM: GENERAL CURVES}, we applied directly the uniform bound $G(x,s) = \cO(|\bzeta|^{-\frac{1}{2}})$, while there are cases where a smaller bound $G(x,s) = \Theta(|\bzeta|^{-1})$ can occur. Then, the variance will consist of two components with decay rates $\cO(h)$ and $\cO(h^2)$, respectively, and the quadratic decay rate dominates when the grid size is relatively large.
\begin{figure}[!htb]
    \centering
    \includegraphics[scale=0.34]{ 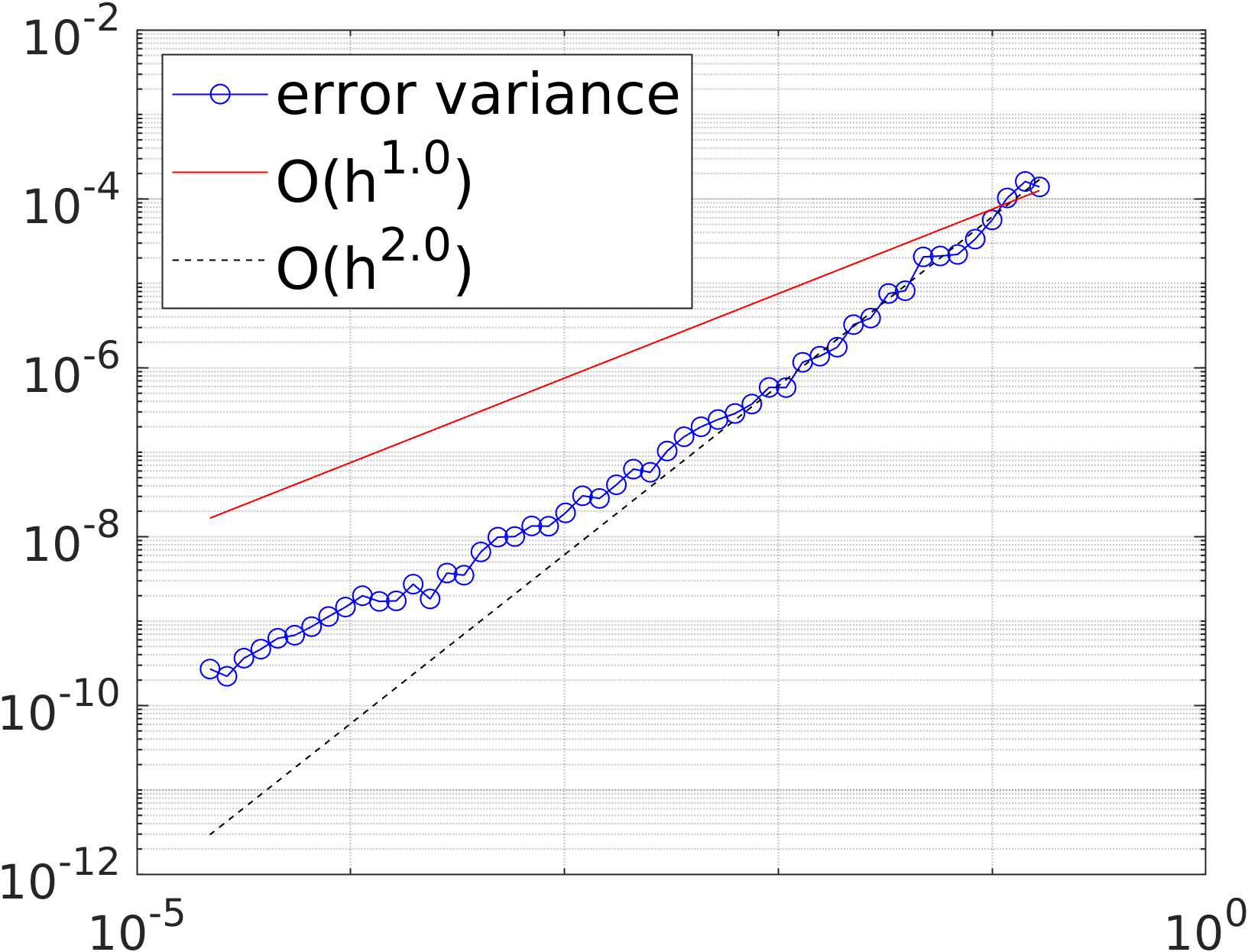}\qquad 
    \includegraphics[scale=0.34]{ 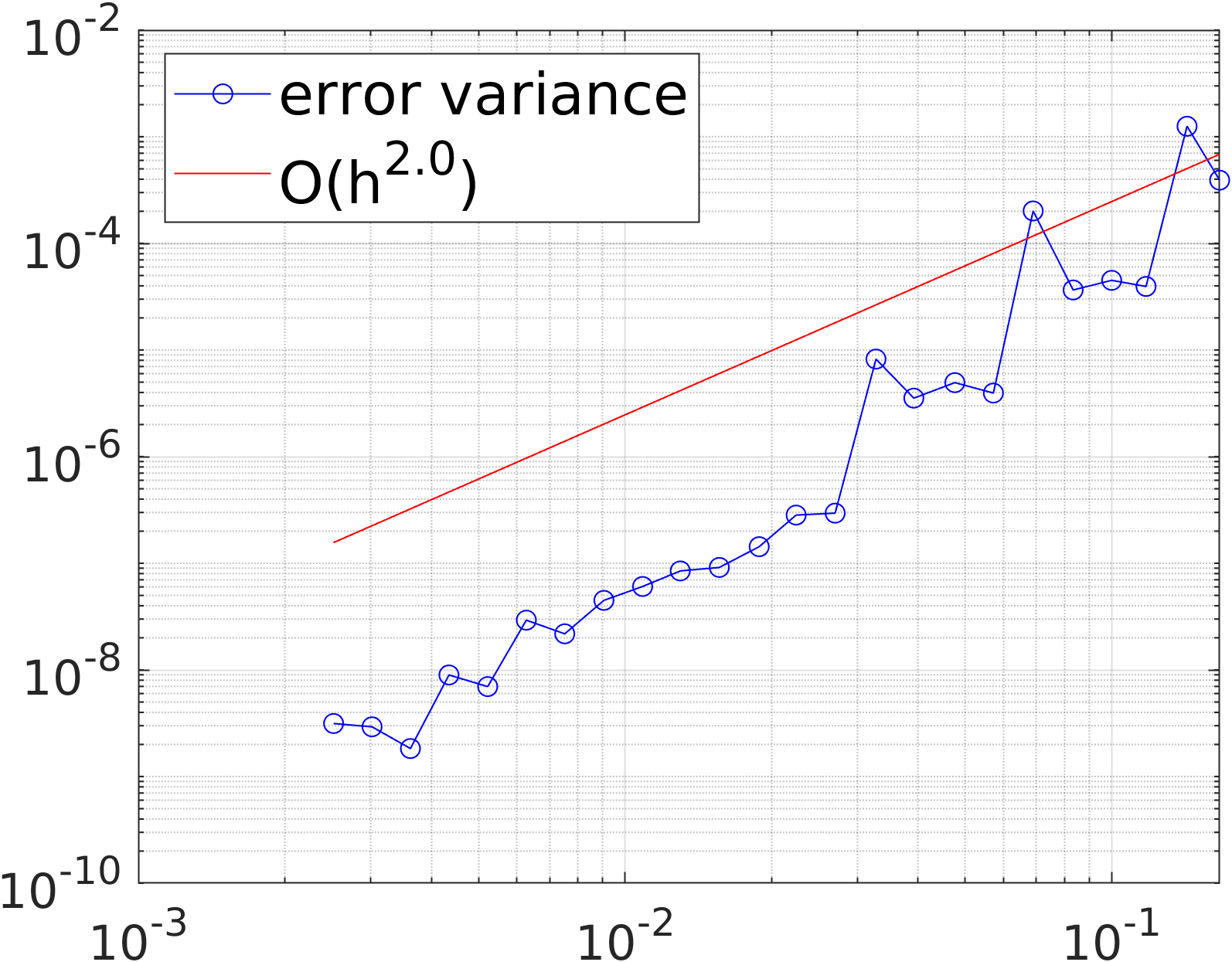}\qquad 
    \includegraphics[scale=0.34]{ 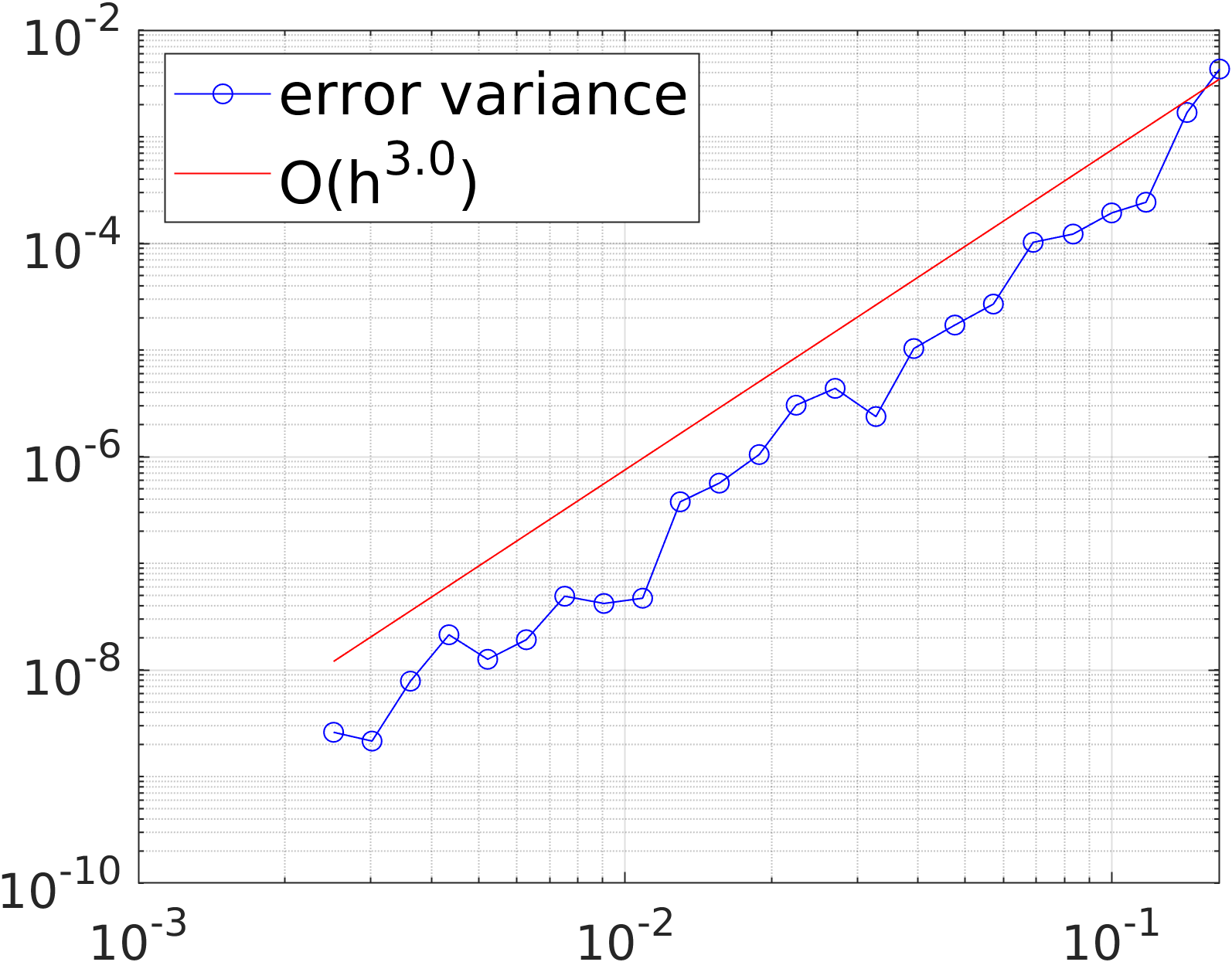} 
    \caption{Variance of quadrature error with weight function $\theta_{\eps}^{\Delta}$. Left: $\eps = 2h$. Middle: $\eps = 2h^{\frac{1}{2}}$. Right: $\eps = 0.1$.}
    \label{fig: semi-circle decay}
\end{figure}

\subsubsection{Segment with quadratically irrational slope}\label{SEC: ERROR SEG IRR}
    In this experiment, we verify the result in Theorem~\ref{THM: LINE INT} on the segment of unit length from the line $y = \gamma x$ with choices of $\gamma$ are $\sqrt{2}$ and $\frac{1+\sqrt{5}}{2}$, with continued fraction $[1;2,2,2,\cdots]$ and $[1;1,1,1,\cdots]$, respectively. Both slopes have uniformly bounded terms. The test integrand and the weight function are $f(\bx) =|\bx|^2$ and  $\theta_{\eps}^{\cos}\in \cW_2$, respectively.
    While the fluctuations are large, the upper bound of the quadrature error decays roughly $\cO(h^{2-\alpha})$ for various choices of $\eps = \Theta(h^{\alpha})$, $\alpha\in [0, 1]$; see Figure~\ref{fig: line decay 1} and Figure~\ref{fig: line decay 2}.
    \begin{figure}[!htb]
        \centering
        \includegraphics[scale=0.34]{ 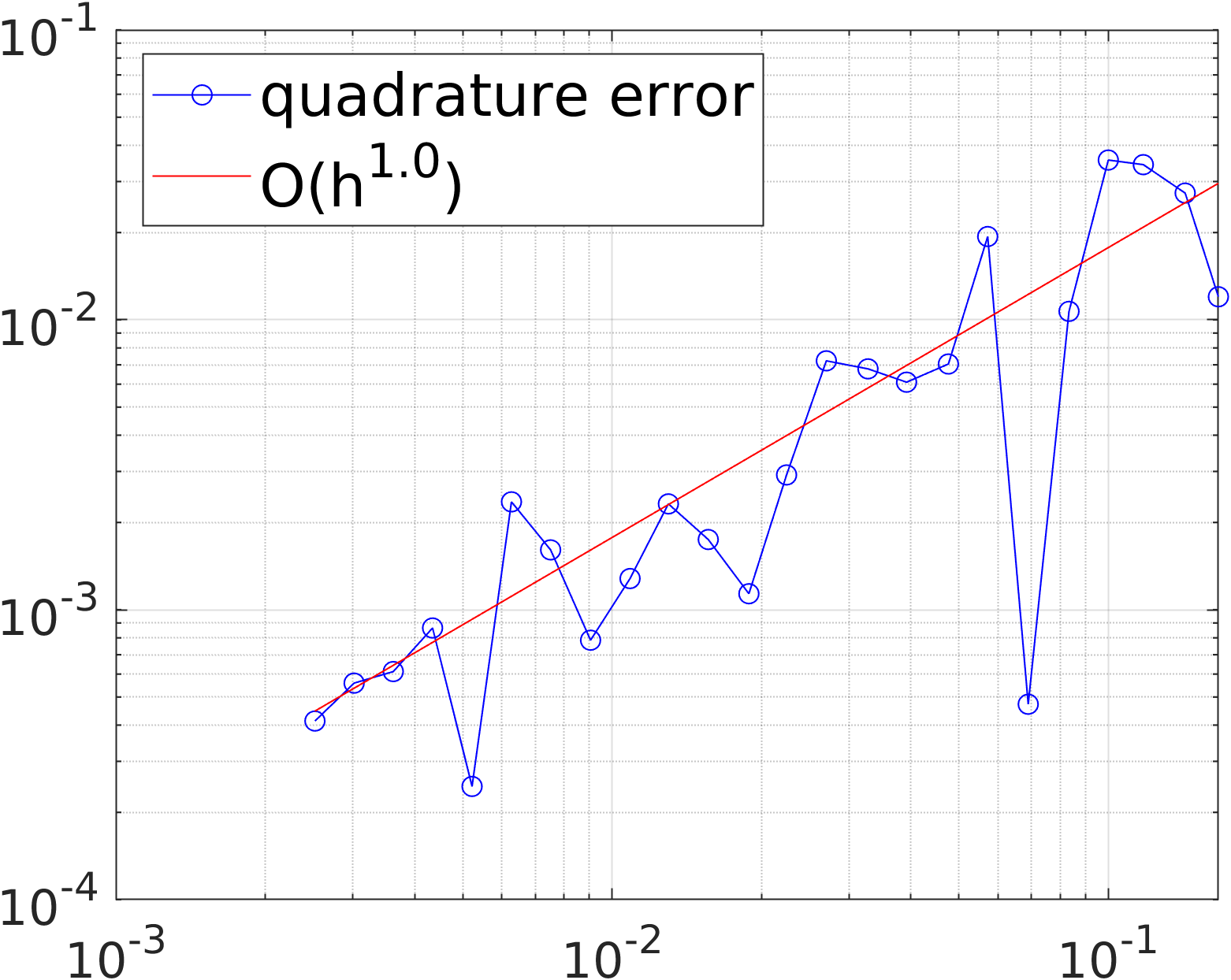}\qquad
        \includegraphics[scale=0.34]{ 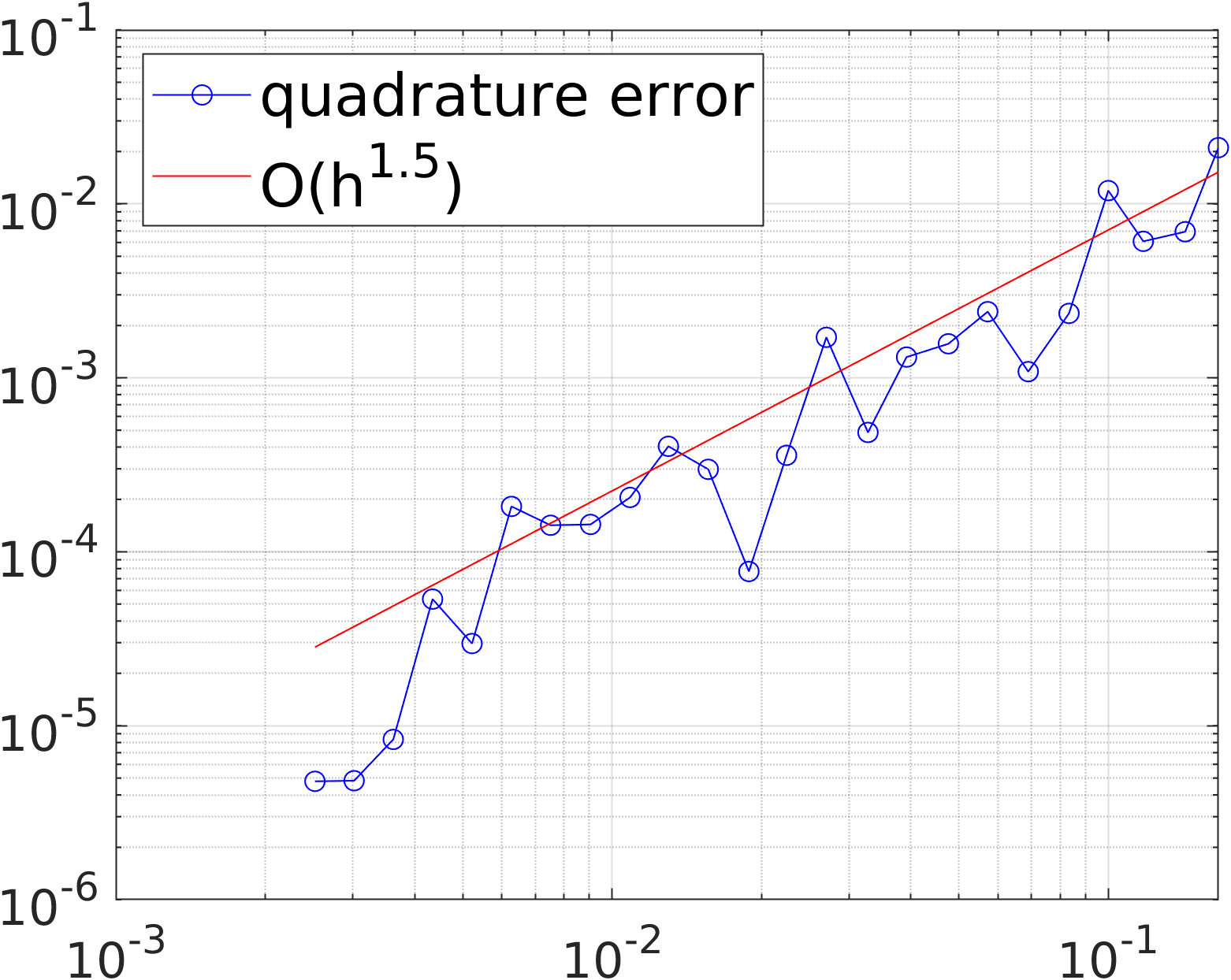}\qquad
        \includegraphics[scale=0.34]{ 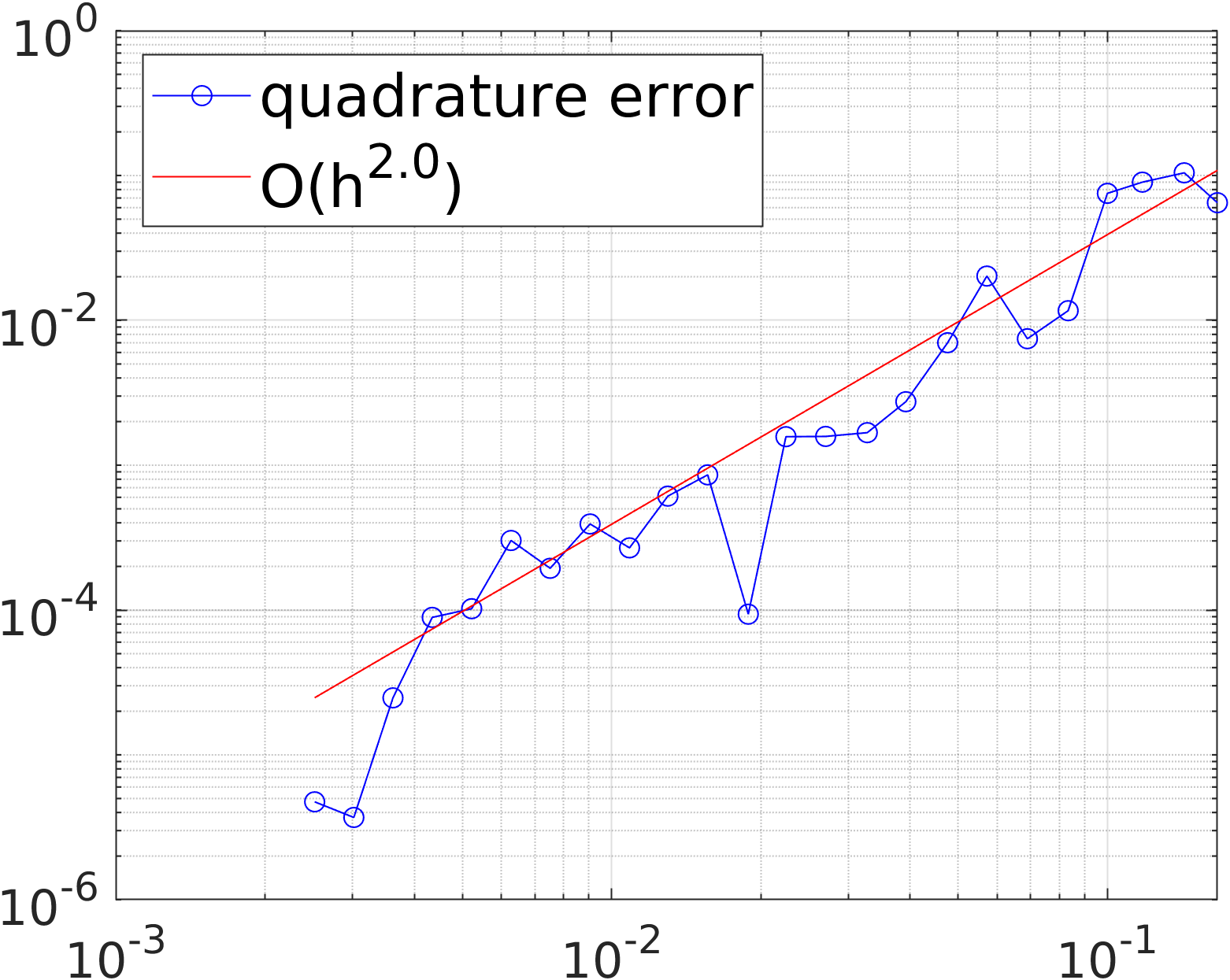}
        \caption{The error decay for different tube widths and $\gamma = \sqrt{2}$. Left: $\eps = 2h$. Middle: $\eps = 2h^{\frac{1}{2}}$. Right: $\eps = 0.1$.}
        \label{fig: line decay 1}
    \end{figure}
    \begin{figure}[!htb]
        \centering
        \includegraphics[scale=0.34]{ 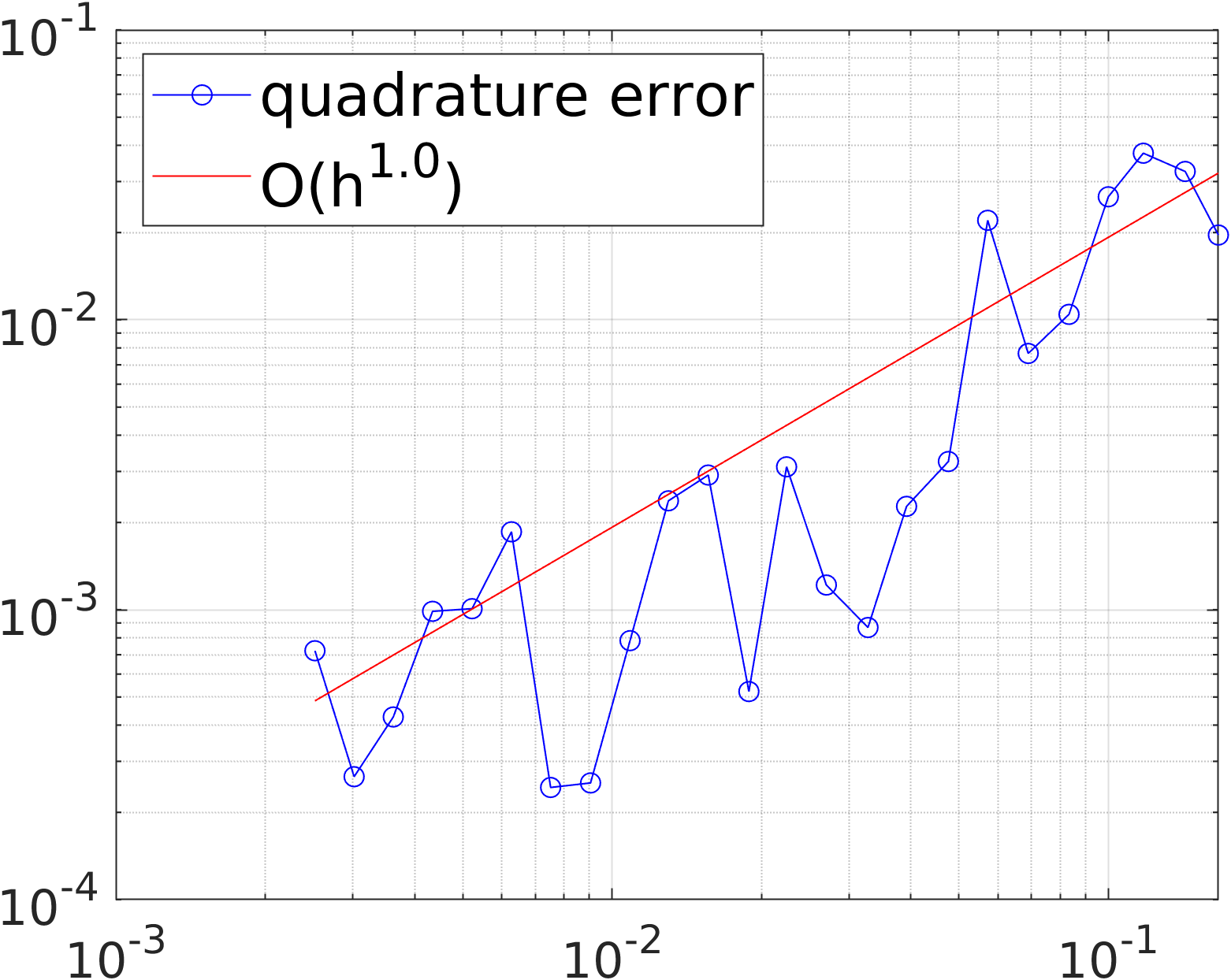}\qquad
        \includegraphics[scale=0.34]{ 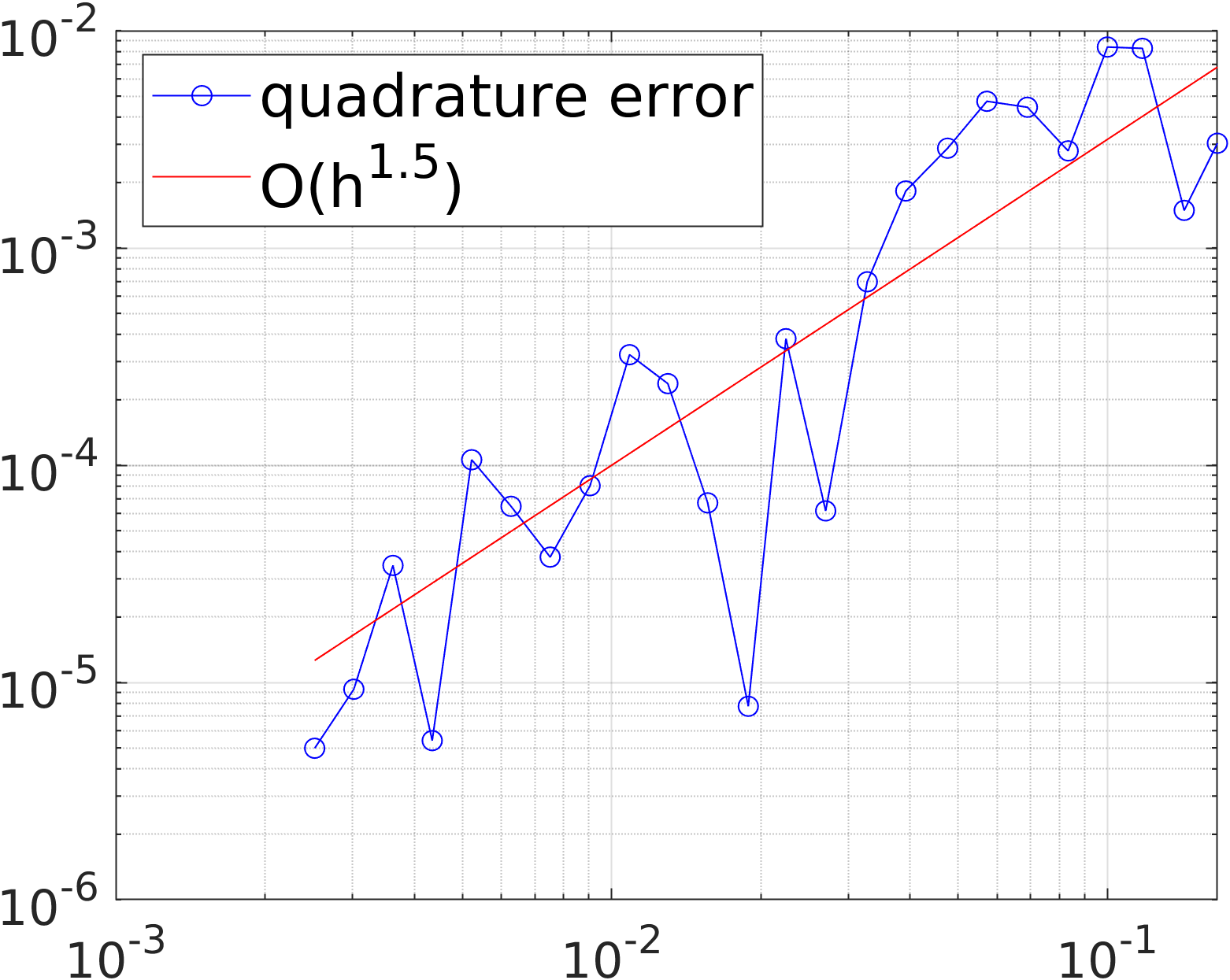}\qquad
        \includegraphics[scale=0.34]{ 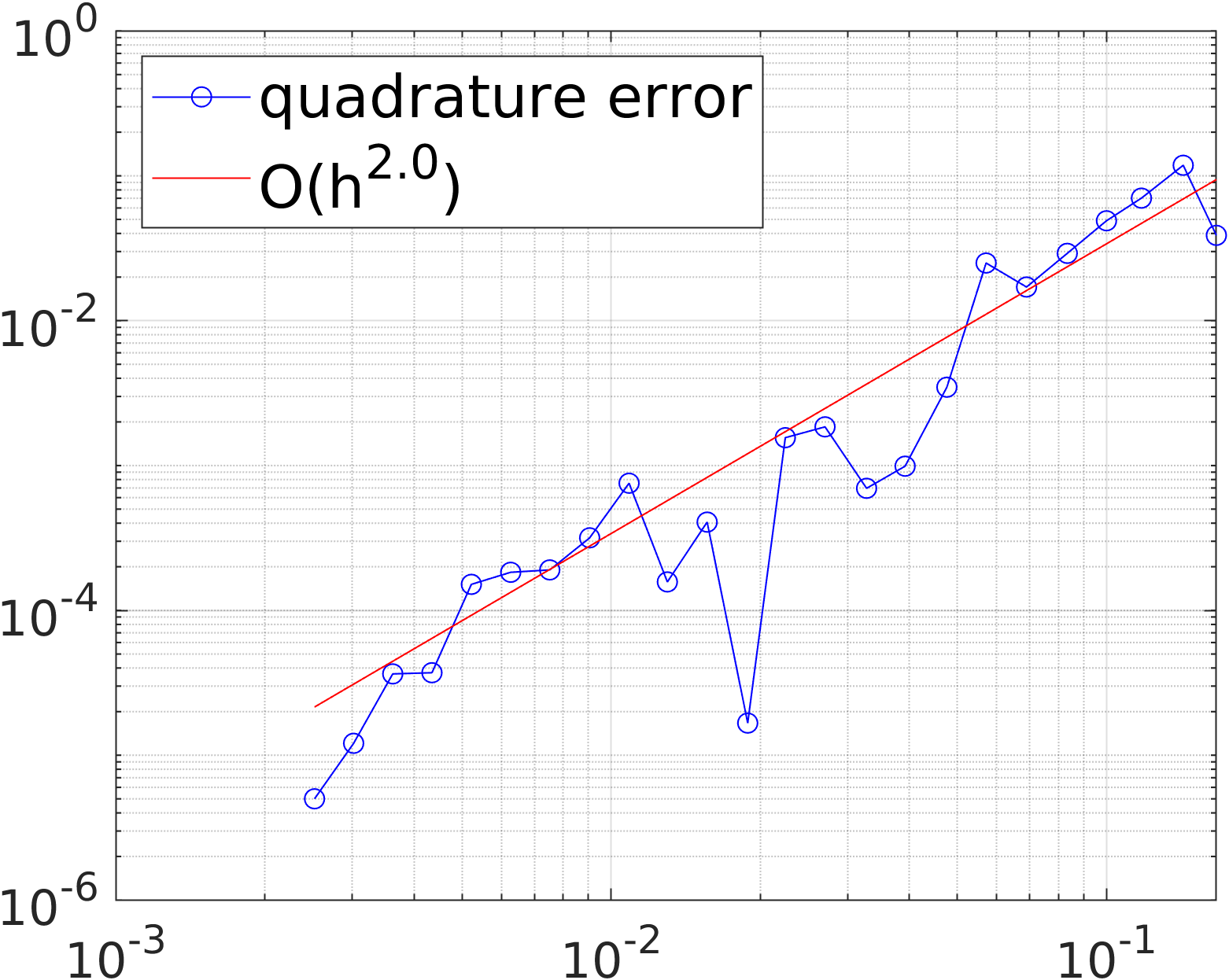}
        \caption{The error decay for different tube widths and $\gamma = \frac{\sqrt{5}+1}{2}$. Left: $\eps = 2h$. Middle: $\eps = 2h^{\frac{1}{2}}$. Right: $\eps = 0.1$.}
        \label{fig: line decay 2}
    \end{figure}

\subsubsection{Segment under random rotations}\label{SEC: VAR SEG}

To verify numerically the variance for the quadrature error on a unit segment with random rigid transformations, we choose the integrand function $f(\bx) = |\bx|^2$. The weight function is $\theta_{\eps}^{\cos}\in \cW_2$. Each experiment is performed with 32 random rigid transformations independently. A decay rate of $\cO(h^{3-\alpha})$ has been observed in Figure~\ref{fig: line var decay 2} for different tube widths $\eps = \Theta(h^{\alpha})$. In particular, for $\alpha = 0$ and $\alpha = \frac{1}{2}$, the decay rates match the prediction of Theorem~\ref{THM: STAT 2}. However, Theorem~\ref{THM: STAT 2} seems to provide an overestimate for the case of $\alpha = 1$.
\begin{figure}[!htb]
    \centering
    \includegraphics[scale=0.34]{ 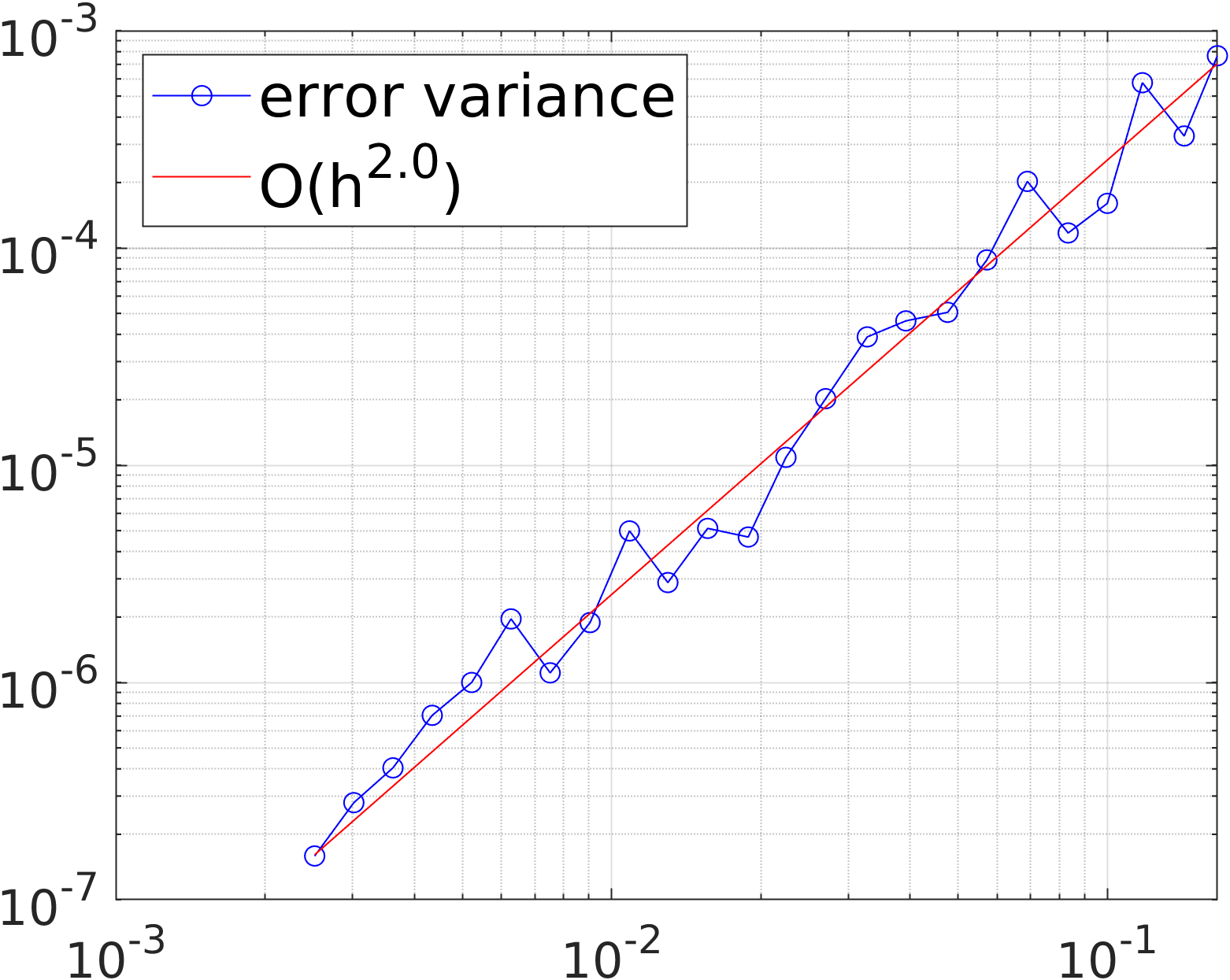}\qquad
    \includegraphics[scale=0.34]{ 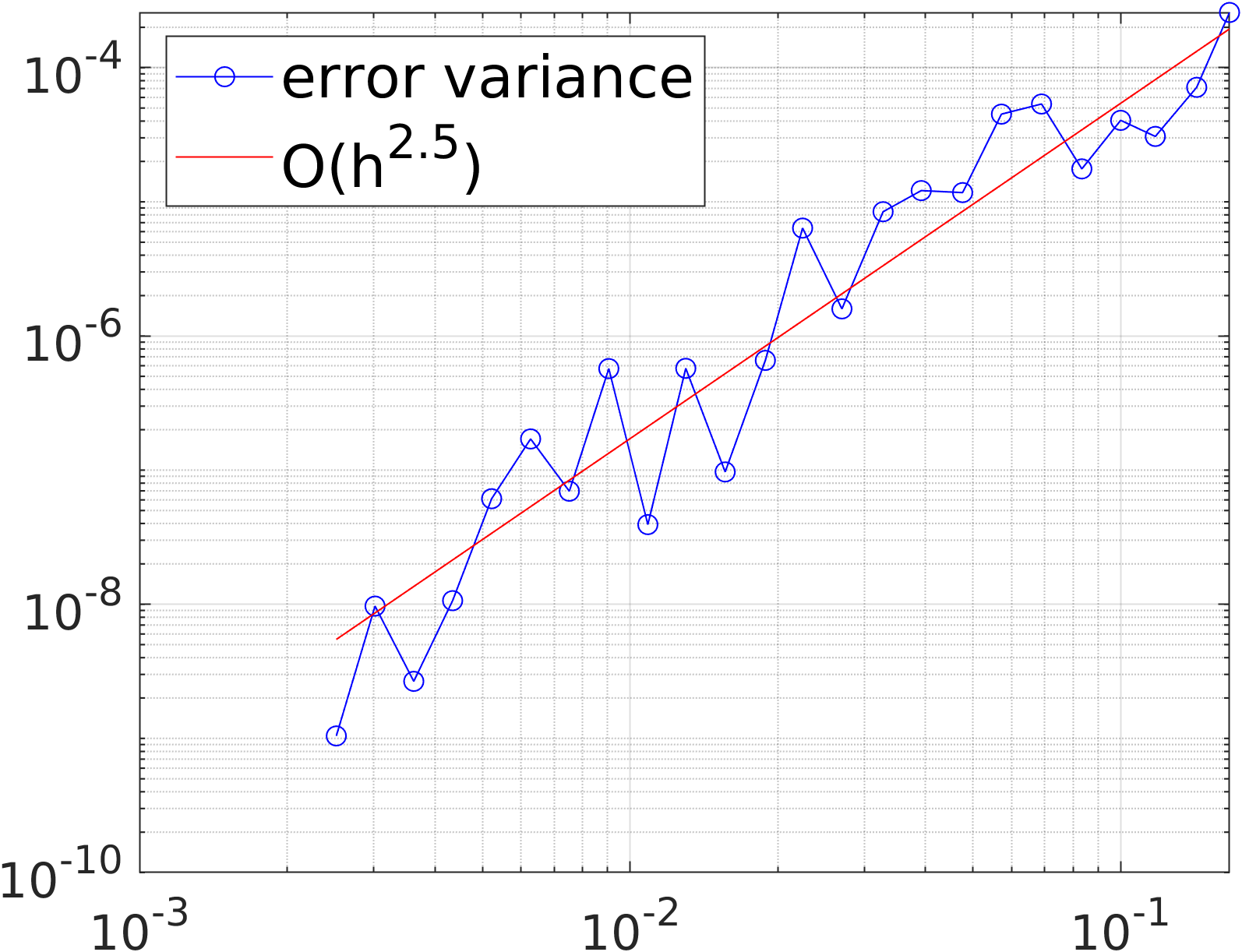}\qquad
    \includegraphics[scale=0.34]{ 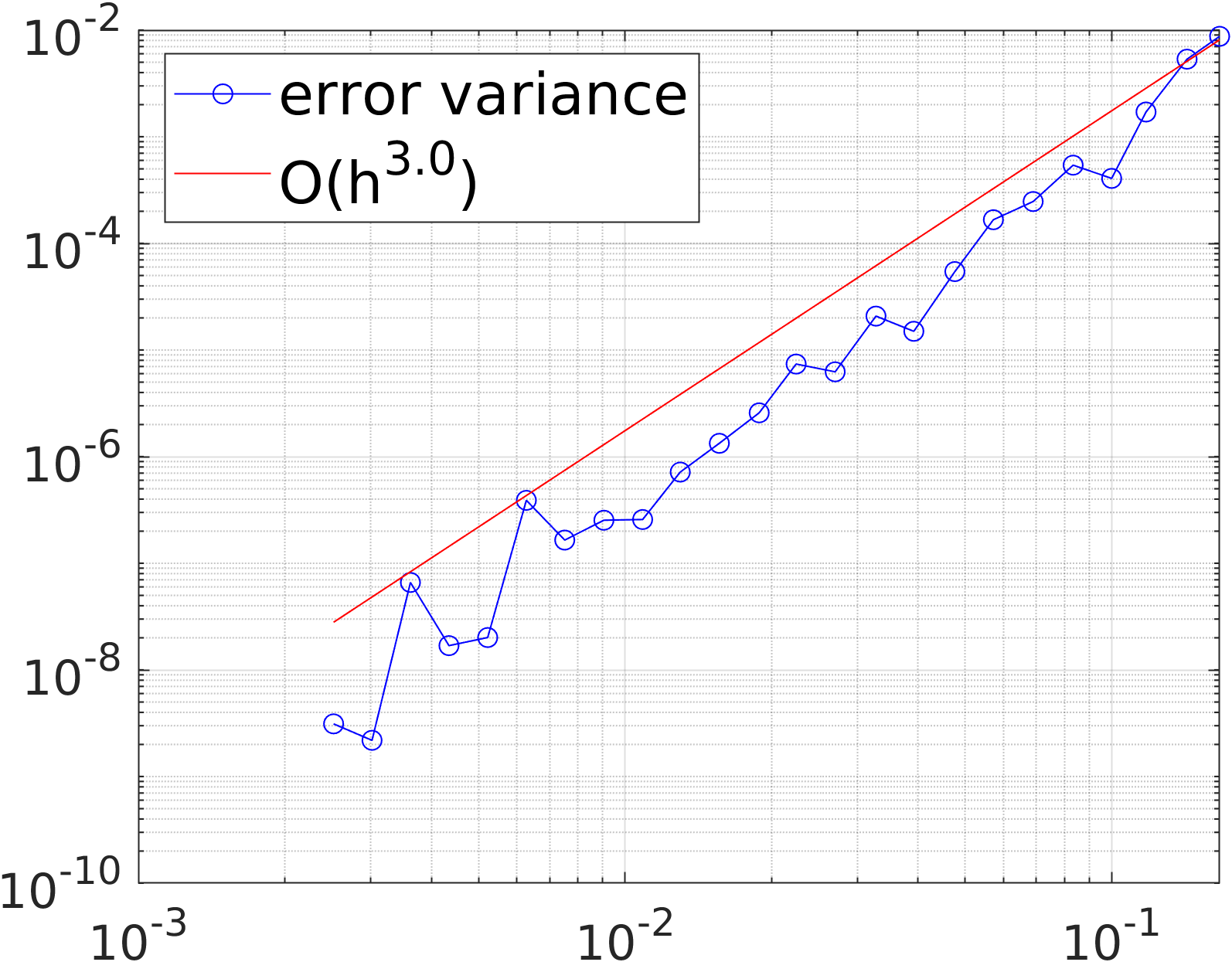}
    \caption{Variance of quadrature error with weight function $\theta_{\eps}^{\cos}$. Left: $\eps = 2h$. Middle: $\eps = 2h^{\frac{1}{2}}$. Right: $\eps = 0.1$.}
    \label{fig: line var decay 2}
\end{figure}

\subsubsection{Capsule shape}
\label{SEC: VAR CAP}

The estimates in Theorem~\ref{LEM: GENERAL CURVES} and Theorem~\ref{THM: STAT 2} imply that, under random rigid transformations, the implicit boundary integral produces a variance of quadrature error of $\cO(h^{\min(3-\alpha, 1+2(q+1)(1-\alpha), 3 - 2\alpha})$ for a boundary made of strongly convex curves and segments in two dimensions. This can be verified by considering the capsule shape boundary with random rigid transformations; see Figure~\ref{fig: capsule shape}.
\begin{figure}[!htb]
    \centering
    \includegraphics[scale=0.4]{ 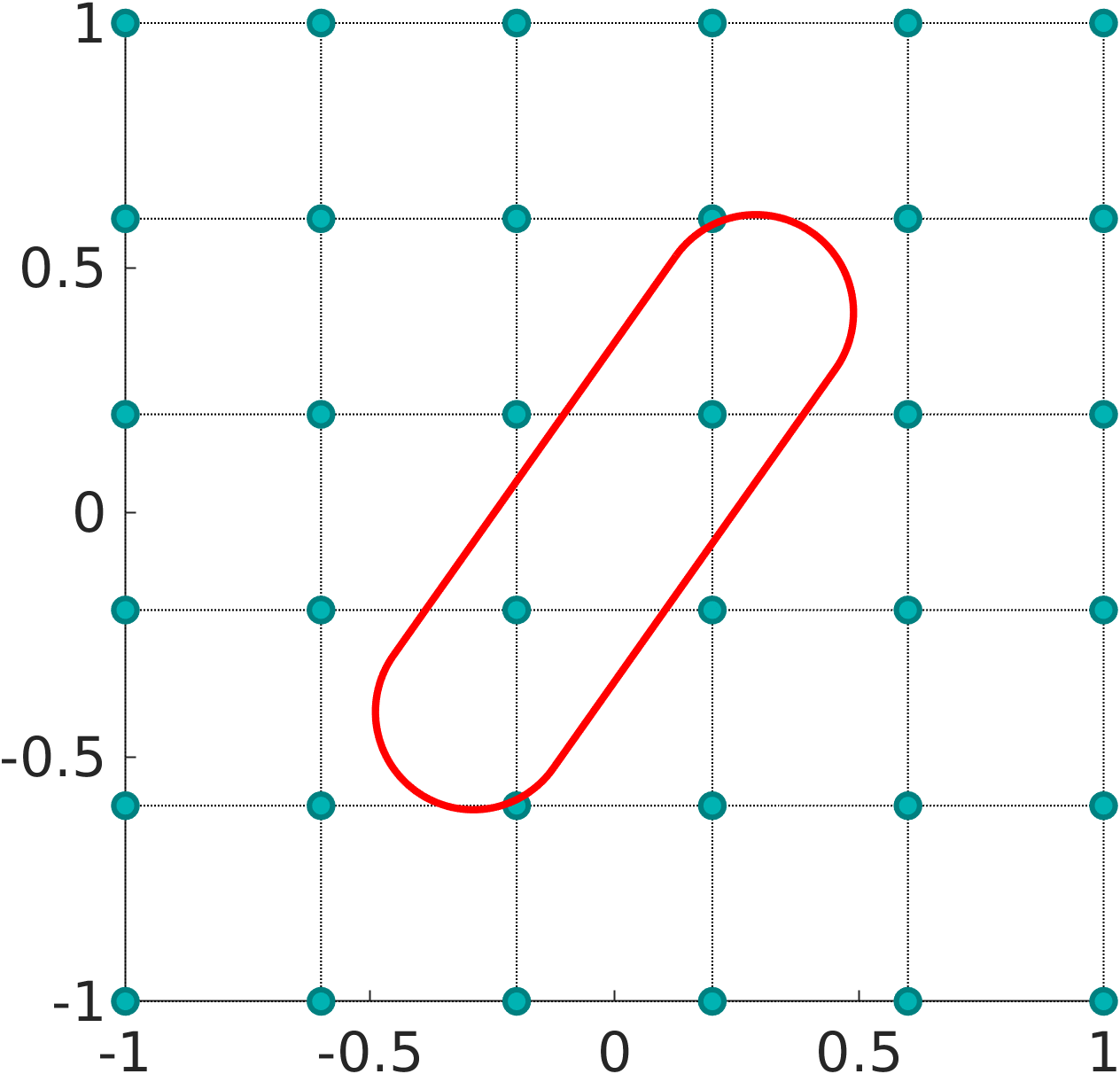}
    \caption{Capsule shape boundary (solid red line) consists of two segments of unit length and two semicircle boundaries with radius $r=0.2$.}
    \label{fig: capsule shape}
\end{figure} 
The integrand function is chosen as
\begin{equation*}
    f(x, y) = \cos(x^2 - y) \sin(y^2 - x^3).
\end{equation*}
The weight function $\theta_{\eps}^{\Delta}\in \cW_1$. Each experiment is performed with 32 random rigid transformations independently. The variance of quadrature error is shown in Figure~\ref{fig: var decay}. The error matches the theoretical estimate $\cO(h^{\min(3-\alpha, 5-4\alpha, 3-2\alpha)}) = \cO(h^{3-2\alpha})$.
\begin{figure}[!htb]
    \centering
    \includegraphics[scale=0.34]{ 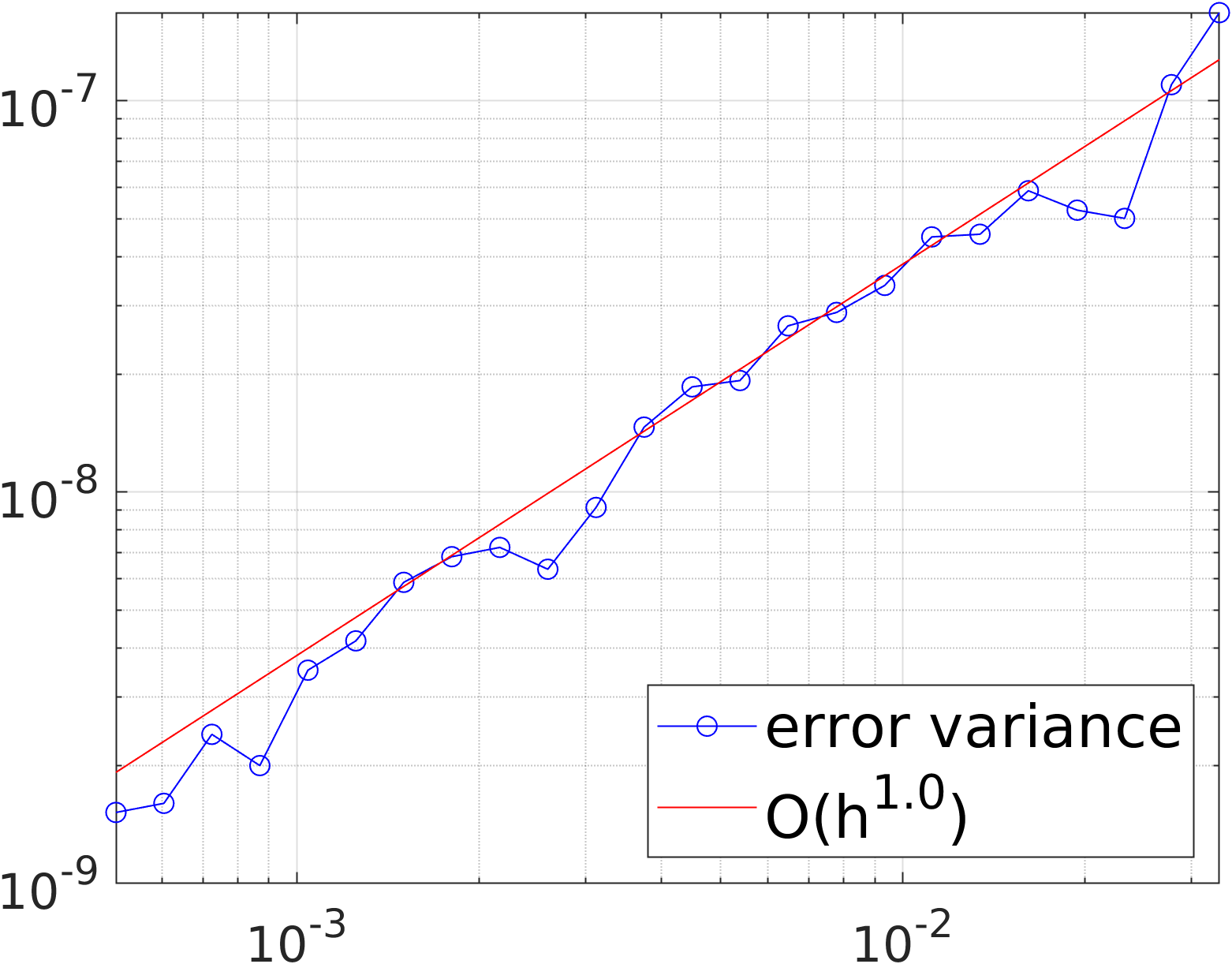}\qquad
    \includegraphics[scale=0.34]{ 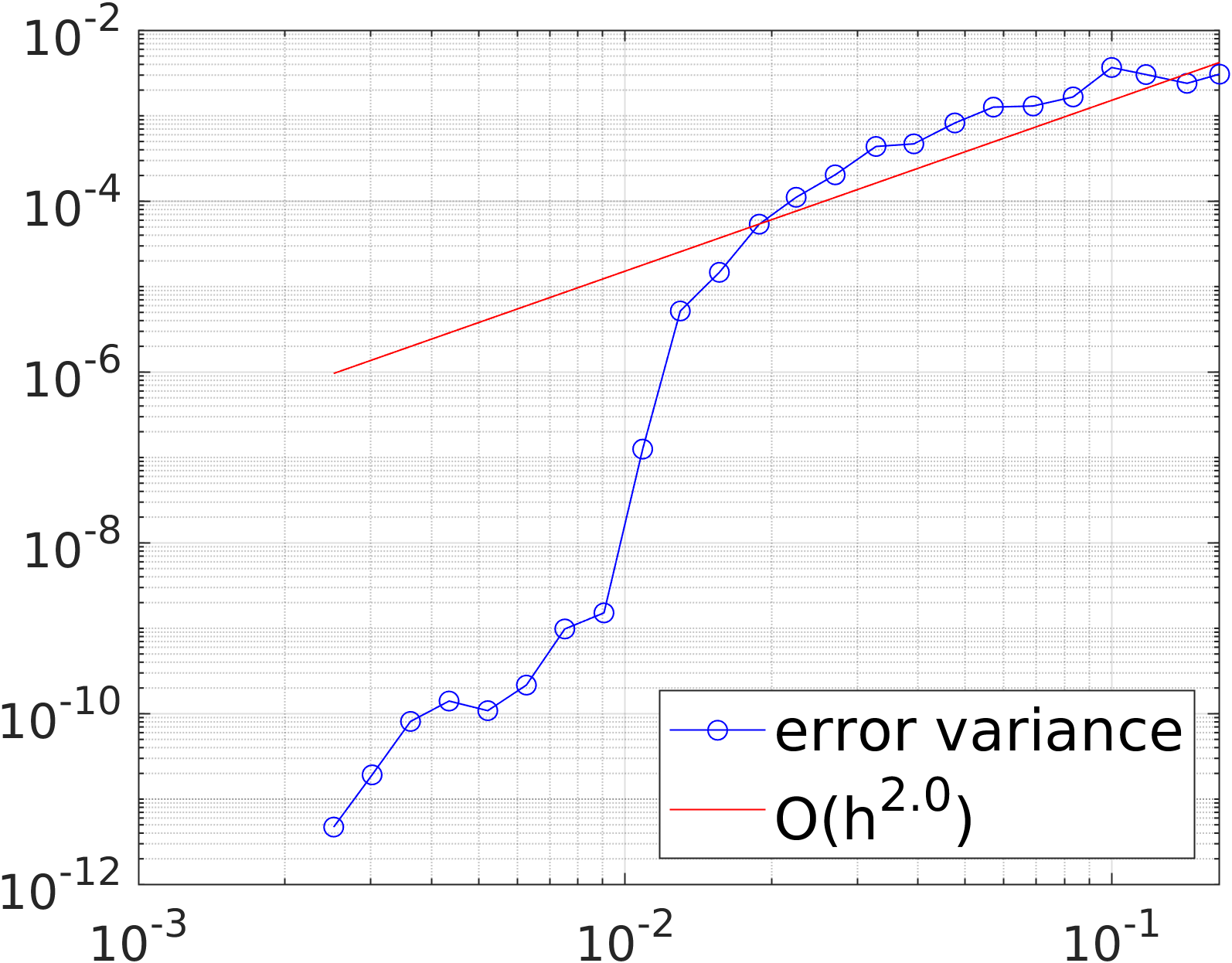}\qquad
    \includegraphics[scale=0.34]{ 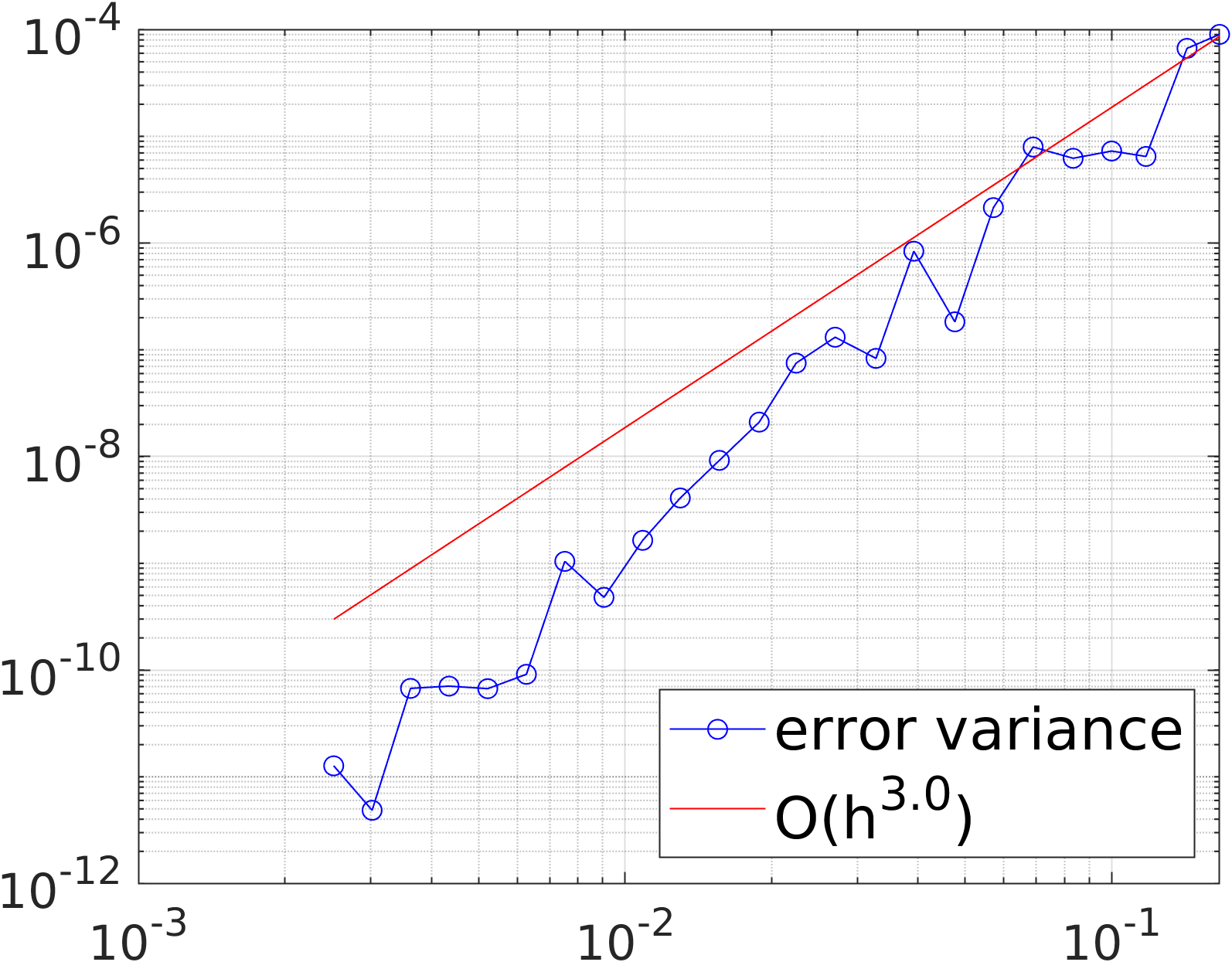}
    \caption{Variance of quadrature error with weight function $\theta_{\eps}^{\Delta}$. Left: $\eps = 2h$. Middle: $\eps = 2h^{\frac{1}{2}}$. Right: $\eps = 0.1$.}
    \label{fig: var decay}
\end{figure}

\section{Conclusion}
\label{SEC: 5}

This work analyzes the approximation of the numerical quadrature of the implicit boundary integral method under different assumptions on the shape and regularity of the boundary. We show that the quadrature error gains an additional order of $\frac{d-1}{2}$ for a smooth, strongly convex boundary. For a general smooth convex boundary in 2D with finitely many points of vanishing curvatures, the average quadrature error has an additional order of $\frac{1}{2}$ under random rigid transformations. The variance of quadrature error estimate is also extended to open curves instead of closed boundaries in 2D. For generic piecewise smooth curves with vanishing curvatures that $\kappa\in [2, \frac{3+\sqrt{5}}{2})$ (see Section~\ref{SEC: 3.1}), the average quadrature error is $\cO(h^{\frac{1}{\kappa}+ \frac{1}{\kappa-1}(1-\alpha)})$. If the curve is a segment, the average quadrature error will become
$\cO(h^{\min(\frac{3-\alpha}{2}, \frac{1}{2} + (1-\alpha)(q+1))})$ instead. Error estimates for general curves with larger $\kappa$, as well as high-dimensional quadrature error estimates for general smooth surfaces and polytopes, will be studied in future works.

\section*{Acknowledgment}

The work of YZ is partially supported by the National Science Foundation through grant DMS-2309530. KR is partially supported by NSF DMS-2309802. RT is partially supported by the Army Research Office, under
Cooperative Agreement Number W911NF-19-2-0333 and National Science Foundation Grant DMS-2110895.

\appendix

\section{Proof of Lemma~\ref{LEM: STATIONARY PHASE}}\label{PRF: STATION}
\begin{proof}
    For any $\bx\in T_{\eps}$, one can write it as $\bx = \bx' + d_{\Gamma} (\bx) \bn(\bx')$, where $\bx' = P_{\Gamma}\bx$. The operator $P_{\Gamma}$ is the projection onto $\Gamma$ and $d_{\Gamma}$ is the signed distance function to $\Gamma$. Take a partition of unity $\{\phi_j\}_{j=1}^N$ on $\Gamma$ that $\phi_{j} \in C_0^{\infty}(\Gamma)$. On $\supp \phi_j$, we set a point $\by_j$ as the origin. Then locally $\Gamma$ can be represented by
    \begin{equation*}
        \Gamma = \{(\by', y_d) \in B\mid y_d = \rho_j(\by')\}
    \end{equation*}
    where $B$ is a ball centered at the origin. One can arrange $\rho_j$ such that $\rho_j(\bzero) = 0$, $\nabla_{\by'} \rho_j(\by')|_{\by' = \bzero} = \bzero$ and $|\nabla_{\by'} \rho_j (\bzero)| = 1$ on $\Gamma$. Next, we extend the definition of $\phi_j$ to $T_{\eps}$ without losing regularity by setting
    \begin{equation*}
        \widetilde \phi_j(\bx) := \phi_{j}(P_{\Gamma}\bx)\,.
    \end{equation*}
    Under the above setup, the Fourier transform of $\cQ\widetilde\phi_j$ can be written as
    \begin{equation}\label{EQ: Fourier Q}
    \begin{aligned}
        \widehat{\cQ \widetilde \phi_j}(\bzeta) &= \int_{\bbR^d} e^{-2\pi i (\bx\cdot \bzeta)} \cQ(\bx) \widetilde \phi_j(\bx) d\bx \\
        &= \int_{\Gamma} \int_{-\eps}^{\eps}   f(\bx')  \widetilde\phi_j(\bx')  e^{-2\pi i (\bx' + s\bn(\bx'))\cdot \bzeta}  \theta_{\eps}(s) ds  d\sigma \\
        &= \int_{\Gamma} f(\bx') \widetilde\phi_j(\bx')  e^{-2\pi i \bx'\cdot \bzeta} \int_{-\eps}^{\eps}    e^{-2\pi i s\bn(\bx') \cdot \bzeta}  \theta_{\eps}(s) ds  d\sigma 
    \end{aligned}
    \end{equation}
    where $\sigma$ is the Lebesgue measure on $\Gamma$. Using the local frame $(\by', y_d)$ to replace $\bx'\in\Gamma$ and writing $\bzeta = (\bzeta', \zeta_d)$, we define
    \begin{equation*}
        \psi(\by', \bzeta', \zeta_d, s) := \by'\cdot \bzeta' + \rho_j(\by')  \zeta_d + s \bn(\by', \rho_j(\by'))\cdot \bzeta\,.
    \end{equation*}
    Let $c = \sup_{\bx'\in\supp\phi_j} |\nabla_{\by'}\rho_j|$ with $\nabla_{\by'} \bn$ being the shape operator (hence $ \kappa = \sup_{\bx'\in \supp\phi_j} |\nabla_{\by'} \bn|$ is bounded by the principal curvature).
    Suppose the tube width $\eps$ is small enough that $\eps\kappa < 1$ and the smallest eigenvalue of $\nabla^2_{\by'} \rho_j$ is larger than $\eps \sqrt{1+\tau^2}\|\nabla^2_{\by'} \bn\|_{\cL(\bbR^{d}\mapsto \bbR^{d}\times \bbR^d)} $ for $\tau = \frac{c}{1-\eps \kappa}$.

\noindent{\bf{Case I}. } If $|\bzeta'| > \frac{c}{1 - \eps\kappa} |\zeta_d|$  , we find that 
    \begin{equation*}
    \begin{aligned}
         |\nabla_{\by'}(\by' \cdot \bzeta' + \rho_j(\by')\zeta_d + s \bn(\by')\cdot \bzeta)| &= |\bzeta' + {\nabla_{\by'} \rho_j} \zeta_d + s \nabla_{\by'}\bn(\by')\cdot \bzeta |\\&\ge |\bzeta'|(1-s\kappa) - |\zeta_d| |\nabla_{\by'} \rho_j|  > 0\,.       
    \end{aligned}
    \end{equation*}
    Using Fubini's theorem, we can represent the integral~\eqref{EQ: Fourier Q} as 
    \begin{equation*}
    \begin{aligned}
            \widehat{\cQ \widetilde \phi_j}(\bzeta) &= \int_{-\eps}^{\eps}  \theta_{\eps}(s) 
 \int_{\Gamma} f(\bx') \widetilde\phi_j(\bx')  e^{-2\pi i (\bx' + s\bn(\bx') )\cdot \bzeta}  d\sigma ds  \\
 &= \int_{-\eps}^{\eps}  \theta_{\eps}(s) 
 \int_{\bbR^{d-1}} e^{-2\pi i \psi(\by', \bzeta', \zeta_d, s)} f(\by', y_d) \widetilde \phi_j (\by', y_d)  \left[1 + |\nabla_{\by'}\rho_j|^2\right]^{\frac{1}{2}} d\by' ds\,.
    \end{aligned}
    \end{equation*}
    Since there is no stationary point in the phase $\psi$, this integral decays as $\cO(|\bzeta|^{-N})$ for any fixed $N$.
    
\noindent{\bf{Case II}.}  If $|\bzeta'| \le \frac{c}{1 - \eps\kappa} 
 |\zeta_d|$, the choice of $\eps$ makes $\nabla_{\by'}^2\phi = \nabla^2_{\by'} \rho_j \zeta_d + s \nabla^2 \bn \cdot \bzeta$ positive-definite. Define 
    \begin{equation*}
        \Phi(\bx', \bzeta) := \int_{-\eps}^{\eps}    e^{-2\pi i s\bn(\bx') \cdot \bzeta}  \theta_{\eps}(s) ds\,.
    \end{equation*} 
Because $\theta_{\eps}\in\cW_q$, we can apply integration by parts $q$ times for $\Phi$ to obtain 
    \begin{equation*}
    \begin{aligned}
         \Phi(\bx', \bzeta) &:= \int_{-\eps}^{\eps} e^{-2\pi i s \bn(\bx')\cdot \bzeta} \theta_{\eps}(s) ds\\
         &=\left(\frac{1}{2\pi i \bn(\bx')\cdot \bzeta}\right)^q \sum_{l=1}^L\int_{s_l}^{s_{l+1}} e^{-2\pi i s\bn(\bx')\cdot \bzeta} \frac{d^q}{ds^q} \theta_{\eps}(s) ds\,,
    \end{aligned}
    \end{equation*}
    where $[s_l, s_{l+1}]$ denotes the support of the $l$-th piece $C^1$ component of $\frac{d^q}{ds^q}\theta_{\eps}(s)$. By applying integration by parts again on each support $[s_l, s_{l+1}]$, we have that
    \begin{equation*}
    \begin{aligned}
            &\int_{s_l}^{s_{l+1}} e^{-2\pi i s \bn(\bx')\cdot \bzeta} \frac{d^q}{ds^q}\theta_{\eps}(s) ds \\
            &= -\frac{1}{2\pi i \bn(\bx')\cdot \bzeta}  e^{-2\pi i s \bn(\bx')\cdot \bzeta}\frac{d^q}{ds^q} \theta_{\eps}(s) \Big|_{s_l}^{s_{l+1}}  +\frac{1}{2\pi i \bn(\bx')\cdot \bzeta}\int_{s_l}^{s_{l+1}} e^{-2\pi i s \bn(\bx')\cdot \bzeta} \frac{d^{q+1}}{ds^{q+1}} \theta_{\eps}(s) ds\,.
    \end{aligned}
    \end{equation*}
    Therefore~\eqref{EQ: Fourier Q} becomes
    \begin{equation}\nonumber
    \begin{aligned}
        \widehat{\cQ \widetilde \phi_j}(\bzeta) &= \int_{\bbR^d} e^{-2\pi i (\bx\cdot \bzeta)} \cQ(\bx) \widetilde \phi_j(\bx) d\bx \\
        &= - \frac{1}{|\bzeta|^{q+1}}\sum_{l=1}^L \left(\frac{d^q}{ds^q} \theta_{\eps}\Big|_{s_l^{+}}^{s_l^{-}} \right) \int_{\Gamma} \left(\frac{1}{2\pi i \bn(\bx')\cdot \frac{\bzeta}{|\bzeta|}}  \right)^{q+1} f(\bx') \widetilde\phi_j(\bx') e^{-2\pi i (\bx' + s_l \bn(\bx') ) \cdot \bzeta}  d\sigma \\
       & +  \frac{1}{|\bzeta|^{q+1}}\sum_{l=1}^L \int_{s_l}^{s_{l+1}} \frac{d^{q+1}}{ds^{q+1}} \theta_{\eps}(s)\int_{\Gamma} f(\bx') \left(\frac{1}{2\pi i \bn(\bx')\cdot \frac{\bzeta}{|\bzeta|}}  \right)^{q+1}\widetilde\phi_j(\bx') e^{-2\pi i (\bx' + s \bn(\bx') ) \cdot \bzeta}   d\sigma  ds\,.
    \end{aligned}
    \end{equation}
    The main task is now to estimate the integral
    \begin{equation}\label{EQ: STA PHASE}
    \begin{aligned}
        &\int_{\Gamma}\left(\frac{1}{2\pi i \bn(\bx')\cdot \frac{\bzeta}{|\bzeta|}}  \right)^{q+1} f(\bx') \widetilde\phi_j(\bx') e^{-2\pi i (\bx' + s \bn(\bx') ) \cdot \bzeta}   d\sigma \\&= \int_{\bbR^{d-1}} e^{-2\pi i \phi(\by', \bzeta', \zeta_d) } \left(\frac{1}{2\pi i \bn(\bx')\cdot \frac{\bzeta}{|\bzeta|}}  \right)^{q+1}f(\by', y_d) \widetilde \phi_j (\by', y_d)  \left[1 + |\nabla_{\by'}\rho_j|^2\right]^{\frac{1}{2}} d\by'\,.
    \end{aligned}
    \end{equation}
Since for all $\bx'\in \supp\phi_j $ we have $|\bn(\bx') \cdot \frac{\bzeta}{|\bzeta|} | > c'$ for certain $c' > 0$, we can easily deduce from~\eqref{EQ: STA PHASE} that 
$|\widehat{\cQ}(\bzeta)| = \cO(\cR_{\eps}\cP_{\eps}|\bzeta|^{-(q+1)} )$, where 
    \begin{equation*}
    \begin{aligned}
        \cR_{\eps} &:=  \|\theta_{\eps}^{(q)}\|_{L^{\infty}[-\eps,\eps]} +  \eps\|\theta_{\eps}^{(q+1)}\|_{L^{\infty}[-\eps, \eps]} =  \cO(\eps^{-q-1})\,,\\
        \cP_{\eps} &:= \sup_{s\in [-\eps, \eps]} \int_{\Gamma} \left(\frac{1}{2\pi i \bn(\bx')\cdot \frac{\bzeta}{|\bzeta|}}  \right)^{q+1}f(\bx') \widetilde\phi_j(\bx') e^{-2\pi i (\bx' + s \bn(\bx') ) \cdot \bzeta}   d\sigma\,.
    \end{aligned}
    \end{equation*}
    In the final step, we apply the standard stationary phase approximation to $\cP_{\eps}$. This produces a factor of $|\bzeta|^{-(d-1)/2}$. Therefore, we have $|\widehat{\cQ}(\bzeta)| = \cO(\eps^{-q-1}|\bzeta|^{-(d-1)/2 - (q+1)}) $. This finishes the proof.
\end{proof}
\begin{remark}
It is clear from the proof that we can relax the regularity assumption on the boundary from $C^{\infty}$ to $C^{k}$ with $k = \frac{d+5}{2} + \max(\frac{d-1}{2}, q)$.
\end{remark}

\begin{remark}\label{REMARK:Convex}
We also observe from the proof that the result, in a modified form, can hold for general convex surfaces with at least one positive principal curvature. Indeed, since the stationary phase approximation directly depends on the number of positive principal curvatures of $\Gamma$, we can replace the dimensionality constant $d$ in the above proof (and therefore in Lemma~\ref{LEM: STATIONARY PHASE}) with $\Lambda+1$ ($\Lambda$ being the number of positive principal curvatures). The result remains true.
\end{remark}

\section{Proof of Lemma~\ref{LEM: ESTIMATE}}
\label{PRF: ESTIMATE}

\begin{proof}
   Using Lemma~\ref{LEM: STATIONARY PHASE}, we have 
   \begin{equation*}
       | \widehat{\cQ}(h^{-1}\bn) \widehat{\psi}(\delta \bn)| = \cO(\eps^{-(q+1)} h^{\frac{d+1}{2} + q} |\bn|^{-\frac{d+1}{2} - q}) |\widehat{\psi}(\delta \bn)|\,.
   \end{equation*}
   We separate the summation $\sum_{|\bn|\neq 0} \widehat{\cQ}(h^{-1}\bn) \widehat{\psi}(\delta \bn)$ into two groups: $\{\bn\in\bbZ^d\mid 1\le |\bn|\le \delta^{-\beta}\}$ and $\{\bn\in\bbZ^d\mid |\bn| > \delta^{-\beta}\}$. The parameter $\beta > 0$ is a constant to balance the summation between the two groups.  Using the uniform boundedness of $|\widehat{\psi}(\delta \bn)|$, the first group can be estimated as
   \begin{equation}\label{EQ:Q-P}
   \begin{aligned}
        \sum_{1\le |\bn|\le \delta^{-\beta}} | \widehat{\cQ}(h^{-1}\bn) \widehat{\psi}(\delta \bn)|  &=  \sum_{1\le |\bn|\le \delta^{-\beta}} \cO(\eps^{-q-1} h^{\frac{d+1}{2} + q}|\bn|^{-\frac{d+1}{2} - q}) \\
        &= \cO(\eps^{-q-1} h^{\frac{d+1}{2} + q}) \int_{1}^{\delta^{-\beta}} r^{-\frac{d+1}{2} - q} r^{d-1} dr \\
        &= \cO(\eps^{-q-1} h^{\frac{d+1}{2} + q}) \int_{1}^{\delta^{-\beta}} r^{\frac{d-3}{2} - q} dr \\
        &=\begin{cases}
            \cO(\eps^{-q-1} h^{\frac{d+1}{2} + q} \delta^{-\beta(\frac{d-1}{2}-q)}) \quad & \frac{d-1}{2} > q\,, \\ 
            \cO(\eps^{-q-1} h^{\frac{d+1}{2} + q}  |\log\delta|)\quad &\frac{d-1}{2} = q\,, \\
             \cO(\eps^{-q-1} h^{\frac{d+1}{2} + q})\quad &\frac{d-1}{2} < q\,.
        \end{cases}
   \end{aligned}
   \end{equation}
   For the second group, we use that fact that $|\widehat{\psi}(\delta \bn)| =\cO(|\bn\delta|^{-\nu})$ for any fixed $\nu > 0$. In particular, we take $\nu =  \frac{d+1}{2}$ to have 
   \begin{equation}\label{EQ:Q-N}
   \begin{aligned}
    \sum_{|\bn| > \delta^{-\beta}} | \widehat{\cQ}(h^{-1}\bn) \widehat{\psi}(\delta \bn)| &= \cO(\eps^{-q-1} h^{\frac{d+1}{2} + q})\int_{\delta^{-\beta}}^{\infty} r^{-\frac{d+1}{2} - q} |r\delta|^{-\nu} r^{d-1} dr \\
    &= \cO(\eps^{-q-1} h^{\frac{d+1}{2} + q} \delta^{\beta (q+1) - \nu})\,.
   \end{aligned}
   \end{equation}
   Next, we choose $\beta$ to balance the contributions from~\eqref{EQ:Q-P} and ~\eqref{EQ:Q-N}. This is split into three different cases as follows.
   \begin{enumerate}
       \item When $\frac{d-1}{2} > q$, we take $\beta = 1$. This leads to
       \begin{equation*}
           \sum_{|\bn|\neq 0} |\widehat{\cQ}(h^{-1}\bn) \widehat{\psi}(\delta \bn)| =  \cO(\eps^{-q-1} h^{\frac{d+1}{2} + q} \delta^{-(\frac{d-1}{2}-q)})\,. 
       \end{equation*}
       \item When $\frac{d-1}{2} = q$, we can take an arbitrary $\beta > 1$ to get 
        \begin{equation*}
           \sum_{|\bn|\neq 0} |\widehat{\cQ}(h^{-1}\bn) \widehat{\psi}(\delta \bn)| =  \cO(\eps^{-q-1} h^{\frac{d+1}{2} + q} |\log\delta|)\,. 
       \end{equation*}
       \item When $\frac{d-1}{2} < q$, the contribution from~\eqref{EQ:Q-N} is negligible. We may take an arbitrary $\beta \ge 1$. This leads to
       \begin{equation*}
           \sum_{|\bn|\neq 0} |\widehat{\cQ}(h^{-1}\bn) \widehat{\psi}(\delta \bn)| =  \cO(\eps^{-q-1} h^{\frac{d+1}{2} + q})\,. 
       \end{equation*}
   \end{enumerate}
   The proof is complete.
\end{proof}

\section{The van der Corput Lemma}

We now prove a special version of the van der Corput Lemma that we used in Section~\ref{SEC: 3}. Let us first recall the standard van der Corput Lemma. The proof of the lemma can be found in~\cite{stein1993harmonic}.
\begin{lemma}[van der Corput Lemma~\cite{stein1993harmonic}]\label{LEM: van der Corput}
    Let $\phi:\bbR\mapsto\bbR$ be a $C^2$ function on interval $J$. (i) If $|\phi'(x)| > c>0$ for all $x\in J$ and $\phi''(x)$ does not change its sign, then for any $f\in C^1(J)$, 
    \begin{equation}\label{EQ: van der Corput 1}
        \int_{J} e^{i\lambda \phi(x)} f(x) dx = \cO\left(\frac{\|f\|_{C^1}}{\lambda c} \right).
    \end{equation}
    (ii) If $|\phi''(x)| > c>0$ for all $x\in J$, then for any $f\in C^2(J)$, 
    \begin{equation}\label{EQ: van der Corput 2}
        \int_{J} e^{i\lambda \phi(x)} f(x) dx = \cO\left(\|f\|_{C^1}\sqrt{\frac{1}{\lambda c}}\right).
    \end{equation}
\end{lemma}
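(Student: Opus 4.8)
The plan is the classical two-step argument based on integration by parts, in which the sign conditions on $\phi',\phi''$ are used to guarantee the relevant monotonicity.

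\textbf{Part (i).} Since $|\phi'|\ge c>0$ on $J$, I would write $e^{i\lambda\phi}=\frac{1}{i\lambda\phi'}\frac{d}{dx}e^{i\lambda\phi}$ and integrate by parts:
\begin{equation*}
    \int_J e^{i\lambda\phi}f\,dx = \Big[\frac{f e^{i\lambda\phi}}{i\lambda\phi'}\Big]_{\partial J} - \frac{1}{i\lambda}\int_J e^{i\lambda\phi}\,\frac{d}{dx}\!\Big(\frac{f}{\phi'}\Big)\,dx .
\end{equation*}
The boundary term is $\cO(\|f\|_\infty/(\lambda c))$. In the remaining integral, expand $\frac{d}{dx}(f/\phi')=f'/\phi'-f\phi''/(\phi')^2$; the first piece is bounded by $\frac{1}{\lambda c}\|f'\|_{L^1(J)}$, and for the second I would use that $\phi''$ does not change sign, so $1/\phi'$ is monotone and therefore $\int_J |\phi''|/(\phi')^2\,dx=\big|1/\phi'\big|_{\partial J}\le 2/c$, giving a contribution $\cO(\|f\|_\infty/(\lambda c))$. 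Summing and using $\|f'\|_{L^1(J)}\le |J|\,\|f'\|_\infty$ yields $\cO(\|f\|_{C^1}/(\lambda c))$.

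\textbf{Part (ii).} Without loss of generality I would assume $\phi''>c>0$ on $J$ (otherwise replace $\phi$ by $-\phi$ and conjugate). Then $\phi'$ is strictly increasing, so it has at most one zero in $\overline J$; let $x_0$ be that zero, or, if $\phi'$ does not vanish, the endpoint of $J$ at which $|\phi'|$ is smallest. For a parameter $\delta>0$, split $J=J_1\cup J_2$ with $J_1=J\cap(x_0-\delta,x_0+\delta)$ and $J_2=J\setminus J_1$. On $J_1$ the trivial bound gives $\cO(\delta\|f\|_\infty)$. On $J_2$ (a union of at most two intervals) the mean value theorem together with $\phi''>c$ and the monotonicity of $\phi'$ forces $|\phi'(x)|\ge c|x-x_0|\ge c\delta$, and $\phi''$ has constant sign, so part (i) applies on each piece and yields $\cO(\|f\|_{C^1}/(\lambda c\delta))$. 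Hence the whole integral is $\cO\big(\delta\|f\|_\infty+\|f\|_{C^1}/(\lambda c\delta)\big)$; choosing $\delta=(\lambda c)^{-1/2}$ balances the terms and produces $\cO(\|f\|_{C^1}(\lambda c)^{-1/2})$, while if this $\delta$ exceeds $|J|$ the trivial bound $|J|\,\|f\|_\infty\le (\lambda c)^{-1/2}\|f\|_{C^1}$ already suffices.

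The routine parts are the two integrations by parts and the optimization in $\delta$. The one point requiring care is making the constants in part (ii) depend only on the lower bound $c$ for $\phi''$, and not on $|J|$ or on finer features of $\phi$: the resolution is precisely the observation that $\phi''$ of one sign makes $\phi'$ monotone, so $|\phi'|$ can be small on only a single short interval, on whose complement part (i) is available. In part (i), the only non-automatic estimate is $\int_J|\phi''|/(\phi')^2$, which is exactly where the hypothesis that $\phi''$ does not change sign is used.
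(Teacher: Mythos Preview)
The paper does not supply its own proof of this lemma; it simply cites Stein's \emph{Harmonic Analysis} and states that ``the proof of the lemma can be found in~\cite{stein1993harmonic}.'' Your argument is exactly the classical one given there---integration by parts for part (i), then the near/far splitting around the unique critical point with optimization in $\delta$ for part (ii)---and it is correct. The only minor point is that your bound in part (i) picks up a factor of $|J|$ through $\|f'\|_{L^1(J)}\le |J|\,\|f'\|_\infty$, so the implicit constants in both parts depend on $|J|$; this is harmless here, since the paper only invokes the lemma on fixed intervals such as $[r/2,r]$ in the proof of Lemma~\ref{LEM: van der Corput Revised}.
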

Note that we do not need $f$ to be compactly supported on $J$ in the van der Corput Lemma. Using the above lemma, we can prove the following result.


\begin{lemma}\label{LEM: van der Corput Revised}
Let $\kappa > 2$ be given and $g(x) = |x|^{\kappa} h(x)$ with $h\in C^2[-1, 1]$ and $h(x)\neq 0$ on $[-1, 1]$. 
Let $r < 1$ be sufficiently small that 
    $$r\le \frac{\kappa(\kappa - 1)| h(0)|}{2(\|h''\|_{\infty} + \kappa(\kappa + 1)\|h'\|_{\infty} ) }\, . $$
If $f\in C^{2}[-r,r]$, then for any $\bzeta = (\zeta_1, \zeta_2)\in\bbR^2$ such that $|\zeta_2|> \beta |\zeta_1|$ for some $\beta>0$, we have
    \begin{equation}\label{EQ: DEGEN VAN DER}
        \int_{-r}^{r} e^{-2\pi i (\zeta_1 x + \zeta_2 g(x))} f(x) dx = \cO(|\bzeta|^{1/\kappa})\,.
    \end{equation}
\end{lemma}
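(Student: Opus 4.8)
The plan is to reduce the bound to the classical van der Corput second-derivative test, Lemma~\ref{LEM: van der Corput}(ii), applied on the region where $g$ is genuinely non-degenerate, together with a trivial estimate on a small symmetric neighbourhood of the origin whose radius is tuned to the frequency. Throughout I may assume $h(0)>0$ (otherwise replace $h$ by $-h$; this alters neither $|g''|$ nor the modulus of the integral), and I write $\phi(x):=\zeta_1 x+\zeta_2 g(x)$ so that the integral in question is $\int_{-r}^r e^{-2\pi i\phi(x)}f(x)\,dx$.

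The first step is to extract from the smallness hypothesis on $r$ the quantitative non-degeneracy of $g$ away from $0$. For $x\neq 0$ a direct differentiation gives
\[
g''(x)=|x|^{\kappa-2}\bigl(\kappa(\kappa-1)h(x)+2\kappa x\,h'(x)+x^2 h''(x)\bigr),
\]
and for $|x|\le r\le 1$ the bracketed factor differs from $\kappa(\kappa-1)h(0)$ by at most $r\bigl(\kappa(\kappa+1)\|h'\|_\infty+\|h''\|_\infty\bigr)$ (using $\kappa(\kappa-1)+2\kappa=\kappa(\kappa+1)$); by the assumed threshold on $r$ this perturbation is at most $\tfrac12\kappa(\kappa-1)h(0)$, whence
\[
|g''(x)|\ \ge\ c_0\,|x|^{\kappa-2},\qquad 0<|x|\le r,\qquad c_0:=\tfrac12\kappa(\kappa-1)h(0)>0.
\]
Consequently $\phi''(x)=\zeta_2 g''(x)$ obeys $|\phi''(x)|\ge c_0|\zeta_2|\,|x|^{\kappa-2}$ on $[-r,r]\setminus\{0\}$.

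The second step is the frequency-dependent splitting. Set $\mu:=|\zeta_2|^{-1/\kappa}$. If $\mu\ge r$ then $|\zeta_2|\le r^{-\kappa}$, so by $|\zeta_2|>\beta|\zeta_1|$ the vector $\bzeta$ is bounded and the integral is $\cO(1)=\cO(|\bzeta|^{-1/\kappa})$, which settles this range. When $\mu<r$, decompose $[-r,r]=[-\mu,\mu]\cup[\mu,r]\cup[-r,-\mu]$. On $[-\mu,\mu]$ the trivial bound gives $\cO(\mu\,\|f\|_\infty)=\cO(\mu)$. On $[\mu,r]$, monotonicity of $|x|^{\kappa-2}$ (this is where $\kappa>2$ enters) yields $|\phi''(x)|\ge c_0|\zeta_2|\mu^{\kappa-2}$ uniformly, so Lemma~\ref{LEM: van der Corput}(ii) (with large parameter $2\pi$ and amplitude $f\in C^2$) gives
\[
\left|\int_\mu^r e^{-2\pi i\phi(x)}f(x)\,dx\right|=\cO\!\left(\|f\|_{C^1}\bigl(|\zeta_2|\mu^{\kappa-2}\bigr)^{-1/2}\right),
\]
and symmetrically on $[-r,-\mu]$. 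With $\mu=|\zeta_2|^{-1/\kappa}$ one has $|\zeta_2|\mu^{\kappa-2}=|\zeta_2|^{2/\kappa}$, so all three pieces are $\cO(|\zeta_2|^{-1/\kappa})$; finally $|\zeta_2|>\beta|\zeta_1|$ forces $|\zeta_2|\ge(1+\beta^{-2})^{-1/2}|\bzeta|$, turning the estimate into $\cO(|\bzeta|^{-1/\kappa})$, which is the asserted decay.

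The only genuinely delicate point is the first step: checking that the explicit constant $\kappa(\kappa+1)$ in the threshold on $r$ is exactly what is needed to keep $|g''(x)|$ comparable to $|x|^{\kappa-2}$ across the whole interval, since this is what lets the second-derivative test absorb the vanishing of the curvature at the origin. Two remarks keep the argument clean: part (ii) of the van der Corput lemma (unlike part (i)) imposes no sign condition on $\phi''$, so no additional subdivision near $0$ is needed even though $\phi''$ may change sign there; and the hypothesis $|\zeta_2|>\beta|\zeta_1|$ is used only to convert $|\zeta_2|$ into $|\bzeta|$ at the end — in the complementary, $\zeta_1$-dominated regime the phase would be non-stationary with much faster decay, so that case is not an obstacle.
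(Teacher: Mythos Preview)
Your proof is correct and takes a genuinely different, simpler route than the paper. The paper uses a full dyadic decomposition of $(-r,r)$: it splits into shells $|x|\in[r/2^{\ell+1},r/2^\ell]$, rescales each to a fixed interval, applies van der Corput's second-derivative test on each piece to get $|\cJ_\ell|=\cO\bigl(2^{-\ell}(1+|\zeta_2|2^{-\ell\kappa})^{-1/2}\bigr)$, and then sums the resulting series over $\ell$. You instead use a single frequency-adapted cutoff at $\mu=|\zeta_2|^{-1/\kappa}$: trivial estimate on $[-\mu,\mu]$, one application of Lemma~\ref{LEM: van der Corput}(ii) on each of $[\mu,r]$ and $[-r,-\mu]$, and you are done. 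Both are standard strategies for degenerate oscillatory integrals; your single-scale splitting is more economical and makes the balance of the two error contributions transparent, while the dyadic argument is slightly more robust (it never relies on the monotonicity of $|x|^{\kappa-2}$ over a long interval and generalises more easily, e.g.\ to the non-convex variant $g(x)=\sgn(x)|x|^\kappa h(x)$ noted in the paper). Your derivation of the lower bound $|g''(x)|\ge c_0|x|^{\kappa-2}$ from the threshold on $r$ matches the paper's computation exactly.
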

\begin{proof}
Let us first remark that the assumption that $\bzeta = (\zeta_1, \zeta_2)$ satisfy $|\zeta_2|> \beta |\zeta_1|$ for some $\beta>0$ is necessary as otherwise there are no stationary points near the origin (in the small interval $[-r, r]$). 

We use the classical dyadic decomposition of the interval $(-r, r)$ to rewrite the integral in~\eqref{EQ: DEGEN VAN DER} into
\begin{equation}\nonumber
        \int_{-r}^{r} e^{-2\pi i (\zeta_1 x + \zeta_2 g(x))} f(x) dx = \sum_{\ell=0}^{\infty} \cJ_\ell\,, 
\end{equation}
where
\begin{equation*}
    \begin{aligned}
           \cJ_\ell &:= \int_{ -\frac{r}{2^{\ell}} }^{ -\frac{r}{2^{\ell+1}} } e^{-2\pi i (\zeta_1 x + \zeta_2 g(x))} f(x) dx  + \int_{\frac{r}{2^{\ell+1}}}^{\frac{r}{2^{\ell}}} e^{-2\pi i (\zeta_1 x + \zeta_2 g(x))} f(x) dx  \\
            &= \frac{1}{2^\ell}\int_{-r}^{-r/2} e^{-2\pi i (\zeta_1 2^{-\ell}z + \zeta_2 h(2^{-\ell}z) 2^{-\ell\kappa}|z|^{\kappa})} f(2^{-\ell}z) dz \\
            &\quad + \frac{1}{2^\ell}\int_{r/2}^{r} e^{-2\pi i (\zeta_1 2^{-l}z + \zeta_2 h(2^{-\ell}z) 2^{-\ell\kappa}|z|^{\kappa})} f(2^{-\ell}z) dz\,.
    \end{aligned}
\end{equation*}
When $\ell$ is sufficiently large, $|2^{-\ell\kappa}\zeta_2| <1$. We can conclude straightforwardly that 
\begin{equation}\label{EQ:Large ell}
    |\cJ_\ell| \le 2^{-\ell}\|f\|_{\infty} = \cO(1)\,,
\end{equation} 
since the integrating interval length is $r<1$. Otherwise, let $\phi(z):= \zeta_1 2^{-\ell}z + \zeta_2 h(2^{-\ell}z) 2^{-\ell\kappa}|z|^{\kappa}$. We then observe that 
\begin{equation*}
    \begin{aligned}
    |\phi''(z)| &\ge  |\zeta_2| 2^{-\ell\kappa}|z|^{\kappa-2} \left( \kappa(\kappa - 1)  |h(2^{-\ell}z)| - 2^{-2\ell} |h''(2^{-\ell}z)||z|^{2} - 2\kappa 2^{-\ell}  |h'(2^{-\ell}z)| |z| \right)    \\
    &\ge |\zeta_2| 2^{-\ell\kappa}|z|^{\kappa-2} \left( \kappa(\kappa - 1)  \inf_{(-r,r)}|h| - \|h''\|_{\infty}r^2 - 2\kappa   \|h'\|_{\infty} r \right) \quad (\text{since }r^2 < r)\\
    &\ge |\zeta_2| 2^{-\ell\kappa}|z|^{\kappa-2} \left( \kappa(\kappa - 1) \left( |h(0)| - r\|h'\|_{\infty}\right)  - \|h''\|_{\infty}r - 2\kappa   \|h'\|_{\infty} r \right) \\
    &\ge |\zeta_2| 2^{-\ell\kappa}|z|^{\kappa-2} \left(\kappa(\kappa - 1) |h(0)| - r \left(\kappa(\kappa + 1)   \|h'\|_{\infty} +  \|h''\|_{\infty}  \right) \right)\\
    &\ge \frac{1}{2}|\zeta_2| 2^{-\ell\kappa}|z|^{\kappa-2}\kappa(\kappa - 1) |h(0)| \quad (\text{since } |z|\ge \frac{r}{2})\\
    &\ge \frac{1}{2}|\zeta_2| 2^{-\ell\kappa}\left|\frac{r}{2}\right|^{\kappa-2}\kappa(\kappa - 1) |h(0)|\,.
    \end{aligned}
\end{equation*}
The second part of the van der Corput Lemma, that is, the estimate in~\eqref{EQ: van der Corput 2}, then gives us that
\begin{equation}\label{EQ:Small ell}
        |\cJ_\ell| =\cO\left(2^{-\ell}\sqrt{\frac{1}{|\zeta_2| 2^{-\ell\kappa}}}\right)\,.
\end{equation}
We now combine~\eqref{EQ:Large ell} and~\eqref{EQ:Small ell} to conclude that 
\begin{equation*}
        |\cJ_\ell| = \cO\left(\frac{2^{-\ell}}{(1+|\zeta_2| 2^{-\ell\kappa})^{1/2}} \right).
\end{equation*}
Taking the summation over $\cJ_\ell$, we find that 
\begin{equation*}
    \begin{aligned}
     |\sum_{\ell=0}^{\infty} \cJ_\ell| \le \sum_{\ell=0}^{\infty} \frac{2^{-\ell}}{(1+|\zeta_2| 2^{-\ell\kappa})^{1/2}}     
    &\le \sum_{2^{\ell\kappa } < |\zeta_2|} 2^{-\ell}\sqrt{\frac{1}{|\zeta_2| 2^{-\ell\kappa}} } + \sum_{2^{\ell\kappa } \ge |\zeta_2|} 2^{-\ell} \\
    &\le |\zeta_2|^{-1/2} \sum_{2^{\ell\kappa } < |\zeta_2|} 2^{\ell(\kappa/2 - 1)} + 2|\zeta_2|^{-1/\kappa} \\&= \cO(|\zeta_2|^{-1/\kappa})\,.
    \end{aligned}
\end{equation*}
This, together with the fact that $|\zeta_2| > \beta |\zeta_1|$, gives the desired result in ~\eqref{EQ: DEGEN VAN DER}.
\end{proof}
\begin{remark}\label{RMK:Nonconvex}
    It is straightforward to see that the above proof also works for the case where $g(x) = \sgn(x) |x|^{\kappa} h(x)$.
\end{remark}

\bibliographystyle{siam}
\bibliography{main}

\end{document}